\newtheorem{theorem}{Theorem}[section]
\newtheorem{proposition}[theorem]{Proposition}
\newtheorem{lemma}[theorem]{Lemma}
\newtheorem{corollary}[theorem]{Corollary}
\newtheorem{definition}[theorem]{Definition}
\theoremstyle{definition}
\newtheorem{remark}[theorem]{Remark}
\numberwithin{equation}{section}
\numberwithin{figure}{section}
\newcommand{\E}{\mathbb{E}}
\newcommand{\Prob}{\mathbf{P}}
\renewcommand{\rho}{\varrho}
\renewcommand{\phi}{\varphi}
\newcommand{\R}{\mathbb{R}}
\newcommand{\N}{\mathbb{N}}
\newcommand{\dd}{{\rm d}}
\newcommand{\ee}{\mathrm{e}}
\newcommand{\Ex}{\textsf{E}}
\renewcommand{\Pr}{\mathsf{P}}
\DeclareMathOperator{\Exp}{Exponential}
\DeclarePairedDelimiter\floor{\lfloor}{\rfloor}
\DeclareMathOperator{\tx}{\textnormal{\texttt{tx}}}
\newcommand{\Expect}{\mathbf{E}}
\newcommand{\cA}{\ensuremath{\mathcal A}} 
\newcommand{\cB}{\ensuremath{\mathcal B}} 
\newcommand{\cD}{\ensuremath{\mathcal D}} 
\newcommand{\cE}{\ensuremath{\mathcal E}} 
\newcommand{\cF}{\ensuremath{\mathcal F}} 
\newcommand{\cG}{\ensuremath{\mathcal G}} 
\newcommand{\cH}{\ensuremath{\mathcal H}} 
\newcommand{\cJ}{\ensuremath{\mathcal J}}
\newcommand{\cO}{\ensuremath{\mathcal O}} 
\newcommand{\cP}{\ensuremath{\mathcal P}} 
\newcommand{\cQ}{\ensuremath{\mathcal Q}} 
\newcommand{\cR}{\ensuremath{\mathcal R}} 
\newcommand{\cS}{\ensuremath{\mathcal S}} 
\newcommand{\cT}{\ensuremath{\mathcal T}}
\newcommand{\cW}{\ensuremath{\mathcal W}} 
\newcommand{\cX}{\ensuremath{\mathcal X}} 
\newcommand{\cZ}{\ensuremath{\mathcal Z}} 
\newcommand{\bbT}{{\ensuremath{\mathbb T}} }
\renewcommand{\P}{{\ensuremath{\mathbb P}} }
\newcommand{\ind}{\mathds{1}}
\def\({\left(}
\def\){\right)}
\def\[{\left[}
\def\]{\right]}
\renewcommand{\complement}{\texttt{C}}
\begin{document}
	
\date{\today}
	
\title[Voter model on dynamic random graphs]{The voter model on random regular graphs\\ with random rewiring}
	
\author[Avena, Baldasso, Hazra, den Hollander, Quattropani]{Luca Avena, Rangel Baldasso, Rajat Subhra Hazra,\\ Frank den Hollander, Matteo Quattropani}
	
\address{Luca Avena, Dipartimento di Matematica e Informatica `Ulisse Dini', Viale Morgagni 67/a, 50134, Firenze, Italy}
\email{luca.avena@unifi.it}
	
\address{Rangel Baldasso, PUC-Rio, Rua Marqu\^es de S\~ao Vicente, 225, G\'avea, 22451-900 Rio de Janeiro, RJ - Brasil}
\email{rangel@puc-rio.br}
	
\address{Rajat Subhra Hazra, Mathematical Institute, Leiden University, Einsteinweg 55, 2333 CC Leiden, The Netherlands}
\email{r.s.hazra@math.leidenuniv.nl}
	
\address{Frank den Hollander, Mathematical Institute, Leiden University, Einsteinweg 55, 2333 CC Leiden, The Netherlands}
\email{denholla@math.leidenuniv.nl}
	
\address{Matteo Quattropani, Dipartimento di Matematica e Fisica, Universit\`a degli Studi Roma Tre, Via della Vasca Navale 84, 00146, Roma, Italy}
\email{matteo.quattropani@uniroma3.it}
	
\begin{abstract}
We consider the voter model with binary opinions on a random regular graph with $n$ vertices of degree $d \geq 3$, subject to a rewiring dynamics in which pairs of edges are rewired, i.e., broken into four half-edges and subsequently reconnected at random. A parameter $\nu \in (0,\infty)$ regulates the frequency at which the rewirings take place, in such a way that any given edge is rewired exponentially at a rate $\nu$ in the limit as $n\to\infty$. We show that, under the joint law of the random rewiring dynamics and the random opinion dynamics, the fraction of vertices with either one of the two opinions converges on time scale $n$ to the Fisher-Wright diffusion with an explicit diffusion constant $\vartheta_{d,\nu}$ in the limit as $n\to\infty$. In particular, we identify $\vartheta_{d,\nu}$ in terms of a continued-fraction expansion and analyse its dependence on $d$ and $\nu$. A key role in our analysis is played by the set of discordant edges, which constitutes the boundary between the sets of vertices carrying the two opinions.   
		
\bigskip\noindent  
\emph{Key words.} 
Random regular graph, random rewiring, voter model, coalescing random walks, Fisher-Wright diffusion.
		
\medskip\noindent
\emph{MSC2020:}
05C80, 
05C81, 
60K35, 
60K37. 
		
\medskip\noindent
\emph{Acknowledgement.} 
The research in this paper was supported by the Netherlands Organisation for Scientific Research (NWO) through the NETWORKS Gravitation programme under grant agreement no.\ 024.002.003. RB has counted on the support of the Conselho Nacional de Desenvolvimento Científico e Tecnológico - CNPq' grants Projeto Universal (402952/2023-5) and Produtividade em Pesquisa (308018/2022-2), MQ on the support support of the European Union's Horizon 2020 research and innovation programme under the Marie Sk\l odowska-Curie grant agreement no.\ 101034253. MQ is a member of GNAMPA INdAM and acknowledges partial support through the GNAMPA INdAM project ``Redistibution models on networks''. FdH and MQ are grateful to the Simons Institute for the Theory of Computing in Berkeley, California, USA for hospitality during a one-month visit in the Fall of 2022. 		
\end{abstract}
	
\maketitle
	
\small
\tableofcontents
\normalsize
	
	
\section{Introduction}
\label{sec:model}

Random processes on \emph{static} random graphs has been an important topic of research in network science in recent years. By now their behaviour is relatively well understood, and a number of mathematical techniques that were developed for their analysis have been progressively refined \cite{D10,RvdH17,RvdH24}. However, analysing random processes on \emph{dynamic} random graphs remains a major challenge. The latter are natural models for systems in which the underlying geometry evolves alongside the process that evolves on it. Models fall into two classes:
\begin{enumerate}
	\item \emph{One-way feedback}: The random graph affects the random process, but itself evolves autonomously.
	\item \emph{Two-way feedback}: The random process and the random graph mutually affect each other.
\end{enumerate}
The model considered in the present paper belongs to the first class. In the second class, where the two dynamics are in \emph{co-evolution}, there are so far only a handful of examples for which mathematical progress has been made. In recent years, new techniques have emerged that allow for an extension of the results obtained for static random graphs to dynamic random graphs. Recent key references for the analysis of random processes on \emph{dynamic} random graphs include: simple random walk \cite{AGHH18,AGHH19,ST20,AGHHN22,PSS15,PSS20,HS20,M24,CQ21}, the contact process \cite{JM17,SOV21pr,SV23pr,JLM24pr}, and the voter model \cite{HN06,DGLMSSSV12,BDZ15,BS17,BdHM22pr,Fernley24}. Nonetheless, the overall picture remains fragmented.

Within the realm of static random graphs, \emph{random regular graphs} represent one of the most extensively studied examples, because the \emph{local structure} of such graphs converges, as their size tends to infinity, to a simple deterministic graph, namely, the infinite $d$-regular tree. The most studied processes on random regular graphs include simple random walk \cite{LS10,CF05}, the voter model \cite{CFR10,ABHdHQ24,O13}, the contact process \cite{MV16}, and the stochastic Ising model \cite{DHJN17,HHK21}. For extensions to directed random graphs, see \cite{BCS18,ACHQpr,C24pr}. In the present paper we use random regular graphs as a building block for the study of the voter model on dynamic random graphs. 


\subsection{Voter model on static random graphs}

The voter model \cite{HL75} is a classical interacting particle system that can be described as follows. Initially, each vertex of the graph is equipped with a binary opinion. At the arrival times of a Poisson process, a vertex selects one of its neighbours uniformly at random and copies its opinion. Clearly, if the underlying graph is connected and finite, then the system eventually gets absorbed into one of the two \emph{consensus configurations} where all the vertices share the same opinion.  

As pointed out in \cite{AF02,A12,D10}, it is heuristically clear that \emph{universal behaviour} emerges for the voter model in the scaling limit when the underlying graph is \emph{locally transient}, i.e., when it converges to a transient graph in the local-weak sense. More precisely, focusing on the density of one of the two opinions and scaling time appropriately, convergence is expected towards the solution of the classical SDE for the \emph{Fisher-Wright diffusion} $(\mathfrak{B}_t)_{t \geq 0}$, which is given by
$$
{\rm d}\mathfrak{B}_t = \sqrt{2\vartheta \mathfrak{B}_t(1-\mathfrak{B}_t)} \, {\rm d}\mathfrak{W}_t \,,
$$
where $(\mathfrak{W}_t)_{t \geq 0}$ is the standard Brownian motion and $\vartheta$ is a positive real number referred to as the \emph{diffusion constant}. This idea has been made rigorous for the complete graph \cite{D08} (where $\vartheta = 1$), and for the $d$-dimensional torus \cite{C89} (where $\vartheta$ is related to the Green function of simple random walk on the infinite lattice). More recently, in \cite{CCC16} (see also \cite{O13}) sufficient conditions are provided that ensure the above convergence, which is part of a more general program initiated in \cite{CG90} referred to as the \emph{finite-systems scheme}. In particular, the approach developed in \cite{CCC16} is capable of predicting that the limiting diffusion constant must be given by a proper rescaling of the expected meeting time of two stationary independent simple random walks evolving on the same underlying random graph. Exploiting this perspective, the limiting diffusion constant has been successfully identified for a number of \emph{static} random graph models \cite{CFR10,C21,ABHdHQ24,ACHQpr,QS23}.  


\subsection{Voter model on dynamic random graphs}

In the present paper, we consider the two-opinion voter model evolving on a \emph{dynamic random regular graph}, where the dynamics arises from \emph{random rewiring}. More precisely, our (autonomous) graph dynamics is initialised with a random regular graph and evolves in a stationary way by \emph{randomly rewiring} edges at exponential times. More precisely, each stub (half-edge) is equipped with a Poisson process and, at each arrival time, selects another stub uniformly at random and matches with it, i.e., the two half-edges are linked together to form an edge. A parameter $\nu \in (0,\infty)$ controls the rate of the Poisson process, slowing down or speeding up the graph dynamics, with $\nu = 0$ corresponding to the static graph. This model of a dynamic random graph has been used in \cite{SOV21pr,SV23pr} as the underlying geometry for the contact process. 

The goal of the present paper is to extend the work in \cite{CCC16} from a static to a dynamic environment, thereby proving pathwise convergence of the opinion densities to a Fisher-Wright diffusion. Moreover, by studying the meeting time of two independent random walks on our dynamic random graph, we identify the diffusion constant, $\vartheta_{d,\nu}$ as a function of the degree $d$ and the rewiring rate $\nu$. In particular, we provide an explicit expression for the diffusion constant in terms of a {\em continued-fraction expansion}, from which its finer properties can be deduced.  As a byproduct of the analysis of this expansion, we derive relevant takeaways for applications: To what extent does the rewiring dynamics enhance the speed of consensus? What is the interplay between the speed of the dynamics and the degree? Moreover, we recover the mean-field setting in the limit as $\nu \to \infty$ or $d \to \infty$, and the static setting in the limit as $\nu \downarrow 0$.  

To the best of our knowledge, the results presented in this paper provide the first instance of a \emph{voter model in a dynamic random environment} for which the diffusion constant is computed explicitly (in accordance with the program set out in \cite{AF02,D10,A12}). Our results may also serve as a benchmark for different graph models.  

\subsection{Organisation of the paper}  
The precise statement of our main result, Theorem \ref{th:CCC}, is postponed to Section \ref{sec:notation}, following a detailed description of the model. In the same section, we also provide a thorough account of the proof strategy. Section \ref{sec:tools} presents additional preliminaries, while the core of the paper is covered in Sections \ref{sec:RW}--\ref{sec:theta}. A more detailed road map of the paper can be found in Section \ref{sec:outline}.

		
\section{Notation and results}
\label{sec:notation}

We first provide a formal definition of the dynamic random environment and the random process therein that we analyse in this paper. In Section \ref{sec:grdyn} we recall the definition of the random walk and the voter model on static random regular graphs. In Section \ref{suse:dyn} we introduce the rewiring dynamics of the graph. In Section \ref{suse:RW-vot-dyn} we define the model of interest, i.e., the voter model on rewiring random regular graphs. Section \ref{sec:voter} is devoted to presenting the main result of this paper (Theorem \ref{th:CCC}), which builds on two other results of independent interest (Theorems \ref{th:meeting} and \ref{thm:disc}). A detailed discussion of the latter two results, which constitute the major novelty of this work, and of the derivation of Theorem \ref{th:CCC} is postponed to Section \ref{sec:explanation}.

	
\subsection{Random walks and the voter model on static random graphs}
\label{sec:grdyn}
	
Fix $n \in \N$ and $d \geq 3$ such that $dn$ is even. Let $[n]$ represent the set of (labeled) vertices of our graph. Attach $d$ {\em stubs} (or {\rm half-edges}) to each $x \in [n]$, which will be labeled as $\sigma_{x,1}, \dots, \sigma_{x,d}$. In this language, a \emph{graph} $G$ is a matching on the set of stubs $\cup_{1 \leq i \leq d}\cup_{x \in [n]} \{ \sigma_{x,i} \}$, and we call $\mathcal{G}_n(d)$ the set of all possible graphs. We call an \emph{edge} of $G$ a matched pair of stubs and write $x \sim_G y$ to mean that there exists an edge (possibly more) between $x$ and $y$. More precisely, it will be convenient to write $\sigma_{x,i} \leftrightarrow_G \sigma_{y,j}$ to mean that $\sigma_{x,i}$ and $\sigma_{y,j}$ are matched in $G$, and ${\rm v}_G(x,i) \in [n]$ to denote the vertex to which the stub matched in $G$ to $\sigma_{x,i}$ belongs. We use the expression ``$G$ is sampled according to the \emph{Configuration Model}'' to mean that $G$ is obtained by taking a matching uniformly at random on the set of stubs, i.e., $G$ is uniformly distributed on $\mathcal{G}_n(d)$, and we formally write it as $G=^{(\rm d)} \mu_{d}^{(n)}$. 
	
We will also consider the simple random walk evolving on $G \in \cG_n(d)$, namely, the continuous-time Markov process on $[n]$ with generator 
	\begin{equation}
	\label{RWgenerator}
	(L_{G}^{\rm RW}f)(x) \coloneqq \frac{1}{d} \sum_{1 \leq i \leq d} [f({\rm v}_G(x,i))-f(x)], \qquad f\colon\,[n] \to \R.
	\end{equation}
It is worth pointing out that the uniform distribution on $[n]$, which will be denoted by $\pi^{(n)}$, is stationary for the process $L_G^{\rm RW}$ regardless of the choice of $G \in \cG_n(d)$. 
	
The voter model on a given $G \in \cG_n(d)$ is the continuous-time Markov process $(\eta_t)_{t \geq 0}$ on $\{0,1\}^n$ with generator
	\begin{equation}
	\label{VoterGen}
	(L^{\rm voter}_G f)(\eta) \coloneqq \sum_{x \in [n]} \frac{1}{d} \sum_{1 \leq i \leq d} 
	[f(\eta^{x,{\rm v}_{G}(x,i)})-f(\eta)], \qquad f\colon\, \{0,1\}^n \to \R,
	\end{equation}
where $\eta^{x,y}$ is the configuration obtained from $\eta$ by setting
	\begin{equation}
	\label{eq:voter-step}
	\eta^{x,y}(z) \coloneqq \begin{cases}
	\eta(y), & \text{ if } z=x, \\
	\eta(z), & \text{ otherwise.}
	\end{cases}
	\end{equation}
We will write ``$x$ has opinion 0 (respectively, 1) at time $t$'' when $\eta_t(x)=0$ (respectively, $\eta_t(x)=1$). Define $ \mathbf{0}$ (respectively,  $\mathbf{1}$) as the opinion configuration in which every vertex has opinion 0 (respectively, 1), and 
	\begin{equation}
	\label{def-tau-cons}
	\tau_{\rm cons} \coloneqq \inf\{t \geq 0 \colon \eta_t = \mathbf{0} \text{ or } \eta_t = \mathbf{1}\} 
	\end{equation}
to be the {\em consensus time}, i.e., the first time when all the vertices agree on the same opinion. Note that, as soon as $G$ is connected and regardless of the initial configuration, $\tau_{\rm cons} < \infty$ with probability one.
	
Often, to lighten notation, we suppress the dependence on $n$ and write $\cG(d)$ in place of $\cG_n(d)$, $\mu_d$ in place of $\mu_{d}^{(n)}$, and $\pi$ in place of $\pi^{(n)}$.

	
\subsection{Graph dynamics}
\label{suse:dyn}
	
Next consider a dynamic graph, i.e., a continuous-time Markov process $(G_t)_{t \geq 0}$ on $\cG_n(d)$. Starting at an initial graph $G_0 \in \mathcal{G}_n(d)$, we consider a \emph{rewiring dynamics} in which pairs of edges are rewired, i.e., broken into four stubs that are subsequently reconnected. More precisely, each stub decides to rewire at rate $\frac{\nu}{4}$ in order to get matched to another stub uniformly at random, and the two stubs  that are left alone by this procedure will then be matched among themselves. With this choice of parametrisation, a given edge of the graph has a survival time which is exponential of rate $\nu \frac{dn-2}{dn-1} \sim \nu$. Indeed, an edge will be rewired as soon as one of the two stubs which form the edge decides to rewire with another stub, or when another stub in the graph decides to rewire with one of the stubs forming the edge.
	
We will use the shortcuts $x \sim_t y$, $\sigma_{x,i} \leftrightarrow_t \sigma_{y,j}$ and ${\rm v}_t(x,i)$ to intend  $x \sim_{G_t} y$, $\sigma_{x,i} \leftrightarrow_{G_t} \sigma_{y,j}$ and ${\rm v}_{G_t}(x,i)$, respectively. Formally, $(G_t)_{t \geq 0}$ is the Markov process with generator $L^{\rm dyn}_{d,\nu}$  acting on functions $f: \cG_n(d) \to \R$ as
	\begin{equation}
	\label{GenGraphDyn}
	(L^{\rm dyn}_{d,\nu} f)(G) \coloneqq \sum_{\substack{x \in [n]}}\sum_{1 \leq i \leq d}
	\frac{\nu}{4}\frac1{dn-1} \sum_{y \in [n]}\sum_{1 \leq j \leq d}  \ind_{\sigma_{x,i} \neq \sigma_{y,j}}
	\ind_{\sigma_{x,i} {\not\leftrightarrow}_G \sigma_{y,j}} [f(\bar G^{x,y}_{i,j}) - f(G)],
	\end{equation}
where, for any $G \in \cG_n(d)$, $x, y\in [n]$ and $1 \leq i,j \leq d$ such that $\sigma_{x,i} \neq \sigma_{y,j}$ and $\sigma_{x,i}{\not\leftrightarrow}_G \sigma_{y,j}$, $\sigma_{z,k}$ and $\sigma_{v,\ell}$ are the stubs matched to $\sigma_{x,i}$ and $\sigma_{y,j}$ in $G$, and $\bar{G}^{x,y}_{i,j}$ is the graph obtained from $G$ by replacing the matchings $\sigma_{x,i} \leftrightarrow \sigma_{z,k}$ and $\sigma_{y,j} \leftrightarrow \sigma_{v,\ell}$ with  $\sigma_{x,i} \leftrightarrow \sigma_{y,j}$ and $\sigma_{v,\ell} \leftrightarrow \sigma_{z,k}$. An explicit \emph{graphical construction} of the process $(G_t)_{t \geq 0}$ will be given in Section \ref{sec:tools}.
	
It is worth noting that the measure $\mu_d$ is stationary (actually, reversible) for the rewiring dynamics, in the sense that $G_t =^{(\rm d)} \mu_{d}$ for all $t \geq 0$ as soon as $G_0 =^{(\rm d)} \mu_{d}$.

	
\subsection{Random walks and the voter model on dynamic random graphs}
\label{suse:RW-vot-dyn}
	
In analogy with the static case, we now give a formal description of the simple random walk and of the voter model on our dynamic graph set-up. By \emph{(simple) random walk on the dynamic graph} we mean the continuous-time Markov process $(G_t,X_t)_{t \geq 0}$ on $\cG_n(d) \times [n]$ with generator $L^{\rm dRW}_{d,\nu}$ acting on functions $f\colon \mathcal{G}_{n}(d) \times [n] \to \R$ as
	\begin{equation}
	\label{GenDynRW}
	(L^{\rm dRW}_{d,\nu} f)(G,x) \coloneqq (L^{\rm dyn}_{d,\nu} f(\cdot,x))(G) + (L^{\rm RW}_{G}f(G,\cdot))(x).
	\end{equation}
Note that the process above is reversible with respect to the product measure $\mu_{d} \otimes \pi$. We will often consider two independent random walks on the same underlying dynamic graph, i.e., the Markov process $(G_t, X_t, Y_t)_{t \geq 0}$ on $\cG_n(d) \times [n]^2$ with generator $L^{\rm dRW(2)}_{d,\nu}$ acting on functions $g \colon \mathcal{G}_{n}(d) \times [n]^2 \to \R$ as
	\begin{equation}
	(L^{\rm dRW(2)}_{d,\nu} g)(G,x,y) \coloneqq (L^{\rm dyn}_{d,\nu} g(\cdot,x,y))(G) + (L^{\rm RW}_{G}g(G,\cdot,y))(x)
	+ (L^{\rm RW}_{G}g(G,x,\cdot))(y).
	\end{equation}
Once again, this process is reversible with respect to $\mu_{d}\otimes \pi \otimes \pi$.
	
The main object in the present paper is the \emph{voter model on the dynamic graph $(G_t)_{t \geq 0}$}, namely, the continuous-time Markov process $(G_t,\eta_t)_{t \geq 0}$ on $\cG_n(d) \times \{0,1\}^n$ with generator $L_{d,\nu}^{\rm voter}$ acting on functions $h \colon \mathcal{G}_{n}(d) \times \{0,1\}^{n} \to \R$ as
	\begin{equation}
	\label{GenDynVot}
	(L_{d,\nu}^{\rm voter} f)(G,\eta) \coloneqq (L^{\rm dyn}_{d,\nu} f(\cdot,\eta))(G) + (L^{\rm voter}_{G}f(G,\cdot))(\eta).
	\end{equation}
For any $G \in \cG_n(d)$ and $\xi \in \{0,1\}^n$,  denote by $\P_{G,\xi} = \P^{(n)}_{G,\xi}$ the law of $(G_t, \eta_t)_{t \geq 0}$ starting from $(G,\xi)$. When the initial distribution is of product form with $G_0 =^{(\rm d)} \mu_d$ and $\eta_0 =^{(\rm d)} \otimes_{x\in[n]}{\rm Bernoulli}(u)$ for some $u \in [0,1]$, we use the shortcut $\P_{\mu_d, u}$ to refer to the law of the joint process with such an initial condition. Also in the dynamic set-up we define the \emph{consensus time} as in \eqref{def-tau-cons}. Clearly, $(G_t,\eta_t)_{t \geq 0}$ has two absorbing sets, $\cG_n(d) \times \{\mathbf{0} \}$ and $\cG_n(d) \times  \{\mathbf{1} \}$, and the process will be absorbed in one of them in finite time, i.e., $\P_{\mu_d,u}(\tau_{\rm cons} < \infty) =1$ for all $u \in [0,1]$.
	
	
\subsection{Main results} 
\label{sec:voter}
	
Throughout the sequel we assume $d \geq 3$ and $\nu \in (0,\infty)$ are fixed, and we are interested in deriving asymptotic results in the regime $n \to \infty$. Before presenting our results it is worth defining the following object, which plays a major role in our work.

\begin{definition}{\rm \bf [The diffusion constant $\vartheta_{d,\nu}$]}
	\label{def:key}
	For $d\ge 2$ and $\nu\in[0,\infty)$, define
	\begin{equation}
		\label{def-theta}
		\vartheta_{d,\nu}\coloneqq1-\frac{\Delta_{d,\nu}}{\beta_d},
	\end{equation} 
	with
	\begin{equation}
		\label{def-beta}
		\beta_d \coloneqq \sqrt{d-1},
	\end{equation}
	$\Delta_{d,\nu}$ the inhomogeneous continued fraction (written in the Pringsheim notation)
	\begin{equation}
		\label{def-Delta}
		\Delta_{d,\nu} \coloneqq \frac{1|}{|\frac{2+\nu}{\rho_d}} - \frac{1|}{|\frac{2+2\nu}{\rho_d}} 
		- \frac{1|}{|\frac{2+3\nu}{\rho_d}} - \dots,
	\end{equation}
	and 
	\begin{equation}
		\label{def-rho}
		\rho_{d} \coloneqq \frac{2\sqrt{d-1}}{d}.
	\end{equation}
\end{definition}

\begin{remark}
	\label{cfconv}
	According to the Pringsheim criterion \cite[p.254]{P1913}, for all $d \ge 2$ and $\nu\in[0,\infty)$ the continued fraction $\Delta_{d,\nu}$ in \eqref{def-Delta} is convergent because $\frac{2+\nu i}{\rho_d} \geq 2$ for all $i\in\N$, and the convergence is uniform in $\nu$. Moreover, $\nu \mapsto \Delta_{d,\nu}$ is analytic on $[0,\infty)$. See Figure~\ref{fig:theta} for a numerical computation.
\end{remark}

\begin{remark}
	It is worth noting that $\rho_d$ coincides with the smallest non-zero eigenvalue of the generator of a random walk on the infinite $d$-regular tree $\bbT_d$. We also recall that, for a static random graph $G =^{\rm (d)}\mu_d$, w.h.p.\ the smallest eigenvalue of the generator $L^{\rm RW}_G$ (defined in \eqref{RWgenerator}) is $\rho_d+o(1)$ (see e.g.\ \cite{F03,B20}).
\end{remark}

\begin{figure}[htbp]
	\centering
	\includegraphics[width=4.5cm]{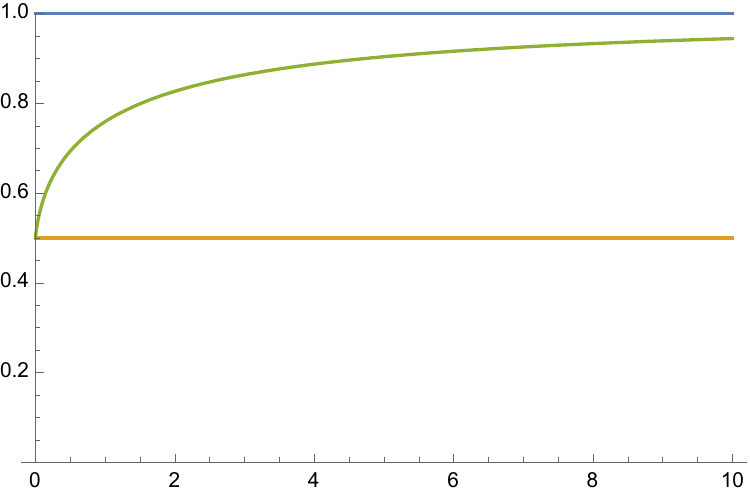} \quad
	\includegraphics[width=4.5cm]{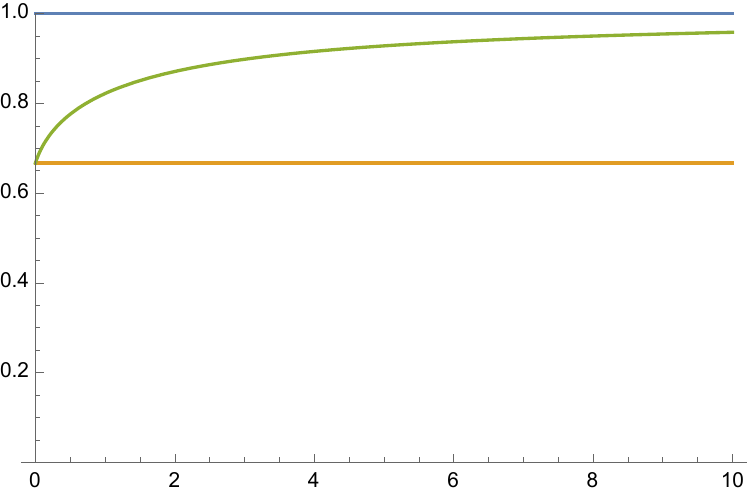} \quad  	
	\includegraphics[width=4.5cm]{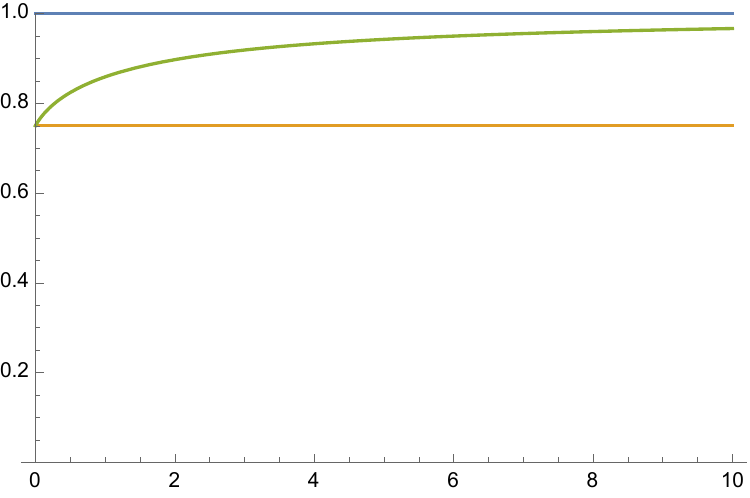}
	\caption{Numerical computation of $\nu\mapsto\vartheta_{d,\nu}$ for $d=3,4,5$ (from left to right). The blue line has height $1$, the orange line has height $\vartheta_{d,0}=(d-2)/(d-1)$. The green line is the numerical approximation of $\nu\mapsto \vartheta_{d,\nu}$ obtained after retaining 10 terms in the continued fraction.}
	\label{fig:theta}
\end{figure}

\subsubsection{Meeting times of stationary random walks}
\label{sec:rw}
As pointed out in the introduction, thanks to duality, the key ingredient to understand the  scaling limit of the voter model on a certain environment is a control on the meeting time of two independent stationary walks. In our case, the latter amounts to considering the Markov process $(G_t,X_t,Y_t)_{t \geq 0}$ on $\cG_n(d) \times [n]^2$ with $(G_0,X_0,Y_0) =^{(\rm d)} \mu_{d} \otimes \pi \otimes \pi$, and examining $\tau_{{\rm meet}}^{\pi\otimes\pi}$, the first time $t \geq 0$ such that $X_t=Y_t$.

The main technical contribution of the present paper lies in showing that, in the limit as $n$ grows with high probability, $\tau_{\rm meet}^{\pi\otimes\pi}$ is well approximated by an exponential random variable whose mean scales as $n/2\vartheta_{d,\nu}$. See Figure~\ref{fig:exp} for a simulation.

\begin{figure}[htbp]
	\centering
	\includegraphics[width=7.5cm]{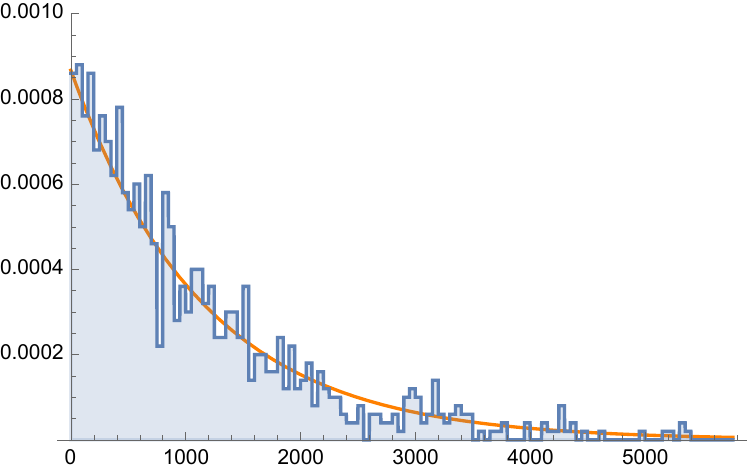}
	\caption{Simulation of the meeting time of two independent random walks in the case $d=3$ and with rewiring rate $\nu = 0.3$. The size of the graph is $n = 1500$. The empirical histogram in blue is obtained by running $1000$ independent simulations. The orange curve is the probability density function of ${\rm Exp}(2\vartheta_{3,0.3}/n)$.}
	\label{fig:exp}
\end{figure}

\begin{theorem}{\rm \bf [Exponential limit law for $\tau_{\rm meet}^{\pi\otimes\pi}$]}
	\label{th:meeting}
	For every $s \geq 0$,
	\begin{equation}
		\label{bound-prob}
		\lim_{n\to\infty} \left| \P_{\mu_d}\left({\tau_{\rm meet}^{\pi\otimes\pi}} > sn\right) 
		- \ee^{-2\vartheta_{d,\nu} s} \right| = 0,
	\end{equation} 
	and 
	\begin{equation}
		\label{bound-exp}
		\lim_{n\to\infty} \frac{\E_{\mu_d}\left[\tau_{\rm meet}^{\pi\otimes\pi}\right]}{n} 
		= \frac{1}{2\vartheta_{d,\nu}}.
	\end{equation} 
\end{theorem}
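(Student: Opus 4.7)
The plan is to analyze the meeting time in two steps: (i) identify the asymptotic meeting rate of two stationary walks via a Green-function / continued-fraction argument on a locally tree-like dynamic graph, yielding \eqref{bound-exp}; and (ii) upgrade the mean asymptotics to the full exponential law \eqref{bound-prob} via a standard fast-mixing / rare-event argument. For the first moment, I would set $f(G, x, y) \coloneqq \E_{G,x,y}[\tau_{\rm meet}]$ and use the Dynkin identity $L^{\rm dRW(2)}_{d,\nu} f \equiv -1$ off the diagonal with $f \equiv 0$ on the diagonal, together with reversibility with respect to $\mu_d \otimes \pi \otimes \pi$, to express $\E_{\mu_d}[\tau_{\rm meet}^{\pi\otimes\pi}]$ as the reciprocal of an effective meeting rate of the stationary pair.

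To identify this rate explicitly, I would pass to the local weak limit. Near the pair, the dynamic graph looks like the $d$-regular tree $\bbT_d$ with each edge independently rewired at rate $\nu$. On this limit, the distance process $D_t = d_{G_t}(X_t, Y_t)$ becomes an absorbing birth-death chain on $\{0, 1, 2, \ldots\}$ with, from level $k \ge 1$: down rate $2/d$, up rate $2(d-1)/d$, and an ``escape to infinity'' at total rate $\nu k$ reflecting the rewiring of one of the $k$ geodesic edges (which in the local limit sends the walks to macroscopic distance). After a gauge transformation by powers of $\beta_d = \sqrt{d-1}$, the three-term recursion for the extinction probabilities becomes $a_k q_k = q_{k-1} + q_{k+1}$ with $a_k = (2 + k\nu)/\rho_d$, and the requirement of a bounded-at-infinity solution identifies $q_1/q_0$ with the continued fraction $\Delta_{d,\nu}$ in \eqref{def-Delta}. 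The constant $\vartheta_{d,\nu} = 1 - \Delta_{d,\nu}/\beta_d$ then plays the role of the effective escape probability per distance-$1$ encounter in the finite graph, which, combined with the equilibrium rate of entering the collision zone, yields the sharp rate $2\vartheta_{d,\nu}/n$ in \eqref{bound-exp}.

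For the exponential tail \eqref{bound-prob} I would apply an Aldous/Brown-style criterion: if a Markov chain mixes much faster than the mean of a hitting time of a rare set, then the rescaled hitting time converges to $\mathrm{Exp}(1)$. Rareness on scales $o(n)$ follows from the $O(1/n)$ meeting rate, and the joint process $(G_t, X_t, Y_t)$ restricted to the off-diagonal is expected to mix on scale $O(\log n)$, by combining the $O(\log n)$ mixing of random walk on random regular graphs \cite{LS10} with the rapid rewiring at rate $\nu$. Concretely, I would partition $[0, sn]$ into epochs of length much larger than $\log n$ but much smaller than $n$, identify the excursions of the pair into the collision zone $\{D_t = O(1)\}$, and recognize each excursion's meet/escape outcome as an approximately independent Bernoulli trial, so that the meeting time becomes a near-geometric sum of near-exponential waiting times, converging to an exponential.

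The main obstacle I anticipate is the rigorous reduction to the dynamic-tree limit. The pair process is not Markovian in $D_t$ alone on the finite graph, and rewirings can create short non-geodesic paths between $X_t$ and $Y_t$ that are invisible to the naive tree approximation. I would handle this by: (a) showing that, up to time $o(n)$ and with high probability, the union of the balls of radius $\tfrac{1}{10}\log_{d-1} n$ around $X_t, Y_t$ remains a tree, by combining tree-excess estimates for $\mu_d$ with a union bound over the finitely many rewirings affecting that ball on the relevant time scale; (b) coupling the on-ball dynamics to the genuine dynamic $d$-regular tree with independent edge rewiring; and (c) absorbing the exceptional events into negligible error terms in both \eqref{bound-prob} and \eqref{bound-exp}.
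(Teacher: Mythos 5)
Your proposal lands on the same heuristics that guide the paper—the local dynamic-tree picture, the killed birth--death chain for the distance, the three-term recursion producing the continued fraction $\Delta_{d,\nu}$, and the fast-mixing/rare-set philosophy—but the route through the rigorous argument is genuinely different in structure. You propose to establish the first moment \eqref{bound-exp} first, via a Dynkin/Green-function identity on the finite graph, and then to upgrade to the exponential tail \eqref{bound-prob} via an Aldous--Brown type criterion; the paper goes in the opposite direction. It builds a second ``toy model'' (Section~\ref{sec:meet}) in which a first phase (waiting for a ``nice'' pair of stubs to rewire the two trees together) alternates with a second phase (the killed distance process on a merged tree). The key observation (Lemma~\ref{le:exponential-tau}) is that by construction the cumulative first-phase time $\tau_{\rm 1^{st}}^{\rm tot}$ is \emph{exactly} exponential, being a thinned Poisson clock; the continued-fraction computation then identifies its rate as $2\vartheta_{d,\nu}/n$ via the recursion for the hitting probability $q(\ell)$ and a generating-function argument. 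The exponential law \eqref{bound-prob} then follows by coupling, and \eqref{bound-exp} is derived \emph{afterwards} by integrating the tail. This inversion is not cosmetic: it lets the paper avoid verifying an Aldous--Brown criterion for the pair process $(G_t,X_t,Y_t)$ directly, which in the dynamic setting would require controlling the quasi-stationary distribution near the diagonal and the relationship between local time and escape probability with care.

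Two places where your plan needs sharpening. First, the step that ``the Dynkin identity $L^{\rm dRW(2)}_{d,\nu}f\equiv-1$ together with reversibility expresses $\E_{\mu_d}[\tau_{\rm meet}^{\pi\otimes\pi}]$ as the reciprocal of an effective meeting rate'' is not automatic: passing from the identity to a rate formula requires some version of the excursion/Kac machinery, i.e.\ knowledge of the quasi-stationary behaviour near the diagonal, which is exactly what the toy model's phase structure is engineered to encode. In effect the Green-function step already presupposes the excursion decomposition you defer to step (ii). Second, your mixing argument cites the static $O(\log n)$ mixing of random regular graphs \cite{LS10} and proposes to ``combine'' it with the rewiring; the paper instead proves an $O(\log n)$ mixing bound for the \emph{joint} process $(G_t,X_t)$ from scratch (Proposition~\ref{pr:mixing}) via a strong stationary time built out of the rewiring clocks, which is both simpler and avoids having to justify a transfer from the static to the dynamic graph. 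Your obstacle paragraph correctly identifies the core difficulty (the pair process is not Markov in the distance alone, and rewirings can silently recreate short paths), and your remedies (a)--(c) are exactly what the exploration process $(E_t,X_t,Y_t)$, the typical events of Proposition~\ref{prop:typical} and Proposition~\ref{lemma:E-far}, and the explicit coupling of Section~\ref{ss.coup} implement; but the actual work, in particular controlling the time the coupling spends inactive (Lemmas~\ref{lemma:set-fail}--\ref{lemma:meet-A}), is where most of the proof lives and is not merely ``absorbing exceptional events into negligible error terms.''
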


The proof of Theorem~\ref{th:meeting} is articulated in two main steps. In Section~\ref{sec:RW} we introduce an idealised model in which two random walks evolve on fragmenting and coalescing infinite trees, and we prove the analogue of  Theorem \ref{th:meeting} in this simplified set-up. In Section \ref{sec:meeting} we show that, via a two-step approximation procedure, the original process can be coupled to the idealised model at a small total variation cost.

\subsubsection{Scaling limit of the voter model}

As mentioned in the Introduction, the aim of this work is to understand the long-time behaviour of the voter model in our dynamic random environment. More precisely, we are interested in the scaling limit of the following observable of the process with generator $L_{d,\nu}^{\rm voter}$ (defined in \eqref{GenDynVot}), 
	\begin{align}
	\label{def-O} \cO_t^{(n)} &= \cO_t \coloneqq \frac1n \sum_{x \in [n]} \eta_t(x) \in [0,1],
	\end{align}
i.e., the fraction of \emph{vertices with opinion} $1$. Our main result shows that, after rescaling time by a factor $n$, the quantity $\cO_t$ evolves according to a \emph{Fisher-Wright diffusion with diffusion constant} $\vartheta_{d,\nu}$ as in Definition \ref{def:key}.

\begin{theorem}{\rm \bf [Convergence to Fisher-Wright diffusion of the opinion density]}
\label{th:CCC}
For all $T\ge 0$,
	\begin{equation}
	(\cO_{sn})_{s \in [0,T]} \overset{\P^{(J_1)}}{\longrightarrow} (\mathfrak{B}_s)_{s\in[0,T]},
	\end{equation}
where $\overset{\P^{(J_1)}}{\longrightarrow}$ stands for convergence in distribution on path space in the Skorohod $J_1$-metric under the joint law $\P_{\mu_d,u}$ as $n \to \infty$, and
	\begin{equation}
	\label{FW}
	\begin{cases}
	\dd \mathfrak{B}_s = \sqrt{2\vartheta_{d,\nu} \mathfrak{B}_s(1-\mathfrak{B}_s) }\,
	\dd \mathfrak{W}_s,\\
	\mathfrak{B}_0=u,
	\end{cases}
	\end{equation}
with $(\mathfrak{W}_s)_{s \geq 0}$ denoting the standard Brownian motion, is the Fisher-Wright diffusion with diffusion constant $\vartheta_{d,\nu}$ identified in Definition \ref{def:key}.
\end{theorem}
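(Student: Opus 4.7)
The plan is to follow the finite-systems scheme of \cite{CCC16}, reducing the Fisher-Wright convergence to three ingredients: (a) the martingale structure of $(\cO_t)_{t\ge 0}$ together with an expression for its quadratic variation in terms of the discordant-edge count, (b) a path-tightness estimate, and (c) an identification of the limiting quadratic variation supplied by Theorem~\ref{thm:disc}, complemented (as a cross-check on finite-dimensional distributions) by the duality with coalescing random walks together with Theorem~\ref{th:meeting}.

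A short generator computation gives $(L_{d,\nu}^{\rm voter}\cO)(G,\eta)=0$ for every $(G,\eta)\in\cG_n(d)\times\{0,1\}^n$: the graph-rewiring part $L^{\rm dyn}_{d,\nu}$ annihilates $\cO$ because $\cO$ depends only on $\eta$, while the voter-model part cancels by $d$-regularity after summing over stubs. Hence $(\cO_t)_{t\ge 0}$ is a bounded $[0,1]$-valued martingale under $\P_{\mu_d,u}$. A second computation yields
\begin{equation}
(L_{d,\nu}^{\rm voter}\cO^2)(G,\eta)=\frac{2|D(\eta,G)|}{n^2 d},
\end{equation}
where $D(\eta,G)$ is the set of discordant edges. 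Consequently the predictable quadratic variation of the time-rescaled martingale is
\begin{equation}
\langle \cO_{n\cdot}\rangle_s=\int_0^s \frac{2|D(\eta_{nr},G_{nr})|}{nd}\,\dd r.
\end{equation}

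Tightness of $(\cO_{ns})_{s\in[0,T]}$ in the Skorohod $J_1$ topology is then routine: individual jumps have size at most $1/n$, and the quadratic variation is pointwise bounded by $T$ (using $|D|\le nd/2$), so the Aldous criterion applies and any subsequential limit $X$ is a continuous martingale in $[0,1]$. To identify $X$, I invoke Theorem~\ref{thm:disc} in the pathwise form
\begin{equation}
\frac{2|D(\eta_{nr},G_{nr})|}{nd}=2\vartheta_{d,\nu}\,\cO_{nr}(1-\cO_{nr})+o_{\P}(1),
\end{equation}
uniformly on compact sub-intervals, which gives $\langle X\rangle_s=\int_0^s 2\vartheta_{d,\nu}X_r(1-X_r)\,\dd r$. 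The martingale-problem characterisation of the Fisher-Wright diffusion (together with pathwise uniqueness for the SDE~\eqref{FW} by Yamada--Watanabe, since $\sqrt{x(1-x)}$ is H\"older-$\tfrac12$) pins down $X=\mathfrak{B}$ and thus identifies the full limit. As an independent verification of the finite-dimensional distributions, the voter-model/coalescing-walks duality on the time-reversed (again stationary) environment reduces the $k$-point functions of $\eta_{ns}$ to the distribution of the number of surviving dual particles $N^k_{ns}$; Theorem~\ref{th:meeting} says pairs coalesce at rate $\sim 2\vartheta_{d,\nu}/n$, and once ternary collisions and correlations between disjoint-pair meetings are ruled out (standard, using the two-step coupling of Section~\ref{sec:RW}), one obtains convergence of $N^k_{n\cdot}$ to Kingman's coalescent at rate $2\vartheta_{d,\nu}$, yielding exactly the moments of $\mathfrak{B}_s$.

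The main obstacle is the \emph{pathwise} application of Theorem~\ref{thm:disc}: one needs the discordant-edge density to track $2\vartheta_{d,\nu}\cO_{nr}(1-\cO_{nr})$ along the random trajectory of the joint process, not merely at equilibrium. This is a separation-of-timescales (averaging) statement --- on scale $n$ the opinion density $\cO_{nr}$ is the slow observable, while the microscopic statistics of the pair $(\eta,G)$ equilibrate on scale $O(1)$ --- and turning it into a quantitative bound requires second-moment control on the discordant-edge count together with the exponential-meeting estimate from Section~\ref{sec:RW} applied conditionally on $\cO_{nr}$; this is where the bulk of the technical effort goes, and it is also the step where the two theorems of independent interest feed into one another.
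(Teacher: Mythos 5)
Your approach is essentially the paper's. The paper deduces Theorem~\ref{th:CCC} by applying the convergence criterion lifted from \cite{CCC16}, quoted as Proposition~\ref{prop:gammanWF-new}, to the homogenisation statement Theorem~\ref{thm:disc}, and then performs two elementary time changes; you instead re-derive the criterion from scratch (martingale of $\cO$ and of $\cO^2-\tfrac1n\int\cD$, small-jump Aldous tightness, martingale-problem identification with Yamada--Watanabe uniqueness), which amounts to the same argument presented more self-containedly. Your generator computations and the identification $\langle \cO_{n\cdot}\rangle_s=\int_0^s\cD_{nr}\,\dd r$ are correct.

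The one place you misstep is in the identification: you write that you invoke Theorem~\ref{thm:disc} ``in the pathwise form'' $\cD_{nr}=2\vartheta_{d,\nu}\,\cO_{nr}(1-\cO_{nr})+o_\P(1)$ uniformly on compacts, and you then flag the passage to this pathwise statement as the main remaining obstacle, to be resolved via second-moment bounds and conditional meeting-time estimates. Neither is right. Theorem~\ref{thm:disc} gives only the time-integrated version, namely that $\int_0^T\big[\tfrac{\gamma_n}{n}\cD_{\gamma_n s}-\cO_{\gamma_n s}(1-\cO_{\gamma_n s})\big]\,\dd s\to 0$ in probability, and that is already exactly what the martingale-problem identification requires, because the object you must control is the predictable quadratic variation $\int_0^s\cD_{nr}\,\dd r$, which is itself a time integral: combined with tightness and Skorohod representation, the integral-form convergence yields $\langle X\rangle_s=\int_0^s 2\vartheta_{d,\nu}X_r(1-X_r)\,\dd r$ for any subsequential limit $X$. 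So the separation-of-timescales strengthening you describe is a non-obstacle; the whole point of the CCC16 criterion (Proposition~\ref{prop:gammanWF-new}) is that only the weak integral hypothesis \eqref{mfc-new} is needed. Indeed the paper explicitly relegates the pathwise version to a remark as an expected but unproved refinement. Your appended Kingman-coalescent verification of the finite-dimensional distributions is not in the paper and is not needed once the martingale problem closes, but it is a legitimate sanity check on moments.
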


\begin{remark}
Our result does not immediately imply the convergence in distribution of the consensus time to the absorption time of corresponding diffusion process. Indeed, hitting times are in general not continuous in the Skorohod $J_1$-topology. Nevertheless, in the spirit of \cite[Proposition 16]{CCC16} and \cite{O13}, it is natural to expect that weak convergence of the consensus time can be obtained by a sharpening of our techniques, leading in particular to the conclusion that
			\begin{equation}\label{eq:oli}
			\lim_{n\to\infty} \frac{\E_{\mu_d,u}\left[\tau_{\rm cons}\right]}{n} 
			= \frac{2H(u)}{\vartheta_{d,\nu}} \qquad \forall\, u \in (0,1),
			\end{equation} 
			with $H(u) = -(1-u)\log(1-u)-u\log u$ the entropy of the initial density. See \cite[Theorem 1.3]{O13} for a proof of \eqref{eq:oli} in the static setup, and \cite[Theorem 1.2]{O13} and \cite[Proposition 2.6]{CCC16} for the analogous result concerning a more general system of coalescing random walks. 
\end{remark}


\subsubsection{Analysis of the diffusion constant}
	
	Even though the Fisher-Wright diffusion is the natural scaling limit of our density process (as an expert reader could guess via a heuristic argument in the spirit of \cite{AF02,D08}), the identification of the diffusion constant $\vartheta_{d,\nu}$ is more challenging. A novel feature of our work lies in the characterisation of this constant as a function of the degree $d$ of the graph and the speed $\nu$ of the rewiring dynamics.
	
Given the explicit definition of the quantity $\vartheta_{d, \nu}$, it is natural to study its limit behaviour when the speed of the dynamics vanishes or grows to infinity, and its dependence on the degree of the graph. In particular, as the next result shows, we prove that, regardless of the degree, the effect of the rewiring is a \emph{speed up} of the consensus time. Symmetrically, regardless of the speed of the rewiring dynamics, there is a \emph{speed up} on the consensus time as the connectivity increases.

\begin{proposition}{\rm \bf [Limits of $\vartheta_{d,\nu}$]}
	\label{lem:limit-theta}
	The functions $\nu \mapsto \vartheta_{d,\nu}$ and  $d \mapsto \vartheta_{d,\nu}$ are strictly increasing.
	Moreover, for every $d \geq 3$,
	\begin{equation}
		\label{eq:limit-theta-a}
		\lim_{\nu \downarrow 0} \vartheta_{d,\nu} = \vartheta_{d,0} = \frac{d-2}{d-1},
		\quad \lim_{\nu\to \infty}\vartheta_{d,\nu} = 1,
	\end{equation}
	while for every $\nu>0$,
	\begin{equation}
		\label{eq:limit-theta-b}
		\lim_{d \to \infty} \vartheta_{d,\nu} = 1.
	\end{equation}
	The first limit corresponds to the static $d$-regular graph, the second and third limits to the mean-field setting.
\end{proposition}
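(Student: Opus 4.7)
The plan is to analyse $\Delta_{d,\nu}$ through its finite truncations and the backward tail recursion, then pass to the limit using the uniform convergence and real-analyticity noted in Remark~\ref{cfconv}. Set $c \coloneqq 1/\rho_d = d/(2\sqrt{d-1})$ and $b_k \coloneqq (2+k\nu)c$, and define the tails of the $N$-th convergent by $T_N^{(N)} \coloneqq 1/b_N$ and $T_k^{(N)} \coloneqq 1/(b_k - T_{k+1}^{(N)})$, so that $\Delta_N = T_1^{(N)}$. Since $\rho_d \leq 1$ for $d \geq 3$, one has $b_k \geq 2$ throughout, and an easy backward induction gives $T_k^{(N)} \in [0,1]$ uniformly in $N,k$; after $N \to \infty$ this yields the crude bound $\Delta_{d,\nu} \leq 1/(b_1 - 1) = \rho_d/(2+\nu-\rho_d)$.

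For the value at $\nu = 0$ the continued fraction is self-similar, so $x \coloneqq \Delta_{d,0}$ satisfies the fixed-point equation $x = 1/(2c - x)$, i.e.\ $x^2 - 2cx + 1 = 0$, whose two roots evaluate to $c \pm \sqrt{c^2-1} = \sqrt{d-1}$ and $1/\sqrt{d-1}$. The bound from the previous paragraph selects the smaller root (and a stability check $|f'|<1$ on it confirms that the convergents converge to it), giving $\vartheta_{d,0} = 1 - 1/(d-1) = (d-2)/(d-1)$. The two limits equal to $1$ then follow from the crude bound: fixing $d$ and letting $\nu \to \infty$ gives $\Delta_{d,\nu} \to 0$, while fixing $\nu > 0$ and dividing by $\beta_d = \sqrt{d-1}$ gives $\Delta_{d,\nu}/\beta_d \leq 2/[d(2+\nu-\rho_d)] \to 0$ as $d \to \infty$ (using $\rho_d/\sqrt{d-1} = 2/d$).

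For the strict monotonicity I prove by backward induction on $N-k$ that each $T_k^{(N)}$ is strictly decreasing in both $\nu$ and $c$: the base case is trivial, and the inductive step uses that $b_k$ is strictly increasing in $\nu$ (respectively, in $c$) and that $(a,x) \mapsto 1/(a-x)$ is strictly decreasing in $a$ and strictly increasing in $x$ on the set $\{a > x \geq 0\}$, a regime guaranteed by $b_k \geq 2$ and $T_{k+1}^{(N)} \leq 1$. Consequently every convergent $\Delta_N$ is strictly decreasing, so taking $N \to \infty$ gives weak monotonicity of the limit. I upgrade this to strict monotonicity via real analyticity (Remark~\ref{cfconv}): a weakly monotone real-analytic function on a connected interval that is not constant must be strictly monotone, since coincidence on any subinterval would force constancy on the whole interval by the identity principle, and non-constancy is furnished by the two previous steps. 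Monotonicity in $d$ follows because both $\beta_d$ and $c_d$ are strictly increasing in $d$ for $d \geq 2$ (elementary calculus gives $c_d' = (d-2)/[4(d-1)^{3/2}] > 0$), so the product $\Delta_{d,\nu}/\beta_d$ strictly decreases in $d$. The main obstacle I anticipate is precisely this last step, the upgrade from weak to strict monotonicity in the infinite limit, since strictness of finite convergents does not automatically survive; analyticity provides the cleanest resolution, though one could alternatively propagate a quantitative contraction estimate through the tail recursion.
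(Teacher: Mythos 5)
Your proposal is correct, and the monotonicity part takes a genuinely different route from the paper. The limit computations and the identification $\vartheta_{d,0}=(d-2)/(d-1)$ via the self-similar fixed-point equation $x=1/(2c-x)$ and selection of the root $1/\beta_d$ agree with the paper's Steps 1--2 (your stability check $|f'|<1$ is redundant once the crude bound already rules out the root $\beta_d$, and the paper instead appeals to $\Delta_{d,\nu}<\beta_d$ coming from $R(i)>0$ in \eqref{def-Delta-i}). Where you diverge is the strict monotonicity. The paper differentiates the tail recursion \eqref{eq:recursion-Delta} with respect to $\nu$, iterates, and exhibits $\partial_\nu\Delta_{d,\nu}(0)=-\rho_d^{-1}\sum_{i\geq 0}(i+1)\prod_{j=0}^{i}\Delta_{d,\nu}(j)^2<0$, and likewise for $d$ after rescaling to $\chi_{d,\nu}=\Delta_{d,\nu}/\beta_d$; these explicit series are then reused to prove Proposition~\ref{pr:der-theta-0}. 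You instead prove strict monotonicity of each finite convergent $\Delta_N$ by backward induction, pass to the pointwise limit to obtain weak monotonicity, and upgrade to strictness by combining the analyticity of Remark~\ref{cfconv} with the identity principle and the non-constancy supplied by the two endpoint values $\tfrac{d-2}{d-1}\neq 1$. Your route avoids justifying differentiation of the limit at the price of treating the analyticity as a black box, whereas the paper's route also yields the derivative formulas it needs later. For the $d$-direction your argument is in fact cleaner than you let on: because $\Delta_{d,\nu}>0$ is (weakly) decreasing in $d$ through $c_d$ while $\beta_d$ is strictly increasing, the ratio $\Delta_{d,\nu}/\beta_d$ is automatically strictly decreasing with no analyticity needed at all. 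Two cosmetic nits: for $d\geq 3$ one has $\rho_d<1$ strictly, and $c_d'(2)=0$ so "$c_d$ strictly increasing for $d\geq 2$" should read $d>2$; neither affects the conclusion since $d\geq 3$.
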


To conclude this section, we look at the first-order behaviour of the diffusion constant for vanishing or diverging rewiring rate, respectively, diverging degree.

\begin{proposition}{\bf [Perturbations of $\vartheta_{d,\nu}$]}
	\label{pr:der-theta-0}
	For every $d \geq 3$, 
	\begin{equation}
		\label{eq:per-theta-a}
		\lim_{\nu \downarrow 0} \frac{1}{\nu} \big(\vartheta_{d,\nu} - \vartheta_{d,0}\big) = \frac{d}{2(d-2)^2},
		\quad \lim_{\nu \to \infty}{\nu} \big(1-\vartheta_{d,\nu}\big) = \frac{2}{d},
	\end{equation}
	while for every $\nu>0$,
	\begin{equation}
		\label{eq:per-theta-b}
		\lim_{d \to \infty}{d} \big(1-\vartheta_{d,\nu}\big) = \frac{2}{\nu+2}.
	\end{equation}
\end{proposition}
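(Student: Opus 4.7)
The plan is as follows. Since $1-\vartheta_{d,\nu}=\Delta_{d,\nu}/\beta_d$ by Definition~\ref{def:key}, each of the three limits reduces to a corresponding asymptotic for the continued fraction $\Delta_{d,\nu}$. I would work throughout with the tails $\Delta^{(k)}(\nu)\coloneqq[a_k(\nu)-\Delta^{(k+1)}(\nu)]^{-1}$, where $a_k(\nu)\coloneqq(2+k\nu)/\rho_d$, so that $\Delta_{d,\nu}=\Delta^{(1)}(\nu)$. Since $\rho_d\le 1$ (which follows from $(d-2)^2\ge 0$) one has $a_k(\nu)\ge 2$, and a standard Pringsheim-type induction yields $\Delta^{(k)}(\nu)\in[0,1]$ uniformly in $k$ and $\nu$; by Remark~\ref{cfconv}, the map $\nu\mapsto\Delta^{(k)}(\nu)$ is real analytic near $0$ for each fixed $k$.

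For the first limit in \eqref{eq:per-theta-a}, note that at $\nu=0$ all tails satisfy the same fixed-point equation $\Delta(2/\rho_d-\Delta)=1$, and the smaller root $\Delta^{(k)}(0)=1/\beta_d$ is forced by $\Delta^{(k)}\in[0,1]$ (recovering $\vartheta_{d,0}=(d-2)/(d-1)$). Writing $\Delta^{(k)}(\nu)=1/\beta_d+\nu b_k+o(\nu)$ and using the algebraic identity $2/\rho_d-1/\beta_d=\beta_d$, a direct Taylor expansion of the tail recursion produces
\begin{equation}
b_k = r\,b_{k+1} - kc,\qquad r\coloneqq\frac{1}{d-1},\quad c\coloneqq\frac{d}{2(d-1)^{3/2}}.
\end{equation}
Since the homogeneous solutions $A r^{-k}$ grow geometrically while $b_k$ must stay polynomially bounded (inherited from the $[0,1]$ bound on the tails via analyticity), the unique admissible solution is $b_k=-c[k/(1-r)+r/(1-r)^2]$, hence $b_1=-c/(1-r)^2=-d\beta_d/[2(d-2)^2]$. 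Plugging into $\vartheta_{d,\nu}-\vartheta_{d,0}=-\nu b_1/\beta_d+o(\nu)$ gives $d/[2(d-2)^2]$.

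For the remaining two limits a one-step truncation suffices. From $a_k\ge 2$ and $\Delta^{(3)}\in[0,1]$ one obtains $\Delta^{(2)}(\nu)\le 2/a_2(\nu)=\rho_d/(1+\nu)$, and hence
\begin{equation}
\Delta_{d,\nu} = \frac{1}{a_1(\nu)-\Delta^{(2)}(\nu)} = \frac{\rho_d}{2+\nu}\Bigl[1+O\Bigl(\frac{\rho_d^2}{(1+\nu)(2+\nu)}\Bigr)\Bigr].
\end{equation}
Dividing by $\beta_d$ and using $\rho_d/\beta_d=2/d$ yields $1-\vartheta_{d,\nu}=\tfrac{2}{d(2+\nu)}[1+o(1)]$. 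Multiplying by $\nu$ and sending $\nu\to\infty$ at fixed $d$ gives the second identity of \eqref{eq:per-theta-a}; multiplying by $d$ and sending $d\to\infty$ at fixed $\nu$ (using $\rho_d^2=O(1/d)$) gives \eqref{eq:per-theta-b}.

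The main obstacle is the rigorous justification of the formal first-order expansion at $\nu=0$: the recursion for $b_k$ runs to infinite depth, and since its homogeneous solutions blow up as $r^{-k}$ one has to rule out any such contribution and control the Taylor remainder uniformly. The cleanest resolution is to lean on the analyticity of $\nu\mapsto\Delta_{d,\nu}$ stated in Remark~\ref{cfconv}, which legitimises differentiating the defining continued-fraction identity and delivers the required $o(\nu)$ remainder. Equivalently, one can approximate the infinite continued fraction by its $N$-th convergent (an explicit rational function of $\nu$), extract $b_1^{(N)}$ from the finite version of the recurrence, and pass to the limit $N\to\infty$, using that the truncation error is $O(r^N)$ by the Pringsheim bound $\Delta^{(N+1)}(\nu)\in[0,1]$.
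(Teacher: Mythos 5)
Your proposal is correct and follows the paper's argument closely: both the $\nu\to\infty$ and $d\to\infty$ limits come from a one-step truncation of the continued fraction $\Delta_{d,\nu}$, and the $\nu\downarrow 0$ limit comes from linearising the recursion \eqref{eq:recursion-Delta} at $\nu=0$, where all the tails equal $1/\beta_d$. The only difference is bookkeeping in the last step: you solve the linear recurrence $b_k=r\,b_{k+1}-kc$ for the first-order Taylor coefficients by a polynomial ansatz (which requires ruling out the homogeneous $Ar^{-k}$ branch, as you note), while the paper iterates the differentiated recursion forward to the explicit series $\partial_\nu\Delta_{d,\nu}(0)=-\rho_d^{-1}\sum_{i\ge 0}(i+1)\prod_{j=0}^i\Delta_{d,\nu}(j)^2$ and evaluates it at $\nu=0$ as a derivative of a geometric series; these are two presentations of the same computation, and both, as you correctly flag, lean on the analyticity in Remark~\ref{cfconv} (or an equivalent tail-decay bound, here $\Delta(j)\to 0$) to justify differentiating through the infinite continued fraction.
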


\noindent
See Remark \ref{rmk:heuristic} below for a heuristic explanation of the scalings in Proposition \ref{pr:der-theta-0}.


\subsection{Homogenisation of discordances}
\label{sec:explanation}

The key object to deduce the convergence in Theorem \ref{th:CCC} is an analysis of the time evolution of the \emph{density of discordances}, defined as
\begin{align}
	\label{def-D} \cD_t &= \cD_t^{(n)} \coloneqq \frac{2}{dn} \sum_{x \in [n]} \eta_{t}(x) 
	\sum_{1 \leq i \leq d} \big(1- \eta_{t}({\rm v}_t(x,i))\big) \in [0,1].
\end{align}
Indeed, in the static set-up the density of opinions $(\cO_t)_{t \geq 0}$ in \eqref{def-O} is well-known to be a martingale with predictable quadratic variation given by $(\frac{2}{n} \int_0^t \cD_s{\rm d}s)_{t \geq 0}$. As the next result shows, this is still true in our dynamic set-up.

\begin{lemma}{\rm \bf [Key martingales]}
	\label{prop:martingales}
	Let $(\mathcal{F}_t)_{t \geq 0}$ be the natural filtration of the joint process $(G_{s}, \eta_{s})_{s \in [0,t]}$. The following two processes are martingales with respect to this filtration:
	\begin{enumerate}
		\item $(\mathcal{O}_{t})_{t \geq 0}$.
		\item $\left(\mathcal{O}^2_{t} - \frac{1}{n} \int_{0}^{t} \mathcal{D}_{s} \, \dd s\right)_{t \geq 0}$.
	\end{enumerate}
\end{lemma}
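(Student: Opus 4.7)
The plan is to apply Dynkin's formula to the joint Markov process $(G_t,\eta_t)_{t \ge 0}$, whose generator is $L^{\rm voter}_{d,\nu}=L^{\rm dyn}_{d,\nu}+L^{\rm voter}_G$, using the two test functions $f_1(G,\eta)=\mathcal{O}(\eta)$ and $f_2(G,\eta)=\mathcal{O}(\eta)^2$. The key simplification I would exploit at the outset is that both $f_1$ and $f_2$ depend on $\eta$ alone and not on $G$, so $L^{\rm dyn}_{d,\nu} f_i \equiv 0$. Although the underlying graph is continuously rewired along the trajectory, the entire contribution to the drift of $f_i(G_t,\eta_t)$ comes from the voter part $L^{\rm voter}_G$; this is the structural reason why the lemma looks formally identical to its static counterpart.

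For part (1), using $\mathcal{O}(\eta^{x,y})-\mathcal{O}(\eta)=\frac{1}{n}[\eta(y)-\eta(x)]$, the computation to carry out is
\begin{equation}
(L^{\rm voter}_G \mathcal{O})(\eta)=\frac{1}{dn}\sum_{x\in[n]}\sum_{i=1}^d \bigl[\eta({\rm v}_G(x,i))-\eta(x)\bigr].
\end{equation}
Because $G$ is $d$-regular, the identity $\sum_{x,i}\eta({\rm v}_G(x,i))=d\sum_y\eta(y)=\sum_{x,i}\eta(x)$ makes the right-hand side vanish pointwise in $G$, and Dynkin's formula delivers the first martingale statement.

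For part (2), with the shorthand $a=\mathcal{O}(\eta)$ and $a'=\mathcal{O}(\eta^{x,y})$, I would expand
\begin{equation}
(a')^2-a^2=\frac{2a}{n}\bigl[\eta(y)-\eta(x)\bigr]+\frac{1}{n^2}\bigl[\eta(y)-\eta(x)\bigr]^2,
\end{equation}
and sum $\tfrac{1}{d}$ of this expression over directed half-edges $(x,i)$ with $y={\rm v}_G(x,i)$. The linear term cancels by the same $d$-regularity identity that was used in part~(1). The quadratic term equals $\tfrac{1}{n^2}\ind\{\eta(x)\neq\eta({\rm v}_G(x,i))\}$, and the sum over $(x,i)$ selects each undirected discordant edge of $G$ exactly twice. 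Writing $N_{\rm disc}(G,\eta)$ for the number of discordant edges, this gives $(L^{\rm voter}_G \mathcal{O}^2)(\eta)=\frac{2N_{\rm disc}(G,\eta)}{dn^2}$. On the other hand, the asymmetric sum defining $\mathcal{D}$ in \eqref{def-D} counts each discordant edge exactly once (from its opinion-$1$ endpoint), so $\mathcal{D}=\frac{2 N_{\rm disc}}{dn}$ and hence $L^{\rm voter}_G \mathcal{O}^2=\mathcal{D}/n$. Dynkin's formula then yields the second martingale.

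No genuine obstacle is anticipated here: the rewiring is invisible to the opinion-density observables, and the argument reduces to a combinatorial bookkeeping that identifies the directed-half-edge sum with twice the number of undirected discordant edges, together with the $d$-regularity cancellation of the linear-in-$\mathcal{O}$ term. The only subtlety worth highlighting is the matching between the asymmetric weighting in \eqref{def-D} and the symmetric directed-edge count produced by $L^{\rm voter}_G$, which is what accounts for the factor $\tfrac{1}{n}$, rather than $\tfrac{2}{n}$, in front of $\int_0^t\mathcal{D}_s\,\dd s$.
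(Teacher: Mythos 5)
Your proof is correct and takes the same route as the paper: apply Dynkin's formula to the joint process with test functions $\mathcal{O}$ and $\mathcal{O}^2$, observing that the rewiring part of the generator annihilates any function of $\eta$ alone. The paper states the resulting generator identities without derivation; you have filled in the bookkeeping (the $d$-regularity cancellation of the linear term and the two-to-one correspondence between directed discordant half-edges and the asymmetric sum defining $\mathcal{D}$), and this bookkeeping is correct.
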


\begin{proof}
	Consider the Dynkin martingale
	\begin{equation}
		M_{t}(f) \coloneqq f(G_{t}, \eta_{t}) - \int_{0}^{t} (L^{\rm voter}_{d,\nu}f)(G_{s}, \eta_{s}) \, \dd s
	\end{equation}
	for arbitrary $f \colon \mathcal{G}_{n}(d) \times \{0,1\}^{n} \to \R$. To check the two statements, it suffices to pick $f=h$ and $f=g$ with $h(G, \eta) = \frac{1}{n}\sum_{x \in [n]} \eta(x)$ and $g(G, \eta) = \big(\frac{1}{n} \sum_{x \in [n]} \eta(x) \big)^{2}$, and note that, by \eqref{GenDynVot},
	\begin{equation}
		(L^{\rm voter}_{d,\nu}h)(G,\eta) = 0, \qquad (L^{\rm voter}_{d,\nu}g)(G,\eta) 
		= \frac{1}{dn^2}\sum_{x\in[n]}\sum_{1\le i\le d}
		\ind_{\{\eta(x) \neq \eta({\rm v}_{G}(x,i))\}}.
	\end{equation}
\end{proof}

For the simplest case of mean-field interaction, i.e., when the underlying static graph is the complete graph, we have
\begin{equation}
	\cD_t = 2\,\frac{n-1}{n}\,\cO_t(1-\cO_t).
\end{equation}
Because the solution to the SDE in \eqref{FW} with $1$ in place of $\vartheta_{d, \nu}$ can be characterised as the unique continuous martingale having predictable quadratic variation
\begin{equation}
	\langle \mathfrak{B} \rangle_t = 2 \int_0^t \mathfrak{B}_s(1-\mathfrak{B}_s)\,{\rm d}s,
\end{equation}
it is not hard to deduce the classical convergence of the mean-field voter model to the standard Fisher-Wright diffusion when time is rescaled linearly in the volume of the graph.

In \cite{CCC16} it is shown that, beyond the complete graph, the latter condition continues to hold under certain assumptions on the random walk on the underlying graph, which can be essentially summarised by a fast-mixing condition and an anti-concentration property of the stationary distribution (see \cite[Theorem 2.2]{CCC16}). For obvious reasons the latter are referred to as \emph{mean-field conditions}. In particular, convergence to the standard Fisher-Wright diffusion emerges when time is rescaled by the expected meeting time of two independent random walks evolving on the same graph and initialised at stationarity.

In view of this situation, our approach in proving Theorem \ref{th:CCC} is based on a two-step argument. First. we identify the first-order approximation of the expected meeting time, leading to Theorem \ref{th:meeting}. Second, we generalise the results in \cite{CCC16} to our dynamic set-up. The first step is a major novelty in the present paper, and requires delicate coupling arguments as well as an analysis of certain recursive equations. We provide the proof of and the underlying heuristics behind Theorem \ref{th:meeting} in Section \ref{sec:RW}. As to the second step, we follow the strategy developed in \cite{CCC16}. The following \emph{convergence criterion}, which is stated in \cite[Theorem 2.1]{CCC16} for the static case, is based on the analogue of Lemma \ref{prop:martingales} and standard tightness criteria. The reader may check that the proof presented in \cite[Section 5]{CCC16} extends to our set-up without any change.

\begin{proposition}{\bf [Convergence criterion]}
	\label{prop:gammanWF-new}
	Fix $d \geq 3$, $\nu>0$ and $u \in (0,1)$, and assume that there exists a positive sequence $(\gamma_n)_{n \in \N}$ such that, for any $T >0$, 
	\begin{equation}
		\label{mfc-new}
		\frac{\gamma_{n}}{n} \int_{0}^{T} \mathcal{D}_{\gamma_{n}s} \, \dd s 
		- \int_{0}^{T} \mathcal{O}_{\gamma_{n}s} \big(1-\mathcal{O}_{\gamma_{n}s}\big) \, 
		\dd s \overset{\P}{\longrightarrow} 0,
	\end{equation}
	where $\overset{\P}{\longrightarrow}$ stands for convergence in probability under the measure $\P_{\mu_d,u}$. Then, for any $T>0$, the process $(\mathcal{O}_{\gamma_{n} t})_{t \in [0,T]}$ converges to the standard Fisher-Wright diffusion, i.e., the solution to the SDE in \eqref{FW} with $1$ in place of $2\vartheta_{d,\nu}$.
\end{proposition}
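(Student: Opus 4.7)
The plan is to follow the martingale-problem strategy of \cite[Section~5]{CCC16}: establish $C$-tightness, identify the predictable quadratic variation of any subsequential limit via hypothesis \eqref{mfc-new}, and then conclude by well-posedness of the Fisher--Wright martingale problem on $[0,1]$. Set $\widetilde{\cO}^{(n)}_t := \cO_{\gamma_n t}$ and $\widetilde{\cF}_t := \cF_{\gamma_n t}$. Combining Lemma~\ref{prop:martingales} with the time-change formula for continuous-time martingales, $\widetilde{\cO}^{(n)}$ is a $[0,1]$-valued c\`adl\`ag $(\widetilde{\cF}_t)$-martingale with predictable quadratic variation
\[
\langle \widetilde{\cO}^{(n)}\rangle_t = \frac{\gamma_n}{n}\int_0^t \cD_{\gamma_n s}\,\dd s
\]
and jumps of size at most $1/n$. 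The graph dynamics enters the subsequent analysis only through $\cD$, which is why no structural change from the static setting of \cite{CCC16} is needed.

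First I would prove $C$-tightness of $(\widetilde{\cO}^{(n)})_n$ in $D([0,T],[0,1])$ by Aldous--Rebolledo. For any $[0,T]$-valued $(\widetilde{\cF}_t)$-stopping time $\tau_n$ and $\delta>0$, hypothesis \eqref{mfc-new} applied on $[0,T+1]$ combined with $\cO(1-\cO)\le 1/4$ yields
\[
\langle \widetilde{\cO}^{(n)}\rangle_{\tau_n+\delta} - \langle \widetilde{\cO}^{(n)}\rangle_{\tau_n} \;\le\; \tfrac{\delta}{4} + o_\P(1),
\]
which verifies the criterion; the vanishing-jumps bound then upgrades $J_1$-tightness to $C$-tightness. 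The initial condition is handled by the law of large numbers for the product Bernoulli$(u)$ opinions, giving $\widetilde{\cO}^{(n)}_0 \to u$ in probability.

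Next I would identify any subsequential weak limit $\mathfrak{B}$. Uniform boundedness and vanishing jumps force $\mathfrak{B}$ to be a continuous martingale with $\mathfrak{B}_0=u$. Rewriting the Dynkin martingale $(\widetilde{\cO}^{(n)}_t)^2 - \langle \widetilde{\cO}^{(n)}\rangle_t$ as $(\widetilde{\cO}^{(n)}_t)^2 - \int_0^t \widetilde{\cO}^{(n)}_s(1-\widetilde{\cO}^{(n)}_s)\,\dd s$ plus an error that vanishes in probability by \eqref{mfc-new}, and passing to the weak limit (uniform boundedness gives uniform integrability), one sees that $\mathfrak{B}_t^2 - \int_0^t \mathfrak{B}_s(1-\mathfrak{B}_s)\,\dd s$ is a continuous martingale; hence $\langle \mathfrak{B}\rangle_t = \int_0^t \mathfrak{B}_s(1-\mathfrak{B}_s)\,\dd s$. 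By Dambis--Dubins--Schwarz, equivalently by well-posedness of the Fisher--Wright martingale problem on $[0,1]$, this identifies $\mathfrak{B}$ in law as the unique weak solution of $\dd \mathfrak{B}_s = \sqrt{\mathfrak{B}_s(1-\mathfrak{B}_s)}\,\dd \mathfrak{W}_s$ with $\mathfrak{B}_0=u$, and convergence therefore holds along the full sequence.

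The main obstacle I anticipate is bookkeeping rather than conceptual: \eqref{mfc-new} is stated at a fixed endpoint $T$, whereas both Aldous--Rebolledo and the quadratic-variation identification require uniform control over $[0,T]$. This upgrade is harmless because both $t\mapsto \frac{\gamma_n}{n}\int_0^t \cD_{\gamma_n s}\,\dd s$ and $t\mapsto \int_0^t \cO_{\gamma_n s}(1-\cO_{\gamma_n s})\,\dd s$ are nondecreasing, so pointwise convergence of their difference at $T$ together with monotonicity yields uniform convergence on $[0,T]$ via a Dini-type argument. Since the rewiring only enters through $\cD$, no further estimate is needed to accommodate it, and the framework of \cite[Section~5]{CCC16} transfers verbatim, which is precisely why the authors can assert that the proof extends without changes.
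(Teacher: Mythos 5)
Your proposal is correct and takes essentially the same approach as the paper, which defers to the martingale-problem argument of \cite[Section~5]{CCC16}: $C$-tightness via Aldous--Rebolledo using the jump bound $1/n$, identification of the predictable quadratic variation through \eqref{mfc-new}, and well-posedness of the Fisher--Wright martingale problem on $[0,1]$. One small terminological quibble: the upgrade from \eqref{mfc-new} at each fixed $T$ to uniform convergence on $[0,T]$ is not literally Dini's theorem (the difference $\frac{\gamma_n}{n}\int_0^t\cD_{\gamma_n s}\,\dd s - \int_0^t\cO_{\gamma_n s}(1-\cO_{\gamma_n s})\,\dd s$ need not be monotone in $t$), but the intended mesh argument---monotonicity of both primitives together with the uniform $1/4$-Lipschitz bound on the second---does deliver the uniform control you need, so the step is sound.
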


In light of the above discussion, our main task for the second step, spelled out in detail in Section \ref{sec:CCC}, amounts to proving the following \emph{homogenisation} property for the density of discordant edges of the voter model under the edge-rewiring dynamics.

\begin{theorem}\label{thm:disc}{\bf [Homogenisation of the discordances]}
	Fix $d \geq 3$, $\nu>0$ and $u \in (0,1)$. For any $T >0$, the convergence in \eqref{mfc-new} holds for the choice $\gamma_n = \frac{n}{2\vartheta_{d, \nu}}$.
\end{theorem}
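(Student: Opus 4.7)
\textbf{Proof plan for Theorem \ref{thm:disc}.} The plan is to follow the scheme of \cite{CCC16} (as encoded in Proposition \ref{prop:gammanWF-new}), but with the meeting-time input replaced by Theorem \ref{th:meeting} and with a preliminary \emph{edge-to-stationary} reduction that accounts for the extra factor $\vartheta_{d,\nu}$. The guiding identity is the one obtained from the duality between the voter model on $(G_t)_{t\ge 0}$ and a system of coalescing random walks on the \emph{time-reversed} graph dynamics: for any two vertices $x,y$ and any $t \ge 0$,
\begin{equation}
\P_{\mu_d,u}\bigl(\eta_t(x)\neq \eta_t(y)\bigr)
\;=\; 2u(1-u)\,\P_{\mu_d}\bigl(\widehat{X}^x_t\neq \widehat{X}^y_t\bigr),
\end{equation}
where $(\widehat X^x,\widehat X^y)$ are two coalescing walks started at $x,y$ and driven by the time-reversed environment. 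Reversibility of both the rewiring dynamics under $\mu_d$ and of the walk under $\pi$ means that, distributionally, these backward walks behave exactly like a pair of independent walks on $(G_t)_{t\ge 0}$ that are removed upon first meeting.

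\emph{Step 1 (edge-to-stationary reduction).} Averaging the duality identity over a uniformly chosen edge of $G_t$, one obtains
\begin{equation}
\E_{\mu_d,u}\bigl[\mathcal D_t\bigr] \;=\; 2u(1-u)\,q_n(t),
\qquad
q_n(t):=\P_{\mu_d}\bigl(\tau_{\mathrm{meet}}^{\mathrm{edge}}>t\bigr),
\end{equation}
where the pair of walks is initialised at the two endpoints of a uniform edge of $G_0$. The short-time analysis underlying Theorem \ref{th:meeting} (the tree approximation and the fragmenting/coalescing trees of Section \ref{sec:RW}) shows that on time scales $1\ll s_n\ll n$, the pair of walks from a random edge either coalesces quickly (in $O(1)$ time) or separates and the two walks become essentially stationary and independent. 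More precisely, the probability of non-coalescence during this transient window converges to $\vartheta_{d,\nu}$, which is exactly the constant appearing in Definition \ref{def:key}. Combining this with the exponential law \eqref{bound-prob} yields, for every $s>0$,
\begin{equation}
\lim_{n\to\infty} q_n(\gamma_n s) \;=\; \vartheta_{d,\nu}\, e^{-2\vartheta_{d,\nu} s},
\qquad \gamma_n=\frac{n}{2\vartheta_{d,\nu}}.
\end{equation}

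\emph{Step 2 (conditional identity via the Markov property).} Applying the same duality on the interval $[s,t]$ and conditioning on $\mathcal F_s$ (using stationarity of $\mu_d$ and a coupling showing that the graph at time $s$ is close to $\mu_d$ in total variation, relying on the mixing of the rewiring dynamics which is $O(1/\nu)$), one obtains, for $s<t$,
\begin{equation}
\E_{\mu_d,u}\bigl[\mathcal D_t \,\big|\, \mathcal F_s\bigr]
\;=\; 2\,\mathcal{O}_s(1-\mathcal{O}_s)\,q_n(t-s) + \mathrm{err}_n(s,t),
\end{equation}
where $\mathrm{err}_n(s,t)\to 0$ once $s$ is taken of intermediate order (e.g.\ $s = \sqrt{\gamma_n}$ beyond the graph-mixing scale but negligible on the voter scale). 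Choosing $t-s$ of order $s_n$ as in Step 1 gives
\begin{equation}
\E_{\mu_d,u}\bigl[\mathcal D_{t} \,\big|\, \mathcal F_{t-s_n}\bigr] \;=\; 2\vartheta_{d,\nu}\,\mathcal O_{t-s_n}(1-\mathcal O_{t-s_n}) + o(1).
\end{equation}
Since $\mathcal{O}$ is the martingale of Lemma \ref{prop:martingales}, and its predictable quadratic variation $\frac{1}{n}\int_0^t \mathcal D_s\,ds$ is $O(T/\gamma_n)$ on a window of length $s_n\ll\gamma_n$, we get $\mathcal{O}_{t}-\mathcal{O}_{t-s_n}\to 0$ in $L^2$, so that
\begin{equation}
\E_{\mu_d,u}\bigl[\mathcal D_{\gamma_n r}\bigr] - 2\vartheta_{d,\nu}\,\E_{\mu_d,u}\bigl[\mathcal O_{\gamma_n r}(1-\mathcal O_{\gamma_n r})\bigr] \longrightarrow 0
\end{equation}
uniformly in $r\in[0,T]$.

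\emph{Step 3 (from means to probability).} To upgrade the first-moment statement to the convergence in probability \eqref{mfc-new} with $\gamma_n/n=1/(2\vartheta_{d,\nu})$, I would bound the $L^1$-norm of
\begin{equation}
Z_n(T) \;:=\; \frac{1}{2\vartheta_{d,\nu}}\int_0^T \mathcal D_{\gamma_n s}\,ds - \int_0^T \mathcal O_{\gamma_n s}(1-\mathcal O_{\gamma_n s})\,ds
\end{equation}
by first replacing $\mathcal D_{\gamma_n s}$ by its conditional expectation given $\mathcal F_{\gamma_n s - s_n}$ (this costs $o(1)$ by a second-moment computation based on the multi-point duality representation, which expresses $\E[\mathcal D_{t_1}\mathcal D_{t_2}]$ in terms of meeting/coalescence events of four coalescing walks) and then using Step 2 together with Fubini and the continuity of $u\mapsto u(1-u)$.

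\emph{Main obstacle.} The delicate point is the uniform-in-time control of the edge-to-stationary reduction (Step 1). One needs a quantitative version of the statement that two walks launched at the endpoints of a random edge decouple into a stationary-like pair on a time scale much shorter than $n$, in spite of the fact that the dynamic environment never quite equilibrates the joint two-walk process locally. This is precisely what the tree-based idealised model of Section \ref{sec:RW} provides, and it is this input—combined with the total-variation coupling to the true dynamics used to prove Theorem \ref{th:meeting}—that feeds the value $\vartheta_{d,\nu}$ into the discordance density. The second-moment argument in Step 3, which requires analysing four coalescing walks on the dynamic graph, is conceptually straightforward but technically the most laborious part, and would be carried out by iterating the same two-step approximation (tree idealisation plus total-variation coupling).
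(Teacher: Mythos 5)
Your plan follows the same overall template as the paper: split time into a short transient window and a macroscopic window, use the voter--coalescing-walks duality to express $\E[\mathcal D_t]$ via $\tau_{\rm meet}^{\rm edge}$, feed in the meeting-time input from Theorem~\ref{th:meeting}, and control second moments to pass from $L^1$ to convergence in probability. Your Step~2 is essentially the paper's Proposition~\ref{separate2-new} combined with Lemma~\ref{lemma:bound_TV} (the short-time, configuration-uniform statement $\E_{\mu_d,\xi}[\mathcal D_{\delta_n\alpha_n}]\approx 2\vartheta_{d,\nu}\mathcal O^n(\xi)(1-\mathcal O^n(\xi))$ plus a mixing-time bound), and your Step~1 is the paper's Proposition~\ref{lemma-new} (the intermediate-time edge-meeting probability converging to $\vartheta_{d,\nu}$). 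So in broad strokes you have the right picture.

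The genuine divergence is in Step~3. You propose to control the second moment by a multi-point duality representing $\E[\mathcal D_{t_1}\mathcal D_{t_2}]$ through \emph{four} coalescing walks and iterating the tree-idealisation plus coupling. That would presumably work, but it is much more machinery than is needed. The paper never analyses more than two walks: it sets $Q_n(s)=\frac{\alpha_n}{n}\bigl(\mathcal D_{\alpha_n s}-\E[\mathcal D_{\alpha_n s}\,|\,\mathcal F_{s-\delta_n}]\bigr)$, observes that $\E[Q_n(s)Q_n(r)]=0$ whenever $|s-r|>\delta_n$ (tower property), so the variance integral concentrates on a strip of width $\delta_n$, and then bounds each of the four resulting cross-terms purely via the one-sided, configuration-uniform inequality $\E_{\mu_d,u}[\mathcal D_{\alpha_n r}\,|\,\mathcal F_{\alpha_n s}]\le 2\P_{\mu_d}(\tau_{\rm meet}^{\rm edge}>\alpha_n(r-s))$, which is already available from Lemma~\ref{lemma:conseq-duality} (part~\eqref{exp-D-sup}). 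This reuses the two-walk duality twice via conditioning, instead of opening up four-walk coalescence, and avoids the hardest technical step you flagged. Separately, a small slip in your Step~1: with $\gamma_n=n/(2\vartheta_{d,\nu})$ the claimed limit should be $q_n(\gamma_n s)\to\vartheta_{d,\nu}\,e^{-s}$, not $\vartheta_{d,\nu}\,e^{-2\vartheta_{d,\nu}s}$ (the latter is what you get with the time scale $n$, not $\gamma_n$); this does not affect the structure of the argument, and in fact the paper never needs the macroscopic exponential profile of $\tau_{\rm meet}^{\rm edge}$ at all --- only its behaviour on intermediate time scales $1\ll s_n\ll n$ and the conditional upper bound above.
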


\noindent
As will become clear in the proof in Section \ref{sec:CCC}, to derive the above homogenisation property we must adapt the arguments in \cite{CCC16} to the dynamic set-up. 

Theorem \ref{th:CCC} is a straightforward consequence of Theorem \ref{thm:disc}.

\begin{proof}[Proof of Theorem~\ref{th:CCC}]
	Let $\gamma_n=\frac{n}{2\vartheta_{d, \nu}}$. Proposition~\ref{prop:gammanWF-new} and Theorem \ref{thm:disc} imply that $(\mathcal{O}_{\gamma_{n} t})_{t \in [0,T]}$ converges in distribution to the standard Fisher-Wright diffusion $(\bar{\mathfrak{B}}_{t})_{t \in [0,T]}$, the solution of the stochastic differential equation
	\begin{equation}
		\begin{cases}
			\dd \bar{\mathfrak{B}}_s = \sqrt{ \bar{\mathfrak{B}}_s(1-\bar{\mathfrak{B}}_s) }\, \dd \mathfrak{W}_s, \\
			\bar{\mathfrak{B}}_0=u.
		\end{cases}
	\end{equation}
	It therefore suffices to perform two time changes. First, $(\mathcal{O}_{n t})_{t \in [0,T]}$ converges in distribution to $(\bar{\mathfrak{B}}_{2\vartheta_{d,\nu} t})_{t \in [0,T]}$, by the very definition of the scale parameter $\gamma_{n}$. Second, $(\bar{\mathfrak{B}}_{2\vartheta_{d,\nu} t})_{t \in [0,T]}$ has the same distribution as $(\mathfrak{B}_{t})_{t \in [0,T]}$, which is the solution of \eqref{FW}.
\end{proof}

\begin{remark}
(1) In \cite{ABHdHQ24} we considered the model \emph{without rewiring} and identified the behaviour of $\cD_t$ also on moderate time scales, i.e., when $1\ll t\ll n$. In particular, we showed that, when the initial density of opinions is $u$, $\cD_t$ is well approximated (in a strong sense) by the deterministic quantity $2u(1-u)\vartheta_{d,0}$ (see \cite[Theorem 1.3]{ABHdHQ24}). In Proposition \ref{separate2-new} we show that, for $1 \ll t \ll n$, $\E[\cD_t] = (1+o(1))\,2u(1-u)\vartheta_{d,\nu}$. We expect that a similar control on $\E[\cD_t^2]$ can be obtained by extending the coupling arguments developed in Section \ref{sec:meeting}, implying a concentration property.\\
(2) We expect that on time scale $n$ the result in \eqref{mfc-new} can be extended to convergence of $(\mathcal{D}_{ns})_{s \in [0,T]}$ to $(2\mathcal{O}_{ns}(1-\mathcal{O}_{ns}))_{s \in [0,T]} $ in the Skorohod topology. A proof of this would require a refinement of the coupling argument in Section~\ref{sec:meeting}.
\end{remark}


\subsection{Organization of the proof}
\label{sec:outline}

The remainder of this paper is organised as follows. In Section~\ref{sec:tools} we formally introduce the probability space under investigation, presenting the graphical construction of the joint process and a number of basic tools, including duality of the voter model with a system of coalescing random walks. In Section~\ref{sec:RW} we consider two \emph{idealized models}, in which two random walks evolve on certain coalescing and fragmenting trees, which turn out to be the key approximation tools. In Section~\ref{sec:meeting} we couple the idealised models with the actual random walks evolving on the finite dynamic graph, leading to the proof of the exponential law of their meeting time in Theorem \ref{th:meeting}. Section~\ref{sec:CCC} is devoted to the proof of Theorem \ref{thm:disc}, which shows the {\em convergence criterion} for the Fisher-Wright  approximation is fulfilled in the case of random regular graphs with rewiring. The results presented in this section have been developed in a static set-up in \cite{CCC16}, so our main technical contribution in here is to show how to lift the recipe of \cite{CCC16} to our edge-dynamic setting. Finally, in Section~\ref{sec:theta} we provide a complete analysis of the diffusion constant, thereby settling Propositions \ref{lem:limit-theta}--\ref{pr:der-theta-0}.

	
\section{Duality and meeting times}
\label{sec:tools}

	
\subsection{Duality and graphical construction}
\label{ss.notprop}
	
It will be convenient to consider the following construction of the probability space we are interested in, which is usually referred to as the \emph{graphical construction}:
	\begin{itemize}
	\item[(1)] 
	Attach to each stub $\sigma_{x,i}$, $1 \leq i \leq d$, of each vertex $x \in [n]$ a Poisson process of rate $1/d$, and call $(\mathfrak{T}^{\rm rw}_{x,i})_{x \in [n], 1 \leq i \leq d}$ the collection of these processes.
	\item[(2)] 
	Attach to each  stub $\sigma_{x,i}$, $1 \leq i \leq d$, of each vertex $x \in [n]$ a Poisson process of rate $\frac{\nu}{4}$, and mark each arrival of such a process with a uniformly chosen independent stub $\sigma_{y,j}$, $y \in [n]$, $1 \leq j \leq d$, with $(x,i) \neq (y,j)$. Call $(\mathfrak{T}^{\rm dyn}_{x,i})_{x \in [n], 1 \leq i \leq d}$ the collection of these marked processes.
	\end{itemize}
We further assume that all the Poisson processes above are independent. With this source of randomness the graph dynamics, starting from an initial graph $G=G_0$, can be obtained as follows:
	\begin{itemize}
	\item 
	Let $t>0$ be the first arrival among the Poisson processes in (2). Assume that this time arrives on the process associated to $\sigma_{x,i}$ and is marked by $\sigma_{y, j}$ (with $(x,i) \neq (y,j)$).
	\begin{itemize}
	\item
	If $\sigma_{x,i}$ and $\sigma_{y, j}$ are matched in $G_0$, then nothing changes: set $G_s=G_0$ for all $s \in [0,t]$.
	\item 
	If $\sigma_{x,i}$ and $\sigma_{y, j}$ are not matched in $G_0$, then call $\sigma_{z,k}$ and $\sigma_{v,\ell}$ the stubs matched to $\sigma_{x,i}$ and $\sigma_{y,j}$, respectively, and:
	\begin{itemize}
	\item 
	let $G_s=G_0$ for all $0 < s < t$;
	\item 
	let $G_t$ be obtained from $G_0$ by erasing the edges $\sigma_{x,i} \leftrightarrow \sigma_{z,k}$  and $\sigma_{y,j} \leftrightarrow \sigma_{v,\ell}$ and creating the edges $\sigma_{x,i} \leftrightarrow \sigma_{y,j}$ and $\sigma_{z,k} \leftrightarrow \sigma_{v,\ell}$.
	\end{itemize}
	\end{itemize}
	\end{itemize}
For any initial configuration, the voter dynamics on the underlying dynamic graph, i.e., $(G_t, \eta_t)_{t \geq 0}$, can be obtained by means of the same source of randomness as follows:
	\begin{itemize}
	\item 
	Consider the rewiring  dynamics $(G_t)_{t \geq 0}$ constructed above starting at some initial graph $G_0$, and let $\eta_0 = \xi$ for some $\xi \in \{0,1\}^n$. 
	\item  
	Let $t>0$ be the first arrival among the Poisson processes in (1). Assume that this time arrives on the process associated to $\sigma_{x,i}$, let $\eta_s = \eta_0$ for all $0 < s < t$, and define $\eta_t = \eta_0^{x,{{\rm v}_t(x,i)}}$ (recall \eqref{eq:voter-step}).
	\end{itemize}
Now, fix an initial graph $G_0 = G$ and a time horizon $t>0$. We will consider two coalescing random walks evolving on $(G_s)_{0 \leq s \leq t}$ \emph{backward in time}. More precisely, let $x,y$ be two vertices and consider a process $(\hat X^x_{s,t}, \hat X_{s,t}^y)_{s \in [0,t]}$ on $[n]^2$ constructed as follows:
	\begin{itemize}
	\item[(a)] 
	Set $\hat X^x_{0,t} = x$ and $\hat X^y_{0,t} = y$. 
	\item[(b)] 
	Look at the \emph{last} arrival time $r$ in the interval $[0,t]$ among the processes defined in (1). Say that this corresponds to an arrival for the process associated to $\sigma_{z,i}$ for some $z \in [n]$ and $1 \leq i \leq d$. Set $(\hat X_{s,t}^x, \hat X_{s,t}^y)=(x,y)$ for all $s \in [t-r,t)$. Moreover, 
	\begin{itemize}
	\item
	if $z \notin \{x,y\}$, then set $\hat X^x_{t-r,t} = x$ and $\hat X^y_{t-r,t} = y$;
	\item 
	if $z \in \{x,y\}$, then set $\hat X^z_{t-r,t}$ to be the vertex connected to $x$ (respectively, $y$) through $\sigma_{z,i}$ in $G_r$, i.e., $\hat X_{t-r,t}^x = {\rm v}_r(\sigma_{z,i})$.
	\end{itemize}
	\end{itemize} 

The above construction can be extended to $n$ coalescing random walks $(\hat X^x_{s,t})_{s \in [0,t], x \in [n]}$ running backwards in time, each starting from a different vertex. Moreover, for any $t>0$ we can also consider the \emph{forward-in-time} coalescing random walks $(X_s^x)_{s \in [0,t], x \in [n]}$, which are defined in the same fashion as $(\hat X_{s,t}^x)_{s \in [0,t], x \in [n]}$, with the only difference that in (b) the word \emph{last} is replaced by the word \emph{first}. Note that when we are looking at \emph{first} arrivals such a forward-in-time process does not need to be defined on finite time intervals only, so that we can also consider $(X_s^x)_{s \geq 0,x \in [n]}$.
	
In what follows we employ the symbol $\P$ to refer to the probability space associated to the Poisson processes in (1) and (2), and write $\P_{G,\xi}$ to intend that the initial state of the process is $(G_0,\eta_0) = (G,\xi)$. Similarly, we will write $\P_{\mu_d,u}$ to intend the same probability space enriched with the (independent) randomness needed to construct the initial state $(G_0,\eta_0) =^{(\rm d)} \mu_d \otimes_{x \in [n]} {\rm Bernoulli}(u)$.
	
The graphical construction above reveals the following duality relation, which generalises the classical duality between the voter model and coalescent random walks on static graphs: for any initial configuration $(G,\xi) \in \cG_n(d) \times \{0,1\}^n$, any sets of vertices $A,A' \subset [n]$ and any time $t>0$,
	\begin{equation}
	\label{Duality}
	\prod_{x\in A} \eta_{t}(x) \prod_{y\in A'} (1-\eta_t(y))
	= \prod_{x\in A} \xi(\hat X_{t,t}^x) \prod_{y\in A'} (1-\xi(\hat X_{t,t}^y) ) \qquad \P_{G, \xi} \text{-a.s.}
	\end{equation}
For $x, y \in [n]$ and $t>0$, consider 
	\begin{equation}
	\label{def:hat-tau-meet}
	\hat \tau_{{\rm meet},t}^{x,y} \coloneqq \inf\{s \in [0,t] \mid \hat X_{s,t}^x = \hat X_{s,t}^y \},
	\end{equation}
and observe that \eqref{Duality} gives the relation
	\begin{equation}
	\label{Dual2}
	\P_{G,\xi} \big( \eta_{t}(x) \neq \eta_{t}(y) \big) 
	= \P_{G,\xi}\big( \hat \tau_{{\rm meet},t}^{x,y} > t, \xi(\hat X_{t,t} ^x)\neq\xi(\hat X_{t,t} ^y) \big),
	\end{equation}
which, when the system starts from i.i.d.\ Bernoulli opinions of parameter $u \in [0,1]$, reduces to 
	\begin{equation}
	\label{Dual3}
	\P_{G,u}\big( \eta_{t}(x) \neq \eta_{t}(y) \big) 
	= 2u(1-u)\, \P_{G} \big(\hat\tau_{{\rm meet},t}^{x,y} > t\big),
	\end{equation}
for any $x, y \in [n]$, $G \in \cG_n(d)$ and $u \in [0,1]$, where the dependence on $u$ in the probability on the right-hand side can be dropped due to the independence of the backward random walks and the initial configuration of the voter model. In particular, letting $G =^{(\rm d)} \mu_d$, we obtain
	\begin{equation}
	\label{Dual4}
	\P_{\mu_d,u} \big(\eta_{t}(x) \neq \eta_{t}(y)\big) 
	= 2u(1-u)\, \P_{\mu_d} \big(\hat\tau_{{\rm meet},t}^{x,y} > t\big).
	\end{equation}
At this point it is worth noting that, since the graph dynamics is reversible with respect to $\mu_{d}$ and the Poisson processes in (1) are i.i.d., we have, for any $t>0$ and $x, y \in [n]$,
	\begin{equation}
	\label{Dual5}
	\P_{\mu_d} \big(\hat\tau_{{\rm meet},t}^{x,y} >t\big) = \P_{\mu_d} \big(\tau_{\rm meet}^{x,y} > t\big),
	\end{equation}
where, similarly to \eqref{def:hat-tau-meet}, the meeting time $\tau_{\rm meet}^{x,y}$ is defined as 
	\begin{equation}
	\label{def-tau-meet}
	\tau_{\rm meet}^{x,y} \coloneqq \inf\{s > 0 \mid X_{s}^x = X_{s}^y \}.
	\end{equation}
Combining \eqref{Dual4} and \eqref{Dual5}, we get
	\begin{equation}
	\label{Dual-final}
	\P_{\mu_d,u} \big(\eta_{t}(x) \neq \eta_{t}(y) \big) 
	= 2u(1-u)\, \P_{\mu_d} \big(\tau_{\rm meet}^{x,y} > t\big).
	\end{equation}

		
\subsection{Consequences of duality}

The identity in \eqref{Dual-final} unveils the relation between discordances of opinions for the voter model and meeting times of associated random walks. We next show more explicitly how the expectation of the observables we are interested in (i.e., those in \eqref{def-O} and \eqref{def-D}) are linked to the meeting time. To do so, it is useful to generalise the definition of meeting time in \eqref{def-tau-meet} to allow for the two initial positions $x$ and $y$ to be random, possibly depending on the initial graph $G_0$. In particular, we will consider the product case $(X_0,Y_0) =^{(\rm d)} \pi \otimes \pi$, and the case in which the two random walks start at the extremes of an edge of $G_0$ sampled uniformly at random. In order to indicate the meeting time of the two random walks in the latter two scenarios, we will write $\tau_{\rm meet}^{\pi\otimes\pi}$ and $\tau_{\rm meet}^{\rm edge}$, respectively.
		
With this notation at hand, and with the duality in \eqref{Duality}, it is not hard to prove the following lemma.
\begin{lemma}
\label{lemma:conseq-duality}{\bf [Representation in terms of meeting time]}
Recall the definition of $(\cO_t)_{t \geq 0}$ and $(\cD_t)_{t \geq 0}$ in \eqref{def-O} and \eqref{def-D}, respectively. For any $t \geq 0$,
		\begin{equation}
		\label{exp-O-u}
		\E_{\mu_d,u}[\cO_t(1-\cO_t)] = u(1-u)\,\P_{\mu_d}(\tau^{\pi\otimes\pi}_{\rm meet}>t),
		\end{equation}
and
		\begin{equation}
		\label{exp-D-u}
		\E_{\mu_d,u}[\cD_t] = 2u(1-u)\,\P_{\mu_d}(\tau^{\rm edge}_{\rm meet}>t).
		\end{equation}
Moreover,
		\begin{equation}
		\label{exp-D-sup}
				\max_{\xi \in \{0,1\}^n} \E_{\mu_d,\xi}[\cD_t] \leq 2 \P_{\mu_d}(\tau^{\rm edge}_{\rm meet}>t).
		\end{equation}
\end{lemma}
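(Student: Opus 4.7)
All three identities will follow by unfolding the voter observables via the duality \eqref{Duality}, integrating out the initial spins $\xi$, and invoking the time-reversal identity \eqref{Dual5} to convert backward meeting events into forward ones. For \eqref{exp-O-u}, I would expand $\cO_t(1-\cO_t) = \frac{1}{n^2}\sum_{x,y\in[n]}\eta_t(x)(1-\eta_t(y))$, use $\eta_t(z) = \xi(\hat X^z_{t,t})$, and integrate first over $\xi\sim\otimes\mathrm{Bernoulli}(u)$ (independent of the graph and voter clocks) to obtain
\[
\E_{\mu_d,u}\bigl[\eta_t(x)(1-\eta_t(y)) \,\big|\, (G_s)_{s\in[0,t]}, \text{voter clocks}\bigr] = u(1-u)\,\ind_{\hat X^x_{t,t}\neq \hat X^y_{t,t}}.
\]
Taking full expectation and applying \eqref{Dual5} yields $u(1-u)\P_{\mu_d}(\tau^{x,y}_{\rm meet}>t)$; summing over $(x,y)\in[n]^2$ reconstructs the $\pi\otimes\pi$ initial law and produces \eqref{exp-O-u}.

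For \eqref{exp-D-u}, I would start from the stub representation $\cD_t = \frac{2}{dn}\sum_{x,i}\eta_t(x)(1-\eta_t({\rm v}_t(x,i)))$ and integrate $\xi$ as above, obtaining the factor $u(1-u)\,\ind_{\hat X^x_{t,t}\neq \hat X^{{\rm v}_t(x,i)}_{t,t}}$ per stub. The key new feature compared with \eqref{exp-O-u} is that the second endpoint ${\rm v}_t(x,i)$ depends on the terminal graph $G_t$, so \eqref{Dual5} is not directly applicable. The core step is therefore the strengthening
\[
\P_{\mu_d}\bigl(\hat X^x_{t,t}\neq \hat X^{{\rm v}_t(x,i)}_{t,t}\bigr) = \P_{\mu_d}\bigl(\tau^{x,{\rm v}_0(x,i)}_{\rm meet}>t\bigr),
\]
which I would obtain by running the time-reversal involution $(G_s)_{s\in[0,t]}\mapsto(G_{t-s})_{s\in[0,t]}$ of the graphical construction of Section~\ref{ss.notprop}: reversibility of the rewiring dynamics, combined with the exchangeability of the voter Poisson clocks, makes this involution law-preserving, swaps ${\rm v}_t(x,i)$ with ${\rm v}_0(x,i)$, and sends the backward coalescing walks to the forward ones. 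Summing over the $dn$ stubs and using the symmetry $\tau^{x,y}_{\rm meet}=\tau^{y,x}_{\rm meet}$ identifies the resulting average with $\P_{\mu_d}(\tau^{\rm edge}_{\rm meet}>t)$, and the prefactor $\frac{2}{dn}$ delivers \eqref{exp-D-u}. For \eqref{exp-D-sup}, I would repeat this chain of identities using the pointwise bound $\xi(a)(1-\xi(b))\le \ind_{a\ne b}$ valid for any deterministic $\xi\in\{0,1\}^n$ in place of the Bernoulli computation, which exchanges $u(1-u)$ for $1$ and yields the stated upper bound.

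\textbf{Anticipated obstacle.} Only the reversibility step in \eqref{exp-D-u} requires real care: the off-the-shelf identity \eqref{Dual5} applies to a fixed pair of starting points, whereas here the second endpoint itself is a function of the terminal graph. One must therefore execute the time reversal simultaneously on the graph dynamics and on the coalescing random walks, which is cleanest when performed inside the explicit graphical construction rather than at the level of marginal laws.
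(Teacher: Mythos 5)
Your proposal is correct and follows essentially the same route as the paper's proof: duality to rewrite the voter observables in terms of backward coalescing walks, integration over the i.i.d.\ Bernoulli spins to extract $u(1-u)$ (or the pointwise bound $\xi(a)(1-\xi(b))\le\ind_{a\ne b}$ for \eqref{exp-D-sup}), and a path-space time reversal — combining reversibility of the rewiring dynamics with the time-symmetry of the voter Poisson clocks — to convert backward meeting events into forward ones. The only cosmetic difference is that the paper handles the random endpoint by inserting the indicator $\ind_{\sigma_{x,i}\leftrightarrow_t\sigma_{y,j}}$ and summing over deterministic $(y,j)$ before invoking reversibility (their identity \eqref{d3}), whereas you keep ${\rm v}_t(x,i)$ random and absorb it into the reversal; the two formulations coincide after averaging over $G,G'$.
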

		
\begin{proof}
We start by showing \eqref{exp-O-u}. By \eqref{Duality}, the definition of $\cO_t$, and the fact that the distribution of initial opinions is of product form, we can write
			\begin{equation}
			\begin{split}
			\E_{G,u}[\cO_t(1-\cO_t)]
			& = \sum_{x \in [n]} \sum_{y \in [n]} \frac{1}{n^2}
			\E_{G,u} [\eta_t(x)(1-\eta_t(y))] \\
			& = \sum_{x \in [n]} \sum_{y \in [n]} \frac{1}{n^2} \E_{G,u}[\xi(\hat X^x_{t,t})(1-\xi(\hat X_{t,t}^y))
			\ind_{\hat{\tau}_{{\rm meet},t}^{x,y}>t}] \\
			& = u(1-u) \sum_{x \in [n]} \sum_{y \in [n]\setminus\{x\}} \frac{1}{n^2}
			\P_{G}(\hat{\tau}_{{\rm meet},t}^{x,y}>t).
			\end{split}
			\end{equation}
Next, we exploit that the initial graph is distributed according to $\mu_d$ and use \eqref{Dual5}, to get
			\begin{equation}
			\begin{split}
			\E_{\mu_d,u}[\cO_t(1-\cO_t)]
			& = u(1-u) \sum_{x \in [n]} \sum_{y \in [n]\setminus\{x\}} \frac{1}{n^2}
			\P_{\mu_d}({\tau}_{\rm meet}^{x,y}>t)
			= u(1-u)\,\P_{\mu_d}({\tau}_{\rm meet}^{\pi\otimes\pi}>t),
			\end{split}
			\end{equation}
where for the last equality we use the definition of $\tau_{\rm meet}^{\pi\otimes\pi}$. To prove \eqref{exp-D-u} and \eqref{exp-D-sup}, we proceed similarly. By \eqref{def-D},
		\begin{equation}
		\label{d0-}
		\begin{split}
		\E_{\mu_d,\xi} [\cD_t] & = \frac{2}{dn} \sum_{G \in \cG(d)} \mu_d(G) \sum_{x \in [n]}
		\sum_{1 \leq i \leq d} \sum_{y \in [n]\setminus\{x\}} \sum_{1\leq j \leq d} \E_{G,\xi}
		\left[ \eta_t(x)(1-\eta_t(y))\ind_{\sigma_{x,i} \leftrightarrow_t \sigma_{y,j}} \right].
		\end{split}
		\end{equation}
Using \eqref{Duality}, we get that, for all $x,y \in [n]$, $G \in \cG(d)$, $\xi \in \{0,1\}^n$ and $t \geq 0$,
		\begin{equation}
		\label{d1}
		\begin{split}
		& \E_{G,\xi} \left[ \eta_t(x)(1-\eta_t(y))\ind_{\sigma_{x,i} \leftrightarrow_t \sigma_{y,j}} \right]
		= \E_{G,\xi} \left[ \xi(\hat X^x_{t,t})(1-\xi(\hat X^y_{t,t}))\ind_{\sigma_{x,i} \leftrightarrow_{t}
		\sigma_{y,j}} \ind_{\hat\tau^{x,y}_{{\rm meet},t}>t} \right].
		\end{split}
		\end{equation}
In particular, for any $\xi \in \{0,1\}^n$,
		\begin{equation}
		\label{d2}
		\begin{split}
		\E_{G,\xi} \left[ \eta_t(x)(1-\eta_t(y))\ind_{\sigma_{x,i} \leftrightarrow_t \sigma_{y,j}} \right]
		& \leq \E_{G} \left[ \ind_{\sigma_{x,i} \leftrightarrow_{t} \sigma_{y,j}}
		\ind_{\hat\tau^{x,y}_{{\rm meet},t}>t} \right],
		\end{split}
		\end{equation}
while for the product initial condition with parameter $u \in (0,1)$,
		\begin{equation}
		\label{d2bis}
		\begin{split}
		\E_{G, u} \left[ \eta_t(x)(1-\eta_t(y)) \ind_{\sigma_{x,i} \leftrightarrow_t \sigma_{y,j}} \right]
		& = u(1-u) \E_{G} \left[ \ind_{\sigma_{x,i} \leftrightarrow_{t} \sigma_{y,j}}
		\ind_{\hat\tau^{x,y}_{{\rm meet},t}>t} \right].
		\end{split}
		\end{equation}
A crucial observation is that, because the graph dynamics is reversible with respect to the uniform measure $\mu_d$ on $\cG(d)$, we have
	\begin{equation}
	\label{d3}
	\begin{split}
	\E_{G} \left[ \ind_{G_t = G'} \ind_{\sigma_{x,i} \leftrightarrow_{G'} \sigma_{y,j}}
	\ind_{\hat\tau^{x,y}_{{\rm meet},t}>t} \right] = \E_{G'}
	\left[ \ind_{G_t = G} \ind_{\sigma_{x,i} \leftrightarrow_{0} \sigma_{y,j}} \ind_{\tau^{x,y}_{\rm meet} > t} \right].
	\end{split}
	\end{equation}
Substituting \eqref{d1},~\eqref{d2bis} and~\eqref{d3} into~\eqref{d0-} with $\xi =^{\rm (d)} \otimes_{x \in [n]} {\rm Bern}(u)$, and using that $\mu_d$ is uniform over $\cG(d)$, we obtain
	\begin{equation}
	\label{d4}
	\begin{split}
	\E_{\mu_d,u} [\cD_t] & = \frac{2u(1-u)}{dn} \sum_{G, G' \in \cG(d)} \mu_d(G')
	\sum_{\substack{x,y \in [n] \\ y \neq x}} \sum_{1 \leq i,j \leq d} \E_{G'} \left[ \ind_{G_t = G}
	\ind_{\sigma_{x,i} \leftrightarrow_{0} \sigma_{y,j}} \ind_{\tau^{x,y}_{\rm meet}>t} \right] \\
	& = \frac{2u(1-u)}{dn} \sum_{\substack{x,y \in [n] \\ y \neq x}} \sum_{1 \leq i,j \leq d} \P_{\mu_d}
	\left( \{\sigma_{x,i} \leftrightarrow_{0} \sigma_{y,j}\} \cap \{\tau^{x,y}_{\rm meet}>t\} \right) \\
	& = 2u(1-u)\,\P_{\mu_d}(\tau_{\rm meet}^{\rm edge}>t),
	\end{split}
\end{equation}
where in the last equality we only use the definition of  $\tau_{\rm meet}^{\rm edge}$. This concludes the proof of \eqref{exp-D-u}. The proof of~\eqref{exp-D-sup} follows the same argument, after~\eqref{d2bis} is replaced by~\eqref{d2}.
\end{proof}

	
\section{Toy model: Meeting time on dynamic trees}
\label{sec:RW}
	
This section analyses an idealised model that approximates the meeting time of two stationary random walks on a dynamic random graph. We begin with a heuristic argument to guide the reader through Sections \ref{sec:RW}--\ref{sec:meeting}. The idealised model is introduced in Section~\ref{sec:RW}, while the coupling and its relation to the original model are discussed in Section \ref{sec:meeting}.

\medskip\noindent
{\bf Meeting time of random walks and hitting time of the diagonal set.} We begin by recalling the argument that was used to prove the analogue of Theorem \ref{th:meeting} in the static set-up. It is well known that a $d$-regular random graph \emph{locally} looks like a $d$-regular tree, in the sense that most of vertices have a neighbourhood of diverging radius that is a tree. This similarity is expedient to study observables of random walks such as mixing, hitting, meeting, and cover times, \cite{CF05,CFR10,LS10,ABHdHQ24}, as well as the existence of a phase transition for the contact process \cite{MV16}. 
	
In general, the meeting time of two random walks on a graph $G$ can be easily rephrased in terms of the hitting time of the diagonal set $\{(x,x) \colon x \in [n]\}$ for a random walk on the Cartesian product $G \times G$. For regular undirected graphs, the stationary distribution of the diagonal vanishes as the size of the graph increases, so that this set can be thought of as a \emph{small set of states}. Moreover, if the original graph locally resembles a transient graph, then the product graph, when viewed from the diagonal, also locally resembles a transient graph. Additionally, as a consequence of local transience, one can typically deduce a form of \emph{rapid mixing}, meaning that the expected hitting time of a small target set of states $\cX$ is much longer than the time it takes for the process to reach its equilibrium distribution $\pi$.

With the above in mind, the meeting time problem amounts to understanding the distribution of the hitting time of a small target set $\cX$ by a rapidly mixing stationary random walk. Due to the rapid mixing property, the hitting time of $\cX$ is well approximated by an exponential random variable. Furthermore, stationarity allows the hitting time to be viewed as the first success in a series of independent attempts, leading to exponential-like behaviour. This phenomenon has been rigorously studied under various names, such as Poisson clumping heuristic \cite{A13}, exponential approximation of hitting times \cite{A82,AB92,AB93}, First Visit Time Lemma \cite{CF05,MQS21}, and mean-field conditions \cite{O13,CCC16}. The rate of the exponential random variable is at most the stationary value of $\pi(\cX)$, discounted by the local time spent in $\cX$ before equilibrium. Given the local transience and the fact that the two random walks move independently, dividing this discount factor by two and taking the inverse we get the probability that a random walk starting at $\cX$ reaches equilibrium before returning to $\cX$.


\medskip\noindent
{\bf Meeting time on static regular random graphs.} To make the above heuristic concrete, we return to the static $d$-regular random graph. In this case, the stationary distribution of the diagonal set is $1/n$, while the mixing time of two random walks is of order $\log n$, which quantifies the ``rapid mixing" criterion mentioned above. Moreover, approximating the neighbourhood of a vertex by an infinite $d$-regular tree, we see that the probability that the process starting from the diagonal (i.e., two random walks starting at the same vertex) does not revisit the diagonal before mixing can be approximated by a one-dimensional problem. More precisely, for two random walks on a tree (both starting at the root), the distance between them evolves as a biased random walk on $\mathbb{N}_0$ (starting at $0$) that moves to the right at rate $2\frac{d-1}{d}$ and to the left at rate $\frac{2}{d}$. (When the biased random walk is at $0$, it moves to the right at rate $2$.) Therefore, we are left with computing the escape probability, i.e., the probability that such a random walk does not revisit $0$, which is known to be $\vartheta_{d,0} = \frac{d-2}{d-1}$ (which also coincides with the inverse of the expected local time at $0$, i.e., the Green's function). In \cite{ABHdHQ24, CFR10}, the analogue of Theorem \ref{th:meeting} for this setting is obtained by making the above heuristic rigorous.


\medskip\noindent
{\bf Meeting time on dynamic regular graphs and related toy models.}  Returning to our dynamic set-up, the (small) target set is $\cG_n(d) \times \{(x,x) \colon\, x \in [n]\}$ (with mass $1/n$ according to the stationary measure), and the process is rapidly mixing by Proposition \ref{pr:mixing} below. Although a slight modification of Proposition \ref{pr:mixing} is needed to account for the second random walk, this generalisation is straightforward and is not pursued here, since it is not needed for proving Theorem \ref{th:meeting}. The main requirements for our heuristic approach are satisfied, with the only remaining task being constructing a proper \emph{local approximation} of the dynamic environment. To this end, we proceed in stages, by considering two different toy models.

In the \emph{first toy model}, two random walks evolve independently on a $d$-regular tree, both starting at the root. To model the rewiring mechanism, we allow edges in the unique path (if any) connecting the two random walks to disappear at rate $\nu$. If any such edge disappears, then the two random walks will be doomed to stay apart forever. By removing only the edges in the path between the two random walks we model the fact that the rewiring of other edges does not affect their local mutual perspective. In Section \ref{sec:locdiag} we compute the expected time that the two random walks spend together before becoming permanently separated. If the heuristic described above is correct, this local time should provide a good estimate for the quantity $\vartheta_{d,\nu}$. This is indeed verified in Proposition \ref{lemma-R}, where we prove that the expected local time in the first model aligns with $\frac{1}{2\vartheta_{d,\nu}}$. However, rigorously establishing this fact in our dynamic set-up is far from straightforward.
	
In Section \ref{sec:meet} we consider the \emph{second toy model}, which mimics more directly the behaviour of the two random walks on the dynamic graph. We will consider two ``large'' $d$-regular trees, representing the graph as seen {\em locally} by the random walks (which are represented by the roots of the trees), and consider a process articulated into two phases: 
\begin{itemize}
\item
In the \emph{first phase} (the ``unmerged phase''), the trees are distinct and at exponential times pairs of edges (one in each of the two trees) are rewired, leading to a merging of the two trees. 
\item
In the \emph{second phase} (the ``merged phase''), the two random walks evolve independently on a single tree starting at a certain distance (depending on the pairs of edges that have been rewired during the first phase), but the edges along the unique path joining them disappear at a rate $\nu$: if the two random walks meet before the disappearance of the path joining them, then the process stops (representing the meeting of the two random walks); otherwise the process is reinitialised to the first phase. This phase can be coupled with the first toy model, which will provide a direct approximation tool.
\end{itemize}

At this stage, it should be clear that the second toy model offers a more direct approximation of the original process. Consequently, Theorem \ref{th:meeting} can be established by using Proposition \ref{lem:exponential-tau} below in conjunction with a coupling argument, which will be detailed in Section \ref{sec:meeting}. Finally, let us mention that Proposition \ref{lem:exponential-tau} below states that the meeting time in this idealised model is indeed exponentially distributed with rate $2\vartheta_{d,\nu}$, confirming the heuristic prediction within our dynamic framework.


\subsection{First toy model}
\label{sec:locdiag}
	
Consider two independent continuous-time random walks $X=(X_t)_{t \geq 0}$ and $Y=(Y_t)_{t \geq 0}$ on an infinite $d$-regular tree, both jumping along edges at rate $1$. Call $Z=(Z_t)_{t\ge 0}$ their distance process in the tree and note that this is a continuous-time Markov chain on $\N_0$ with transition rates
	\begin{equation}
	r_d(i,j) = \begin{cases}
	2\frac{d-1}{d}, & 0 < i = j-1, \\
	\frac{2}{d}, & i = j+1, \\
	2, & i=0 \text{ and } j=1, \\
	0, & \text{otherwise}.
	\end{cases}
	\end{equation} 
Suppose that the edges in the unique path in the tree joining $X_t$ and $Y_t$ (of length $Z_t$) \emph{disappear} at rate $\nu$ independently of each other. We consider the modified distance process $\hat Z = (\hat Z_t)_{t\ge 0}$ on $\N_0\cup\{\dagger\}$ with transition rates
	\begin{equation}
	\hat r_{d,\nu}(i,j) = \begin{cases}
	2\frac{d-1}{d}, & 0 < i = j-1, \\
	\frac{2}{d}, & i = j+1, \\
	2, & i=0\,,j=1\,, \\
	\nu i, & j=\dagger \text{ and } i \neq \dagger, \\
	0, & \text{otherwise}.
	\end{cases}
	\end{equation} 
In words, $\dagger$ represents the absorbing states of the modified distance process, representing the disappearance of an edge along the path joining the two random walks. Clearly the process $(\hat Z_t)_{t\ge 0}$ will be absorbed in $\dagger$ a.s.\ in a finite time, regardless of the initial distance $\hat Z_0$. Denote the law of this process by $\Pr_{d,\nu}$, and let
	\begin{equation}
	\label{def-Ri}
	R_{d,\nu}(i) \coloneqq \Ex_{d,\nu}\left[\int_0^\infty \ind_{\hat Z_t=0}\, \dd t\:\bigg\rvert\: \hat Z_0=i\right]
	\end{equation}
be the average total time $X$ and $Y$ spend together, also called the \emph{collision local time}. Finally, let
	\begin{equation}
	\label{def-q}
	q_{d,\nu}(i)\coloneqq\Pr_{d,\nu}(\exists\, t\ge 0\colon \hat Z_t=0\mid \hat Z_0=i)
	\end{equation}
be the probability that $X$ and $Y$ ever meet when starting at distance $i$.
	
\begin{proposition}{\rm \bf [Average collision local time]}
\label{lemma-R} 
For every $\nu>0$ and $d\ge 3$, 
		\begin{equation}
		R_{d,\nu}(0) = \frac{1}{2\vartheta_{d,\nu}}\,,
		\end{equation}
where $\vartheta_{d, \nu}$ is defined in \eqref{def-theta}.
\end{proposition}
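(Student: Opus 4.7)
My plan is to reduce $R_{d,\nu}(0)$ to the ever-meeting probability $q_{d,\nu}(1)$ by a regenerative argument, and then to identify $q_{d,\nu}(1)$ by turning the harmonic equation it satisfies into a Jacobi-type three-term recursion whose subdominant solution is precisely encoded by the continued fraction $\Delta_{d,\nu}$.

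First, from state $0$ the chain jumps to $1$ after an $\mathrm{Exp}(2)$ holding time, since the only outgoing transition from $0$ has rate $2$. Applying the strong Markov property at the successive returns to $0$, the number of visits to $0$ before absorption at $\dagger$ is geometric of parameter $1-q_{d,\nu}(1)$, and each visit has mean duration $1/2$. Hence
\begin{equation}
R_{d,\nu}(0) = \frac{1}{2\,\bigl(1-q_{d,\nu}(1)\bigr)},
\end{equation}
and it remains to show $q_{d,\nu}(1) = \Delta_{d,\nu}/\beta_d$. Next, a first-step analysis on the killed chain gives, writing $q(i) = q_{d,\nu}(i)$, the recursion
\begin{equation}
(2+\nu i)\,q(i) = \tfrac{2(d-1)}{d}\,q(i+1) + \tfrac{2}{d}\,q(i-1), \qquad i\ge 1, \quad q(0)=1,
\end{equation}
with $q$ characterised as the minimal non-negative solution; in particular $q(i)\to 0$ as $i\to\infty$, which also follows by domination against the static case $\nu=0$, where $q_{d,0}(i) = (d-1)^{-i}$. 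The substitution $c_i \coloneqq \beta_d^{\,i}\, q(i)$, combined with $\beta_d = \sqrt{d-1}$ and $\rho_d = 2\beta_d/d$, converts this into the symmetric three-term form
\begin{equation}
c_{i-1} - \tfrac{2+\nu i}{\rho_d}\,c_i + c_{i+1} = 0, \qquad c_0 = 1.
\end{equation}

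Setting $s_i \coloneqq c_i/c_{i-1}>0$ (well-defined as $q(i)>0$ everywhere), the recursion becomes $s_i = 1/\bigl((2+\nu i)/\rho_d - s_{i+1}\bigr)$, and unrolling $n$ times exhibits $s_1$ as the $n$-th convergent of $\Delta_{d,\nu}$ evaluated at the tail $s_{n+1}$. Since $q(i)\to 0$ forces $c_i$ to be the subdominant solution of the three-term recursion --- a generic solution satisfies $c_{i+1}/c_i \sim \nu i /\rho_d$ and grows super-exponentially, whereas $c_i\le\beta_d^{\,i}$ --- the classical Pincherle correspondence between minimal solutions and continued fractions, together with the Pringsheim convergence criterion recalled in Remark \ref{cfconv}, yields $s_1 = \Delta_{d,\nu}$. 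Because $s_1 = c_1 = \beta_d\, q_{d,\nu}(1)$, this gives $q_{d,\nu}(1) = \Delta_{d,\nu}/\beta_d$, and inserting into the renewal identity delivers $R_{d,\nu}(0) = 1/[2\vartheta_{d,\nu}]$. The main obstacle I anticipate is precisely this last identification: certifying that $\Delta_{d,\nu}$ represents the ratio $c_1/c_0$ for the physically relevant (minimal) solution rather than for a dominant one, which is handled by Pincherle/Pringsheim after isolating the minimal/dominant dichotomy via the contrast between the a priori bound $c_i\le\beta_d^{\,i}$ and the super-exponential growth of a generic solution.
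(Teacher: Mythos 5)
Your proposal is correct, and it takes a genuinely different entry point from the paper. The paper works with the Green's function $R$ throughout: it writes the two-sided recursion \eqref{eq:recursion-R} for $R$, introduces the ratio $\Delta(i) = \beta R(i+1)/R(i)$, unrolls the forward recursion \eqref{eq:recursion-Delta} to identify $\Delta(0)$ with the continued fraction $\Delta_{d,\nu}$, and then closes via the boundary condition $R(0) = \tfrac12 + R(1)$. You instead split the problem in two: a purely probabilistic renewal step, which gives $R_{d,\nu}(0) = \tfrac12 \E[N] = \tfrac{1}{2(1-q_{d,\nu}(1))}$ by viewing the local time at $0$ as a geometric number of i.i.d.\ $\mathrm{Exp}(2)$ sojourns, followed by an analytic step identifying $q_{d,\nu}(1) = \Delta_{d,\nu}/\beta_d$ from the harmonic recursion for $q$. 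The two arguments meet at the same symmetric three-term recurrence (after the substitution $c_i = \beta_d^i q(i)$, respectively $\tilde c_i = \beta_d^i R(i)$), so the continued-fraction computation is shared; but your route buys a cleaner probabilistic interpretation of the prefactor $\tfrac{1}{2(1-q(1))}$, avoids manipulating the boundary relation $R(0)=\tfrac12+R(1)$ algebraically, and --- importantly --- makes explicit a point the paper leaves implicit: why the value of the infinite continued fraction equals the ratio $c_1/c_0$ for the specific solution at hand. You settle this by the Pincherle correspondence (the $q$-solution is minimal because $c_i \le (d-1)^{-i/2}$ by domination against the static case, while a generic solution grows super-exponentially like $(\nu i/\rho_d)^i$), which is exactly the justification the paper's ``iteration gives'' step quietly relies on. The minor imprecision in your write-up is the stated a priori bound $c_i \le \beta_d^i$, which on its own is exponential and would not distinguish minimality without the super-exponential growth of the dominant solution; the sharper bound $c_i \le (d-1)^{-i/2}$ that you also derive does the job unconditionally.
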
 
	
\begin{proof}
We will suppress the dependence on $d$ and $\nu$ to improve readability. By looking at the first transition of $\hat Z$, we get the recursion relations
		\begin{equation}
		\label{eq:recursion-R}   
		\begin{aligned}
		i \in \N \colon \,\, & R(i) = 2\,\frac{d-1}{d}\frac{1}{2+\nu i}\, R(i+1) 
		+ \frac{2}{d}\frac{1}{2+\nu i}\, R(i-1), \\
		i = 0 \colon \,\, & R(0) = \frac12 + R(1),
		\end{aligned}
		\end{equation}   
where we use that the total rate of a transition at $i$ equals $2+\nu i$. Recall the definition of $\beta$ in~\eqref{def-beta} and set
		\begin{equation}
		\label{def-Delta-i}
		\Delta(i) \coloneqq \frac{R(i+1)}{R(i)} \beta, \qquad i \in \N_0.
		\end{equation}
The first recursion relation can be written as the forward recursion
		\begin{equation}
		\label{eq:recursion-Delta}
		\Delta(i) = \left(\frac{2+\nu (i+1)}{\rho} - \Delta(i+1)\right)^{-1}, \qquad i \in \N_0,
		\end{equation}
where $\rho=\rho_d$ is given by~\eqref{def-rho}.
Iteration gives
		\begin{equation}
		\Delta(0) 
		= \frac{1|}{|\frac{2+\nu}{\rho}} - \frac{1|}{|\frac{2+2\nu}{\rho}} - \frac{1|}{|\frac{2+3\nu}{\rho}} - \dots = \Delta_{d, \nu}.
		\end{equation}
The second recursion relation can be written as
		\begin{equation}
		\frac{\beta R(1)}{\Delta(0)} = \frac{1}{2}+R(1),
		\end{equation}
from which we get 
		\begin{equation}
		R(1) = \frac12\left(\frac{\beta}{\Delta(0)}-1\right)^{-1}
		\end{equation}
and hence
		\begin{equation}
		R(0) = \frac12\frac{\beta}{\beta-\Delta(0)} = \frac{1}{2\vartheta_{d,\nu}},
		\end{equation}
concluding the proof.
\end{proof}

	
\subsection{Second toy model}
\label{sec:meet}

It is convenient to introduce finite trees having heights depending on a parameter $n$, which will be related to the size of the dynamic graph in the forthcoming Section \ref{sec:meeting}. To this aim, fix $n\in\N$, $\delta\in(0,\frac1{48})$, and define
	\begin{equation}
	\label{def-sigma}
	\hslash=\hslash_n = \floor{\delta \log_{d} n}\,.
	\end{equation}
The process will be divided in two phases that alternate in a sequential way:
	\begin{itemize}
	\item 
	\emph{First phase}: Consider two infinite $d$-regular trees, call $X$ and $Y$ their roots, and call $\cT^X$ and $\cT^Y$ the subtrees of height $\hslash$ starting at each of the roots. It will be convenient to see an edge of the tree as a matching between two stubs. In particular, for every vertex $z\in \cT^W$ (with $W\in\{X,Y\}$) we let $(\sigma^W_{z,k})_{1\le k\le d}$ be the collection of its stubs and, if $z$ is not the root, we assume that $\sigma^W_{z,1}$ is the unique stub pointing towards the root. Note that $\sum_{i=0}^{\hslash-1}d(d-1)^i$ coincides with the number of edges in each of the two trees (i.e., half of the number of stubs in each tree). Attach to each stub in each tree an independent Poisson process of rate $\frac{\nu}{4}\,\frac{2}{dn-1} \sum_{i=0}^{\hslash-1}d(d-1)^i$, and independently mark each arrival of the Poisson process by a uniformly chosen stub in the other tree. Call 
\begin{equation}
({\mathfrak{T}}_{z,k}^W)_{W\in\{X,Y\},\,z\in\cT^W,\,1\le k\le d}
\end{equation}
the collection of these marked processes.
		
For simplicity, in what follows we call \emph{nice} a pair of process-marks enjoying one of the following properties:
		\begin{itemize}
		\item 
		the arrival is from a process ${\mathfrak{T}}_{z,1}^W$ for some $W \in \{X,Y\}$, $z \neq W$, and the attached mark is $\sigma^{W'}_{z',k'}$ with $W'\neq W$ and $k'\neq 1$;
		\item 
		the arrival is from a process ${\mathfrak{T}}_{z,1}^W$ for some $W \in \{X,Y\}$, $z \neq W$, and the attached mark is $\sigma^{W'}_{W,1}$ with $W' \neq W$;
		\item 
		the arrival is from a process ${\mathfrak{T}}_{z,k}^W$ for some $W \in \{X,Y\}$, $z \neq W$ and $k>1$, and the attached mark is $\sigma^{W'}_{z',1}$ with $W' \neq W$.
		\end{itemize}
Note that a \emph{nice} pair is such that, when rewired, produces a graph in which $X$ and $Y$ are in the same connected component. At the first arrival of a \emph{nice} pair enter the second phase, and call $\{\sigma_{z,k}^X,\sigma_{z',k'}^Y \}$ the pair associated to the first arrival having the above mentioned features (regardless of which of the members of the pair ``generated'' the arrival and which was just the mark). Set
		\begin{equation}
		\label{ij}
		i = {\rm dist}_{\cT^X}(X,z) - \ind_{k>1} \ind_{z\neq X}, \qquad 
		j = {\rm dist}_{\cT^Y}(Y,z') - \ind_{k'>1} \ind_{z'\neq Y},
		\end{equation}
and note that, with this notation, when the selected nice pair is rewired, the distance between $X$ and $Y$ is $i+j+1$.
		\item 
		\emph{Second phase}: 
Consider two random walks evolving on the same infinite $d$-regular tree $\cT^{\rm joint}$ (obtained from the first phase) starting at distance $i+j+1$ (note that, by transitivity, we are free to specify the exact initial positions), and let the edges along the path joining them disappear at rate $\nu$. In other words, consider the process $\hat Z=(\hat Z_t)_{t\ge 0}$ introduced in Section \ref{sec:RW}. If, for some $t>0$, $\hat Z_t=0$, then stop the whole process. Otherwise, if the process hits $\dagger$ before $0$, then stop the second phase and restart from the first phase.
		\end{itemize}
We will call $N$ the number of times in which the two phases are executed up to the end of the process. Moreover, we call $(\tau^\iota_{\rm 1^{st}},\tau^\iota_{\rm 2^{nd}})$, $1\le \iota\le N$, the duration of the first and the second phases in each iteration, and set
	\begin{equation}
	\label{eq:def-tau-final}
	\tau^{\rm tot}_{\rm 1^{st}}=\sum_{\iota=1}^N\tau^\iota_{\rm 1^{st}}\,,\qquad	\tau^{\rm tot}_{\rm 2^{nd}}
	= \sum_{\iota=1}^N\tau^\iota_{\rm 2^{nd}}\,,\qquad \tau_{\rm final}=\tau^{\rm tot}_{\rm 1^{st}}
	+\tau^{\rm tot}_{\rm 2^{nd}}\,,
	\end{equation}
where $\tau_{\rm final}$ clearly represents the total duration of the process. We will use the symbol $\Prob_{d,\nu}=\Prob_{d,\nu}^{(n)}$ (respectively, $\Expect_{d,\nu}=\Expect^{(n)}_{d,\nu}$) to refer to the law (respectively, the expectation) of this process, and note that this law depends on the underlying parameter $n$.

The aim of this section is to prove the following asymptotic result.

\begin{proposition}{\rm \bf [Exponential distribution of $\tau_{\rm final}$]}
\label{lem:exponential-tau}
For every $\nu>0$ and $d\ge 3$,
		\begin{equation}
		\label{eq:meeting1}
		\text{under } \Prob_{d,\nu},\, \frac{\tau_{\rm final}}{n} \text{ converges in distribution to } \Exp(2\vartheta_{d,\nu})
		\text{ as } n\to\infty\,,
		\end{equation}
and
		\begin{equation}
		\label{eq:meeting2}
		\lim_{n\to\infty}\frac{\Expect_{d,\nu}[\tau_{\rm final}]}{n} = \frac1{2\vartheta_{d,\nu}}\,,
		\end{equation}
where $\vartheta_{d,\nu}$ is given by \eqref{def-theta}.
\end{proposition}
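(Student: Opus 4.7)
The plan is to exploit the renewal structure of the second toy model: $\tau_{\rm final}$ decomposes as a sum, over iterations, of independent phase-1 and phase-2 durations, with the iteration count $N$ geometrically distributed. Specifically, I would first show that, conditional on the past, each phase-1 interval is $\Exp(\lambda_n)$ distributed with $\lambda_n$ the total rate of nice-pair arrivals, and that upon the ensuing nice arrival the configuration $(i,j)$ is drawn i.i.d.\ from a fixed law determined by the tree geometry; by the very construction of phase~2, the meeting probability in that iteration is then $q_{d,\nu}(i+j+1)$, with $q_{d,\nu}$ as in \eqref{def-q}. Hence $N\sim\mathrm{Geom}(p)$ with $p:=\Expect_{d,\nu}[q_{d,\nu}(i+j+1)\mid\text{nice}]$ is independent of the i.i.d.\ times $\tau^\iota_{1^{\rm st}}$, and the elementary identity that a $\mathrm{Geom}(p)$-sum of i.i.d.\ $\Exp(\lambda)$ random variables is \emph{exactly} $\Exp(p\lambda)$ implies that $\sum_{\iota\le N}\tau^\iota_{1^{\rm st}}\sim\Exp(p\lambda_n)$.

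A direct stub count produces $\lambda_n = \nu M^2/(dn-1)$ with $M = \sum_{\ell=0}^{\hslash-1} d(d-1)^\ell$; splitting nice pairs by the depth type of the two selected stubs (and hence by the corresponding values of $i,j$ read from \eqref{ij}) one can rewrite $p = \Sigma_\hslash/(2M^2)$, where $\Sigma_\hslash$ is a weighted sum of the meeting probabilities $q_{d,\nu}(i+j+1)$. Combined with the expression for $\lambda_n$ and taking $n\to\infty$, the convergence statements \eqref{eq:meeting1}--\eqref{eq:meeting2} both reduce to establishing the limit
\begin{equation}
\lim_{\hslash\to\infty}\;\frac{\nu\,\Sigma_\hslash}{2d}\;=\;2\vartheta_{d,\nu}.
\end{equation}

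The main obstacle is precisely this identity. My approach is to substitute $\psi(m):=(d-1)^m q_{d,\nu}(m)$, which by Proposition~\ref{lemma-R} satisfies the second-order recursion $(2+\nu m)\psi(m)=\tfrac{2}{d}\psi(m+1)+\tfrac{2(d-1)}{d}\psi(m-1)$ with boundary $\psi(0)=1$ and $\psi(1)=(d-1)(1-\vartheta_{d,\nu})$. Summing this recursion against the constant weight $1$ already gives the moment identity $\nu\sum_{m\ge1}m\psi(m)=2(d-1)\vartheta_{d,\nu}/d$, so that the target constant $\vartheta_{d,\nu}$ is naturally visible. I would then reorganise the double sum defining $\Sigma_\hslash$ — which is a convolution of the depth-counts $\tilde U(m),\tilde L(m)$ of upper/lower stubs against $q_{d,\nu}(m)$ — using summation by parts to express it as a linear combination of $\psi(1)$, $\sum_{m\ge1}\psi(m)$ and $\sum_{m\ge1}m\psi(m)$, and then close the identity by producing a second independent moment relation. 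The cleanest way to obtain this second relation appears to be via the generating function $\Psi(x):=\sum_{m\ge0}\psi(m)x^m$, which satisfies a first-order linear ODE derived from the recursion; this ODE has singular points precisely at $x=1$ and $x=1/(d-1)$, matching the arithmetic of the continued fraction defining $\vartheta_{d,\nu}$ in \eqref{def-Delta}, and evaluating the ODE and appropriate derivatives of $\Psi$ at these points should produce the missing closed-form relation.

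Finally, the total phase-2 contribution is asymptotically negligible. Each $\tau^\iota_{2^{\rm nd}}$ is stochastically dominated by the absorption time of $\hat Z$, which has mean $O(1/\nu)$ uniformly in the initial state (since the absorption rate from state $m\ge1$ is $\nu m\ge\nu$). Hence $\Expect_{d,\nu}\bigl[\sum_{\iota\le N}\tau^\iota_{2^{\rm nd}}\bigr]=O(1/(p\nu))=O(M^2/\nu^2)=o(n)$, using $\hslash=\floor{\delta\log_d n}$ with $\delta<1/48$. Consequently $\tau_{\rm final}/n$ inherits the $\Exp(2\vartheta_{d,\nu})$ limit law of the phase-1 total, yielding \eqref{eq:meeting1}; and the convergence of means \eqref{eq:meeting2} follows from the exact formula $\Expect_{d,\nu}[\sum_{\iota\le N}\tau^\iota_{1^{\rm st}}]=1/(p\lambda_n)$ together with $np\lambda_n\to 2\vartheta_{d,\nu}$.
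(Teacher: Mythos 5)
Your plan follows the paper's architecture quite closely: exact exponentiality of the total first-phase time (your geometric sum of i.i.d.\ exponentials is the paper's thinning of the nice-pair Poisson clocks in Lemma \ref{le:exponential-tau}), identification of the rate through the recursion \eqref{eq:recursion-q} for $q_{d,\nu}$, and negligibility of the second phase. Two remarks on where you deviate. First, you have already solved the rate identification and do not notice it: with $\psi(m)=(d-1)^m q_{d,\nu}(m)$, summing your recursion $(2+\nu m)\psi(m)=\tfrac2d\psi(m+1)+\tfrac{2(d-1)}{d}\psi(m-1)$ over $m\ge 1$ gives exactly $\nu\sum_{m\ge1}m\psi(m)=\tfrac{2(d-1)}{d}\vartheta_{d,\nu}$, and the weighted sum over nice pairs (after the dual-pair bookkeeping of \eqref{ij}, as in Step 1 of the paper's proof) collapses to $\tfrac{d^2}{d-1}\sum_{\ell}\ell(d-1)^\ell q_{d,\nu}(\ell)$, i.e.\ precisely this first moment; no summation by parts against depth counts and no ``second independent moment relation'' are needed. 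Your fallback generating-function ODE is in fact the paper's method (its $\Phi$ is your $\Psi$ up to the $m=0$ term, with $\varsigma(z)=((d-1)z-1)(z-1)$ vanishing at $z=1$), so that route also works, but the direct summation is simpler than the paper — provided you justify the rearrangement by showing $\sum_m m\psi(m)<\infty$, which follows from $\Delta_{d,\nu}(i)\to0$ so that $\psi(m)=\beta_d^m\prod_{i<m}\Delta_{d,\nu}(i)$ decays superexponentially (the paper's Step 4).

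Second, your phase-2 bound is stated too quickly, and this is exactly where the paper invests most of its effort (Lemma \ref{le:bound-err-expo}, ``intricate dependencies''). As the paper defines the terms, $\tau^\iota_{\rm 2^{nd}}$ for $\iota<N$ is conditioned on hitting $\dagger$, and for $\iota=N$ on hitting $0$; the latter conditional mean from distance $\ell$ is of order $1/(\nu q_{d,\nu}(\ell))$, which is \emph{not} uniformly $O(1/\nu)$, so ``stochastically dominated by the absorption time, uniformly in the initial state'' does not apply verbatim to the summands. Your estimate is nevertheless salvageable, and indeed simpler than the paper's, via a Wald-type decomposition: write $\Expect_{d,\nu}[\sum_{\iota\le N}\tau^\iota_{\rm 2^{nd}}]=\sum_{\iota\ge1}\Expect_{d,\nu}[\ind_{N\ge\iota}\,\tau^\iota_{\rm 2^{nd}}]$, note that $\{N\ge\iota\}$ is determined by the first $\iota-1$ iterations while the $\iota$-th second-phase duration, \emph{unconditioned on its outcome}, has mean at most $1/\nu$ (rate to $\dagger$ at least $\nu$), so the total is at most $\Expect_{d,\nu}[N]/\nu$. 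You then still need $\Expect_{d,\nu}[N]=1/p=O((d-1)^{2\hslash}\hslash)=o(n)$, which requires the lower bound on $p$ via the $\ell=1$ term, i.e.\ $\P({\rm dist}(g_1)=1)q_{d,\nu}(1)\gtrsim ((d-1)^{2\hslash}\hslash)^{-1}$ (the paper's Steps 4--5); your $O(M^2)$ claim is of the right order once this is spelled out. With these two points made explicit, your argument goes through and yields \eqref{eq:meeting1}--\eqref{eq:meeting2} exactly as you describe.
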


We split the proof of Proposition \ref{lem:exponential-tau} into two parts. In Section \ref{suse:tau-first} we prove that~\eqref{eq:meeting1} is satisfied for $\tau^{\rm tot}_{\rm 1^{st}}$. In Section \ref{sec:addtime}, we show that $\tau^{\rm tot}_{\rm 2^{nd}}$ does not affect the distribution of $\tau_{\rm final}$ (nor its expectation) at first order. 
	

\subsubsection{Control of the first phase}
\label{suse:tau-first}
	
Recall the definition of $q_{d, \nu}$ from~\eqref{def-q}. Note that $\tau_{\rm 1^{st}}^{\rm tot}$ can be sampled as follows: at the end of each iteration $\iota\ge 1$ of the first phase, given the identity of  the \emph{nice} pair of stubs realizing the first arrival, say $\{\sigma_{z,k}^X,\sigma_{z',k'}^Y \}$, sample a Bernoulli random variable of parameter $q_{d,\nu}(i+j+1)$, where $i$ and $j$ are as in \eqref{ij}. Indeed, the latter coincides with the probability that the forthcoming second phase ends with $\hat Z$ hitting $0$, and hence concludes the whole process. If the Bernoulli variable results in a success, the process stops and $N=\iota$, otherwise proceed to sample $\tau_{\rm 1^{st}}^{\iota+1}$ independently. 	
	
By construction, the fact that $\tau_{\rm 1^{st}}^{\rm tot}$ has an exponential law is immediate from the definition: it can be thought of as the first occurrence of independent exponentials associated to the \emph{nice} pairs of stubs, after a thinning of the Poisson processes by a factor $q_{d,\nu}$ depending on the distances of the two vertices in the \emph{nice} pair from the corresponding roots. 

It will be convenient to make explicit the construction we are going to use: For all $\iota \geq 1$ we sample $\tau_{\rm 1^{st}}^\iota$ by taking an independent collection of exponential random variables (one for each nice pair) of rate $\frac\nu2\frac{1}{dn-1}\sum_{i=0}^{\hslash-1}d(d-1)^i$ and recording the first arrival. Indeed, note that each nice pair might occur in two (independent) ways, depending on which element is associated to the process and which to the mark. If the corresponding arrival is at a pair $\{\sigma^X_{z,k}, \sigma^Y_{z',k'}\}$ and $i$ and $j$ given by \eqref{ij}, then we toss a coin with success probability $q_{d,\nu}(i+j+1)$, while if it is a success, then we set $N=\iota$ and stop the procedure. As an outcome of this procedure with get the random vector $(N,\tau_{\rm 1^{st}}^1,\dots,\tau_{\rm 2^{nd}}^N, g_1,\dots, g_N)$, where $g_\iota$ is the pair of edges that achieves the first arrival at the $\iota$-th iteration (regardless which of the two edges is associated to the arrival of the Poisson process and which to the mark).
	
The next lemma shows that the exponential law of $\tau_{\rm 1^{st}}^{\rm tot}$ is preserved in the asymptotic regime $n\to\infty$, and that its rate converges to $2\vartheta_{d,\nu}$.
	
\begin{lemma}{\rm \bf [Exponential distribution of $\tau_{\rm 1^{st}}^{\rm tot}$]}
\label{le:exponential-tau}
For every $\nu>0$ and $d \geq 3$,
		\begin{equation}\label{eq:tau-first}
		\lim_{n\to\infty} \sup_{s> 0} \left| \frac{\log \Prob_{d,\nu} \left(\tau_{\rm 1^{st}}^{\rm tot} > sn \right)}{s} 
		+ 2\vartheta_{d,\nu} \right| = 0.
		\end{equation}
In particular,
		\begin{equation}\label{exp-tau-first}
		\lim_{n\to\infty} \frac{\Expect_{d,\nu}[\tau_{\rm 1^{st}}^{\rm tot}]}{n} = \frac{1}{2\vartheta_{d,\nu}}.
		\end{equation}
\end{lemma}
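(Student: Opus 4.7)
The plan is to leverage the Poisson-thinning structure built into the sampling procedure for $\tau_{\rm 1^{st}}^{\rm tot}$ to obtain exponentiality exactly, and then carry out a combinatorial counting to identify the rate of the limiting exponential as $2\vartheta_{d,\nu}/n$.

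First, I would observe that by the construction recalled just before the statement, $\tau_{\rm 1^{st}}^{\rm tot}$ can be viewed as the first arrival of a thinned superposition of independent Poisson processes: each nice pair of stubs $(s,s')$ fires at the (common) rate $\lambda_n$ and, upon firing, terminates the whole procedure independently with probability $q_{d,\nu}(i(s)+j(s')+1)$. Poisson thinning then yields
\begin{equation*}
\tau_{\rm 1^{st}}^{\rm tot} \;\overset{(d)}{=}\; \Exp(\Lambda_n), \qquad \Lambda_n \coloneqq \lambda_n\sum_{(s,s')\text{ nice}} q_{d,\nu}\bigl(i(s)+j(s')+1\bigr).
\end{equation*}
Exactness of the exponential law is crucial: the supremum in \eqref{eq:tau-first} collapses to $\lvert n\Lambda_n - 2\vartheta_{d,\nu}\rvert$ (independent of $s$), and \eqref{exp-tau-first} becomes convergence of $1/(n\Lambda_n)$. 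Both statements therefore reduce to the single claim $\lim_{n\to\infty}n\Lambda_n = 2\vartheta_{d,\nu}$.

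The heart of the proof is the asymptotic evaluation of $n\Lambda_n$. I would group nice pairs by their distance parameters $(i,j)$ and exploit the explicit count of stubs at each parameter level in a truncated $d$-regular tree of height $\hslash = \floor{\delta\log_d n}$: at level $0$ there are $d$ root stubs together with $d(d-1)$ contributions from depth-$1$ child stubs, and at level $1\le\ell<\hslash$ there are $d(d-1)^{\ell-1}$ parent stubs joined by $d(d-1)^\ell$ child stubs from depth $\ell+1$, for a total of order $d^{\,2}(d-1)^{\ell-1}$. Because the three nice-pair conditions are characterised by simple constraints on whether each endpoint is a parent or a child stub, the sum $\sum_{\text{nice}}q_{d,\nu}(i+j+1)$ admits a clean expression in terms of these counts; combining with $\lambda_n$ (itself of order $n^{-1}$) yields an explicit formula for $n\Lambda_n$. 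I would then identify the limit using the first toy model: a first-passage decomposition of each visit of $\hat Z$ to $0$ gives $R_{d,\nu}(0)=\tfrac12(1-q_{d,\nu}(1))^{-1}$, which combined with Proposition~\ref{lemma-R} yields $q_{d,\nu}(1)=1-\vartheta_{d,\nu}$; a more refined decomposition, conditioning on the starting distance $i+j+1$, expresses the weighted sum appearing in $n\Lambda_n$ as a linear functional of $R_{d,\nu}$, eventually matching the target value $2\vartheta_{d,\nu}$.

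The main obstacle is precisely this last matching. One has to handle the asymmetric role of parent and child stubs in the definition of a \emph{nice} pair, which prevents full factorisation of the sum over the two trees and produces several boundary terms at the root and at depth $\hslash$. The contribution from the truncation must then be shown to be negligible, which should follow from geometric decay of $q_{d,\nu}(k)$ in $k$: the killing rate $\nu i$ in the distance process $\hat Z$ grows linearly with the current distance, so starts from depth $\hslash = \floor{\delta\log_d n}$ are damped super-polynomially fast in $n$, while the number of stubs at such depth grows only polynomially.
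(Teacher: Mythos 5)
Your reduction is correct and actually slightly cleaner than the paper's on one point: since $\tau_{\rm 1^{st}}^{\rm tot}$ is exactly $\Exp(\gamma_n)$, the supremum in \eqref{eq:tau-first} is literally $\lvert n\gamma_n - 2\vartheta_{d,\nu}\rvert$ independently of $s$, so there is no need for the Dini-type argument the paper invokes, and both limits indeed collapse to the single claim $n\gamma_n\to 2\vartheta_{d,\nu}$. Your derivation $R_{d,\nu}(0) = \tfrac12(1-q_{d,\nu}(1))^{-1}$ (holding time $\Exp(2)$ at $0$, Geometric number of visits) and hence $q_{d,\nu}(1)=1-\vartheta_{d,\nu}$ is also correct and is a genuinely nice probabilistic reformulation.

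The genuine gap is in the final ``matching'' step, which you relegate to a vague appeal to ``a more refined decomposition... expresses the weighted sum as a linear functional of $R_{d,\nu}$.'' After the combinatorics one is left with
\begin{equation}
n\gamma_n \;\sim\; \frac{d\nu}{d-1}\sum_{\ell\ge1}\ell\,(d-1)^\ell\,q_{d,\nu}(\ell),
\end{equation}
and the content of the lemma is the nontrivial identity $\frac{d\nu}{d-1}\sum_{\ell}\ell(d-1)^\ell q(\ell)=2\bigl(1-\Delta_{d,\nu}/\beta_d\bigr)$. Knowing only $q(1)=1-\vartheta_{d,\nu}$ does not determine the full $\ell$-weighted sum, and I do not see a clean ``conditioning on starting distance'' argument that would: the weights $\ell(d-1)^\ell$ are geometry of the second toy model (counting stub pairs at depths $i,j$), not quantities that arise naturally in the return-time analysis of $\hat Z$. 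The paper proves the identity by writing $q(\ell)=\prod_{i<\ell}\Delta(i)/\beta$ from the recursion \eqref{eq:recursion-q}, introducing $\chi_{\ell-1}=(d-1)^\ell q(\ell)$, deriving the three-term recursion \eqref{eq:recursion-chi}, encoding it in a generating function $\Phi(z)=\sum_\ell z^\ell\chi_{\ell-1}$, and extracting $\Phi'(1)$ from the first-order ODE \eqref{eq:diffeq1}--\eqref{eq:diffeq2}, using crucially that the coefficient $\varsigma(z)$ vanishes at $z=1$. That algebraic step is the crux of the lemma, and your sketch does not supply a substitute for it. Until you produce either the ODE argument or a bona fide probabilistic identity for $\sum_\ell\ell(d-1)^\ell q(\ell)$, the proof is incomplete.

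Two smaller remarks. First, the parent/child case analysis you worry about can be sidestepped entirely: each nice pair has a dual nice pair obtained by swapping which stub carries the Poisson arrival and which carries the mark, so one may simply double the per-pair rate and sum only over pairs whose stubs both point away from their respective roots; this gives the clean double sum $\sum_{i,j=0}^{\hslash-1}d(d-1)^i\,d(d-1)^j\,q(i+j+1)$ with no boundary bookkeeping. Second, the truncation at $\hslash$ is indeed negligible, but the reason is the super-geometric decay of $q(\ell)$ (coming from $\Delta(i)\to0$), not merely ``polynomial vs. super-polynomial in $n$''; you need $(d-1)^\ell q(\ell)\to0$ fast enough that the tail of the $\ell$-sum vanishes, which requires $\prod_{i<\ell}\Delta(i)$ to beat $(d-1)^{\ell/2}$, and this is exactly what $\Delta(i)\to0$ delivers.
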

	
\begin{proof}
We suppress the dependence of $d$ and $\nu$ to improve readability, but we keep the dependence on $n$ to indicate the asymptotic role of this parameter. We already know that $\tau_{\rm 1^{st}}^{\rm tot} \sim \Exp(\gamma_{n})$, for some parameter $\gamma_n$. In order to verify~\eqref{eq:tau-first}, it suffices to prove that $\frac{\gamma_{n}}{n} \to 2\vartheta$. The uniform convergence is then an immediate consequence of the converge in distribution of $\frac{\tau_{\rm 1^{st}}^{\rm tot}}{n}$ together with Dini's Theorem. The proof is articulated in four steps.
		
\medskip\noindent
{\bf 1.} Recall that for each \emph{nice pair} there is a \emph{dual nice pair}, namely, the rewiring of the latter matches the former. Therefore, we can simply double the rates and consider only nice pairs in which the stubs are oriented from the root to the leaves. Consequently, recalling \eqref{def-q}, we have
		\begin{equation}
		\label{eq:tilde-theta}
		\begin{split}
		\gamma_n 
		& = \nu \times \frac{1}{dn-1} \sum_{i=0}^{\hslash_n-1}\sum_{j=0}^{\hslash_n-1}d(d-1)^{i} 
		d (d-1)^{j}q(i+j+1) \\
		& = \nu \times \frac{1}{dn-1}\sum_{\ell=1}^{2\hslash_n-1}\sum_{i=0}^{\ell-1} 
		d(d-1)^{i} d (d-1)^{\ell-i-1}q(\ell) \\
		& = \nu \times \frac{1}{dn-1} \times \frac{d^2}{d-1} \sum_{\ell=1}^{2\hslash_n-1}\ell (d-1)^{\ell} q(\ell) \\
		& =\nu \times \frac{1}{dn-1} \times \frac{d^2}{d-1} \sum_{\ell=1}^{2\hslash_n-1}\ell \beta^{2\ell} q(\ell) \\
		& \sim \frac{1}{n} \times \nu \times \frac{d}{d-1} \sum_{\ell=1}^{2\hslash_n-1}\ell \beta^{2\ell}\,q(\ell).
		\end{split}
		\end{equation} 
Moreover, $q(\ell)$ satisfies the same recursion relations as in \eqref{eq:recursion-R} for $R$, but with initial value $1$:
		\begin{equation}
		\label{eq:recursion-q}   
		\begin{aligned}
		i\in\N\colon 
		&\quad q(i) = 2 \frac{d-1}{d} \frac{1}{2+\nu i} q(i+1) + \frac{2}{d} \frac{1}{2+\nu i} q(i-1), \\[0.2cm]
		i=0 \colon 
		&\quad q(0) = 1,
		\end{aligned}
		\end{equation}  
from which we obtain
		\begin{equation}
		\label{eq:q-final}
		q(\ell) = \prod_{i=0}^{\ell-1} \frac{\Delta(i)}{\beta}, \qquad \ell \in \N,
		\end{equation}
with $\Delta(i)$ defined in \eqref{eq:recursion-Delta}. Inserting \eqref{eq:q-final} into \eqref{eq:tilde-theta}, we obtain
		\begin{equation}
		\label{eq:tilde-theta2}
		\gamma\coloneqq\lim_{n\to\infty} n\gamma_n
		= \frac{d\nu}{d-1}\sum_{\ell \in \N} \ell \beta^{\ell} \prod_{i=0}^{\ell-1} \Delta(i).
		\end{equation}
Thus, it remains to show that $\gamma =2\vartheta$, i.e.,
		\begin{equation}
		\label{eq:left-to-show}
		\frac{d\nu}{d-1}\sum_{\ell\in\N} \ell \beta^\ell\prod_{i=0}^{\ell-1}\Delta(i) 
		= 2\left( 1 - \frac{\Delta(0)}{\beta} \right) = 2\vartheta.
		\end{equation}
		
\medskip\noindent
{\bf 2.}	
Put $\kappa(i)\coloneqq\beta \Delta(i)$, $i \in \N_0$, and reverse the recursion in \eqref{eq:recursion-Delta}, to get
		\begin{equation}
		\label{eq:recursion-eta}
		\kappa(i+1) = \psi_{i+1} - \frac{d-1}{\kappa(i)}, \qquad i \in \N_0,
		\end{equation}
with 
		\begin{equation}
		\psi_i \coloneqq d + \frac{d\nu}{2}i, \qquad i \in \N_0.
		\end{equation}
Next, put 
		\begin{equation}
		\label{eq:chi}
		\chi_i = \prod_{j=0}^i \kappa(j), \qquad i \in \N_0.
		\end{equation} 
Using the recursion in \eqref{eq:recursion-eta}, we get 
		\begin{equation}
		\label{eq:recursion-chi}
		\chi_{i+1} = \psi_{i+1} \chi_i - (d-1)\chi_{i-1}, \qquad i \in \N.
		\end{equation}
Abbreviate
		\begin{equation}
		S \coloneqq \sum_{\ell\in\N} \ell \chi_{\ell-1}.
		\end{equation}
With this notation, \eqref{eq:left-to-show} amounts to showing that
		\begin{equation}
		\label{eq:left-to-show-new} 
		\frac{d\nu}{d-1}S = 2\left(1-\frac{\Delta(0)}{\beta} \right).
		\end{equation}
		
\medskip\noindent
{\bf 3.}
Define the generating function
		\begin{equation}	
		\Phi(z) = \sum_{\ell\in\N} z^\ell \chi_{\ell-1}, \qquad z \in \R.
		\end{equation}
Note that $S = \Phi'(1)$. We derive a differential equation for $\Phi$ with the help of the recursion in \eqref{eq:recursion-chi}. To that end we write
		\begin{equation}
		\label{eq:rew1}
		\begin{aligned}
		&\Phi(z) = z\chi_0 + z^2 \chi_1 + {\rm I} -{\rm  II},
		\end{aligned}
		\end{equation} 
where
		\begin{equation}
		{\rm I} \coloneqq \sum_{\ell=3}^\infty z^\ell \psi_{\ell-1}\chi_{\ell-2}, 
		\qquad {\rm II} \coloneqq \sum_{\ell=3}^\infty z^\ell (d-1) \chi_{\ell-3}.
		\end{equation}
Note that
		\begin{equation}
		\label{eq:rew2}
		{\rm I} = \sum_{\ell\in\N} z^{\ell+1} \left[d+\frac{d\nu}{2}\ell\right] \chi_{\ell-1} - z^2 \psi_1\chi_0
		= d z\,\Phi(z) + \frac{d\nu}{2}\,z^2\,\Phi'(z) - z^2\,\psi_1\chi_0
		\end{equation}   
and 
		\begin{equation}
		\label{eq:rew3}
		{\rm II} = \sum_{\ell\in\N} z^{\ell+2}\,(d-1) \chi_{\ell-1} = (d-1) z^2 \Phi(z).
		\end{equation}
Combining \eqref{eq:rew1}--\eqref{eq:rew3}, we obtain
		\begin{equation}
		\label{eq:diffeq1}
		\alpha(z) \Phi'(z) = \varsigma(z) \Phi(z) + \upsilon(z)
		\end{equation}
with
		\begin{equation}
		\label{eq:diffeq2}
		\alpha(z) \coloneqq\frac{d\nu}{2}z^2, \quad \varsigma(z) 
		\coloneqq (d-1)z^2 - dz + 1, \quad \upsilon(z) \coloneqq (d-1)z^2-\kappa(0)z,
		\end{equation}
where we use \eqref{eq:recursion-eta} for $i=0$ to get $\psi_1\chi_0-\chi_1 = d-1$. Pick now $z=1$ in \eqref{eq:diffeq1}--\eqref{eq:diffeq2} and note that $\varsigma(1)=0$. This gives
		\begin{equation}
		\frac{d\nu}{2}\Phi'(1) = \upsilon(1),
		\end{equation}
provided $\Phi(1)<\infty$. From the fact that $\Phi'(1)=S$, it follows that 
		\begin{equation}
		\frac{d\nu}{d-1} S = \frac{2}{d-1} \upsilon(1) = \frac{2}{d-1}\,\big[(d-1) - \beta\Delta(0)\big]
		= 2\left(1-\frac{\beta\Delta(0)}{d-1}\right) = 2\left(1-\frac{\Delta(0)}{\beta}\right),
		\end{equation}
which proves \eqref{eq:left-to-show-new}. 
		
\medskip\noindent
{\bf 4.}
To conclude the proof of the lemma, we show that $\Phi(1)<\infty$. Recall that $\kappa(i)\coloneqq\beta\Delta(i)$ and that $\Delta(i)$ can be expressed in terms of a continued fraction as 
		\begin{equation}
		\Delta(i) =  \frac{1|}{|\frac{2+(i+1)\nu}{\rho}} - \frac{1|}{|\frac{2+(i+2)\nu}{\rho}} 
		- \frac{1|}{|\frac{2+(i+3)\nu}{\rho}} - \dots,
		\end{equation}
which is immediate from \eqref{eq:recursion-Delta}. The latter shows that $\lim_{i \to \infty} \Delta(i) = 0$ (recall Remark~\ref{cfconv}). Hence $\lim_{i \to \infty} \kappa(i) = 0$ and, via~\eqref{eq:chi}, also $\lim_{i \to \infty} \frac{1}{i} \log \chi_i = - \infty$. Consequently, $\Phi(z) < \infty$ for all $z \in \R$. This concludes the proof of the lemma.
\end{proof}
	
\begin{remark}
The differential equation in~\eqref{eq:diffeq1}--\eqref{eq:diffeq2} can be solved explicitly, namely, 
\begin{equation}
\Phi(z) = \exp\left[\int_{1}^z \dd y\,\frac{\varsigma(y)}{\alpha(y)}\right]
\left\{\Phi(1)+ \int_{(\cdot)}^z \dd y\,\frac{\upsilon(y)}{\alpha(y)} 
\exp\left[-\int_{(\cdot)}^y \dd x\,\frac{\varsigma(x)}{\alpha(x)} \right]\right\}\,,
\end{equation}
where
\begin{equation}
\begin{aligned}
\int_{1}^y \dd x\,\frac{\varsigma(x)}{\alpha(x)} 
&= \int_{1}^y \dd x\,\left[\frac{2(d-1)}{d\nu} - \frac{2}{\nu x} + \frac{2}{d\nu x^2}\right]
= \frac{2(d-1)}{d\nu}\,y - \frac{2}{\nu x}\,\log y - \frac{2}{d\nu}\,\frac{1}{y}
\end{aligned}
\end{equation}	
and
\begin{equation}
\frac{\upsilon(y)}{\alpha(y)} = \frac{2(d-1)}{d\nu} - \frac{2\kappa(0)}{d\nu z}\,.	
\end{equation}
In particular, we may pick $(\cdot) = 1$ and $\upsilon(\cdot) = \Phi(1)$ to get a closed formula.
\end{remark}
	

\subsubsection{Control of the second phase}
\label{sec:addtime}
	
In what follows it will be useful to consider the same model as above, with the only difference that after $\tau_{\rm final}$ the process restarts at the first phase. We will call this modification the \emph{renewal version}. We are interested in the total amount of time spent by this process in the second phase up to some time $t$ depending on the parameter $n$. To this aim, for $t> 0$ we let $\bar \tau_{\rm 2^{nd}}(t)$ be the total amount of time spent by the \emph{renewal version} of the process in the second phase until time $t$. 
	
We consider the following explicit construction of $\bar \tau_{\rm 2^{nd}}(t)$. We first sample the first phase of the renewal version of the process up to time $t$, and call $\bar N_t$ the total number of times in which the process passes from the first to the second phase before having spent a time $t$ in the first phase. We use the notation $\bar \tau_{\rm 2^{nd}}^\iota$ to denote the length of the $\iota$-th iteration of the second phase for $1\le \iota\le \bar N_t$. For any $1 \leq \iota \leq \bar  N_t$, $\tau^\iota_{\rm 2^{nd}}$ depends on $\tau^\iota_{\rm 1^{st}}$ only though the identity of the \emph{nice} pair of edges that ended the first phase. More precisely, assume that the $\iota$-th iteration of the first phase ended with the clock associated to the \emph{nice pair}  $g_\iota=(\sigma^X_{z,k},\sigma^{Y}_{z',k'})$ ringing. In what follows we will use the notation ${\rm dist}(g_\iota)= \ell$ to mean that $g_\iota$ is of the form $(\sigma^X_{z,k},\sigma^{Y}_{z',k'})$ for some $i$ and $j$ as in \eqref{ij} such that $i+j+1=\ell$. The distribution of $\bar\tau^\iota_{\rm 2^{nd}}$ is the distribution of the first hitting of $\{0,\dagger\}$ by the process $\hat Z=(\hat Z_{t})_{t\ge 0}$ starting at $\hat Z_0=\ell$. More precisely, defining
\begin{equation}
 	H_A \coloneqq \inf\{t \geq 0 \mid \hat Z_t\in A \}, \qquad A \subset \N_0 \cup \{\dagger\},
\end{equation}
we have
\begin{equation}
\label{eq:tau2-cond-dagger-new}
	\Prob_{d,\nu}(\bar \tau^\iota_{\rm 2^{nd}}> s \mid \iota < \bar N_t, {\rm dist}(g_\iota)=\ell)
	=\Pr(H_{\{0,\dagger\}} > s\mid \hat Z_0=\ell), \quad s,t> 0.
\end{equation}
For $1 \leq \ell \leq 2\hslash_{n} - 1$, let $K_\ell$ be the random variable with law $\Pr(H_{\{0,\dagger\}}\in \cdot\mid \hat Z_0=\ell)$. Then
	\begin{equation}
	\label{id}
	\Prob_{d,\nu}\big(\bar\tau_{\rm 2^{nd}}(t)> s\big) \leq \Prob_{d,\nu} \bigg(\sum_{\iota=1}^{\bar N_t} 
	K^{(\iota)}_{{\rm dist}(g_\iota)}> s \bigg),
	\end{equation}
where $(K^{(\iota)}_{\ell})_{\iota\ge 1}$ are independent copies of $K_\ell$, and we assume that these random variables are independent for different $\ell$ and mutually independent.
	
\begin{lemma}
\label{lemma:2nd-phase-new}{\rm \bf [Control on the time spent in the second phase]}
Recall the definition of $\delta$ in \eqref{def-sigma}. For any $t>0$ and $\bar{\tau}_{\rm 2^{nd}}(t)$ defined as above
		\begin{equation}
		\lim_{n \to \infty} \Prob_{d,\nu} ( \bar \tau_{\rm 2^{nd}}(t) > t n^{-1+4\delta}) = 0.
		\end{equation}
\end{lemma}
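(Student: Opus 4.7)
My plan is to bound $\Expect_{d,\nu}[\bar\tau_{\rm 2^{nd}}(t)]$ and then apply Markov's inequality. Starting from \eqref{id}, it suffices to control
\begin{equation}
S_t \coloneqq \sum_{\iota=1}^{\bar N_t} K^{(\iota)}_{{\rm dist}(g_\iota)}.
\end{equation}
Conditioning on the first-phase randomness (so on $\bar N_t$ and on the identities of the $g_\iota$) and using that the $K_\ell^{(\iota)}$ are mutually independent and independent of everything in the first phase, we obtain
\begin{equation}
\Expect_{d,\nu}[S_t] \leq \Expect_{d,\nu}[\bar N_t] \cdot \sup_{\ell \geq 1} \Ex_{d,\nu}[K_\ell].
\end{equation}

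The key technical input is the uniform estimate $\Ex_{d,\nu}[K_\ell] \leq 1/\nu$. I would prove it via a graphical construction of the second phase: attach to each edge of $\cT^{\rm joint}$ an independent rate-$\nu$ Poisson clock, and declare $\dagger$ reached as soon as some clock rings on an edge that lies in the path joining the two walks at the time of ringing. By independence of the clocks and Poisson thinning, conditionally on the path process $(\hat Z_s)_{s \leq t}$ the probability that no such ringing has occurred up to time $t$ equals $\exp\big(-\nu \int_0^t \hat Z_s \dd s\big)$, thanks to the identity $\sum_e \big(\text{time edge } e \text{ spends on the path in } [0,t]\big) = \int_0^t \hat Z_s \dd s$. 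Since on $\{K_\ell > t\}$ we have $H_0 > t$ and hence $\hat Z_s \geq 1$ for all $s \in [0,t]$, we deduce $\Pr(K_\ell > t) \leq e^{-\nu t}$, whence $\Ex_{d,\nu}[K_\ell] \leq 1/\nu$ uniformly in $\ell \geq 1$.

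To estimate $\Expect_{d,\nu}[\bar N_t]$, I would use that the first-phase iterations $\tau_{\rm 1^{st}}^{\iota}$ are i.i.d.\ exponentials with parameter $\gamma_n^{(1)}$, the total rate of nice-pair arrivals before the thinning by $q_{d,\nu}$; a direct count of stub pairs (as at the beginning of the proof of Lemma~\ref{le:exponential-tau}) gives
\begin{equation}
\gamma_n^{(1)} \leq \frac{\nu}{dn-1}\!\left(\sum_{i=0}^{\hslash_n-1} d(d-1)^i\right)^{\!2}
\leq \frac{C_d\,\nu\,(d-1)^{2\hslash_n}}{n}
\leq \frac{C_d\,\nu\,n^{2\delta}}{n},
\end{equation}
where the last step uses $\hslash_n \leq \delta \log_d n$. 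Since $\bar N_t$ is stochastically dominated by a Poisson variable of mean $t\gamma_n^{(1)}$, combining the above bounds yields $\Expect_{d,\nu}[\bar\tau_{\rm 2^{nd}}(t)] \leq \Expect_{d,\nu}[S_t] \leq C_d\, t\, n^{2\delta-1}$, and Markov's inequality gives
\begin{equation}
\Prob_{d,\nu}\!\left(\bar\tau_{\rm 2^{nd}}(t) > t n^{-1+4\delta}\right) \leq C_d\, n^{-2\delta} \xrightarrow[n\to\infty]{} 0,
\end{equation}
which is the claim (the slack $2\delta$ vs.\ $4\delta$ is comfortable). The main obstacle I anticipate is the uniform bound on $\Ex_{d,\nu}[K_\ell]$: attacking it through the recursion for $i \mapsto \Ex_{d,\nu}[K_i]$ is clumsy because of the interplay between the upward drift of $\hat Z$ and the state-dependent absorption rate $\nu i$, giving rise to inhomogeneous continued fractions reminiscent of $\Delta_{d,\nu}$. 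The graphical argument bypasses this entirely by exploiting the mutual independence of the edge clocks together with the simple pathwise identity for the cumulative path-length.
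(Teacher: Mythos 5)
Your proposal is correct and uses exactly the same two ingredients as the paper: the uniform bound $\Ex_{d,\nu}[K_\ell]\leq 1/\nu$ and the bound $\Expect_{d,\nu}[\bar N_t]\lesssim t\,n^{-1+2\delta}$. The only genuine difference is how you combine them: you compute $\Expect_{d,\nu}[S_t]$ directly via a Wald-type conditioning on the first-phase $\sigma$-algebra (legitimate, since $\bar N_t$ and the $g_\iota$ are first-phase measurable and the $K^{(\iota)}_\ell$ are independent of it), and then apply Markov once; the paper instead truncates $\bar N_t$ at a level $m$, applies Markov separately to $\Prob(\sum_{\iota\le m}K^{(\iota)}>s)$ and to $\Prob(\bar N_t>m)$, and tunes $m$ between $n^{-1+2\delta}$ and $n^{-1+4\delta}$. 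Your version is cleaner and produces the sharper rate $O(n^{-2\delta})$ versus the paper's $O(n^{-\delta})$, though of course both suffice. One small remark on your key input: you build $\Ex_{d,\nu}[K_\ell]\leq 1/\nu$ via independent edge clocks and the pathwise identity $\sum_e(\text{time }e\text{ on path})=\int_0^t \hat Z_s\,\dd s$; the paper gets the same bound more directly by observing that, away from $\{0,\dagger\}$, $\hat Z$ transitions to $\dagger$ at rate $\nu\hat Z_s\geq\nu$, so $K_\ell$ is stochastically dominated by an ${\rm Exp}(\nu)$ variable. Your graphical argument is correct but the recursion you feared needing is never required---the uniform rate bound already closes the estimate.
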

	
\begin{proof}
For any $m \in \N$, the Markov inequality yields
		\begin{equation}
		\label{eq:expect-tau-add0-neww}
		\begin{split}
		\Prob_{d,\nu} \left( \sum_{\iota=1}^{\bar N_t} K^{(\iota)}_{{\rm dist}(g_\iota)} > s \right)				
		& \leq \Prob_{d,\nu} \left( \sum_{\iota=1}^{m} K^{(\iota)}_{{\rm dist}(g_\iota)} > s \right) + \Prob_{d,\nu} (\bar N_t > m) \\
		& \leq \frac{m \Expect_{d,\nu} [K_{{\rm dist}(g_1)}]}{s} + \frac{\Expect_{d,\nu} [\bar N_t]}{m} \\
		& \leq \frac{m \max_{1\le\ell\le 2\hslash_{n}-1} \Expect_{d,\nu} [K_\ell]}{s} + \frac{\Expect_{d,\nu} [\bar N_t]}{m}.
		\end{split}
		\end{equation}
Note that the maximum in the last display can be bounded by a constant depending only on $\nu$. Indeed, regardless of the value of $\hat Z_s\not\in\{0, \dagger\}$ the transition to $\dagger$ occurs at rate at least $\nu$, which yields the bound $\Expect_{d,\nu} [K_{\ell}] \leq \frac{1}{\nu}$. Moreover, $\Expect_{d,\nu} [\bar N_t]$ is the expected number of arrivals before time $t$ of the Poisson processes associated to the collection of \emph{nice} pairs. Hence, for all $n$ large enough,
		\begin{equation}
		\label{nt}
		\Expect_{d,\nu} [\bar N_t] \le t \frac{\nu}{dn-1} d^\hslash\le t  n^{-1+2\delta}.
		\end{equation}
The desired conclusion follows from \eqref{eq:expect-tau-add0-neww} and \eqref{nt} by choosing $m=t n^{-1+3\delta}$ and $s=t n^{-1+4\delta}$, namely,
\begin{equation}
\Prob_{d,\nu} ( \bar \tau_{\rm 2^{nd}}(t) > t n^{-1+4\delta}) \leq \frac{m}{s \nu} + \frac{tn^{-1+2\delta}}{m} \to 0,
\end{equation}
as $n$ tends to infinity, which concludes the proof of the lemma.
\end{proof}
	
Next, we control the expectation of $\tau^{\rm tot}_{\rm 2^{nd}}$. Similarly to what was done above, assume that the $\iota$-th iteration of the first phase ended with the clock associated to the \emph{nice pair} $g_\iota=(\sigma^X_{z,k},\sigma^{Y}_{z',k'})$ ringing. We saw that the distribution of $\tau^\iota_{\rm 2^{nd}}$ is the distribution of the first hitting of $\{0,\dagger\}$ by the process $\hat Z=(\hat Z_{t})_{t\ge 0}$ starting at $\hat Z_0=\ell$. Nevertheless, if $1\le\iota< N$, then $\tau^\iota_{\rm 2^{nd}}$ is distributed as follows
		\begin{equation}
		\label{eq:tau2-cond-dagger-a}
		\Prob_{d,\nu} ( \tau^\iota_{\rm 2^{nd}} \leq t \mid \iota<N, {\rm dist}(g_\iota)=\ell)
		=\Pr(H_\dagger \leq t \mid \hat Z_0 = \ell, H_\dagger<H_0), \quad t \geq 0,
		\end{equation}
while if $\iota=N$, then
		\begin{equation}
		\label{eq:tau2-cond-0}
		\Prob_{d,\nu} (\tau^\iota_{\rm 2^{nd}} \leq t \mid \iota=N, {\rm dist}(g_\iota)=\ell)
		=\Pr(H_0 \leq t \mid \hat Z_0=\ell, H_\dagger>H_0), \quad t \geq 0.
		\end{equation}
For $1 \leq \ell \leq 2\hslash_n - 1$ let $W_\ell$ with law $\Pr(H_\dagger \in \cdot \mid \hat Z_0 = \ell, H_\dagger < H_0)$ and $F_\ell$ with law $\Pr(H_0 \in \cdot \mid \hat Z_0 = \ell, H_\dagger > H_0)$ (recall \eqref{eq:tau2-cond-dagger-a} and \eqref{eq:tau2-cond-0}), and assume that these random variables are independent for different $\ell$ and mutually independent. With this notation we have
		\begin{equation}
		\Prob_{d,\nu} \big(\tau_{\rm 2^{nd}}^{\rm tot}\le t\big) = \Prob_{d,\nu} \bigg(F_{{\rm dist}(g_N)}
		+\sum_{\iota=1}^{N-1} W^{(\iota)}_{{\rm dist}(g_\iota)}\le t \bigg)\,,
		\end{equation}
where $(W^{(\iota)}_{\ell})_{\iota\in\N}$ are independent copies of $W_{\ell}$.
		
\begin{lemma}{\rm \bf [Control on $\tau_{\rm 2^{nd}}^{\rm tot}$]}
\label{le:bound-err-expo}
For all $d\ge 3$ and $\nu>0$,
			\begin{equation}
			\label{eq:bound-err-expo}
			\lim_{n\to\infty}\frac{\Expect_{d,\nu} \big[ \tau_{\rm 2^{nd}}^{\rm tot} \big] }{n} = 0.
			\end{equation}
\end{lemma}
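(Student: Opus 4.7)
The plan is to apply a simple Wald-type identity to reduce the problem to bounding two factors: the expected number of iterations $\Expect_{d,\nu}[N]$ and the expected duration of a single second phase. For each iteration $\iota\ge 1$, set $T_\iota := \tau_{\rm 2^{nd}}^\iota$, the hitting time of $\{0,\dagger\}$ by the chain $\hat Z$ initialised at ${\rm dist}(g_\iota)$, and $\epsilon_\iota := \ind_{H_0 < H_\dagger}$, the indicator that iteration $\iota$ terminates the process. The construction of the alternating two-phase dynamics, together with the memoryless property of the underlying Poisson processes, implies that $(T_\iota, \epsilon_\iota)_{\iota\ge 1}$ is i.i.d.\ and $N = \inf\{\iota\ge 1 : \epsilon_\iota = 1\}$. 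Since $\{N \ge n\} = \{\epsilon_1 = \cdots = \epsilon_{n-1} = 0\}$ depends only on earlier $\epsilon$'s and is therefore independent of $T_n$, one gets
\begin{equation}
\Expect_{d,\nu}\big[\tau_{\rm 2^{nd}}^{\rm tot}\big]
= \sum_{n\ge 1} \Expect_{d,\nu}\big[T_n\ind_{N\ge n}\big]
= \Expect_{d,\nu}[T_1]\cdot\Expect_{d,\nu}[N].
\end{equation}

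For the first factor I would establish the uniform bound $\Expect_{d,\nu}[H_{\{0,\dagger\}} \mid \hat Z_0 = \ell] \le C_\nu$ for all $\ell \ge 1$, with $C_\nu$ depending only on $\nu$. The key point is that from any state $i \ge 1$ the chain $\hat Z$ jumps to $\dagger$ at rate $\nu i \ge \nu$, while from state $0$ it is forced to jump to $1$ at rate $2$. Pairing consecutive transitions, at least one of them starts in a state $\ge 1$ and therefore leads to absorption in $\dagger$ with probability at least $\nu/(\nu+2)$, so the expected number of transitions before absorption in $\{0,\dagger\}$ is $O(1/\nu)$. Since each holding time is bounded by $1/2$, this yields the uniform bound, and averaging over the distribution of ${\rm dist}(g_1)$ gives $\Expect_{d,\nu}[T_1] \le C_\nu$.

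For the second factor note that $\Expect_{d,\nu}[N] = 1/P_{\rm succ}$, where the per-iteration termination probability satisfies $P_{\rm succ} = \gamma_n/\lambda_n$. Here $\gamma_n$ is the rate of the exponential first-phase time $\tau_{\rm 1^{st}}^{\rm tot}$, which by Lemma~\ref{le:exponential-tau} satisfies $n\gamma_n \to 2\vartheta_{d,\nu}$, and $\lambda_n$ is the total nice-pair arrival rate, i.e., the same quantity as $\gamma_n$ but without the $q_{d,\nu}$-thinning. Repeating the first calculation in the proof of Lemma~\ref{le:exponential-tau} but dropping the factor $q(\ell)$ gives
\begin{equation}
\lambda_n = \frac{\nu d^2}{(d-1)(dn-1)} \sum_{\ell=1}^{2\hslash_n - 1} \ell\,(d-1)^\ell
= O\bigl(\hslash_n\, n^{-1+2\delta}\bigr),
\end{equation}
using $(d-1)^{2\hslash_n} \le d^{2\hslash_n} = n^{2\delta}$. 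Hence $\Expect_{d,\nu}[N] = O(\hslash_n\, n^{2\delta})$, and combining with the bound on $\Expect_{d,\nu}[T_1]$ yields $\Expect_{d,\nu}[\tau_{\rm 2^{nd}}^{\rm tot}] = O((\log n)\, n^{2\delta})$, which is $o(n)$ since $\delta < 1/48 < 1/2$. The main subtlety is the multiplicative decomposition in the first display, whose justification relies on the independence of $\{N \ge n\}$ and $T_n$; the remaining estimates are routine quantitative consequences of the first-phase analysis.
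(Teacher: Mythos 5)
Your proof is correct, and it takes a genuinely cleaner route than the paper's.

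The paper's proof of this lemma (its six steps) decomposes $\Expect_{d,\nu}[\tau_{\rm 2^{nd}}^{\rm tot}]$ by conditioning each iteration on $\{\iota<N\}$ versus $\{\iota=N\}$, thereby introducing the conditioned hitting-time variables $W_\ell$ (law of $H_\dagger$ given $H_\dagger<H_0$) and $F_\ell$ (law of $H_0$ given $H_0<H_\dagger$), and then separately bounds $\max_\ell\Expect[W_\ell]$, the $F_\ell$-contribution, the normalising constant $\cZ_0$ from below, and finally $\Expect_{d,\nu}[N]$ via an auxiliary lower bound on $\Expect_{d,\nu}[\tau_{\rm 1^{st}}^1\mid N>1]$. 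You bypass all of this: observing that the iteration pairs $(T_\iota,\epsilon_\iota)$ are i.i.d.\ and that $\{N\ge m\}$ depends only on $\epsilon_1,\dots,\epsilon_{m-1}$, a one-line Wald argument gives $\Expect_{d,\nu}[\tau_{\rm 2^{nd}}^{\rm tot}]=\Expect_{d,\nu}[T_1]\,\Expect_{d,\nu}[N]$ with $T_1=H_{\{0,\dagger\}}$ \emph{unconditional}, so only a single uniform hitting-time bound is needed. (One can check the two decompositions are algebraically identical: $\Expect[T_1]\Expect[N]=\Expect[N-1]\,\Expect_\dagger[W]+\Expect_0[F]$ once you substitute $\Expect[N-1]=\cZ_\dagger/\cZ_0$ and expand $\Expect[K_\ell]=(1-q(\ell))\Expect[W_\ell]+q(\ell)\Expect[F_\ell]$.) Your identification $\Expect_{d,\nu}[N]=\lambda_n/\gamma_n$, with $\lambda_n$ the total (unthinned) nice-pair rate and $\gamma_n$ the rate already analysed in Lemma~\ref{le:exponential-tau}, is also a neat shortcut that replaces the paper's steps 4--5. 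The only minor blemish: the ``pairing of consecutive transitions'' in your bound on $\Expect[T_1]$ is unnecessary, since every state visited strictly before absorption in $\{0,\dagger\}$ is automatically $\ge 1$ (the chain is stopped upon reaching $0$), so each holding time already has total rate at least $2+\nu$ and the transition to $\dagger$ already occurs with probability at least $\nu/(2+\nu)$; the conclusion is the same. Overall your argument is shorter, avoids the auxiliary random variables, and reaches the same quantitative bound $\Expect_{d,\nu}[\tau_{\rm 2^{nd}}^{\rm tot}]=O(\hslash_n\,n^{2\delta})=o(n)$.
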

		
\begin{proof}
The proof that follows seems long for a statement as simple as \eqref{eq:bound-err-expo}, but there are intricate dependencies that need to be controlled.
			
We start by bounding the expectation of $\tau_{\rm 2^{nd}}^{\rm tot}$. Recall the definition of $q$ from~\eqref{def-q} and define
\begin{equation}
	\label{def-Z0}
	\cZ_\dagger = \sum_{\ell = 1}^{2\hslash_n - 1} \Prob_{d,\nu} ( {\rm dist}(g_1) = \ell) [1-q(\ell)],
	\qquad \cZ_0 = \sum_{\ell = 1}^{2\hslash_n - 1} \Prob_{d,\nu} ( {\rm dist}(g_1) = \ell) q(\ell).
\end{equation}
	Given $N$, the collection of random variables $({\rm dist}(g_\iota))_{\iota< N}$ are i.i.d.\ with distribution
			$$
			\Prob_{d,\nu} ( {\rm dist}(g_\iota) = \ell \mid \iota < N )
			=\frac{\Prob_{d,\nu} ({\rm dist}(g_1)=\ell)[1-q(\ell)]}{\cZ_\dagger}, \qquad 1 \leq \ell \leq 2\hslash_{n} -1,
			$$
while ${\rm dist}(g_N)$ is independent of the previous random variables and has distribution
			$$
			\Prob_{d,\nu} ( {\rm dist}(g_\iota) = \ell \mid \iota = N) = \frac{\Prob_{d,\nu} ({\rm dist}(g_1)=\ell)
			q(\ell)}{\cZ_0}, \qquad 1 \leq \ell \leq 2\hslash_{n}-1.
			$$			
The proof is articulated in six steps.
			
\medskip\noindent
{\bf 1.} 
Observe first the simple bound
\begin{equation}
	\label{eq:expect-tau-add0}
	\begin{split}
	\Expect_{d,\nu} [\tau_{\rm 2^{nd}}^{\rm tot}] 
	& = \Expect_{d,\nu} [N -1] \sum_{\ell = 1}^{2\hslash_n - 1} \frac{\Prob_{d,\nu} ( {\rm dist}(g_1) = \ell)[1-q(\ell)]}{\cZ_\dagger}\Expect_{d,\nu} [ W_\ell ] \\
	& \qquad +\sum_{\ell = 1}^{2\hslash_n - 1} \frac{\Prob_{d,\nu} ( {\rm dist}(g_1) = \ell ) q(\ell)}{\cZ_0} \Expect_{d,\nu} [ F_\ell ] \\
	& \leq \Expect_{d,\nu} [N] \max_{\ell \leq 2\hslash_n - 1} \Expect_{d,\nu} [ W_\ell ] + \sum_{\ell = 1}^{2\hslash_n - 1} \frac{\Prob_{d,\nu} ( {\rm dist}(g_1) = \ell ) q(\ell)}{\cZ_0} \Expect_{d,\nu} [F_\ell].
	\end{split}
\end{equation}
			
\medskip\noindent
{\bf 2.}  
We now prove that, for $n$ large enough and uniformly over $d \geq 3$,
			\begin{equation}
			\label{eq:bound-expect-T}
			\max_{\ell \le 2\hslash_n-1}\Expect_{d,\nu} [W_\ell] \le \frac{2+\nu}{\nu^2}\,. 
			\end{equation}
To do so, we start by examining at the expectation of $W_\ell$ for some $1 \leq \ell \leq 2\hslash_{n} - 1$. We bound
\begin{equation}
	\label{eq:bound1}
	\begin{split}
	\Prob_{d,\nu} ( W_\ell > t ) 
	& = \Pr( H_\dagger > t \mid \hat Z_0 = \ell, H_\dagger<H_0) \\
	& = \frac{\Pr( H_\dagger > t, H_\dagger < H_0 \mid \hat Z_0 = \ell )}{\Pr( H_\dagger < H_0 \mid \hat Z_0 = \ell )}
	\leq \frac{\Pr( H_{\{0,\dagger\}} > t \mid \hat Z_0 = \ell )}{\Pr( H_\dagger < H_0 \mid \hat Z_0 = \ell )}.
	\end{split}
\end{equation}
Note that, for $n$ large enough and uniformly over $d \geq 3$ and $1 \leq \ell \leq 2 \hslash_{n}-1$,
			\begin{equation}
			\label{claim1}
			1-q(\ell) = \Pr( H_\dagger < H_0 \mid \hat Z_0 = \ell ) > \frac{\nu}{2+\nu}.
			\end{equation}
Indeed, since $\ell \geq 1$, the probability that the biased random walk hits $\dagger$ before hitting $0$ is at least the probability that, at the very next jump, the process $\hat Z$ jumps to $\dagger$. Similarly, for $n$ large enough and uniformly over $d \geq 3$ and $1 \leq \ell \leq 2\hslash_{n} - 1$,
			\begin{equation}
			\label{claim2}
			\Prob_{d,\nu} ( H_{\{0,\dagger\}} > t \mid \hat Z_0 = \ell ) \leq \ee^{-\nu t},
			\end{equation}
since up to time $H_\dagger \wedge H_0$ the rate to hit $\dagger$ is at least $\nu$. As a consequence of \eqref{eq:bound1}--\eqref{claim2}, we deduce that
			\begin{equation}
			\Expect_{d,\nu} [W_\ell]\le \frac{2+\nu}{\nu}\int_0^\infty \ee^{-\nu t}\, {\rm d}t = \frac{2+\nu}{\nu^{2}},
			\end{equation}
for every $1 \leq \ell \leq 2\hslash_{n}-1$, from which~\eqref{eq:bound-expect-T} follows.
			
\medskip\noindent
{\bf 3.} 
To control the last quantity on the right-hand side of \eqref{eq:expect-tau-add0}, we note that
			\begin{equation}
			\label{Fell}
			\begin{split}
			\Expect_{d,\nu} [F_\ell] = \int_0^\infty\Prob_{d,\nu} (F_\ell>t)\, \dd t 
			= \int_0^\infty \frac{\Pr(H_{\{0,\dagger\}}>t\mid \hat Z_0=\ell)}
			{\Pr(H_\dagger>H_0 \mid \hat Z_0=\ell)}\, \dd t.
			\end{split}
			\end{equation}
Moreover, by definition the denominator in the last display equals $q(\ell)$ (recall \eqref{def-q}). Hence, by \eqref{Fell} and \eqref{claim2},
			\begin{align}
			\nonumber\sum_{\ell=1}^{2\hslash_n-1} \Prob_{d,\nu} & ( {\rm dist}(g_1) = \ell) q(\ell) \Expect_{d,\nu} [F_\ell] \\
			& \leq \sum_{\ell=1}^{2\hslash_n-1} \Prob_{d,\nu} ( {\rm dist}(g_1) = \ell )
			\int_0^\infty \Prob_{d,\nu} ( H_{\{0,\dagger\}} > t \mid \hat Z_0 = \ell)\,\dd t \\
			\label{Fell2}	
			& \leq \sum_{\ell = 1}^{2\hslash_n-1} \Prob_{d,\nu} ( {\rm dist}(g_1) = \ell )
			\int_0^\infty e^{-\nu t} \, \dd t = \frac{1}{\nu}.
			\end{align}
			
\medskip\noindent
{\bf 4.} 
We next show how to control the partition function $\cZ_0$ defined in \eqref{def-Z0}. Note that we can roughly bound
			\begin{equation}
			\cZ_0 > {\Prob_{d,\nu} ( {\rm dist}(g_1) = 1 ) q(1)}.
			\end{equation}
To control the first factor on the right-hand side, note that, for every $1 \leq \ell \leq 2\hslash_n-1$,
			\begin{equation}
			\Prob_{d,\nu} ( {\rm dist}(g_1) = \ell ) \propto \sum_{i=0}^{\hslash_n-1} \sum_{j=0}^{\hslash_n-1} d^2(d-1)^{i+j} \ind_{i+j+1=\ell} = \frac{d^2}{d-1} \ell (d-1)^\ell.
			\end{equation}
It remains to find the proportionality constant, which can be derived by the following asymptotic relation, valid as $n$ tends to infinity:
			\begin{equation}
			\label{use}
			\sum_{\ell=1}^{2\hslash_n-1} \ell(d-1)^\ell \sim \frac{2}{d^{2}}(d-2) (d-1)^{2\hslash_n} \hslash_n.
			\end{equation}
The two previous equations lead to
			\begin{equation}
			\label{est1}
			\Prob_{d,\nu} ( {\rm dist}(g_1) = \ell) \sim \frac{d^{4}}{2(d-2)}\frac{\ell(d-1)^\ell}{(d-1)^{2\hslash_n-1} \hslash_n}.
			\end{equation}
In particular,
			\begin{equation}
			\label{est1bis}
			\Prob_{d,\nu} ( {\rm dist}(g_1) = 1 ) \sim \frac{d^{4}}{2(d-2)}\frac{1}{(d-1)^{2\hslash_n-2}\hslash_n}. 
			\end{equation}
On the other hand,
			\begin{equation}
			\label{est2bis}
			q(1) \geq \frac{\frac{2}{d}}{2+\nu},
			\end{equation}
since it suffices that one of the two random walk traverses the unique edge taking them apart before doing anything else. Therefore, by \eqref{est1bis} and \eqref{est2bis},
			\begin{equation}
			\label{est-const}
			\cZ_0 > \frac{d^{3}}{(2+\nu)(d-2)} \frac{1}{(d-1)^{2\hslash_n-2}\hslash_n}.
			\end{equation}

\medskip\noindent
{\bf 5.} 
We are left to bound $\Expect_{d,\nu} [N]$. Note that
			\begin{equation}
			\begin{split}
			\Expect_{d,\nu} [\tau_{\rm 1^{st}}^{\rm tot}]
			= \Expect_{d,\nu} [N-1] \Expect_{d,\nu} & [\tau_{\rm 1^{st}}^1 \mid N> 1] \\
			& + \Expect_{d,\nu} [\tau_{\rm 1^{st}}^1 \mid N=1] 
			\geq \Expect_{d,\nu} [N-1] \Expect_{d,\nu}[\tau_{\rm 1^{st}}^1 \mid N > 1],
			\end{split}			
			\end{equation}
and therefore
			\begin{equation}
			\label{est3}
			\Expect_{d,\nu} [N] \leq 1 + \frac{\Expect_{d,\nu} [\tau_{\rm 1^{st}}^{\rm tot}]}{\Expect_{d,\nu} [\tau_{\rm 1^{st}}^1 \mid N> 1]}.
			\end{equation}
To bound from below the expectation in the denominator on the right-hand side, we use the tower property and exploit the fact that, conditional on the value of ${\rm dist}(g_1)$, $\tau_{\rm 1^{st}}^1$ is independent of $N$, i.e.,
			\begin{equation}
			\label{est2}
			\begin{split}
			\Expect_{d,\nu} [\tau_{\rm 1^{st}}^1 \mid N>1] & = \sum_{\ell=1}^{2\hslash_n-1} 
			\Prob_{d,\nu} ({\rm dist}(g_1) = \ell \mid N > 1) \Expect_{d,\nu} [\tau_{\rm 1^{st}}^1 \mid {\rm dist}(g_1) = \ell] \\
			& = \sum_{\ell=1}^{2\hslash_n-1} \frac{\Prob_{d,\nu} ( {\rm dist}(g_1) = \ell ) [1-q(\ell)]}{\cZ_\dagger} 
			\Expect_{d,\nu} [\tau_{\rm 1^{st}}^1 \mid {\rm dist}(g_1) = \ell] \\
			& = \sum_{\ell=1}^{2\hslash_n-1} \frac{1-q(\ell)}{\cZ_\dagger} 
			\Expect_{d,\nu} [\tau_{\rm 1^{st}}^1 \ind_{{\rm dist}(g_1) = \ell} ].
			\end{split}
			\end{equation}
Note that, for any fixed $\ell$, the last expectation can be bounded from below by the expected value of the minimum of $\sum_{\ell' \leq 2\hslash_{n}-1}\ell'(d-1)^{\ell'}$ exponential random variables of rate $\frac{\nu}{dn-1}$, and therefore
			\begin{equation}
			\label{estBlue}
			\begin{split}
			\Expect_{d,\nu} [\tau_{\rm 1^{st}}^1 \mid N > 1] & \geq \frac{dn-1}{\nu} \times 
			\frac{1}{3d(d-1)^{2\hslash_n-1} \hslash_n} \times \sum_{\ell=1}^{2\hslash_n-1} \frac{1-q(\ell)}{\cZ_\dagger},
			\end{split}
			\end{equation}
where we use \eqref{use}. Next, we bound from below the last factor on the right-hand side, arguing similarly as in step \textbf{4}. Indeed,
			\begin{equation}
			\label{est5}
			\sum_{\ell=1}^{2\hslash_n-1} \frac{1-q(\ell)} {\cZ_\dagger} \geq \frac{1-q(1)}{\cZ_\dagger} 
			\geq 1-q(1) \geq \frac{\nu}{2+\nu},
			\end{equation}
where for the second inequality we use that the partition function $\cZ_\dagger$ can be bounded from above by $1$ (simply by neglecting the terms $[1-q(\ell)]$ in \eqref{def-Z0}), and for the third we use \eqref{claim1}. In conclusion, combining \eqref{exp-tau-first}, \eqref{est3}, \eqref{est2}, \eqref{estBlue} and \eqref{est5}, we obtain
			\begin{equation}
			\label{est4}
			\Expect_{d,\nu} [N] \leq C_1 \hslash_n (d-1)^{2\hslash_n},
			\end{equation}
for some constant $C_1$ depending only on $d$ and $\nu$.
			
\medskip\noindent
{\bf 6.} 
To conclude, simply substitute~\eqref{eq:bound-expect-T},~\eqref{Fell2},~\eqref{est-const},~and \eqref{est4} into~\eqref{eq:expect-tau-add0}, and recall the definition of $\hslash_n$ in~\eqref{def-sigma} and the fact that $\delta<\tfrac{1}{2}$.
\end{proof}

\begin{proof}[Proof of Proposition \ref{lem:exponential-tau}]
Observe first that Lemma~\ref{le:exponential-tau} yields
		\begin{equation}
		\Prob( \tau_{\rm final} \geq sn) \geq \Prob( \tau_{\rm 1^{st}}^{\rm tot} \geq sn)
		\to e^{-2\vartheta_{d, \nu} s}.
		\end{equation}
On the other hand, for any $t>0$ we have $\tau_{2^{\rm nd}}^{\rm tot} \ind_{\tau_{1^{\rm st}}^{\rm tot} \leq t} \leq \bar \tau_{2^{\rm nd}}(t)$. Therefore, for any $\varepsilon \in (0,s)$,
		\begin{align}
		\Prob( \tau_{\rm final}>sn) & \leq \Prob \big( \tau_{\rm 1^{st}} \geq (s-\varepsilon) n \big) + 
		\Prob \big( \tau_{\rm 1^{st}}^{\rm tot} \leq (s-\varepsilon)n , \tau_{2^{\rm nd}}^{\rm tot}>\varepsilon n) \\
		& \leq  \Prob \big( \tau_{\rm 1^{st}} \geq (s-\varepsilon) n \big)+ \Prob \big( \bar \tau_{2^{\rm nd}}\big( (s-\varepsilon) n \big) > \varepsilon n \big).
		\end{align}
The desired result follows by noting that the first term in the last display converges to $\ee^{-2\vartheta_{d, \nu} (s-\varepsilon)}$ thanks to Lemma \ref{le:exponential-tau}, while the second term converges to $0$ thanks to Lemma \ref{lemma:2nd-phase-new}. By letting $\varepsilon \to 0$ we immediately obtain~\eqref{eq:meeting1}. The limit in~\eqref{eq:meeting2} follows directly from the combination of~\eqref{exp-tau-first} and~\eqref{eq:bound-err-expo}.
\end{proof}

	
\section{Proof of the exponential law of the meeting time}
\label{sec:meeting}
	
We begin by introducing the relevant notation. Let $(G_t,X_t,Y_t)_{t \ge 0}$ be the Markov process on $\cG_n(d) \times [n]^2$ with generator $L^{\rm (2)dRW}$ defined in Section~\ref{suse:RW-vot-dyn}, with $(G_0,X_0,Y_0) \overset{d}{=} \mu_d \otimes \pi \otimes \pi$. We write ${\rm dist}_{G_t} (X_t,Y_t)$ for the graph distance between $X_t, Y_t \in [n]$ in $G_t$, and for $x \in [n]$ denote by $\cB_{\hslash,t}(x)$ the ball of radius $\hslash$ (defined in~\eqref{def-sigma}) around $x$ in $G_t$. Moreover, we write $\tx(\cB_{\hslash,t}(x))$ for the tree excess of such a ball, i.e., the difference between the number of vertices in $\cB_{\hslash,t}(x)$ and the number of edges in $\cB_{\hslash,t}(x)$, minus $1$. In this way $\tx(\cB_{\hslash,t}(x)) = 0$ if and only if $\cB_{\hslash,t}(x)$ is a tree. Since we are only interested in the analysis of the process $(G_t,X_t,Y_t)_{t \geq 0}$ up to time $\tau_{\rm meet}^{\pi \otimes \pi}$, for any given initial state $(G,x,y) \in \cG_n(d) \times [n]^2$ we can assume that the process is constructed explicitly by using the graphical construction introduced in Section~\ref{ss.notprop}. Moreover, to ease the reading, in what follows we will suppress the dependence on the initial condition and write $\P,\E$ in place of $\P_{\mu_d, \nu},\E_{\mu_d, \nu}$, and always assume that $(X_0,Y_0) \overset{d}{=} \pi \otimes \pi$, unless specified otherwise.
	
The remainder of this section is devoted to the proof Theorem \ref{th:meeting}. We split the argument into three steps, each discussed in a different subsection. 
\begin{itemize}
\item
In Section~\ref{ss.typev} we define~\emph{typical events}, i.e., time-dependent events of the process $(G_t,X_t,Y_t)_{t \geq 0}$ which occur for all $t\le n^{3/2}$ with high probability.  Since we are interested in studying the process $(G_t,X_t,Y_t)_{t \geq 0}$ up to the meeting time of the two random walks, which we prove to be w.h.p.\ of order $n$, estimates can be performed under the realisation of any finite collection of typical events at a small cost in probability (see Corollary~\ref{coro}).
\item
In Section~\ref{ss.expl} we introduce and analyse an \emph{exploration process}, in which the local graph dynamics is constructed together with the random walks. Such construction exploits the fact that the random walks evolve by means of local rules, making information about the details of the graph far away from their current location essentially irrelevant. A similar approach has already been used successfully in \cite{SOV21pr,SV23pr} for the analysis of the {\em contact process} on dynamic $d$-regular graphs.
\item
In Section~\ref{ss.coup} we construct an explicit coupling between the exploration process and the toy model introduced in Section~\ref{sec:meet}. The proof of Theorem \ref{th:meeting}, which is postponed to the end of the section, uses the analysis in Sections~\ref{ss.typev}--\ref{ss.expl} to show that, in the coupled probability space introduced in Section~\ref{ss.coup}, we have $\tau_{\rm meet}^{\pi\otimes\pi}=\tau_{\rm final}$ with high probability.
\end{itemize}

	
\subsection{Typical events}
\label{ss.typev}
	
For $t \geq 0$, define the events 
	\begin{equation}
	\label{events}
	\cE^{\rm trees}_t \coloneqq \{ \tx(\cB_{\hslash,t}(X_t)) = \tx(\cB_{\hslash,t}(Y_t)) = 0 \}, 
	\qquad \text{and} \qquad
	\cE_t^{\rm far} \coloneqq \{ {\rm dist}_{G_t}(X_t,Y_t) > 2\hslash \}.
	\end{equation}
Thanks to stationarity of the process and the symmetry between the two random walks, the probabilities of the events above do not depend on $t$. 

It is useful to define the rate at which the balls around $X_t$ and $Y_t$ evolve. By definition, $\cB_{\hslash,t}(X_t) \cup \cB_{\hslash,t}(Y_t)$ evolves when either one (or a pair of) stub therein rewires, or when one of the two random walks moves. Since each stub is selected for a rewiring at rate asymptotically $\nu/2$, and each of the random walks moves at rate $one$, the total rate of change of $\cB_{\hslash,t}(X_t) \cup \cB_{\hslash,t}(Y_t)$ is bounded from above, uniformly in $t \geq 0$, by
	\begin{equation}
	\mathfrak r \coloneqq 2+\frac{\nu}{2} \times \frac1{dn-1} \times 2 \sum_{\ell=0}^{\hslash-1} d(d-1)^{\ell} \times dn \leq C_1 d^{\hslash},
	\end{equation}
for some constant $C_1 = C_1(d,\nu) > 0$ and $n$ large enough. The next results shows that {\em typically} the random walks are far away if one of them has a neighbourhood which is not a tree.

\begin{proposition}{\bf [Typicality of the events $\cE^{\rm trees}_t$ and $\cE^{\rm far}_t$]}
\label{prop:typical}
For any $d \geq 3$ and $\nu > 0$,
		\label{prop:nice-events}
		\begin{equation}\label{eq:EH}
		\lim_{n \to \infty} \P\big( \cE^{\rm trees}_t \cup \cE^{\rm far}_t, \text{ for all } 0 \leq t \leq n^{3/2} \big) = 1.
		\end{equation}
\end{proposition}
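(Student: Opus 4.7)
The plan is to combine stationarity of the joint process $(G_t, X_t, Y_t)$, whose law is $\mu_d \otimes \pi \otimes \pi$ at every $t \geq 0$, with a bound on the stationary rate at which the complementary event $\cE^{\rm bad}_t \coloneqq (\cE^{\rm trees}_t)^c \cap (\cE^{\rm far}_t)^c$ begins to hold. First I would establish the pointwise estimate $\P(\cE^{\rm bad}_t) \leq C n^{4\delta-2}$. By stationarity this reduces to $t=0$. A standard configuration model computation gives $\P(\dist_{G_0}(X_0, Y_0) \leq 2\hslash) \leq C d^{2\hslash}/n = C n^{2\delta-1}$, since the ball of radius $2\hslash$ around $X_0$ contains $O(d^{2\hslash})$ vertices and each has probability $1/n$ of coinciding with $Y_0$. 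Conditionally on the walkers being within distance $2\hslash$, the union $\cB_{\hslash,0}(X_0) \cup \cB_{\hslash,0}(Y_0)$ contains $O(d^\hslash)$ vertices, and counting the expected number of pairs of half-edges matched inside this set shows the probability of positive tree excess is again $O(d^{2\hslash}/n) = O(n^{2\delta-1})$; multiplying yields the claimed bound.

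Next, since $(G_t, X_t, Y_t)$ is a pure-jump Markov process, $t \mapsto \ind_{\cE^{\rm bad}_t}$ is piecewise constant and changes only at jump times. Denoting by $N_T$ the number of upward jumps of this indicator on $[0,T]$, the compensator formula together with stationarity yields $\E[N_T] = T\lambda$, with $\lambda$ the stationary rate of entry into $\cE^{\rm bad}$. I would enumerate the transitions that can begin $\cE^{\rm bad}$: (i) a walker move shrinking the pairwise distance from $2\hslash+1$ to $2\hslash$, with a cycle already present in one of the balls; (ii) a walker move landing on a position whose new ball contains a cycle, with the walkers already close; (iii) a rewiring creating a cycle inside one of the balls; (iv) a rewiring attaching a shortcut between the two balls. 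For each type the contribution to $\lambda$ factors into the stationary probability of being in the relevant near-boundary state (of order $n^{2\delta-1}$) times the rate of producing the missing ingredient (also of order $n^{2\delta-1}$), so that $\lambda \leq C n^{4\delta-2}$.

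Finally, a union bound gives
\begin{equation}
\P\bigl(\exists\, t \in [0, n^{3/2}] \colon \cE^{\rm bad}_t\bigr)
\leq \P(\cE^{\rm bad}_0) + \E[N_{n^{3/2}}]
\leq C n^{4\delta-2} + C n^{-1/2+4\delta} = o(1),
\end{equation}
since $\delta < \tfrac{1}{48} < \tfrac{1}{8}$ makes the dominant exponent negative.

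The hard part will be the enumeration in the second step: verifying that every admissible transition contributes a factor $O(n^{4\delta-2})$ to $\lambda$ requires delicate bookkeeping, especially for rewirings, since a single rewiring removes two matched pairs and creates two new ones, so it can simultaneously destroy one cycle and create another, and can also alter distances if it introduces a shortcut. Stationarity of $\mu_d$ under the edge-rewiring dynamics is essential here, because it guarantees that the post-transition configuration again has law $\mu_d \otimes \pi \otimes \pi$, allowing uniform control of the resulting local structure via the same configuration-model estimates used in the pointwise step.
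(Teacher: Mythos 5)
Your proposal is sound in outline and reaches the same conclusion as the paper, but it takes a genuinely different route for the union-bound-over-time step, and the route it takes is the more laborious of the two.

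The paper does not bound the entry rate $\lambda$ into $\cE^{\rm bad}_t = [\cE^{\rm trees}_t \cup \cE^{\rm far}_t]^\complement$ by classifying the transitions that can trigger it. It instead uses a ``freezing'' observation: define $\tau_{\rm bad}$ as the first bad time and note that, with probability at least $\ee^{-1}$, the local neighbourhoods $\cB_{\hslash,t}(X_t) \cup \cB_{\hslash,t}(Y_t)$ remain unchanged for a time $\mathfrak r^{-1}$ after $\tau_{\rm bad}$ (where $\mathfrak r = O(d^\hslash)$ is the total rate of change of the balls). Thus on the event $\{\tau_{\rm bad}\le t\}$ the occupation time of $\cE^{\rm bad}$ on $[0,t+\mathfrak r^{-1}]$ is at least $\mathfrak r^{-1}$ with constant probability, while Fubini and stationarity give $\E[\text{occupation time}] = (t+\mathfrak r^{-1})\P(\cE^{\rm bad}_0)$. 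This yields $\P(\tau_{\rm bad}\le t) \le \ee\,\mathfrak r\,(t+\mathfrak r^{-1})\,\P(\cE^{\rm bad}_0)$ with no enumeration of transition types whatsoever. Effectively this is the inequality $\lambda \le \ee\, \mathfrak r\, \P(\cE^{\rm bad}_0)$: the entry rate is at most the exit rate, which is at most $\ee\mathfrak r$, times the stationary mass.

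Your approach derives the same order of magnitude ($\lambda = O(n^{4\delta-2})$, hence $\E[N_{n^{3/2}}]=O(n^{-1/2+4\delta})=o(1)$), but arrives there via a compensator/L\'evy-system calculation that requires the enumeration you yourself flag as ``the hard part''. That enumeration is where the details bite. In particular, your heuristic that each transition type ``factors into the stationary probability of a near-boundary state (order $n^{2\delta-1}$) times the rate of producing the missing ingredient (also $n^{2\delta-1}$)'' does \emph{not} hold for your types (i) and (ii): a walker move happens at constant rate $2$, so the $O(n^{4\delta-2})$ contribution must come entirely from the joint stationary probability of being at distance exactly $2\hslash+1$ with a cycle already present near one of the walkers, which is $O(n^{3\delta-2})$ (smaller, so fine, but the factorisation does not match the stated template). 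You also need to account for mixed transitions that simultaneously create a cycle and shorten the distance — a single rewiring inserts two new edges, so it can do both at once; this contributes at an even smaller order but should appear in a rigorous enumeration. None of these cause failure, but they are precisely the bookkeeping the paper's freezing trick makes unnecessary. The paper's argument also naturally handles the fact that $\cE^{\rm bad}_t$ depends on more than the strict $\hslash$-balls (via the distance) by working with the freezing of $\cB_{\hslash,t}(X_t)\cup\cB_{\hslash,t}(Y_t)$ as a whole, which is cleaner than conditioning on each transition type.

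A minor note: your pointwise estimate $\P(\cE^{\rm bad}_0)=O(n^{4\delta-2})$ is sufficient but coarser than the paper's stated $O(n^{3\delta-2})$; both give $\delta<\tfrac18$ as the needed threshold, comfortably satisfied by $\delta<\tfrac{1}{48}$.
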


\begin{proof}
Define the event
		\begin{equation}
		\cS_t \coloneqq \left\{ \cB_{\hslash,s}(X_s) \cup \cB_{\hslash,s}(Y_s) 
		= \cB_{\hslash,t}(X_t) \cup \cB_{\hslash,t}(Y_t), \text{ for all } s \in [t,t+\mathfrak{r}^{-1}] \right\}.
		\end{equation}
For $t \geq 0$, we bound
		\begin{equation}
		\P ( \cS_t ) \geq \P \big( {\rm Exp}(\mathfrak{r}) > \tfrac1{\mathfrak{r}} \big) = \ee^{-1}.
		\end{equation}
Now let $\cA_t = [\cE^{\rm trees}_t \cup \cE^{\rm far}_t]^\complement$ and note that this event is measurable with respect to $\cB_{\hslash,t}(X_t)\cup \cB_{\hslash,t}(Y_t)$. Define
		\begin{equation}
		\tau_{\rm bad} \coloneqq \inf\{ s \geq 0\colon \cA_s \text{ holds} \},
		\end{equation}
so that~\eqref{eq:EH} can be rephrased as
		\begin{equation}\label{eq:EHalt}
		\lim_{n \to \infty} \P\big( \tau_{\rm bad} \geq n^{3/2} \big) = 1.
		\end{equation}
Thanks to the strong Markov property, we have 
		\begin{equation}
		\P( \{\tau_{\rm bad} \leq t\} \cap \cS_{\tau_{\rm bad}} ) \geq \P(\tau_{\rm bad} \leq t) \ee^{-1}.
		\end{equation}
On the other hand, we can use the a.s.\ inequality
		\begin{equation}
		\mathfrak{r}^{-1}\ind_{\{\tau_{\rm bad} \le t\} \cap \cS_{\tau_{\rm bad}}} 
		\le \int_0^{t+\mathfrak{r}^{-1}} \ind_{\cA_s}\,\dd s\,,
		\end{equation}
where we note that, on the event $\cS_{\tau_{\rm bad}}$, $\cA_s$ holds for all $s \in [\tau_{\rm bad}, \tau_{\rm bad}+\mathfrak{r}^{-1}]$. Taking the expectation and using Fubini's theorem together with the stationarity of the graph dynamics, we deduce
		\begin{equation}
		\label{eq:bound-tau}
		\P(\tau_{\rm bad} \leq t) \leq \ee \mathfrak{r} (t+\mathfrak{r}^{-1}) \P(\cA_0)
		\leq C_2 t d^{\hslash}\, \P(\cA_0)
		\end{equation}
for some constant $C_2 = C_2(d,\nu) > 0$. Consequently, recalling the definition of $\hslash$ in \eqref{def-sigma}, in order to settle the claim it suffices to show that, for some $\varepsilon > \delta$ and all $n$ large enough,
\begin{equation}
		\label{eq:real}
		\P\big( [\cE^{\rm trees}_0]^\complement \cap [\cE^{\rm far}_0]^\complement \big) 
		\leq 2 \P\big( \tx(\cB_{\hslash,t}(X_t)) \geq 1 \text{ and } {\rm dist}_0(X_0,Y_0) \leq 2\hslash \big)
		\leq n^{-3/2-\varepsilon}.
\end{equation}
note that the first bound in the probability above follows from union bound and the fact that $(X_0, Y_0) =^{\rm (d)} (Y_0, X_0)$.
 
To prove the latter we proceed by constructing $(G_0, X_0, Y_0)$ as follows:
		\begin{enumerate}
		\item Sample $X_0 =^{\rm (d)} \pi$.
		\item Construct the neighbourhood $\cB_{2\hslash,0}(X_0)$ by matching its stubs following an arbitrary lexicographic order.
		\item Sample $Y_0 =^{\rm (d)} \pi$ independently.
		\item Match the remaining stubs uniformly at random.
		\end{enumerate}

We now bound
\begin{equation}
\E \big[ \textbf{1}_{{\rm dist}_0(X_0,Y_0) \leq 2\hslash} ~\big|~ \cB_{2\hslash,0}(X_0) \big] = \frac{|\cB_{2\hslash,0}(X_0)|}{n} \leq \frac{d^{2\hslash}}{n} \leq n^{-1+2\delta},
\end{equation}
which follows from the independence of $Y_0$ and the uniform bound on the size of the ball of radius $2\hslash$, $|\cB_{2\hslash,0}(X_0)| \leq d^{2\hslash}$.

Second, let us examine the event where $\tx(\cB_{\hslash,t}(X_0)) \geq 1$, whose probability can be bounded following a standard argument that we briefly describe. By the matching-by-matching construction, in order for $\cB_{\hslash,0}(X_0)$ not to be a tree, it is necessary at at least one of the stubs in $\cB_{\hslash,0}(X_0)$ is matched to another stub available during the construction. This observation applied to all the possible stubs used in the construction of this neighbourhood yields the bound
\begin{equation}
\P\big( \tx(\cB_{\hslash,0}(X_0)) \geq 1 \big) \leq \P\Bigg( {\rm Bin} \bigg( d^\hslash,\frac{d}{dn-2d^\hslash} \bigg) \geq 1 \bigg) \leq \frac{d^{\hslash+1}}{dn-2d^{\hslash}}.
\end{equation}
Combining the two estimates above, we get
\begin{equation}
\begin{split}
\P\big( \tx(\cB_{\hslash,0}(X_0)) \geq 1 \text{ and } & {\rm dist}_0(X_0,Y_0) \leq 2\hslash \big) \\
& = \E \big[ \textbf{1}_{\tx(\cB_{\hslash,0}(X_0)) \geq 1} \E \big[ \textbf{1}_{{\rm dist}_0(X_0,Y_0) 
\leq 2\hslash} ~\big|~ \cB_{2\hslash,0}(X_0) \big] \big] \\
& \leq \E \big[ \textbf{1}_{\tx(\cB_{\hslash,0}(X_0)) \geq 1} n^{-1+2\delta} \big] \\
& \leq \frac{d^{\hslash+1}}{dn-2d^{\hslash}}n^{-1+2\delta} \leq  C_{3} n^{-2+3\delta},
\end{split}
\end{equation}
for some constant $C_3(d)>0$, provided $n$ is large enough. The proof is complete by noting that $\delta < \tfrac{1}{48}$.
\end{proof}

\begin{remark}
We are interested in excluding w.h.p.\ the complement of a typical event on the time scale of the meeting time, i.e., $\Theta(n)$. Hence the choice of the exponent $\tfrac{3}{2}$ is expedient only to avoid the introduction of further constants. All the statements above are true when $\tfrac{3}{2}$ is replaced by $2-C_2\delta$ for some $C_2=C_2(d,\nu)>0$ with $\delta$ as in \eqref{def-sigma}.
\end{remark}
	
\begin{corollary}{\bf [Restriction to typicality]}
\label{coro}
For all $0 \le t \le n^{3/2}$,
\begin{equation}
		\lim_{n \to \infty} \P\big( \{\tau_{\rm meet}^{\pi \otimes \pi} > t\} \cap \{ \cE^{\rm trees}_s \cup \cE^{\rm far}_s,
		\text{ for all } 0 \leq s \leq t\}^\complement \, \big) = 0.
\end{equation}
\end{corollary}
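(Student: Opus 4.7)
The plan is to observe that this corollary is an immediate consequence of Proposition~\ref{prop:typical} combined with monotonicity of the event in $t$. Specifically, I would first note that for any $t \le n^{3/2}$, having one of the events $\cE^{\rm trees}_s \cup \cE^{\rm far}_s$ fail at some $s \in [0,t]$ is stronger than having one of them fail at some $s \in [0, n^{3/2}]$. That is,
\begin{equation*}
\{\cE^{\rm trees}_s \cup \cE^{\rm far}_s, \text{ for all } 0 \le s \le t\}^\complement \;\subseteq\; \{\cE^{\rm trees}_s \cup \cE^{\rm far}_s, \text{ for all } 0 \le s \le n^{3/2}\}^\complement.
\end{equation*}

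From here the argument is a single line. Dropping the intersection with $\{\tau_{\rm meet}^{\pi\otimes\pi} > t\}$ (which only shrinks the event) and applying the inclusion above, the probability in question is bounded above by
\begin{equation*}
\P\bigl(\{\cE^{\rm trees}_s \cup \cE^{\rm far}_s, \text{ for all } 0 \le s \le n^{3/2}\}^\complement\bigr) \;=\; 1 - \P\bigl(\cE^{\rm trees}_s \cup \cE^{\rm far}_s, \text{ for all } 0 \le s \le n^{3/2}\bigr),
\end{equation*}
which tends to $0$ as $n \to \infty$ by Proposition~\ref{prop:typical}. This yields the conclusion uniformly over the range $0 \le t \le n^{3/2}$.

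I expect no genuine obstacle here: the work has already been done in Proposition~\ref{prop:typical}, whose proof handles the delicate part, namely, controlling the probability of the ``bad'' time-stationary event $[\cE^{\rm trees}_0]^\complement \cap [\cE^{\rm far}_0]^\complement$ via a matching-by-matching construction, and then extending to all times up to $n^{3/2}$ via the stationarity argument together with the rate bound $\mathfrak r \le C_1 d^{\hslash}$. The role of the corollary is merely to recast Proposition~\ref{prop:typical} in a form that will be convenient in the subsequent coupling argument (Section~\ref{ss.coup}), since the analysis of $\tau_{\rm meet}^{\pi\otimes\pi}$ will be carried out under the realisation of typical events at no asymptotic cost, as long as attention is restricted to times before the two walks meet.
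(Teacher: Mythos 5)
Your proof is correct and coincides with what the paper intends: the corollary is stated without proof precisely because it is an immediate consequence of Proposition~\ref{prop:typical} by dropping the intersection with $\{\tau_{\rm meet}^{\pi\otimes\pi}>t\}$ and using the inclusion $\{\cE^{\rm trees}_s \cup \cE^{\rm far}_s,\ \forall\, s\le t\}^\complement \subseteq \{\cE^{\rm trees}_s \cup \cE^{\rm far}_s,\ \forall\, s\le n^{3/2}\}^\complement$. Your observation that the resulting bound is independent of $t$, and hence uniform over the stated range, is also the right reading of the quantifier ``for all $0\le t\le n^{3/2}$''.
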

	
	
\subsection{Exploration process}
\label{ss.expl}
	
We next consider an exploration process similar in spirit to the one in \cite[Section 4.2]{SV23pr}. Let
	\begin{equation}
	\mathfrak{P}_{n}(d) \coloneqq \big\{ \{\sigma_{x,i},\sigma_{y,j}\} \colon x,y\in [n], 	1 \leq  i,j \leq d, (x,i) \neq (y,j) \big\}
	\end{equation}
be the set of all potential edges of the random graph. A set $E \subset \mathfrak{P}_{n}(d)$ is called \emph{a partial matching} when no stub is present in $E$ more than once. With this notation
	\begin{equation}
	{\cP}_n(d) \coloneqq \{E \subset \mathfrak{P}_n(d) \colon E \text{ is a partial matching} \}
	\end{equation}
represents the set of all the possible {partial matchings} of the graph, so that in particular $\cG_n(d) \subset {\cP}_n(d)$. Using the same source of randomness used to construct $(G_t,X_t,Y_t)_{t \geq 0}$, we will define a process $(E_t)_{t \geq 0}$ taking values in $\mathfrak{P}_n(d)$. In words, $E_t$ will represent the collection of edges that are known to be part of $G_t$ by either of the two random walks. 
	
Without risk of confusion, in this section we use the symbol $\sigma \leftrightarrow_t \sigma'$ to mean that the stubs $\sigma$ and $\sigma'$ are matched in $E_t$ (similarly for ${\rm v}_s (\sigma)$), and write ${\rm dist}_t(x,y)$ for the (graph) distance in $E_t$. We will sometimes abuse notation by letting $\sigma \in E_t$ mean that the stub $\sigma$ is matched in $E_t$. We denote by $\bar E_t$ the collection of stubs that are matched in $E_t$, together with the stubs that are unmatched in $E_t$ whose corresponding vertex has at least one other stub matched in $E_t$.
	
Before explicitly defining the process, we claim that it will enjoy the following properties:
	\begin{itemize}
	\item[(P1)] 
	The triple $(E_t,X_t,Y_t)_{t \geq 0}$ is a Markov chain.
	\item[(P2)] 
	For any $t \geq 0$, the distribution of $(G_t,X_t,Y_t)$ initialized at $(G_0,X_0,Y_0) =^{\rm(d)} \mu_d \otimes \pi \otimes \pi$ coincides with the product distribution of $(E_t,X_t,Y_t)$ and the uniform distribution on the possible matching of the stubs that are unmatched in $E_t$.
	\end{itemize}
In other words, the process $(E_t,X_t,Y_t)_{t\ge0}$ is a Markovian marginal of the original process  $(G_t,X_t,Y_t)_{t\ge 0}$ in which the only randomness used is the one required to know the $\hslash$-neighbourhoods of $(X_t,Y_t)_{t \geq 0}$. Intuitively, the exploration process has to be thought of as follows: the two random walks can ``see'' up to distance $\hslash$, so at any time they are aware of the current matchings that are at most $\hslash$ steps apart from either of them. On the other hand, if, for instance, some rewiring involving the $\hslash$-neighbourhood of the first random walk takes place, then that random walk has knowledge of its new $\hslash$-neighbourhood {\em and} of the matchings involved in the sub-graph that has been moved away from it by the rewiring (the gray edges in Figures \ref{fig:explorationRW}--\ref{fig:explorationDec}). Nevertheless, this last information rapidly becomes negligible, since those edges that are now ``far'' from the walks are rewired at rate $\nu$.
	
Let us formally define the process $(E_t,X_t,Y_t)_{t \geq 0}$. 
	\begin{enumerate}
	\item 
	Sample $(X_0,Y_0) =^{\rm (d)} \pi \otimes \pi$.
	\item 
	Construct $\cB_{\hslash,0}(X_0) \cup \cB_{\hslash,0}(Y_0)$, following the breadth-first construction (see the construction below~\eqref{eq:real}).
	\item 
	Let $E_0$ be the set of edges (= pairs of matched stubs) in $\cB_{\hslash,0}(X_0)\cup \cB_{\hslash,0}(Y_0)$.
	\item 
	Use the same clocks as in Section \ref{ss.notprop}: $(\mathfrak{T}_{z,i}^{\rm rw},\mathfrak{T}_{z,i}^{\rm dyn})_{z\in[n],i\in[d]}$.
	\item 
	For each pair $(z,i) \in [n] \times d$, construct an unmarked Poisson process $\hat{\mathfrak{T}}_{z,i}^{\rm dyn}$ by retaining the arrival times of $\mathfrak{T}_{z,i}^{\rm dyn}$ and the arrival times of $({\mathfrak{T}}_{u,k}^{\rm dyn})_{(u,k)\neq(z,i)}$, which are marked by $\sigma_{z,i}$.
	Note that the collection constructed above is not independent. In fact, every arrival time is associated to two Poisson processes $\hat{\mathfrak{T}}_{z,i}^{\rm dyn}$: one for the stub that originated the arrival time and the other for the stub marked by this arrival.
	\item 
	We are now in position to define the dynamics. For each $s \geq 0$, denote by $t >s$ the first arrival after time $s$ among the Poisson processes $(\mathfrak{T}^{\rm rw}_{z,i})_{z \in \{X_s,Y_s\}, 1 \leq i \leq d}$ and $(\hat{\mathfrak{T}}^{\rm dyn}_{z,i})_{\{ (z,i) \colon \sigma_{z,i} \in E_s \}}$. We set $(E_r, X_r, Y_r) = (E_s, X_s, Y_s) $ for $r \in (s,t)$ and now define $(E_t, X_t, Y_t)$ in the following way.
	\begin{itemize}
	\item[(a)]
	{\bf [Random walk transition]} If $t>s$ is an arrival time of $\mathfrak{T}^{\rm rw}_{X_s,i}$ for some $1 \leq i \leq d$, set $Y_t = Y_s$, $X_t = {\rm v}_s (\sigma_{X_s,i})$, and
	\begin{equation}
	E_t = E_s \cup V^X(X_s,\sigma_{X_s,i}),
	\end{equation}
	where $V^X(X_s,\sigma_{X_s,i})$ is a random variable in $\cP_n(d)$ obtained by randomly matching the stubs in $\cB_{\hslash,t}(X_t)$ (respectively, $\cB_{\hslash,t}(Y_t)$) that are unmatched in $E_s$. See Figure \ref{fig:explorationRW}. The case when $t>s$ is a arrival of $\mathfrak{T}^{\rm rw}_{Y_s,i}$, for some $1 \leq i \leq d$ is defined analogously.
	\item[(b)]
	{\bf [Internal rewiring of an edge]} 
	If  $t>s$ is an arrival time of both $\hat{\mathfrak{T}}_{z,i}^{\rm dyn}$ and $\hat{\mathfrak{T}}_{v,j}^{\rm dyn}$ for stubs $\sigma_{z,i},\sigma_{v,j}\in \bar E_s$, denote by $\sigma_{u,k}$ the stub matched to $\sigma_{z,i}$ in $E_s$ (if any, i.e., if $\sigma_{z,i}\in E_s$) and $\sigma_{u',k'}$ the stub matched to $\sigma_{v,j}$ in $\bar E_s$ (if any). Let
	\begin{equation}
	E_{s}^{-} = E_s\setminus\left(\{\sigma_{z,i},\sigma_{u,k} \}\cup\{\sigma_{v,j},\sigma_{u',k'} \}\right),
	\end{equation}
	and set
	\begin{equation}
		E_t = E_{s}^{-} \cup V^{-}(\sigma_{z,i},\sigma_{v,j},E_s),
	\end{equation}
	where $V^{-}(\sigma_{z,i},\sigma_{v,j},E_s)$ is a random variable in $\cP_n(d)$ obtained by iteratively matching at random the stubs at distance less than $\hslash$ from either $X_t$ or $Y_t$ that are unmatched in $E_{s}^{-}$.	See Figures \ref{fig:explorationV-}, \ref{fig:explorationV0} and \ref{fig:explorationSigma-}.
	\item[(c)]
	{\bf [External rewiring of an edge]} 
		If $t>s$ is an arrival time of a unique $\hat{\mathfrak{T}}_{z,i}^{\rm dyn}$ for some $\sigma_{z,i} \in \bar E_s$ such that $z \in \cB_{\hslash,s}(X_s) \cup \cB_{\hslash,t}(Y_s)$, denote by $\sigma_{u,k}$ the stub matched to $\sigma_{z,i}$ in $\bar E_s$ (if any) and set
	\begin{equation}
		E_t = E_s\setminus\left(\{\sigma_{z,i},\sigma_{u,k} \}\right) \cup V^{+}(\sigma_{z,i},E_s),
	\end{equation} 
	where $V^{+}(\sigma_{z,i},E_s)$ is a random variable in $\cP_n(d)$ obtained by iteratively matching at random the stubs at distance less than $\hslash$ from either $X_t$ or $Y_t$ that are unmatched in $E_s\setminus\{\sigma_{z,i},\sigma_{u,k} \}$. See Figure \ref{fig:explorationV+}.
		\item[(d)] 
	{\bf [External rewiring far from the two random walks]} 
	Finally, if $t>s$ is an arrival time of $\hat{\mathfrak{T}}_{z,i}^{\rm dyn}$ for a unique $\sigma_{z,i} \in E_s$, with $z \not \in \cB_{\hslash,s}(X_s) \cup \cB_{\hslash,t}(Y_s)$, denote by $\sigma_{u,k}$ the stub matched to $\sigma_{z,i}$ in $E_s$ and set
	\begin{equation}
		E_t = E_s\setminus\left( \{\sigma_{z,i},\sigma_{u,k} \}\right).
	\end{equation} 
	See Figure \ref{fig:explorationDec}.
			\end{itemize}
			\end{enumerate}
The reader can check that properties (P1)-(P2) are satisfied by the definition of the process $(E_t,X_t,Y_t)_{t\ge 0}$. Without risk of confusion, we will use the same symbol $\P$ to refer to the probability law associated to the graphical construction just described.

\begin{figure}[!ht]
\centering
\begin{tikzpicture}[scale=0.5]

\draw [line width=2, green] (0,4.25) -- (1, 27/8);
\draw [line width=2, green] (6/8,173/56) -- (10/8,205/56);
\draw (6/8,173/56) -- (10/8,205/56);
\node[right] at (1, 27/8) {{\tiny $\sigma_{X_{s}, 3}$}};

\draw (0,4.25) -- (-2,2.5);
\draw (-2,2.5) -- (-2.5,1.25);
\draw (-2,2.5) -- (-1.5,1.25);
\draw (-1.5,1.25) -- (-1.75,0);
\draw (-1.5,1.25) -- (-1.25,0);
\draw (-2.5,1.25) -- (-2.25,0);
\draw (-2.5,1.25) -- (-2.75,0);

\draw (0,4.25) -- (0,2.5);
\draw (0,2.5) -- (-0.55,1.25);
\draw (0,2.5) -- (0.5,1.25);
\draw (-0.5,1.25) -- (-0.75,0);
\draw (-0.5,1.25) -- (-0.25,0);
\draw (0.5,1.25) -- (0.25,0);
\draw (0.5,1.25) -- (0.75,0);

\draw (0,4.25) -- (2,2.5);
\draw (2,2.5) -- (2.5,1.25);
\draw (2,2.5) -- (1.5,1.25);
\draw (1.5,1.25) -- (1.75,0);
\draw (1.5,1.25) -- (1.25,0);
\draw (2.5,1.25) -- (2.25,0);
\draw (2.5,1.25) -- (2.75,0);

\draw [fill=blue] (0,4.25) circle (0.15);
\node[above] at (0,4.25) {$X_{s}$}; 

\draw [fill=red] (-2,2.5) circle (0.15);
\draw [fill=white] (-1.5,1.25) circle (0.15);
\draw [fill=white] (-2.5,1.25) circle (0.15);
\draw [fill=white] (-2.75,0) circle (0.15);
\draw [fill=white] (-2.25,0) circle (0.15);
\draw [fill=white] (-1.75,0) circle (0.15);
\draw [fill=white] (-1.25,0) circle (0.15);

\draw [fill=purple!70!white] (0,2.5) circle (0.15);
\draw [fill=white] (-0.5,1.25) circle (0.15);
\draw [fill=white] (0.5,1.25) circle (0.15);
\draw [fill=white] (0.75,0) circle (0.15);
\draw [fill=white] (0.25,0) circle (0.15);
\draw [fill=white] (-0.25,0) circle (0.15);
\draw [fill=white] (-0.75,0) circle (0.15);

\draw [fill=orange] (2,2.5) circle (0.15);
\draw [fill=pink] (1.5,1.25) circle (0.15);
\draw [fill=yellow] (2.5,1.25) circle (0.15);
\draw [fill=white] (2.75,0) circle (0.15);
\draw [fill=white] (2.25,0) circle (0.15);
\draw [fill=white] (1.75,0) circle (0.15);
\draw [fill=white] (1.25,0) circle (0.15);

\begin{scope}[shift={(6,0)}]

\draw (0,4.25) -- (-2,2.5);
\draw (-2,2.5) -- (-2.5,1.25);
\draw (-2,2.5) -- (-1.5,1.25);
\draw (-1.5,1.25) -- (-1.75,0);
\draw (-1.5,1.25) -- (-1.25,0);
\draw (-2.5,1.25) -- (-2.25,0);
\draw (-2.5,1.25) -- (-2.75,0);

\draw (0,4.25) -- (0,2.5);
\draw (0,2.5) -- (-0.55,1.25);
\draw (0,2.5) -- (0.5,1.25);
\draw (-0.5,1.25) -- (-0.75,0);
\draw (-0.5,1.25) -- (-0.25,0);
\draw (0.5,1.25) -- (0.25,0);
\draw (0.5,1.25) -- (0.75,0);

\draw (0,4.25) -- (2,2.5);
\draw (2,2.5) -- (2.5,1.25);
\draw (2,2.5) -- (1.5,1.25);
\draw (1.5,1.25) -- (2.5,1.25);
\draw (1.5,1.25) -- (1.25,0);
\draw (2.5,1.25) -- (2.75,0);

\draw [fill=violet] (0,4.25) circle (0.15);
\node[above] at (0,4.25) {$Y_{s}$}; 

\draw [fill=cyan] (-2,2.5) circle (0.15);
\draw [fill=white] (-1.5,1.25) circle (0.15);
\draw [fill=white] (-2.5,1.25) circle (0.15);
\draw [fill=white] (-2.75,0) circle (0.15);
\draw [fill=white] (-2.25,0) circle (0.15);
\draw [fill=white] (-1.75,0) circle (0.15);
\draw [fill=white] (-1.25,0) circle (0.15);

\draw [fill=lime] (0,2.5) circle (0.15);
\draw [fill=white] (-0.5,1.25) circle (0.15);
\draw [fill=white] (0.5,1.25) circle (0.15);
\draw [fill=white] (0.75,0) circle (0.15);
\draw [fill=white] (0.25,0) circle (0.15);
\draw [fill=white] (-0.25,0) circle (0.15);
\draw [fill=white] (-0.75,0) circle (0.15);

\draw [fill=olive] (2,2.5) circle (0.15);
\draw [fill=white] (1.5,1.25) circle (0.15);
\draw [fill=white] (2.5,1.25) circle (0.15);
\draw [fill=white] (2.75,0) circle (0.15);
\draw [fill=white] (1.25,0) circle (0.15);

\end{scope}

\begin{scope}[shift={(15,0)}]

\draw [line width=2, green] (-2,2.5) -- (-1, 27/8);
\draw [line width=2, green] (-6/8,173/56) -- (-10/8,205/56);
\draw (-6/8,173/56) -- (-10/8,205/56);
\node[left] at (-1, 27/8) {{\tiny $\sigma_{X_{s}, 3}$}};

\draw (0,4.25) -- (-2,2.5);
\draw (-2,2.5) -- (-2.5,1.25);
\draw (-2,2.5) -- (-1.5,1.25);
\draw (-1.5,1.25) -- (-1.75,0);
\draw (-1.5,1.25) -- (-1.25,0);
\draw (-2.5,1.25) -- (-2.25,0);
\draw (-2.5,1.25) -- (-2.75,0);

\draw (0,4.25) -- (0,2.5);
\draw (0,2.5) -- (-0.55,1.25);
\draw (0,2.5) -- (0.5,1.25);
\draw [color=brown!90!black] (-0.5,1.25) -- (-0.75,0);
\draw [color=brown!90!black] (-0.5,1.25) -- (-0.25,0);
\draw [color=brown!90!black] (0.5,1.25) -- (0.25,0);
\draw [color=brown!90!black] (0.5,1.25) -- (0.75,0);

\draw (0,4.25) -- (2,2.5);
\draw (2,2.5) -- (2.5,1.25);
\draw (2,2.5) -- (1.5,1.25);
\draw [color=brown!90!black] (1.5,1.25) -- (1.75,0);
\draw [color=brown!90!black] (1.5,1.25) -- (1.25,0);
\draw [color=brown!90!black] (2.5,1.25) -- (2.25,0);
\draw [color=brown!90!black] (2.5,1.25) -- (2.75,0);


\draw [color=gray] (-2.75,0) -- (-2.9,-1.5);
\draw [color=gray] (-2.75,0) -- (-3.25,-1.5);
\draw [color=gray] (-2.25,0) -- (-2.55,-1.5);
\draw [color=gray] (-2.25,0) -- (-2.2,-1.5);
\draw [color=gray] (-1.75,0) -- (-1.8,-1.5);
\draw [color=gray] (-1.75,0) -- (-1.45,-1.5);
\draw [color=gray] (-1.25,0) -- (-1.1,-1.5);
\draw [color=gray] (-1.25,0) -- (-0.75,-1.5);

\draw [color=gray, fill=white] (-3.25,-1.5) circle (0.15);
\draw [color=gray, fill=white] (-2.9,-1.5) circle (0.15);
\draw [color=gray, fill=white] (-2.55,-1.5) circle (0.15);
\draw [color=gray, fill=white] (-2.2,-1.5) circle (0.15);
\draw [color=gray, fill=white] (-1.8,-1.5) circle (0.15);
\draw [color=gray, fill=white] (-1.45,-1.5) circle (0.15);
\draw [color=gray, fill=white] (-1.1,-1.5) circle (0.15);
\draw [color=gray, fill=white] (-0.75,-1.5) circle (0.15);

\draw [fill=orange] (0,4.25) circle (0.15);
\node[above] at (0,4.25) {$X_{t}$}; 

\draw [fill=blue] (-2,2.5) circle (0.15);
\draw [fill=purple!70!white] (-1.5,1.25) circle (0.15);
\draw [fill=red] (-2.5,1.25) circle (0.15);
\draw [fill=white] (-2.75,0) circle (0.15);
\draw [fill=white] (-2.25,0) circle (0.15);
\draw [fill=white] (-1.75,0) circle (0.15);
\draw [fill=white] (-1.25,0) circle (0.15);

\draw [fill=pink] (0,2.5) circle (0.15);
\draw [fill=white] (-0.5,1.25) circle (0.15);
\draw [fill=white] (0.5,1.25) circle (0.15);
\draw [color=brown!90!black, fill=white] (0.75,0) circle (0.15);
\draw [color=brown!90!black, fill=white] (0.25,0) circle (0.15);
\draw [color=brown!90!black, fill=white] (-0.25,0) circle (0.15);
\draw [color=brown!90!black, fill=white] (-0.75,0) circle (0.15);

\draw [fill=yellow] (2,2.5) circle (0.15);
\draw [fill=white] (1.5,1.25) circle (0.15);
\draw [fill=white] (2.5,1.25) circle (0.15);
\draw [color=brown!90!black, fill=white] (2.75,0) circle (0.15);
\draw [color=brown!90!black, fill=white] (2.25,0) circle (0.15);
\draw [color=brown!90!black, fill=white] (1.75,0) circle (0.15);
\draw [color=brown!90!black, fill=white] (1.25,0) circle (0.15);

\draw [decorate,decoration={brace,amplitude=5pt,mirror}] (-0.9,-0.3) -- (2.9,-0.3) node[midway, below] { {\tiny $V^{X}(X_{s}, \sigma_{X_{s},3})$} };

\begin{scope}[shift={(6,0)}]

\draw (0,4.25) -- (-2,2.5);
\draw (-2,2.5) -- (-2.5,1.25);
\draw (-2,2.5) -- (-1.5,1.25);
\draw (-1.5,1.25) -- (-1.75,0);
\draw (-1.5,1.25) -- (-1.25,0);
\draw (-2.5,1.25) -- (-2.25,0);
\draw (-2.5,1.25) -- (-2.75,0);

\draw (0,4.25) -- (0,2.5);
\draw (0,2.5) -- (-0.55,1.25);
\draw (0,2.5) -- (0.5,1.25);
\draw (-0.5,1.25) -- (-0.75,0);
\draw (-0.5,1.25) -- (-0.25,0);
\draw (0.5,1.25) -- (0.25,0);
\draw (0.5,1.25) -- (0.75,0);

\draw (0,4.25) -- (2,2.5);
\draw (2,2.5) -- (2.5,1.25);
\draw (2,2.5) -- (1.5,1.25);
\draw (1.5,1.25) -- (2.5,1.25);
\draw (1.5,1.25) -- (1.25,0);
\draw (2.5,1.25) -- (2.75,0);

\draw [fill=violet] (0,4.25) circle (0.15);
\node[above] at (0,4.25) {$Y_{t}$}; 

\draw [fill=cyan] (-2,2.5) circle (0.15);
\draw [fill=white] (-1.5,1.25) circle (0.15);
\draw [fill=white] (-2.5,1.25) circle (0.15);
\draw [fill=white] (-2.75,0) circle (0.15);
\draw [fill=white] (-2.25,0) circle (0.15);
\draw [fill=white] (-1.75,0) circle (0.15);
\draw [fill=white] (-1.25,0) circle (0.15);

\draw [fill=lime] (0,2.5) circle (0.15);
\draw [fill=white] (-0.5,1.25) circle (0.15);
\draw [fill=white] (0.5,1.25) circle (0.15);
\draw [fill=white] (0.75,0) circle (0.15);
\draw [fill=white] (0.25,0) circle (0.15);
\draw [fill=white] (-0.25,0) circle (0.15);
\draw [fill=white] (-0.75,0) circle (0.15);

\draw [fill=olive] (2,2.5) circle (0.15);
\draw [fill=white] (1.5,1.25) circle (0.15);
\draw [fill=white] (2.5,1.25) circle (0.15);
\draw [fill=white] (2.75,0) circle (0.15);
\draw [fill=white] (1.25,0) circle (0.15);

\end{scope}

\end{scope}

\end{tikzpicture}

\caption{Visual representation of the transition in step (6-a), where an arrival of $\mathfrak{T}^{\rm rw}_{X_s,3}$ is portrayed, and $\hslash = 3$. Edges represented in gray in the second plot are those that are not part of $\cB_{\hslash,t}(X_t)\cup\cB_{\hslash,t}(Y_t) $.}
\label{fig:explorationRW}
\end{figure}

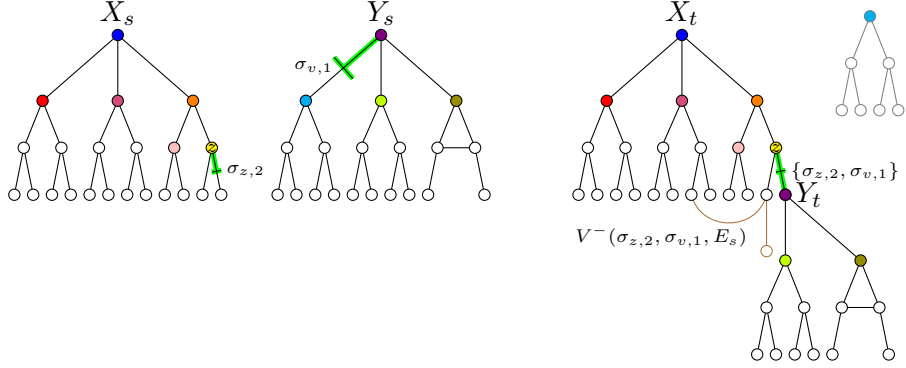
\begin{figure}[!ht]
\centering
\begin{tikzpicture}[scale=0.5]

\draw [line width=2, green] (2.5,1.25) -- (21/8, 5/8);
\draw [line width=2, green] (20/8,24/40) -- (22/8,26/40);
\draw (20/8,24/40) -- (22/8,26/40);
\node[right] at (21/8, 5/8) {{\tiny	 $\sigma_{z, 2}$}};

\draw (0,4.25) -- (-2,2.5);
\draw (-2,2.5) -- (-2.5,1.25);
\draw (-2,2.5) -- (-1.5,1.25);
\draw (-1.5,1.25) -- (-1.75,0);
\draw (-1.5,1.25) -- (-1.25,0);
\draw (-2.5,1.25) -- (-2.25,0);
\draw (-2.5,1.25) -- (-2.75,0);

\draw (0,4.25) -- (0,2.5);
\draw (0,2.5) -- (-0.55,1.25);
\draw (0,2.5) -- (0.5,1.25);
\draw (-0.5,1.25) -- (-0.75,0);
\draw (-0.5,1.25) -- (-0.25,0);
\draw (0.5,1.25) -- (0.25,0);
\draw (0.5,1.25) -- (0.75,0);

\draw (0,4.25) -- (2,2.5);
\draw (2,2.5) -- (2.5,1.25);
\draw (2,2.5) -- (1.5,1.25);
\draw (1.5,1.25) -- (1.75,0);
\draw (1.5,1.25) -- (1.25,0);
\draw (2.5,1.25) -- (2.25,0);
\draw (2.5,1.25) -- (2.75,0);

\draw [fill=blue] (0,4.25) circle (0.15);
\node[above] at (0,4.25) {$X_{s}$}; 

\draw [fill=red] (-2,2.5) circle (0.15);
\draw [fill=white] (-1.5,1.25) circle (0.15);
\draw [fill=white] (-2.5,1.25) circle (0.15);
\draw [fill=white] (-2.75,0) circle (0.15);
\draw [fill=white] (-2.25,0) circle (0.15);
\draw [fill=white] (-1.75,0) circle (0.15);
\draw [fill=white] (-1.25,0) circle (0.15);

\draw [fill=purple!70!white] (0,2.5) circle (0.15);
\draw [fill=white] (-0.5,1.25) circle (0.15);
\draw [fill=white] (0.5,1.25) circle (0.15);
\draw [fill=white] (0.75,0) circle (0.15);
\draw [fill=white] (0.25,0) circle (0.15);
\draw [fill=white] (-0.25,0) circle (0.15);
\draw [fill=white] (-0.75,0) circle (0.15);

\draw [fill=orange] (2,2.5) circle (0.15);
\draw [fill=pink] (1.5,1.25) circle (0.15);
\draw [fill=yellow] (2.5,1.25) circle (0.15);
\draw [fill=white] (2.75,0) circle (0.15);
\draw [fill=white] (2.25,0) circle (0.15);
\draw [fill=white] (1.75,0) circle (0.15);
\draw [fill=white] (1.25,0) circle (0.15);

\node at (2.5, 1.25) { {\tiny $z$} };

\begin{scope}[shift={(7,0)}]

\draw [line width=2, green] (0,4.25) -- (-1, 27/8);
\draw [line width=2, green] (-6/8,173/56) -- (-10/8,205/56);
\draw (-6/8,173/56) -- (-10/8,205/56);
\node[left] at (-1, 27/8) {{\tiny $\sigma_{v,1}$}};

\draw (0,4.25) -- (-2,2.5);
\draw (-2,2.5) -- (-2.5,1.25);
\draw (-2,2.5) -- (-1.5,1.25);
\draw (-1.5,1.25) -- (-1.75,0);
\draw (-1.5,1.25) -- (-1.25,0);
\draw (-2.5,1.25) -- (-2.25,0);
\draw (-2.5,1.25) -- (-2.75,0);

\draw (0,4.25) -- (0,2.5);
\draw (0,2.5) -- (-0.55,1.25);
\draw (0,2.5) -- (0.5,1.25);
\draw (-0.5,1.25) -- (-0.75,0);
\draw (-0.5,1.25) -- (-0.25,0);
\draw (0.5,1.25) -- (0.25,0);
\draw (0.5,1.25) -- (0.75,0);

\draw (0,4.25) -- (2,2.5);
\draw (2,2.5) -- (2.5,1.25);
\draw (2,2.5) -- (1.5,1.25);
\draw (1.5,1.25) -- (2.5,1.25);
\draw (1.5,1.25) -- (1.25,0);
\draw (2.5,1.25) -- (2.75,0);

\draw [fill=violet] (0,4.25) circle (0.15);
\node[above] at (0,4.25) {$Y_{s}$}; 

\draw [fill=cyan] (-2,2.5) circle (0.15);
\draw [fill=white] (-1.5,1.25) circle (0.15);
\draw [fill=white] (-2.5,1.25) circle (0.15);
\draw [fill=white] (-2.75,0) circle (0.15);
\draw [fill=white] (-2.25,0) circle (0.15);
\draw [fill=white] (-1.75,0) circle (0.15);
\draw [fill=white] (-1.25,0) circle (0.15);

\draw [fill=lime] (0,2.5) circle (0.15);
\draw [fill=white] (-0.5,1.25) circle (0.15);
\draw [fill=white] (0.5,1.25) circle (0.15);
\draw [fill=white] (0.75,0) circle (0.15);
\draw [fill=white] (0.25,0) circle (0.15);
\draw [fill=white] (-0.25,0) circle (0.15);
\draw [fill=white] (-0.75,0) circle (0.15);

\draw [fill=olive] (2,2.5) circle (0.15);
\draw [fill=white] (1.5,1.25) circle (0.15);
\draw [fill=white] (2.5,1.25) circle (0.15);
\draw [fill=white] (2.75,0) circle (0.15);
\draw [fill=white] (1.25,0) circle (0.15);

\end{scope}

\begin{scope}[shift={(15,0)}]

\draw [line width=2, green] (2.5,1.25) -- (2.75, 0);
\draw [line width=2, green] (20/8,24/40) -- (22/8,26/40);
\draw (20/8,24/40) -- (22/8,26/40);
\node[right] at (21/8, 5/8) { {\tiny $\{\sigma_{z, 2}, \sigma_{v, 1}\}$} };


\draw [color=brown!90!black] (2.25,0) -- (2.25,-1.5);
\draw [color=brown!90!black] (2.25,0) .. controls (2,-1) and (0.5,-1) .. (0.25, 0);
\draw [color=brown!90!black, fill=white] (2.25,-1.5) circle (0.15);
\node[left] at (2,-1.1) { {\tiny $V^{-}(\sigma_{z,2}, \sigma_{v,1}, E_{s})$} };

\draw (0,4.25) -- (-2,2.5);
\draw (-2,2.5) -- (-2.5,1.25);
\draw (-2,2.5) -- (-1.5,1.25);
\draw (-1.5,1.25) -- (-1.75,0);
\draw (-1.5,1.25) -- (-1.25,0);
\draw (-2.5,1.25) -- (-2.25,0);
\draw (-2.5,1.25) -- (-2.75,0);

\draw (0,4.25) -- (0,2.5);
\draw (0,2.5) -- (-0.55,1.25);
\draw (0,2.5) -- (0.5,1.25);
\draw (-0.5,1.25) -- (-0.75,0);
\draw (-0.5,1.25) -- (-0.25,0);
\draw (0.5,1.25) -- (0.25,0);
\draw (0.5,1.25) -- (0.75,0);

\draw (0,4.25) -- (1, 27/8);
\draw (1, 27/8) -- (2,2.5);
\draw (2,2.5) -- (2.5,1.25);
\draw (2,2.5) -- (1.5,1.25);
\draw (1.5,1.25) -- (1.75,0);
\draw (1.5,1.25) -- (1.25,0);
\draw (2.5,1.25) -- (2.25,0);
\draw (2.5,1.25) -- (2.75,0);

\draw [fill=blue] (0,4.25) circle (0.15);
\node[above] at (0,4.25) {$X_{t}$}; 

\draw [fill=red] (-2,2.5) circle (0.15);
\draw [fill=white] (-1.5,1.25) circle (0.15);
\draw [fill=white] (-2.5,1.25) circle (0.15);
\draw [fill=white] (-2.75,0) circle (0.15);
\draw [fill=white] (-2.25,0) circle (0.15);
\draw [fill=white] (-1.75,0) circle (0.15);
\draw [fill=white] (-1.25,0) circle (0.15);

\draw [fill=purple!70!white] (0,2.5) circle (0.15);
\draw [fill=white] (-0.5,1.25) circle (0.15);
\draw [fill=white] (0.5,1.25) circle (0.15);
\draw [fill=white] (0.75,0) circle (0.15);
\draw [fill=white] (0.25,0) circle (0.15);
\draw [fill=white] (-0.25,0) circle (0.15);
\draw [fill=white] (-0.75,0) circle (0.15);

\draw [fill=orange] (2,2.5) circle (0.15);
\draw [fill=pink] (1.5,1.25) circle (0.15);
\draw [fill=yellow] (2.5,1.25) circle (0.15);
\draw [fill=white] (2.75,0) circle (0.15);
\draw [fill=white] (2.25,0) circle (0.15);
\draw [fill=white] (1.75,0) circle (0.15);
\draw [fill=white] (1.25,0) circle (0.15);

\node at (2.5, 1.25) { {\tiny $z$} };

\begin{scope}[shift={(2.75,-4.25)}]

\draw (0,4.25) -- (0,2.5);
\draw (0,2.5) -- (-0.55,1.25);
\draw (0,2.5) -- (0.5,1.25);
\draw (-0.5,1.25) -- (-0.75,0);
\draw (-0.5,1.25) -- (-0.25,0);
\draw (0.5,1.25) -- (0.25,0);
\draw (0.5,1.25) -- (0.75,0);

\draw (0,4.25) -- (2,2.5);
\draw (2,2.5) -- (2.5,1.25);
\draw (2,2.5) -- (1.5,1.25);
\draw (1.5,1.25) -- (2.5,1.25);
\draw (1.5,1.25) -- (1.25,0);
\draw (2.5,1.25) -- (2.75,0);

\draw [fill=violet] (0,4.25) circle (0.15);
\node[right] at (0,4.25) {$Y_{t}$}; 

\draw [fill=lime] (0,2.5) circle (0.15);
\draw [fill=white] (-0.5,1.25) circle (0.15);
\draw [fill=white] (0.5,1.25) circle (0.15);
\draw [fill=white] (0.75,0) circle (0.15);
\draw [fill=white] (0.25,0) circle (0.15);
\draw [fill=white] (-0.25,0) circle (0.15);
\draw [fill=white] (-0.75,0) circle (0.15);

\draw [fill=olive] (2,2.5) circle (0.15);
\draw [fill=white] (1.5,1.25) circle (0.15);
\draw [fill=white] (2.5,1.25) circle (0.15);
\draw [fill=white] (2.75,0) circle (0.15);
\draw [fill=white] (1.25,0) circle (0.15);

\end{scope}

\begin{scope}[shift = {(5, 02.25)}]

\draw [color=gray] (0,2.5) -- (0.5,1.25);
\draw [color=gray] (0,2.5) -- (-0.5,1.25);
\draw [color=gray] (0.5,1.25) -- (0.75,0);
\draw [color=gray] (0.5,1.25) -- (0.25,0);
\draw [color=gray] (-0.5,1.25) -- (-0.25,0);
\draw [color=gray] (-0.5,1.25) -- (-0.75,0);

\draw [color=gray, fill=cyan] (0,2.5) circle (0.15);
\draw [color=gray, fill=white] (0.5,1.25) circle (0.15);
\draw [color=gray, fill=white] (-0.5,1.25) circle (0.15);
\draw [color=gray, fill=white] (-0.75,0) circle (0.15);
\draw [color=gray, fill=white] (-0.25,0) circle (0.15);
\draw [color=gray, fill=white] (0.75,0) circle (0.15);
\draw [color=gray, fill=white] (0.25,0) circle (0.15);

\end{scope}

\end{scope}

\end{tikzpicture}

\caption{Visual representation of the transition in step (6-b), where an arrival of both  $\hat{\mathfrak{T}}^{\rm dyn}_{z,2}$ and $\hat{\mathfrak{T}}^{\rm dyn}_{v,1}$ is portrayed, with $v=Y_s$. Edges represented in gray in the second plot are those that are not part of $\cB_{\hslash,t}(X_t)\cup\cB_{\hslash,t}(Y_t) $, and $\hslash = 3$. Edges represented in brown are those in $V^-(\sigma_{z,2}, \sigma_{v,1}, E_s)$.}
\label{fig:explorationV-}
\end{figure}
	
\begin{figure}[!ht]
\centering
\begin{tikzpicture}[scale=0.5]

\draw [line width=2, green] (0,4.25) -- (1, 27/8);
\draw [line width=2, green] (6/8,173/56) -- (10/8,205/56);
\draw (6/8,173/56) -- (10/8,205/56);
\node[right] at (1, 27/8) {{\tiny $\sigma_{z, 3}$}};

\draw (0,4.25) -- (-2,2.5);
\draw (-2,2.5) -- (-2.5,1.25);
\draw (-2,2.5) -- (-1.5,1.25);
\draw (-1.5,1.25) -- (-1.75,0);
\draw (-1.5,1.25) -- (-1.25,0);
\draw (-2.5,1.25) -- (-2.25,0);
\draw (-2.5,1.25) -- (-2.75,0);

\draw (0,4.25) -- (0,2.5);
\draw (0,2.5) -- (-0.55,1.25);
\draw (0,2.5) -- (0.5,1.25);
\draw (-0.5,1.25) -- (-0.75,0);
\draw (-0.5,1.25) -- (-0.25,0);
\draw (0.5,1.25) -- (0.25,0);
\draw (0.5,1.25) -- (0.75,0);

\draw (0,4.25) -- (2,2.5);
\draw (2,2.5) -- (2.5,1.25);
\draw (2,2.5) -- (1.5,1.25);
\draw (1.5,1.25) -- (1.75,0);
\draw (1.5,1.25) -- (1.25,0);
\draw (2.5,1.25) -- (2.25,0);
\draw (2.5,1.25) -- (2.75,0);

\draw [fill=blue] (0,4.25) circle (0.15);
\node[above] at (0,4.25) {$X_{s}$}; 

\draw [fill=red] (-2,2.5) circle (0.15);
\draw [fill=white] (-1.5,1.25) circle (0.15);
\draw [fill=white] (-2.5,1.25) circle (0.15);
\draw [fill=white] (-2.75,0) circle (0.15);
\draw [fill=white] (-2.25,0) circle (0.15);
\draw [fill=white] (-1.75,0) circle (0.15);
\draw [fill=white] (-1.25,0) circle (0.15);

\draw [fill=purple!70!white] (0,2.5) circle (0.15);
\draw [fill=white] (-0.5,1.25) circle (0.15);
\draw [fill=white] (0.5,1.25) circle (0.15);
\draw [fill=white] (0.75,0) circle (0.15);
\draw [fill=white] (0.25,0) circle (0.15);
\draw [fill=white] (-0.25,0) circle (0.15);
\draw [fill=white] (-0.75,0) circle (0.15);

\draw [fill=orange] (2,2.5) circle (0.15);
\draw [fill=white] (1.5,1.25) circle (0.15);
\draw [fill=white] (2.5,1.25) circle (0.15);
\draw [fill=white] (2.75,0) circle (0.15);
\draw [fill=white] (2.25,0) circle (0.15);
\draw [fill=white] (1.75,0) circle (0.15);
\draw [fill=white] (1.25,0) circle (0.15);

\begin{scope}[shift={(7,0)}]

\draw [line width=2, green] (-2,2.5) -- (-14/8, 15/8);
\draw [line width=2, green] (-12/8,79/40) -- (-16/8,71/40);
\draw (-12/8,79/40) -- (-16/8,71/40);
\node[right] at (-14/8, 15/8) {{\tiny $\sigma_{v,2}$}};

\draw (0,4.25) -- (-2,2.5);
\draw (-2,2.5) -- (-2.5,1.25);
\draw (-2,2.5) -- (-1.5,1.25);
\draw (-1.5,1.25) -- (-1.75,0);
\draw (-1.5,1.25) -- (-1.25,0);
\draw (-2.5,1.25) -- (-2.25,0);
\draw (-2.5,1.25) -- (-2.75,0);

\draw (0,4.25) -- (0,2.5);
\draw (0,2.5) -- (-0.55,1.25);
\draw (0,2.5) -- (0.5,1.25);
\draw (-0.5,1.25) -- (-0.75,0);
\draw (-0.5,1.25) -- (-0.25,0);
\draw (0.5,1.25) -- (0.25,0);
\draw (0.5,1.25) -- (0.75,0);

\draw (0,4.25) -- (2,2.5);
\draw (2,2.5) -- (2.5,1.25);
\draw (2,2.5) -- (1.5,1.25);
\draw (1.5,1.25) -- (2.5,1.25);
\draw (1.5,1.25) -- (1.25,0);
\draw (2.5,1.25) -- (2.75,0);

\draw [fill=violet] (0,4.25) circle (0.15);
\node[above] at (0,4.25) {$Y_{s}$}; 

\draw [fill=cyan] (-2,2.5) circle (0.15);
\draw [fill=yellow] (-1.5,1.25) circle (0.15);
\draw [fill=pink] (-2.5,1.25) circle (0.15);
\draw [fill=white] (-2.75,0) circle (0.15);
\draw [fill=white] (-2.25,0) circle (0.15);
\draw [fill=white] (-1.75,0) circle (0.15);
\draw [fill=white] (-1.25,0) circle (0.15);

\draw [fill=lime] (0,2.5) circle (0.15);
\draw [fill=white] (-0.5,1.25) circle (0.15);
\draw [fill=white] (0.5,1.25) circle (0.15);
\draw [fill=white] (0.75,0) circle (0.15);
\draw [fill=white] (0.25,0) circle (0.15);
\draw [fill=white] (-0.25,0) circle (0.15);
\draw [fill=white] (-0.75,0) circle (0.15);

\draw [fill=olive] (2,2.5) circle (0.15);
\draw [fill=white] (1.5,1.25) circle (0.15);
\draw [fill=white] (2.5,1.25) circle (0.15);
\draw [fill=white] (2.75,0) circle (0.15);
\draw [fill=white] (1.25,0) circle (0.15);

\end{scope}

\begin{scope}[shift={(15,0)}]

\draw [line width=2, green] (0,4.25) -- (2,2.5);
\draw [line width=2, green] (6/8,173/56) -- (10/8,205/56);
\draw (6/8,173/56) -- (10/8,205/56);
\node[right] at (1, 27/8) {{\tiny $\{\sigma_{z, 3}, \sigma_{v,2}\}$}};

\draw (0,4.25) -- (-2,2.5);
\draw (-2,2.5) -- (-2.5,1.25);
\draw (-2,2.5) -- (-1.5,1.25);
\draw (-1.5,1.25) -- (-1.75,0);
\draw (-1.5,1.25) -- (-1.25,0);
\draw (-2.5,1.25) -- (-2.25,0);
\draw (-2.5,1.25) -- (-2.75,0);

\draw (0,4.25) -- (0,2.5);
\draw (0,2.5) -- (-0.55,1.25);
\draw (0,2.5) -- (0.5,1.25);
\draw (-0.5,1.25) -- (-0.75,0);
\draw (-0.5,1.25) -- (-0.25,0);
\draw (0.5,1.25) -- (0.25,0);
\draw (0.5,1.25) -- (0.75,0);

\draw (0,4.25) -- (1, 27/8);
\draw (1, 27/8) -- (2,2.5);
\draw (2,2.5) -- (2.5,1.25);
\draw (2,2.5) -- (1.5,1.25);
\draw (1.5,1.25) -- (1.75,0);
\draw (1.5,1.25) -- (1.25,0);
\draw (2.5,1.25) -- (2.25,0);
\draw (2.5,1.25) -- (2.75,0);

\draw (2.75,0) -- (2.75,-1.5);
\draw (2.75,0) -- (3.25,-1.5);
\draw (3.25,-1.5) -- (2.75,-1.5);
\draw (2.25,0) -- (1.75,-1.5);
\draw (2.25,0) -- (2.25,-1.5);
\draw (2.75,-1.5) -- (3.25,-3);
\draw (3.25,-1.5) -- (3.75,-3);
\draw (1.75,-1.5) -- (1.6,-3);
\draw (1.75,-1.5) -- (1.25,-3);
\draw (2.25,-1.5) -- (2.5,-3);
\draw (2.25,-1.5) -- (2,-3);

\draw [fill=white] (2.75,-1.5) circle (0.15);
\draw [fill=white] (3.25,-1.5) circle (0.15);
\draw [fill=white] (1.75,-1.5) circle (0.15);
\draw [fill=white] (2.25,-1.5) circle (0.15);
\draw [fill=white] (3.75,-3) circle (0.15);
\draw [fill=white] (3.25,-3) circle (0.15);
\draw [fill=white] (2,-3) circle (0.15);
\draw [fill=white] (2.5,-3) circle (0.15);
\draw [fill=white] (1.25,-3) circle (0.15);
\draw [fill=white] (1.6,-3) circle (0.15);

\draw [fill=blue] (0,4.25) circle (0.15);
\node[above] at (0,4.25) {$X_{t}$}; 

\draw [fill=red] (-2,2.5) circle (0.15);
\draw [fill=white] (-1.5,1.25) circle (0.15);
\draw [fill=white] (-2.5,1.25) circle (0.15);
\draw [fill=white] (-2.75,0) circle (0.15);
\draw [fill=white] (-2.25,0) circle (0.15);
\draw [fill=white] (-1.75,0) circle (0.15);
\draw [fill=white] (-1.25,0) circle (0.15);

\draw [fill=purple!70!white] (0,2.5) circle (0.15);
\draw [fill=white] (-0.5,1.25) circle (0.15);
\draw [fill=white] (0.5,1.25) circle (0.15);
\draw [fill=white] (0.75,0) circle (0.15);
\draw [fill=white] (0.25,0) circle (0.15);
\draw [fill=white] (-0.25,0) circle (0.15);
\draw [fill=white] (-0.75,0) circle (0.15);

\draw [fill=cyan] (2,2.5) circle (0.15);
\draw [fill=pink] (1.5,1.25) circle (0.15);
\draw [fill=violet] (2.5,1.25) circle (0.15);
\draw [fill=olive] (2.75,0) circle (0.15);
\draw [fill=lime] (2.25,0) circle (0.15);
\draw [fill=white] (1.75,0) circle (0.15);
\draw [fill=white] (1.25,0) circle (0.15);

\node[right] at (2.5,1.25) {$Y_{t}$};

\begin{scope}[shift = {(5, 02.25)}]

\draw [color=gray] (0,2.5) -- (0.5,1.25);
\draw [color=gray] (0,2.5) -- (-0.5,1.25);
\draw [color=gray] (0.5,1.25) -- (0.75,0);
\draw [color=gray] (0.5,1.25) -- (0.25,0);
\draw [color=gray] (-0.5,1.25) -- (-0.25,0);
\draw [color=gray] (-0.5,1.25) -- (-0.75,0);

\draw [color=gray, fill=orange] (0,2.5) circle (0.15);
\draw [color=gray, fill=white] (0.5,1.25) circle (0.15);
\draw [color=gray, fill=white] (-0.5,1.25) circle (0.15);
\draw [color=gray, fill=white] (-0.75,0) circle (0.15);
\draw [color=gray, fill=white] (-0.25,0) circle (0.15);
\draw [color=gray, fill=white] (0.75,0) circle (0.15);
\draw [color=gray, fill=white] (0.25,0) circle (0.15);

\end{scope}

\begin{scope}[shift = {(5, 0)}]

\draw [color=gray] (0,1.25) -- (0.25,0);
\draw [color=gray] (0,1.25) -- (-0.25,0);

\draw [color=gray, fill=yellow] (0,1.25) circle (0.15);
\draw [color=gray, fill=white] (0.25,0) circle (0.15);
\draw [color=gray, fill=white] (-0.25,0) circle (0.15);

\end{scope}

\end{scope}

\end{tikzpicture}

\caption{Visual representation of the transition in step (6-b), where an arrival of both $\hat{\mathfrak{T}}^{\rm dyn}_{z,3}$ and $\hat{\mathfrak{T}}^{\rm dyn}_{v,2}$is portrayed, where $z=X_s$. In this case, the set $V^{-}$ is empty. Edges represented in gray in the second plot are those that are not part of $\cB_{\hslash,t}(X_t)\cup\cB_{\hslash,t}(Y_t) $, where $\hslash = 3$.}
\label{fig:explorationV0}
\end{figure}
	
\begin{figure}[!ht]
\centering
\begin{tikzpicture}[scale=0.5]

\draw [line width=2, green] (0,4.25) -- (1, 27/8);
\draw [line width=2, green] (6/8,173/56) -- (10/8,205/56);
\draw (6/8,173/56) -- (10/8,205/56);
\node[right] at (1, 27/8) {{\tiny $\sigma_{z, 3}$}};

\draw (0,4.25) -- (-2,2.5);
\draw (-2,2.5) -- (-2.5,1.25);
\draw (-2,2.5) -- (-1.5,1.25);
\draw (-1.5,1.25) -- (-1.75,0);
\draw (-1.5,1.25) -- (-1.25,0);
\draw (-2.5,1.25) -- (-2.25,0);
\draw (-2.5,1.25) -- (-2.75,0);

\draw (0,4.25) -- (0,2.5);
\draw (0,2.5) -- (-0.55,1.25);
\draw (0,2.5) -- (0.5,1.25);
\draw (-0.5,1.25) -- (-0.75,0);
\draw (-0.5,1.25) -- (-0.25,0);
\draw (0.5,1.25) -- (0.25,0);
\draw (0.5,1.25) -- (0.75,0);

\draw (0,4.25) -- (2,2.5);
\draw (2,2.5) -- (2.5,1.25);
\draw (2,2.5) -- (1.5,1.25);
\draw (1.5,1.25) -- (1.75,0);
\draw (1.5,1.25) -- (1.25,0);
\draw (2.5,1.25) -- (2.25,0);
\draw (2.5,1.25) -- (2.75,0);

\draw [fill=blue] (0,4.25) circle (0.15);
\node[above] at (0,4.25) {$X_{s}$}; 

\draw [fill=red] (-2,2.5) circle (0.15);
\draw [fill=white] (-1.5,1.25) circle (0.15);
\draw [fill=white] (-2.5,1.25) circle (0.15);
\draw [fill=white] (-2.75,0) circle (0.15);
\draw [fill=white] (-2.25,0) circle (0.15);
\draw [fill=white] (-1.75,0) circle (0.15);
\draw [fill=white] (-1.25,0) circle (0.15);

\draw [fill=purple!70!white] (0,2.5) circle (0.15);
\draw [fill=white] (-0.5,1.25) circle (0.15);
\draw [fill=white] (0.5,1.25) circle (0.15);
\draw [fill=white] (0.75,0) circle (0.15);
\draw [fill=white] (0.25,0) circle (0.15);
\draw [fill=white] (-0.25,0) circle (0.15);
\draw [fill=white] (-0.75,0) circle (0.15);

\draw [fill=orange] (2,2.5) circle (0.15);
\draw [fill=white] (1.5,1.25) circle (0.15);
\draw [fill=white] (2.5,1.25) circle (0.15);
\draw [fill=white] (2.75,0) circle (0.15);
\draw [fill=white] (2.25,0) circle (0.15);
\draw [fill=white] (1.75,0) circle (0.15);
\draw [fill=white] (1.25,0) circle (0.15);

\begin{scope}[shift={(7,0)}]

\draw [line width=2, green] (-2.75,0) -- (-23/8, -5/8);
\draw [line width=2, green] (-22/8,-26/40) -- (-24/8,-24/40);
\draw (-22/8,-26/40) -- (-24/8,-24/40);
\draw (-2.75,0) -- (-23/8, -5/8);
\node[left] at (-23/8, -5/8) {{\tiny $\sigma_{v,1}$}};

\draw (0,4.25) -- (-2,2.5);
\draw (-2,2.5) -- (-2.5,1.25);
\draw (-2,2.5) -- (-1.5,1.25);
\draw (-1.5,1.25) -- (-1.75,0);
\draw (-1.5,1.25) -- (-1.25,0);
\draw (-2.5,1.25) -- (-2.25,0);
\draw (-2.5,1.25) -- (-2.75,0);

\draw (0,4.25) -- (0,2.5);
\draw (0,2.5) -- (-0.55,1.25);
\draw (0,2.5) -- (0.5,1.25);
\draw (-0.5,1.25) -- (-0.75,0);
\draw (-0.5,1.25) -- (-0.25,0);
\draw (0.5,1.25) -- (0.25,0);
\draw (0.5,1.25) -- (0.75,0);

\draw (0,4.25) -- (2,2.5);
\draw (2,2.5) -- (2.5,1.25);
\draw (2,2.5) -- (1.5,1.25);
\draw (1.5,1.25) -- (2.5,1.25);
\draw (1.5,1.25) -- (1.25,0);
\draw (2.5,1.25) -- (2.75,0);

\draw [fill=violet] (0,4.25) circle (0.15);
\node[above] at (0,4.25) {$Y_{s}$}; 

\draw [fill=cyan] (-2,2.5) circle (0.15);
\draw [fill=white] (-1.5,1.25) circle (0.15);
\draw [fill=pink] (-2.5,1.25) circle (0.15);
\draw [fill=yellow] (-2.75,0) circle (0.15);
\draw [fill=teal] (-2.25,0) circle (0.15);
\draw [fill=white] (-1.75,0) circle (0.15);
\draw [fill=white] (-1.25,0) circle (0.15);

\draw [fill=lime] (0,2.5) circle (0.15);
\draw [fill=white] (-0.5,1.25) circle (0.15);
\draw [fill=white] (0.5,1.25) circle (0.15);
\draw [fill=white] (0.75,0) circle (0.15);
\draw [fill=white] (0.25,0) circle (0.15);
\draw [fill=white] (-0.25,0) circle (0.15);
\draw [fill=white] (-0.75,0) circle (0.15);

\draw [fill=olive] (2,2.5) circle (0.15);
\draw [fill=white] (1.5,1.25) circle (0.15);
\draw [fill=white] (2.5,1.25) circle (0.15);
\draw [fill=white] (2.75,0) circle (0.15);
\draw [fill=white] (1.25,0) circle (0.15);

\end{scope}

\begin{scope}[shift={(15,0)}]

\draw [line width=2, green] (0,4.25) -- (2,2.5);
\draw [line width=2, green] (6/8,173/56) -- (10/8,205/56);
\draw (6/8,173/56) -- (10/8,205/56);
\node[right] at (1, 27/8) {{\tiny $\{\sigma_{z, 3}, \sigma_{v,1}\}$}};

\draw (0,4.25) -- (-2,2.5);
\draw (-2,2.5) -- (-2.5,1.25);
\draw (-2,2.5) -- (-1.5,1.25);
\draw (-1.5,1.25) -- (-1.75,0);
\draw (-1.5,1.25) -- (-1.25,0);
\draw (-2.5,1.25) -- (-2.25,0);
\draw (-2.5,1.25) -- (-2.75,0);

\draw (0,4.25) -- (0,2.5);
\draw (0,2.5) -- (-0.55,1.25);
\draw (0,2.5) -- (0.5,1.25);
\draw (-0.5,1.25) -- (-0.75,0);
\draw (-0.5,1.25) -- (-0.25,0);
\draw (0.5,1.25) -- (0.25,0);
\draw (0.5,1.25) -- (0.75,0);

\draw (0,4.25) -- (1, 27/8);
\draw (1, 27/8) -- (2,2.5);
\draw [color=brown!90!black] (2,2.5) -- (2.5,1.25);
\draw (2,2.5) -- (1.5,1.25);
\draw (1.5,1.25) -- (1.75,0);
\draw (1.5,1.25) -- (1.25,0);
\draw [color=brown!90!black] (2.5,1.25) -- (2.25,0);
\draw [color=brown!90!black] (2.5,1.25) -- (2.75,0);

\begin{scope}[shift = {(-0.5, -1.25)}]

\draw (1.75, 1.25) -- (1.25,0);
\draw (1.25,0) -- (1,-1.25);
\draw (1.25,0) -- (1.5,-1.25);
\draw (1.5,-1.25) -- (1.5,-2.75);
\draw (1.5,-1.25) -- (2,-2.75);
\draw (2,-2.75) -- (1.5,-2.75);
\draw (1,-1.25) -- (0.5,-2.75);
\draw (1,-1.25) -- (1,-2.75);
\draw (1.5,-2.75) -- (2,-4.25);
\draw (2,-2.75) -- (2.5,-4.25);
\draw (0.5,-2.75) -- (0.35,-4.25);
\draw (0.5,-2.75) -- (0,-4.25);
\draw (1,-2.75) -- (1.25,-4.25);
\draw (1,-2.75) -- (0.75,-4.25);

\draw (1.75, 1.25) -- (2.25,0);
\draw (2.25,0) -- (2, -1.25);
\draw (2.25,0) -- (2.5, -1.25);

\draw [fill=cyan] (1.75,1.25) circle (0.15);
\draw [fill=violet] (1.25,0) circle (0.15);
\draw [fill=olive] (1.5,-1.25) circle (0.15);
\draw [fill=lime] (1,-1.25) circle (0.15);
\draw [fill=white] (1.5,-2.75) circle (0.15);
\draw [fill=white] (2,-2.75) circle (0.15);
\draw [fill=white] (0.5,-2.75) circle (0.15);
\draw [fill=white] (1,-2.75) circle (0.15);
\draw [fill=white] (2,-4.25) circle (0.15);
\draw [fill=white] (2.5,-4.25) circle (0.15);
\draw [fill=white] (0.75,-4.25) circle (0.15);
\draw [fill=white] (1.25,-4.25) circle (0.15);
\draw [fill=white] (0,-4.25) circle (0.15);
\draw [fill=white] (0.35,-4.25) circle (0.15);

\draw [fill=white] (2.25,0) circle (0.15);
\draw [fill=white] (2, -1.25) circle (0.15);
\draw [fill=white] (2.5, -1.25) circle (0.15);

\node[left] at (1.25, 0) {$Y_{t}$};

\end{scope}

\draw [fill=blue] (0,4.25) circle (0.15);
\node[above] at (0,4.25) {$X_{t}$}; 

\draw [fill=red] (-2,2.5) circle (0.15);
\draw [fill=white] (-1.5,1.25) circle (0.15);
\draw [fill=white] (-2.5,1.25) circle (0.15);
\draw [fill=white] (-2.75,0) circle (0.15);
\draw [fill=white] (-2.25,0) circle (0.15);
\draw [fill=white] (-1.75,0) circle (0.15);
\draw [fill=white] (-1.25,0) circle (0.15);

\draw [fill=purple!70!white] (0,2.5) circle (0.15);
\draw [fill=white] (-0.5,1.25) circle (0.15);
\draw [fill=white] (0.5,1.25) circle (0.15);
\draw [fill=white] (0.75,0) circle (0.15);
\draw [fill=white] (0.25,0) circle (0.15);
\draw [fill=white] (-0.25,0) circle (0.15);
\draw [fill=white] (-0.75,0) circle (0.15);

\draw [fill=yellow] (2,2.5) circle (0.15);
\draw [fill=pink] (1.5,1.25) circle (0.15);
\draw [color=brown!90!black, fill=white] (2.5,1.25) circle (0.15);
\draw [color=brown!90!black, fill=white] (2.75,0) circle (0.15);
\draw [color=brown!90!black, fill=white] (2.25,0) circle (0.15);
\draw [fill=teal] (1.75,0) circle (0.15);

\draw [decorate,decoration={brace,amplitude=5pt}] (3,2.5) -- (3,0) node[midway, right] { {\tiny $ \,\, V^{-}(\sigma_{z,3}, \sigma_{v,1}, E_{s})$} };


\begin{scope}[shift = {(5, 2.25)}]

\draw [color=gray] (0.2,2.5) -- (0.7,1.25);
\draw [color=gray] (0.2,2.5) -- (-0.3,1.25);
\draw [color=gray] (0.7,1.25) -- (0.95,0);
\draw [color=gray] (0.7,1.25) -- (0.45,0);
\draw [color=gray] (-0.3,1.25) -- (-0.05,0);
\draw [color=gray] (-0.3,1.25) -- (-0.55,0);

\draw [color=gray, fill=orange] (0.2,2.5) circle (0.15);
\draw [color=gray, fill=white] (0.7,1.25) circle (0.15);
\draw [color=gray, fill=white] (-0.3,1.25) circle (0.15);
\draw [color=gray, fill=white] (-0.55,0) circle (0.15);
\draw [color=gray, fill=white] (-0.05,0) circle (0.15);
\draw [color=gray, fill=white] (0.95,0) circle (0.15);
\draw [color=gray, fill=white] (0.45,0) circle (0.15);

\end{scope}

\end{scope}

\end{tikzpicture}

\caption{Visual representation of the transition in step (6-b), where an arrival of both $\hat{\mathfrak{T}}^{\rm dyn}_{z,3}$ and $\hat{\mathfrak{T}}^{\rm dyn}_{v,1}$ is portrayed, where $z=X_s$. In this case $\sigma_{v,1}$ is not matched in $E_s$, i.e., $\sigma_{v,1} \in \bar E_s\setminus E_s$. Edges represented in gray in the second plot are those that are not part of $\cB_{\hslash,t}(X_t)\cup\cB_{\hslash,t}(Y_t)$, and $\hslash = 3$. Edges represented in brown are those in $V^-(\sigma_{z,3},\sigma_{v,1},E_s)$.}
\label{fig:explorationSigma-}
\end{figure}
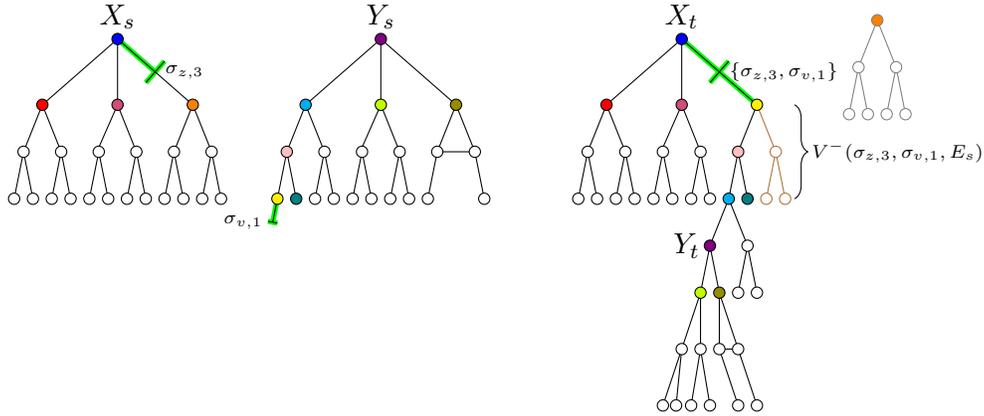
	
	\begin{figure}[!ht]
		\centering
		\begin{tikzpicture}[scale=0.5]
			
			\draw [line width=2, green] (0,4.25) -- (1, 27/8);
			\draw [line width=2, green] (6/8,173/56) -- (10/8,205/56);
			\draw (6/8,173/56) -- (10/8,205/56);
			\node[right] at (1, 27/8) {{\tiny $\sigma_{z, 3}$}};
			
			\draw (0,4.25) -- (-2,2.5);
			\draw (-2,2.5) -- (-2.5,1.25);
			\draw (-2,2.5) -- (-1.5,1.25);
			\draw (-1.5,1.25) -- (-1.75,0);
			\draw (-1.5,1.25) -- (-1.25,0);
			\draw (-2.5,1.25) -- (-2.25,0);
			\draw (-2.5,1.25) -- (-2.75,0);
			
			\draw (0,4.25) -- (0,2.5);
			\draw (0,2.5) -- (-0.55,1.25);
			\draw (0,2.5) -- (0.5,1.25);
			\draw (-0.5,1.25) -- (-0.75,0);
			\draw (-0.5,1.25) -- (-0.25,0);
			\draw (0.5,1.25) -- (0.25,0);
			\draw (0.5,1.25) -- (0.75,0);
			
			\draw (0,4.25) -- (2,2.5);
			\draw (2,2.5) -- (2.5,1.25);
			\draw (2,2.5) -- (1.5,1.25);
			\draw (1.5,1.25) -- (1.75,0);
			\draw (1.5,1.25) -- (1.25,0);
			\draw (2.5,1.25) -- (2.25,0);
			\draw (2.5,1.25) -- (2.75,0);
			
			\draw [fill=blue] (0,4.25) circle (0.15);
			\node[above] at (0,4.25) {$X_{s}$}; 
			
			\draw [fill=red] (-2,2.5) circle (0.15);
			\draw [fill=white] (-1.5,1.25) circle (0.15);
			\draw [fill=white] (-2.5,1.25) circle (0.15);
			\draw [fill=white] (-2.75,0) circle (0.15);
			\draw [fill=white] (-2.25,0) circle (0.15);
			\draw [fill=white] (-1.75,0) circle (0.15);
			\draw [fill=white] (-1.25,0) circle (0.15);
			
			\draw [fill=purple!70!white] (0,2.5) circle (0.15);
			\draw [fill=white] (-0.5,1.25) circle (0.15);
			\draw [fill=white] (0.5,1.25) circle (0.15);
			\draw [fill=white] (0.75,0) circle (0.15);
			\draw [fill=white] (0.25,0) circle (0.15);
			\draw [fill=white] (-0.25,0) circle (0.15);
			\draw [fill=white] (-0.75,0) circle (0.15);
			
			\draw [fill=orange] (2,2.5) circle (0.15);
			\draw [fill=pink] (1.5,1.25) circle (0.15);
			\draw [fill=yellow] (2.5,1.25) circle (0.15);
			\draw [fill=white] (2.75,0) circle (0.15);
			\draw [fill=white] (2.25,0) circle (0.15);
			\draw [fill=white] (1.75,0) circle (0.15);
			\draw [fill=white] (1.25,0) circle (0.15);
			
			\begin{scope}[shift={(6,0)}]
				
				\draw (0,4.25) -- (-2,2.5);
				\draw (-2,2.5) -- (-2.5,1.25);
				\draw (-2,2.5) -- (-1.5,1.25);
				\draw (-1.5,1.25) -- (-1.75,0);
				\draw (-1.5,1.25) -- (-1.25,0);
				\draw (-2.5,1.25) -- (-2.25,0);
				\draw (-2.5,1.25) -- (-2.75,0);
				
				\draw (0,4.25) -- (0,2.5);
				\draw (0,2.5) -- (-0.55,1.25);
				\draw (0,2.5) -- (0.5,1.25);
				\draw (-0.5,1.25) -- (-0.75,0);
				\draw (-0.5,1.25) -- (-0.25,0);
				\draw (0.5,1.25) -- (0.25,0);
				\draw (0.5,1.25) -- (0.75,0);
				
				\draw (0,4.25) -- (2,2.5);
				\draw (2,2.5) -- (2.5,1.25);
				\draw (2,2.5) -- (1.5,1.25);
				\draw (1.5,1.25) -- (2.5,1.25);
				\draw (1.5,1.25) -- (1.25,0);
				\draw (2.5,1.25) -- (2.75,0);
				
				\draw [fill=violet] (0,4.25) circle (0.15);
				\node[above] at (0,4.25) {$Y_{s}$}; 
				
				\draw [fill=cyan] (-2,2.5) circle (0.15);
				\draw [fill=white] (-1.5,1.25) circle (0.15);
				\draw [fill=white] (-2.5,1.25) circle (0.15);
				\draw [fill=white] (-2.75,0) circle (0.15);
				\draw [fill=white] (-2.25,0) circle (0.15);
				\draw [fill=white] (-1.75,0) circle (0.15);
				\draw [fill=white] (-1.25,0) circle (0.15);
				
				\draw [fill=lime] (0,2.5) circle (0.15);
				\draw [fill=white] (-0.5,1.25) circle (0.15);
				\draw [fill=white] (0.5,1.25) circle (0.15);
				\draw [fill=white] (0.75,0) circle (0.15);
				\draw [fill=white] (0.25,0) circle (0.15);
				\draw [fill=white] (-0.25,0) circle (0.15);
				\draw [fill=white] (-0.75,0) circle (0.15);
				
				\draw [fill=olive] (2,2.5) circle (0.15);
				\draw [fill=white] (1.5,1.25) circle (0.15);
				\draw [fill=white] (2.5,1.25) circle (0.15);
				\draw [fill=white] (2.75,0) circle (0.15);
				\draw [fill=white] (1.25,0) circle (0.15);
				
			\end{scope}
			
			\begin{scope}[shift={(17, 3.5)}]
				
				\draw [color=gray] (2,2.5) -- (2.5,1.25);
				\draw [color=gray] (2,2.5) -- (1.5,1.25);
				\draw [color=gray] (1.5,1.25) -- (1.75,0);
				\draw [color=gray] (1.5,1.25) -- (1.25,0);
				\draw [color=gray] (2.5,1.25) -- (2.25,0);
				\draw [color=gray] (2.5,1.25) -- (2.75,0);
				
				\draw [color=gray, fill=orange] (2,2.5) circle (0.15);
				\draw [color=gray, fill=pink] (1.5,1.25) circle (0.15);
				\draw [color=gray, fill=yellow] (2.5,1.25) circle (0.15);
				\draw [color=gray, fill=white] (2.75,0) circle (0.15);
				\draw [color=gray, fill=white] (2.25,0) circle (0.15);
				\draw [color=gray, fill=white] (1.75,0) circle (0.15);
				\draw [color=gray, fill=white] (1.25,0) circle (0.15);
				
			\end{scope}
			
			\begin{scope}[shift={(15,0)}]

				\draw [line width=2, green] (0,4.25) -- (1, 27/8);
				\draw [line width=2, green] (6/8,173/56) -- (10/8,205/56);
				\draw (6/8,173/56) -- (10/8,205/56);
				\node[right] at (1, 27/8) {{\tiny $\sigma_{z, 3}$}};
				
				\draw (0,4.25) -- (-2,2.5);
				\draw (-2,2.5) -- (-2.5,1.25);
				\draw (-2,2.5) -- (-1.5,1.25);
				\draw (-1.5,1.25) -- (-1.75,0);
				\draw (-1.5,1.25) -- (-1.25,0);
				\draw (-2.5,1.25) -- (-2.25,0);
				\draw (-2.5,1.25) -- (-2.75,0);
				
				\draw (0,4.25) -- (0,2.5);
				\draw (0,2.5) -- (-0.55,1.25);
				\draw (0,2.5) -- (0.5,1.25);
				\draw (-0.5,1.25) -- (-0.75,0);
				\draw (-0.5,1.25) -- (-0.25,0);
				\draw (0.5,1.25) -- (0.25,0);
				\draw (0.5,1.25) -- (0.75,0);
				
				\draw (0,4.25) -- (1, 27/8);
				\draw [color=brown!90!black] (1, 27/8) -- (2,2.5);
				\draw [color=brown!90!black] (2,2.5) -- (2.5,1.25);
				\draw [color=brown!90!black] (2,2.5) -- (1.5,1.25);
				\draw [color=brown!90!black] (1.5,1.25) -- (1.75,0);
				\draw [color=brown!90!black] (1.5,1.25) -- (1.25,0);
				\draw [color=brown!90!black] (2.5,1.25) -- (2.25,0);
				\draw [color=brown!90!black] (2.5,1.25) -- (2.75,0);
				
				\draw [fill=orange] (0,4.25) circle (0.15);
				\node[above] at (0,4.25) {$X_{t}$}; 
				
				\draw [fill=blue] (-2,2.5) circle (0.15);
				\draw [fill=purple!70!white] (-1.5,1.25) circle (0.15);
				\draw [fill=red] (-2.5,1.25) circle (0.15);
				\draw [fill=white] (-2.75,0) circle (0.15);
				\draw [fill=white] (-2.25,0) circle (0.15);
				\draw [fill=white] (-1.75,0) circle (0.15);
				\draw [fill=white] (-1.25,0) circle (0.15);
				
				\draw [fill=pink] (0,2.5) circle (0.15);
				\draw [fill=white] (-0.5,1.25) circle (0.15);
				\draw [fill=white] (0.5,1.25) circle (0.15);
				\draw [fill=white] (0.75,0) circle (0.15);
				\draw [fill=white] (0.25,0) circle (0.15);
				\draw [fill=white] (-0.25,0) circle (0.15);
				\draw [fill=white] (-0.75,0) circle (0.15);
				
				\draw [color=brown!90!black, fill=white] (2,2.5) circle (0.15);
				\draw [color=brown!90!black, fill=white] (1.5,1.25) circle (0.15);
				\draw [color=brown!90!black, fill=white] (2.5,1.25) circle (0.15);
				\draw [color=brown!90!black, fill=white] (2.75,0) circle (0.15);
				\draw [color=brown!90!black, fill=white] (2.25,0) circle (0.15);
				\draw [color=brown!90!black, fill=white] (1.75,0) circle (0.15);
				\draw [color=brown!90!black, fill=white] (1.25,0) circle (0.15);
				
				\draw [decorate,decoration={brace,amplitude=5pt,mirror}] (1.1,-0.3) -- (2.9,-0.3) node[midway, below] { {\tiny $V^{+}(\sigma_{z,3}, E_{s})$} };
				
				\begin{scope}[shift={(6,0)}]
					
					\draw (0,4.25) -- (-2,2.5);
					\draw (-2,2.5) -- (-2.5,1.25);
					\draw (-2,2.5) -- (-1.5,1.25);
					\draw (-1.5,1.25) -- (-1.75,0);
					\draw (-1.5,1.25) -- (-1.25,0);
					\draw (-2.5,1.25) -- (-2.25,0);
					\draw (-2.5,1.25) -- (-2.75,0);
					
					\draw (0,4.25) -- (0,2.5);
					\draw (0,2.5) -- (-0.55,1.25);
					\draw (0,2.5) -- (0.5,1.25);
					\draw (-0.5,1.25) -- (-0.75,0);
					\draw (-0.5,1.25) -- (-0.25,0);
					\draw (0.5,1.25) -- (0.25,0);
					\draw (0.5,1.25) -- (0.75,0);
					
					\draw (0,4.25) -- (2,2.5);
					\draw (2,2.5) -- (2.5,1.25);
					\draw (2,2.5) -- (1.5,1.25);
					\draw (1.5,1.25) -- (2.5,1.25);
					\draw (1.5,1.25) -- (1.25,0);
					\draw (2.5,1.25) -- (2.75,0);
					
					\draw [fill=violet] (0,4.25) circle (0.15);
					\node[above] at (0,4.25) {$Y_{t}$}; 
					
					\draw [fill=cyan] (-2,2.5) circle (0.15);
					\draw [fill=white] (-1.5,1.25) circle (0.15);
					\draw [fill=white] (-2.5,1.25) circle (0.15);
					\draw [fill=white] (-2.75,0) circle (0.15);
					\draw [fill=white] (-2.25,0) circle (0.15);
					\draw [fill=white] (-1.75,0) circle (0.15);
					\draw [fill=white] (-1.25,0) circle (0.15);
					
					\draw [fill=lime] (0,2.5) circle (0.15);
					\draw [fill=white] (-0.5,1.25) circle (0.15);
					\draw [fill=white] (0.5,1.25) circle (0.15);
					\draw [fill=white] (0.75,0) circle (0.15);
					\draw [fill=white] (0.25,0) circle (0.15);
					\draw [fill=white] (-0.25,0) circle (0.15);
					\draw [fill=white] (-0.75,0) circle (0.15);
					
					\draw [fill=olive] (2,2.5) circle (0.15);
					\draw [fill=white] (1.5,1.25) circle (0.15);
					\draw [fill=white] (2.5,1.25) circle (0.15);
					\draw [fill=white] (2.75,0) circle (0.15);
					\draw [fill=white] (1.25,0) circle (0.15);
					
				\end{scope}
				
			\end{scope}
			
		\end{tikzpicture}
		
		\caption{Visual representation of the transition in step (6-c), where an arrival of $\hat{\mathfrak{T}}^{\rm dyn}_{z,3}$ is portrayed, where $z=X_s$. Edges represented in gray in the second plot are those that are not part of $\cB_{\hslash,t}(X_t)\cup\cB_{\hslash,t}(Y_t) $. Edges represented in brown are those in $V^+(\sigma_{z,3},E_s)$.}
		\label{fig:explorationV+}
	\end{figure}
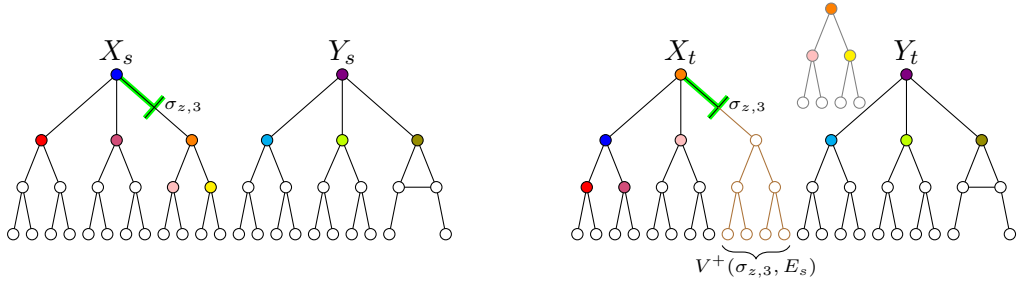

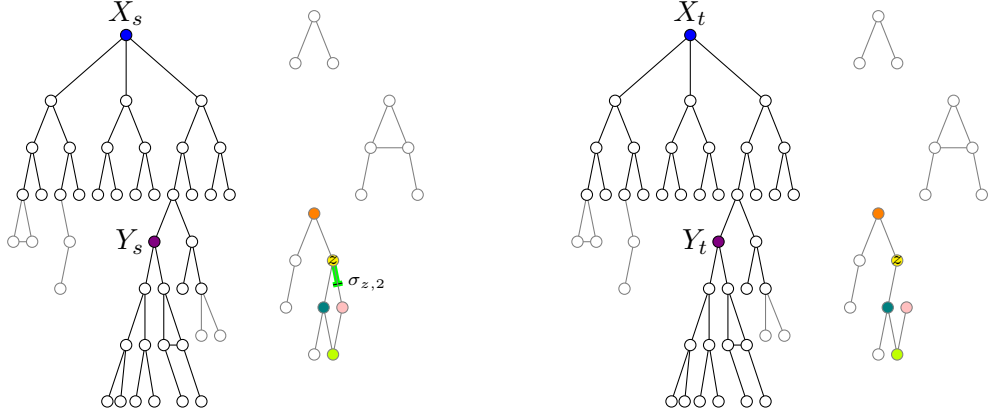
\begin{figure}[!ht]
\centering
\begin{tikzpicture}[scale=0.5]

\draw (0,4.25) -- (-2,2.5);
\draw (-2,2.5) -- (-2.5,1.25);
\draw (-2,2.5) -- (-1.5,1.25);
\draw (-1.5,1.25) -- (-1.75,0);
\draw (-1.5,1.25) -- (-1.25,0);
\draw (-2.5,1.25) -- (-2.25,0);
\draw (-2.5,1.25) -- (-2.75,0);

\draw [color=gray] (-2.75, 0) -- (-3, -1.25);
\draw [color=gray] (-2.75, 0) -- (-2.5, -1.25);
\draw [color=gray] (-3, -1.25) -- (-2.5, -1.25);
\draw [color=gray] (-1.75, 0) -- (-1.5, -1.25);
\draw [color=gray] (-1.5, -1.25) -- (-1.75, -2.5);

\draw (0,4.25) -- (0,2.5);
\draw (0,2.5) -- (-0.55,1.25);
\draw (0,2.5) -- (0.5,1.25);
\draw (-0.5,1.25) -- (-0.75,0);
\draw (-0.5,1.25) -- (-0.25,0);
\draw (0.5,1.25) -- (0.25,0);
\draw (0.5,1.25) -- (0.75,0);

\draw (0,4.25) -- (1, 27/8);
\draw (1, 27/8) -- (2,2.5);
\draw (2,2.5) -- (2.5,1.25);
\draw (2,2.5) -- (1.5,1.25);
\draw (1.5,1.25) -- (1.75,0);
\draw (1.5,1.25) -- (1.25,0);
\draw (2.5,1.25) -- (2.25,0);
\draw (2.5,1.25) -- (2.75,0);

\begin{scope}[shift = {(-0.5, -1.25)}]

\draw (1.75, 1.25) -- (1.25,0);
\draw (1.25,0) -- (1,-1.25);
\draw (1.25,0) -- (1.5,-1.25);
\draw (1.5,-1.25) -- (1.5,-2.75);
\draw (1.5,-1.25) -- (2,-2.75);
\draw (2,-2.75) -- (1.5,-2.75);
\draw (1,-1.25) -- (0.5,-2.75);
\draw (1,-1.25) -- (1,-2.75);
\draw (1.5,-2.75) -- (2,-4.25);
\draw (2,-2.75) -- (2.5,-4.25);
\draw (0.5,-2.75) -- (0.35,-4.25);
\draw (0.5,-2.75) -- (0,-4.25);
\draw (1,-2.75) -- (1.25,-4.25);
\draw (1,-2.75) -- (0.75,-4.25);

\draw (1.75, 1.25) -- (2.25,0);
\draw (2.25,0) -- (2, -1.25);
\draw (2.25,0) -- (2.5, -1.25);

\draw [color=gray] (2.5, -1.25) -- (3, -2.5);
\draw [color=gray] (2.5, -1.25) -- (2.5, -2.5);

\draw [fill=white] (1.75,1.25) circle (0.15);
\draw [fill=violet] (1.25,0) circle (0.15);
\draw [fill=white] (1.5,-1.25) circle (0.15);
\draw [fill=white] (1,-1.25) circle (0.15);
\draw [fill=white] (1.5,-2.75) circle (0.15);
\draw [fill=white] (2,-2.75) circle (0.15);
\draw [fill=white] (0.5,-2.75) circle (0.15);
\draw [fill=white] (1,-2.75) circle (0.15);
\draw [fill=white] (2,-4.25) circle (0.15);
\draw [fill=white] (2.5,-4.25) circle (0.15);
\draw [fill=white] (0.75,-4.25) circle (0.15);
\draw [fill=white] (1.25,-4.25) circle (0.15);
\draw [fill=white] (0,-4.25) circle (0.15);
\draw [fill=white] (0.35,-4.25) circle (0.15);

\draw [fill=white] (2.25,0) circle (0.15);
\draw [fill=white] (2, -1.25) circle (0.15);
\draw [fill=white] (2.5, -1.25) circle (0.15);

\draw [color=gray, fill=white] (3, -2.5) circle (0.15);
\draw [color=gray, fill=white] (2.5, -2.5) circle (0.15);

\node[left] at (1.25, 0) {$Y_{s}$};

\end{scope}

\draw [fill=blue] (0,4.25) circle (0.15);
\node[above] at (0,4.25) {$X_{s}$}; 

\draw [fill=white] (-2,2.5) circle (0.15);
\draw [fill=white] (-1.5,1.25) circle (0.15);
\draw [fill=white] (-2.5,1.25) circle (0.15);
\draw [fill=white] (-2.75,0) circle (0.15);
\draw [fill=white] (-2.25,0) circle (0.15);
\draw [fill=white] (-1.75,0) circle (0.15);
\draw [fill=white] (-1.25,0) circle (0.15);

\draw [color=gray, fill=white] (-2.5, -1.25) circle (0.15);
\draw [color=gray, fill=white] (-3, -1.25) circle (0.15);
\draw [color=gray, fill=white] (-1.5, -1.25) circle (0.15);
\draw [color=gray, fill=white] (-1.75, -2.5) circle (0.15);

\draw [fill=white] (0,2.5) circle (0.15);
\draw [fill=white] (-0.5,1.25) circle (0.15);
\draw [fill=white] (0.5,1.25) circle (0.15);
\draw [fill=white] (0.75,0) circle (0.15);
\draw [fill=white] (0.25,0) circle (0.15);
\draw [fill=white] (-0.25,0) circle (0.15);
\draw [fill=white] (-0.75,0) circle (0.15);

\draw [fill=white] (2,2.5) circle (0.15);
\draw [fill=white] (1.5,1.25) circle (0.15);
\draw [fill=white] (2.5,1.25) circle (0.15);
\draw [fill=white] (2.75,0) circle (0.15);
\draw [fill=white] (2.25,0) circle (0.15);
\draw [fill=white] (1.75,0) circle (0.15);

\begin{scope}[shift = {(5, 2.25)}]

\draw [color=gray] (0,2.5) -- (0.5,1.25);
\draw [color=gray] (0,2.5) -- (-0.5,1.25);

\draw [color=gray, fill=white] (0,2.5) circle (0.15);
\draw [color=gray, fill=white] (0.5,1.25) circle (0.15);
\draw [color=gray, fill=white] (-0.5,1.25) circle (0.15);

\end{scope}

\begin{scope}[shift = {(7, 0)}]

\draw [color=gray] (0,2.5) -- (0.5,1.25);
\draw [color=gray] (0,2.5) -- (-0.5,1.25);
\draw [color=gray] (0.5,1.25) -- (0.75,0);
\draw [color=gray] (-0.5,1.25) -- (-0.75,0);
\draw [color=gray] (-0.5,1.25) -- (0.5,1.25);

\draw [color=gray, fill=white] (0,2.5) circle (0.15);
\draw [color=gray, fill=white] (0.5,1.25) circle (0.15);
\draw [color=gray, fill=white] (-0.5,1.25) circle (0.15);
\draw [color=gray, fill=white] (-0.75,0) circle (0.15);
\draw [color=gray, fill=white] (0.75,0) circle (0.15);

\end{scope}

\begin{scope}[shift={(3,-3)}]

\draw [line width=2, green] (2.5,1.25) -- (21/8,25/40);
\draw [line width=2, green] (20/8,24/40) -- (22/8,26/40);
\draw (20/8,24/40) -- (22/8,26/40);
\node[right] at (21/8, 5/8) { {\tiny $\sigma_{z, 2}$} };

\draw [color=gray] (2,2.5) -- (2.5,1.25);
\draw [color=gray] (2,2.5) -- (1.5,1.25);
\draw [color=gray] (1.5,1.25) -- (1.25,0);
\draw [color=gray] (2.5,1.25) -- (2.25,0);
\draw [color=gray] (2.5,1.25) -- (2.75,0);
\draw [color=gray] (2.25,0) -- (2.5,-1.25);
\draw [color=gray] (2.75,0) -- (2.5,-1.25);
\draw [color=gray] (2.25,0) -- (2,-1.25);

\draw [color=gray, fill=orange] (2,2.5) circle (0.15);
\draw [color=gray, fill=white] (1.5,1.25) circle (0.15);
\draw [color=gray, fill=yellow] (2.5,1.25) circle (0.15);
\draw [color=gray, fill=pink] (2.75,0) circle (0.15);
\draw [color=gray, fill=teal] (2.25,0) circle (0.15);
\draw [color=gray, fill=white] (1.25,0) circle (0.15);
\draw [color=gray, fill=lime] (2.5,-1.25) circle (0.15);
\draw [color=gray, fill=white] (2,-1.25) circle (0.15);

\node at (2.5, 1.25) { {\tiny $z$} };

\end{scope}

\begin{scope}[shift={(15,0)}]

\draw (0,4.25) -- (-2,2.5);
\draw (-2,2.5) -- (-2.5,1.25);
\draw (-2,2.5) -- (-1.5,1.25);
\draw (-1.5,1.25) -- (-1.75,0);
\draw (-1.5,1.25) -- (-1.25,0);
\draw (-2.5,1.25) -- (-2.25,0);
\draw (-2.5,1.25) -- (-2.75,0);

\draw [color=gray] (-2.75, 0) -- (-3, -1.25);
\draw [color=gray] (-2.75, 0) -- (-2.5, -1.25);
\draw [color=gray] (-3, -1.25) -- (-2.5, -1.25);
\draw [color=gray] (-1.75, 0) -- (-1.5, -1.25);
\draw [color=gray] (-1.5, -1.25) -- (-1.75, -2.5);

\draw (0,4.25) -- (0,2.5);
\draw (0,2.5) -- (-0.55,1.25);
\draw (0,2.5) -- (0.5,1.25);
\draw (-0.5,1.25) -- (-0.75,0);
\draw (-0.5,1.25) -- (-0.25,0);
\draw (0.5,1.25) -- (0.25,0);
\draw (0.5,1.25) -- (0.75,0);

\draw (0,4.25) -- (1, 27/8);
\draw (1, 27/8) -- (2,2.5);
\draw (2,2.5) -- (2.5,1.25);
\draw (2,2.5) -- (1.5,1.25);
\draw (1.5,1.25) -- (1.75,0);
\draw (1.5,1.25) -- (1.25,0);
\draw (2.5,1.25) -- (2.25,0);
\draw (2.5,1.25) -- (2.75,0);

\begin{scope}[shift = {(-0.5, -1.25)}]

\draw (1.75, 1.25) -- (1.25,0);
\draw (1.25,0) -- (1,-1.25);
\draw (1.25,0) -- (1.5,-1.25);
\draw (1.5,-1.25) -- (1.5,-2.75);
\draw (1.5,-1.25) -- (2,-2.75);
\draw (2,-2.75) -- (1.5,-2.75);
\draw (1,-1.25) -- (0.5,-2.75);
\draw (1,-1.25) -- (1,-2.75);
\draw (1.5,-2.75) -- (2,-4.25);
\draw (2,-2.75) -- (2.5,-4.25);
\draw (0.5,-2.75) -- (0.35,-4.25);
\draw (0.5,-2.75) -- (0,-4.25);
\draw (1,-2.75) -- (1.25,-4.25);
\draw (1,-2.75) -- (0.75,-4.25);

\draw (1.75, 1.25) -- (2.25,0);
\draw (2.25,0) -- (2, -1.25);
\draw (2.25,0) -- (2.5, -1.25);

\draw [color=gray] (2.5, -1.25) -- (3, -2.5);
\draw [color=gray] (2.5, -1.25) -- (2.5, -2.5);

\draw [fill=white] (1.75,1.25) circle (0.15);
\draw [fill=violet] (1.25,0) circle (0.15);
\draw [fill=white] (1.5,-1.25) circle (0.15);
\draw [fill=white] (1,-1.25) circle (0.15);
\draw [fill=white] (1.5,-2.75) circle (0.15);
\draw [fill=white] (2,-2.75) circle (0.15);
\draw [fill=white] (0.5,-2.75) circle (0.15);
\draw [fill=white] (1,-2.75) circle (0.15);
\draw [fill=white] (2,-4.25) circle (0.15);
\draw [fill=white] (2.5,-4.25) circle (0.15);
\draw [fill=white] (0.75,-4.25) circle (0.15);
\draw [fill=white] (1.25,-4.25) circle (0.15);
\draw [fill=white] (0,-4.25) circle (0.15);
\draw [fill=white] (0.35,-4.25) circle (0.15);

\draw [fill=white] (2.25,0) circle (0.15);
\draw [fill=white] (2, -1.25) circle (0.15);
\draw [fill=white] (2.5, -1.25) circle (0.15);

\draw [color=gray, fill=white] (3, -2.5) circle (0.15);
\draw [color=gray, fill=white] (2.5, -2.5) circle (0.15);

\node[left] at (1.25, 0) {$Y_{t}$};

\end{scope}

\draw [fill=blue] (0,4.25) circle (0.15);
\node[above] at (0,4.25) {$X_{t}$}; 

\draw [fill=white] (-2,2.5) circle (0.15);
\draw [fill=white] (-1.5,1.25) circle (0.15);
\draw [fill=white] (-2.5,1.25) circle (0.15);
\draw [fill=white] (-2.75,0) circle (0.15);
\draw [fill=white] (-2.25,0) circle (0.15);
\draw [fill=white] (-1.75,0) circle (0.15);
\draw [fill=white] (-1.25,0) circle (0.15);

\draw [color=gray, fill=white] (-2.5, -1.25) circle (0.15);
\draw [color=gray, fill=white] (-3, -1.25) circle (0.15);
\draw [color=gray, fill=white] (-1.5, -1.25) circle (0.15);
\draw [color=gray, fill=white] (-1.75, -2.5) circle (0.15);

\draw [fill=white] (0,2.5) circle (0.15);
\draw [fill=white] (-0.5,1.25) circle (0.15);
\draw [fill=white] (0.5,1.25) circle (0.15);
\draw [fill=white] (0.75,0) circle (0.15);
\draw [fill=white] (0.25,0) circle (0.15);
\draw [fill=white] (-0.25,0) circle (0.15);
\draw [fill=white] (-0.75,0) circle (0.15);

\draw [fill=white] (2,2.5) circle (0.15);
\draw [fill=white] (1.5,1.25) circle (0.15);
\draw [fill=white] (2.5,1.25) circle (0.15);
\draw [fill=white] (2.75,0) circle (0.15);
\draw [fill=white] (2.25,0) circle (0.15);
\draw [fill=white] (1.75,0) circle (0.15);

\begin{scope}[shift = {(5, 2.25)}]

\draw [color=gray] (0,2.5) -- (0.5,1.25);
\draw [color=gray] (0,2.5) -- (-0.5,1.25);

\draw [color=gray, fill=white] (0,2.5) circle (0.15);
\draw [color=gray, fill=white] (0.5,1.25) circle (0.15);
\draw [color=gray, fill=white] (-0.5,1.25) circle (0.15);

\end{scope}

\begin{scope}[shift = {(7, 0)}]

\draw [color=gray] (0,2.5) -- (0.5,1.25);
\draw [color=gray] (0,2.5) -- (-0.5,1.25);
\draw [color=gray] (0.5,1.25) -- (0.75,0);
\draw [color=gray] (-0.5,1.25) -- (-0.75,0);
\draw [color=gray] (-0.5,1.25) -- (0.5,1.25);

\draw [color=gray, fill=white] (0,2.5) circle (0.15);
\draw [color=gray, fill=white] (0.5,1.25) circle (0.15);
\draw [color=gray, fill=white] (-0.5,1.25) circle (0.15);
\draw [color=gray, fill=white] (-0.75,0) circle (0.15);
\draw [color=gray, fill=white] (0.75,0) circle (0.15);

\end{scope}

\begin{scope}[shift={(3,-3)}]

\draw [color=gray] (2,2.5) -- (2.5,1.25);
\draw [color=gray] (2,2.5) -- (1.5,1.25);
\draw [color=gray] (1.5,1.25) -- (1.25,0);
\draw [color=gray] (2.5,1.25) -- (2.25,0);
\draw [color=gray] (2.25,0) -- (2.5,-1.25);
\draw [color=gray] (2.75,0) -- (2.5,-1.25);
\draw [color=gray] (2.25,0) -- (2,-1.25);

\draw [color=gray, fill=orange] (2,2.5) circle (0.15);
\draw [color=gray, fill=white] (1.5,1.25) circle (0.15);
\draw [color=gray, fill=yellow] (2.5,1.25) circle (0.15);
\draw [color=gray, fill=pink] (2.75,0) circle (0.15);
\draw [color=gray, fill=teal] (2.25,0) circle (0.15);
\draw [color=gray, fill=white] (1.25,0) circle (0.15);
\draw [color=gray, fill=lime] (2.5,-1.25) circle (0.15);
\draw [color=gray, fill=white] (2,-1.25) circle (0.15);

\node at (2.5, 1.25) { {\tiny $z$} };

\end{scope}

\end{scope}

\end{tikzpicture}

\caption{Visual representation of the transition in step (6-d), where an arrival of $\hat{\mathfrak{T}}^{\rm dyn}_{z,2}$. Edges represented in gray in the first (respectively, second) plot are those that are not part of $\cB_{\hslash,s}(X_s)\cup\cB_{\hslash,s}(Y_s)$ (respectively, $\cB_{\hslash,t}(X_t)\cup\cB_{\hslash,t}(Y_t) $).}
\label{fig:explorationDec}
\end{figure}	

The rest of this section will be devoted to proving the following proposition, which shows that the event ``$|E_t|$ is small'' is \emph{typical}, in the sense of Section \ref{ss.typev}.
\begin{proposition}{\bf [Control on the size of $E_t$]} 	
\label{lemma:E-far}
Let
		\begin{equation}
		\label{def-H}
		\cH \coloneqq \{ |E_{t}| \leq n^{12\delta}\,\,\forall\, t \in [0, n^{3/2}] \},
		\end{equation}
where $\delta$ is given by~\eqref{def-sigma}. For every $d \geq 3$ and $\nu>0$,
		\begin{equation}
		\label{eq:Efar}
		\lim_{n \to \infty} \P(\cH)= 1.
		\end{equation}
\end{proposition}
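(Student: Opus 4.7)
The plan is to view $(|E_t|)_{t \geq 0}$ as a piecewise-constant Markov chain exhibiting a strong self-correcting drift: the growth events caused by the random walks and by ``local'' rewirings are either rare or produce bounded-size jumps, while the ``far-away'' rewirings of Case (6-d) remove edges at aggregate rate proportional to $\nu |E_t|$. Introducing the stopping time $\tau \coloneqq \inf\{t \geq 0 : |E_t| > n^{12\delta}\}$, the event $\cH^\complement$ coincides with $\{\tau \leq n^{3/2}\}$, so it will suffice to show $\mathbb{P}(\tau \leq n^{3/2}) \to 0$ as $n \to \infty$.

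\textbf{Bounding jump rates and sizes.} For the stopped process $(|E_{t \wedge \tau}|)_{t \ge 0}$, I plan to apply the generator of the exploration process to the identity and separately control the four cases of step (6). Case (6-a) occurs at rate $2$ and enlarges $|E|$ by at most $|\cB_\hslash| \leq d^{\hslash+1}$. Case (6-b), which requires both stubs of a rewiring to lie in $\bar E_s$, occurs at rate at most $\tfrac{\nu}{2(dn-1)} |\bar E_s|^2 = O(\nu |E_s|^2/n)$ and enlarges $|E|$ by at most $O(d^{\hslash})$. Case (6-c), which restricts to stubs of $\bar E_s$ whose base vertex lies in $\cB_{\hslash,s}(X_s) \cup \cB_{\hslash,s}(Y_s)$, occurs at rate at most $O(\nu d^{\hslash})$ and again enlarges $|E|$ by at most $O(d^{\hslash})$. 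Finally, Case (6-d) decreases $|E|$ by exactly $1$ at aggregate rate at least $c_0 \nu (|E_s| - c_1 d^{\hslash})_+$ for constants $c_0,c_1>0$, since each edge of $E_s$ with both endpoints outside the $\hslash$-neighborhoods is broken by an outward rewiring at rate $\Theta(\nu)$. Combining these four contributions, and using that on the stopped process $|E_{t \wedge \tau}|^2/n \leq n^{24\delta -1} = o(1)$ thanks to $\delta < 1/48$ (see \eqref{def-sigma}), the drift of $f(t) \coloneqq \mathbb{E}[|E_{t\wedge\tau}|]$ will satisfy
\begin{equation}
f'(t) \;\leq\; C_1 \nu\, d^{2\hslash} \;-\; c_0 \nu\, f(t)
\end{equation}
for some constant $C_1 = C_1(d,\nu) > 0$.

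\textbf{Conclusion via ODE and Markov.} Integrating this linear differential inequality with the initial bound $f(0) = |E_0| \leq 2 d^{\hslash+1}$ then gives $f(t) \leq C_2\, d^{2\hslash} = C_2\, n^{2\delta}$ uniformly in $t \geq 0$. Since, by definition of $\tau$ and right-continuity, $\{\tau \leq t\} = \{|E_{t\wedge\tau}| > n^{12\delta}\}$ for every $t \ge 0$, Markov's inequality yields
\begin{equation}
\mathbb{P}(\tau \leq n^{3/2}) \;=\; \mathbb{P}\bigl(|E_{n^{3/2} \wedge \tau}| > n^{12\delta}\bigr) \;\leq\; \frac{C_2\, n^{2\delta}}{n^{12\delta}} \;=\; C_2\, n^{-10\delta} \;\longrightarrow\; 0,
\end{equation}
which settles \eqref{eq:Efar}. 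The main obstacle I foresee is the careful bookkeeping of the firing rates of the symmetrised Poisson processes $\hat{\mathfrak{T}}^{\rm dyn}_{\sigma}$ (built from both original rewirings and marks): one must verify that they deliver the $\Theta(\nu)$ removal rate per far-away edge in Case (6-d), since it is precisely this negative drift that cancels the $O(\nu d^{2\hslash})$ growth contribution coming from Case (6-c) and keeps the mean of the exploration set at polylogarithmic scale $n^{2\delta}$.
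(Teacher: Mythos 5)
Your plan correctly identifies the self-correcting drift of $|E_t|$ (near edges accumulate at bounded rate, far edges die at rate $\Theta(\nu)$ each) and correctly sets up $\{\tau\le t\}=\{|E_{t\wedge\tau}|>n^{12\delta}\}$. The gap is in the step ``integrating this linear differential inequality $\dots$ gives $f(t)\le C_2 n^{2\delta}$ uniformly in $t$.'' The inequality $f'(t)\le C_1\nu d^{2\hslash}-c_0\nu f(t)$ does not hold for the stopped process. Indeed, writing $f(t)=\E[|E_{t\wedge\tau}|]$ one has $f'(t)=\E[\mathrm{drift}(E_t)\ind_{\tau>t}]$, and although $\mathrm{drift}(E)\le A-B|E|$ for $|E|\le n^{12\delta}$, this only yields
\[
f'(t)\le A-B\,\E\bigl[|E_t|\ind_{\tau>t}\bigr]=A-B f(t)+B\,\E\bigl[|E_\tau|\ind_{\tau\le t}\bigr].
\]
The extra term is of order $B\,n^{12\delta}\,\P(\tau\le t)$, precisely the quantity you are trying to bound, so the argument is circular: substituting the Markov bound $\P(\tau\le t)\le f(t)/n^{12\delta}$ turns the ODE into $f'\le A+Bf$ (exponential blow-up), and the trivial bound $\P(\tau\le t)\le1$ turns it into $f'\le A+Bn^{12\delta}-Bf$, whose equilibrium is $\Theta(n^{12\delta})$, useless for Markov at threshold $n^{12\delta}$. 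Working with the un-stopped process does not help either, because the quadratic rate from case (6-b) overtakes the linear decay from case (6-d) once $|E|\gtrsim n/d^{\hslash}$, so the drift inequality is not global. The deeper obstruction is that controlling $\E[|E_{t\wedge\tau}|]$ is \emph{equivalent} to controlling $\P(\tau\le t)$, so a drift-to-mean argument cannot break the circularity; one needs a genuine supremum bound.

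The paper's proof takes a route that avoids this: it first splits $E_t$ into a near part (trivially $\le 2d^{\hslash}$) and a far part, dominates $|E_t^{\rm far}|$ by an auxiliary birth-death chain $\bar I_t$ whose jump rates are \emph{uniformly bounded} on the relevant range, then bounds $\sup_{t\le n^{3/2}}\bar I_t$ by a union bound over the (polynomially many) jump times combined with a Chernoff estimate on excursions: any path from $n^{\varepsilon/2}$ to $n^\varepsilon$ that stays above $n^{\varepsilon/2}$ must contain many upward jumps, each with probability $\lesssim n^{-3\delta}$, giving a stretched-exponential failure bound that survives the polynomial union bound. You would need an argument of that type (an excursion or maximal inequality, not just a mean estimate at a single time) to close the gap.
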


\begin{proof}
For every $t \geq 0$, partition the set $E_t$ in two parts, $E^{\rm near}_t$ and $E^{\rm far}_t$. The stubs in $E^{\rm far}_t$ are those that are at (graph) distance larger than $\hslash$ in $E_t$ from both $X_t$ and $Y_t$. To ease the visualisation, note that edges in $E^{\rm far}_t$ are those reported in gray in Figures \ref{fig:explorationRW}--\ref{fig:explorationDec}. Clearly, for $n$ large enough, $|E_t^{\rm near}| \leq 2 d^\hslash$, for every $t \geq 0$. We are therefore mainly interested in providing a bound on the size of $E^{\rm far}_t$. More precisely, \eqref{eq:Efar} follows at once as soon as we verify that
		\begin{equation}
		\label{goalfar}
		\lim_{n\to\infty} \P\big( |E_t^{\rm far}| \leq n^{10\delta}, \text{ for all } 0 \leq t  \leq n^{3/2} ) = 1,
		\end{equation}
since $2 d^\hslash \leq n^{2\delta}$, for all $n$ large enough. To simplify the reading, we set $\varepsilon = 10\delta$, and stress that all the inequalities below are true only for values of $n$ large enough. 
		
We organise the proof of~\eqref{goalfar} in five steps.
		
\medskip\noindent
{\bf 1.}
Fix some $s\ge 0$ and consider an arbitrary realisation of $(E_r,X_r,Y_r)_{r\in[0,s]}$. Let $t>s$ the first arrival after time $s$. We aim at controlling the difference $|E_t^{\rm far}|-|E_s^{\rm far}|$.

		\begin{itemize}
		\item[\bf (A)] $|E_t^{\rm far}|-|E_s^{\rm far}|>0$ if one of the following hold:
		\begin{itemize}
			\item[\bf (a)] 
			One of the two random walks does a step (as in step (6-a)): in this case the increase can be bounded by $|E^{\rm far}_t|-|E^{\rm far}_s|< d^{\hslash}$ a.s.\ (see Figure \ref{fig:explorationRW}).
			\item[\bf (b)] If a rewiring as in step (6-b) happens, that is, if $t$ is the arrival time of two Poisson processes associated to stubs that are within distace $\hslash$ from $\{X_{s}, Y_{s}\}$. In this case, the increase in $|E_t^{\rm far}|-|E_s^{\rm far}|$ can be bounded by $d^{[\hslash-L]_+}$ a.s., where 
			\begin{equation}
			L = 1+\min \{ {\rm dist}_s(X_s,z), {\rm dist}_s(Y_s,z), {\rm dist}_s(X_s,v), {\rm dist}_s(Y_s,v ) \}, 
			\end{equation}
			the worst case being e.g.\ $(z,v)=(X_s,Y_s)$ (see Figures~\ref{fig:explorationV-},~\ref{fig:explorationV0} and~\ref{fig:explorationSigma-}).
			\item[\bf (c)]  If a rewiring as in step (6-c) takes place, that is, only one arrival for a Poisson process associated to a stub within distance $\hslash$ from $\{X_{s}, Y_{s}\}$. The increase can be bounded by $|E^{\rm far}_t|-|E^{\rm far}_s| < d^{\hslash}$ a.s., the worst case being $z \in \{X_s,Y_s\}$ (see Figure \ref{fig:explorationV+}).
		\end{itemize}
		\item[\bf (B)]  
		$|E_t^{\rm far}| - |E_s^{\rm far}| = -1$ if $t$ is an arrival time from a stub $\sigma_{z, i} \in E_s^{\rm far}$, i.e., as in step (6-d) (see Figure \ref{fig:explorationDec}).
	\end{itemize} 

Note that the rate of the events in the previous list can be controlled as follows:
		\begin{itemize}
		\item 
		The event in {\bf (A-a)} occurs at rate $2$.
		\item
		The event in {\bf (A-b)} with $L = \ell \in \{1, \dots, \hslash-1\}$ occurs at rate bounded by $2\nu d^{\ell}$.
		\item
		The event in {\bf (A-c)} occurs at rate bounded from above by $2\nu d^\hslash$.
		\item
		The event in {\bf (B)} occurs at rate at least $\nu |E^{\rm far}_s| \big(1-\frac{4d^{\hslash} + 2|E^{\rm far}_s|}{dn}\big)$.
		\end{itemize} 

\medskip\noindent
{\bf 2.}
By the estimates above, the evolution of $|E_t^{\rm far}|$ can be stochastically dominated by a random process $(I_t)_{t \ge 0}$ that evolves as follows:
		\begin{itemize}
		\item $I_t \to I_t +d^\hslash$ at rate $2$.
		\item $I_t \to I_t +d^{\hslash-\ell}$ at rate $2\nu d^\ell$ for all $\ell \in \{1,\dots,\hslash\}$.
		\item $I_t \to I_t-1$ at rate $\nu \, I_t\,(1-\frac{4d^{\hslash}+2I_t}{dn})$.
		\end{itemize}
In other words, the process $(I_t)_{t\ge 0}$ can be naturally coupled with the exploration process so as to have $I_t\ge |E_t^{\rm far}|$ for all $t \geq 0$, almost surely. It will be convenient to consider another process, denoted by $(\bar I_t)_{t\ge 0}$, that has the same upward transitions as those of the process $(I_t)_{t \ge 0}$, but decreases as follows:
		\begin{itemize}
		\item $\bar I_t \to \bar I_t-1$ at rate $\frac{\nu}{2} \, (\bar I_t \wedge n^{2\varepsilon})$.
		\end{itemize}
		
Define 
		\begin{equation}
		\tau_{\rm big} \coloneqq \inf\{t \geq 0\colon I_t> n^{2\varepsilon}\},
		\end{equation}
and note that it is possible to couple the two processes so as to have $I_t \leq \bar I_t$ for all $0 \leq t \leq \tau_{\rm big}$. In light of these domination,~\eqref{goalfar} follows as soon as we verify that
		\begin{equation}
		\label{goalfar2}
		\lim_{n \to \infty} \P \Big( \sup_{t \in [0, n^{3/2}]} \bar I_t > n^\varepsilon \Big) = 0.
		\end{equation}
		
\medskip\noindent
{\bf 3.}
To simplify notation, in what follows we use that, by the definition of $\hslash$ in~\eqref{def-sigma} and the fact that $\nu>0$ is a fixed constant,
		\begin{equation}
		\label{totrate}
		\bigg( 2\nu \sum_{\ell=0}^\hslash d^\ell \bigg) \, \vee \, d^\hslash \leq n^{2\delta}, \text{ for all } n \text{ large enough},
		\end{equation}
that is, $n^{2\delta}$ serves as an upper bound for both the rate of increase of $\bar I$ and the magnitude of an upward jump. Moreover, by the definition of $(\bar I_t)_{t\ge0}$, within time $\tau_{\rm big}$ the total jump rate (including also the downward jumps) is uniformly bounded by $n^{3\varepsilon}$. Therefore, defining
		\begin{equation}
		\label{eq:def-cR} 
		\cR \coloneqq \big\{(\bar I_t)_{0 \leq t \leq n^{3/2}} \text{ makes at most } T
		\coloneqq n^{3/2+4\varepsilon} \text{ jumps} \big\},
		\end{equation}
we have that $\P(\cR^\complement)$ is bounded by the probability that a Poisson random variable of mean $n^{3/2+3\varepsilon}$ is larger than $n^{3/2+4\varepsilon}$. By Markov's inequality,
		\begin{equation}
		\label{eq:est.R}
		\P(\cR^\complement) \leq \P \Big( {\rm Poisson}(n^{3/2+3\varepsilon}) > n^{3/2+4\varepsilon} \Big) \to 0.
		\end{equation}
	
\medskip\noindent
{\bf 4.}
Let $(\varkappa_i)_{i \in \N_0} \subset \R_+$ denote the sequence of jump times of the process $(\bar I_t)_{t\ge 0}$. Then
		\begin{equation}
		\label{eq:bound-bar-I}
		\begin{split}
		\P \Big( \sup_{t \in [0, n^{3/2}]} \bar I_t > n^\varepsilon \Big) 
		& \leq \P \big( \big\{\bar I_{\varkappa_j}> n^\varepsilon, \text{ for some } j \leq T \big\} \cap \cR \big) 
		+ \P(\cR^\complement) \\
		& \leq T \max_{1 \leq j \leq T} \P(\bar I_{\varkappa_j} > n^\varepsilon) + \P(\cR^\complement).
		\end{split}
		\end{equation}
By \eqref{totrate}, in order for the event $\{\bar I_{\varkappa_j}>n^\varepsilon\}$ to occur there must exist a jump time $\varkappa_r$, with $ r<j$, such that 
		\begin{equation}
		\bar I_{\varkappa_r} \in A \coloneqq [n^{\varepsilon/2}, n^{\varepsilon/2}+n^{2\delta}]
		\end{equation}
and $\bar I_{\varkappa_i} > n^{\varepsilon/2}$ for all $i \in \{r,\dots ,j\}$. Therefore, Markov's property yields
		\begin{equation}
		\label{eq:bound-bar-I-2}
		\begin{split}
		\P(&\bar I_{\varkappa_j}> n^\varepsilon) \\
		\quad & = \P\big( \{\bar I_{\varkappa_j} > n^{\varepsilon}\} \cap \big\{ \exists\, r < j 
		\text{ s.t. } \bar I_{\varkappa_r} \in A, \bar I_{\varkappa_i} > n^{\varepsilon/2}\,\, 
		\forall\, i \in \{r,\dots,j\} \big\} \big) \\
		& \leq j \max_{r < j} \P \big( \bar I_{\varkappa_r} \in A ) \P(\{\bar I_{\varkappa_j} > n^\varepsilon\}
		\cap \big\{	\bar I_{\varkappa_i} > n^{\varepsilon/2}\,\, \forall\, i \in \{r, \dots, j \} \big\} \mid
		\bar I_{\varkappa_r} \in A \big) \\
		& \leq j \max_{r < j} \P(\cQ_j^r | \bar{I}_{\varkappa_r} \in A),
		\end{split}
		\end{equation}
where
		\begin{equation}
		\cQ_j^r \coloneqq \{ \bar I_{\varkappa_j} > n^\varepsilon \} \cap \big\{ \bar I_{\varkappa_i} 	> n^{\varepsilon/2}, \text{ for all } i \in \{r, \dots, j\} \big\}.
		\end{equation}

We now prove that the conditional  probability in~\eqref{eq:bound-bar-I-2} can be bounded as follows:
		\begin{equation}
		\label{eq:bound-upward}
		\P(\cQ_j^r \mid \bar{I}_r\in A) \leq \P\bigg( {\rm Bin} \bigg( j-r, \frac{n^{2\delta}}{n^{2\delta} + \frac{\nu}{2} n^{\varepsilon/2}} \bigg)
		\geq \frac{n^\varepsilon + (j-r) - n^{\varepsilon/2}-n^{2\delta}}{n^{2\delta}+1} \bigg).
		\end{equation}
Indeed, in order for the process to be above $n^\varepsilon$ after $j-r$ jumps, it must make $j_\downarrow$ jumps downwards and $j_\uparrow$ jumps upwards such that $j_\downarrow + j_\uparrow = j-r$. Since downward jumps have size $1$ and upward jumps have size at most $n^{2\delta}$ (cf.~\eqref{totrate}), it follows that
		\begin{equation}
		n^{2\delta} j_\uparrow - j_\downarrow \geq n^{\varepsilon} - n^{\varepsilon/2} - n^{2\delta} 
		\quad \Longrightarrow \quad j_\uparrow
		\geq \frac{n^{\varepsilon}+(j-r) - n^{\varepsilon/2} - n^{2\delta}}{n^{2\delta}+1}. 
		\end{equation}
Moreover, under the event that the process does not go below $n^{\varepsilon/2}$ in the next $j-r$ jumps, the probability of an upward jump is at most $\frac{n^{2\delta}}{n^{2\delta}+\frac\nu2 n^{\varepsilon/2}}$  (cf.\ again~\eqref{totrate}). These two facts imply~\eqref{eq:bound-upward}. 

Furthermore, note that, once again using that upward jumps are bounded by $n^{2\delta}$, we get
\begin{equation}
\P(\cQ_j^r \mid \bar{I}_r \in A) =  0
\end{equation}
if $j-r \leq \frac{1}{n^{2\delta}}(n^{\varepsilon} - n^{\varepsilon/2} - n^{2\delta})$. By choosing $n$ large enough, the probability above is zero if $j-r \leq n^{7\delta}$.

\medskip\noindent
{\bf 5.}
To simplify the reading, note that for all $n$ large enough the preceding discussion yields the bound
		\begin{equation}
		\label{eq:bound-upward-2}
		\P(\cQ_j^r \mid \bar{I}_r \in A) \leq \ind_{ \big\{ j-r > n^{7\delta} \big\}} \P\bigg( {\rm Bin}\bigg(j-r, \frac{n^{2\delta}}{n^{2\delta}+\frac{\nu}{2} n^{\varepsilon/2}}\bigg)
		\geq \frac {j-r}{n^{2\delta}} \bigg).
		\end{equation}
Since the expectation of the latter Binomial random variable is of order $(j-r)n^{-3\delta}$, Chernoff's bound yields
		\begin{equation}
		\P \bigg( {\rm Bin} \bigg( j-r, \frac{n^{2\delta}}{n^{2\delta}+\frac\nu2 n^{\varepsilon/2}} \bigg) 
		\geq (j-r) n^{-2\delta} \bigg) \leq \exp\Big( -\tfrac{1}{2} (j-r)n^{-4\delta} \Big).
		\end{equation}
Hence, we get
		\begin{equation}
		\label{eq:bound-upward-final}
		\P(\cQ_j^r \mid \bar{I}_r\in B) \leq \ind_{ \big\{ j-r > n^{7\delta} \big\}} \exp\Big( - \tfrac{1}{4} (j-r) n^{-4\delta}\Big).
		\end{equation}
Substituting \eqref{eq:est.R},~\eqref{eq:bound-bar-I-2} and~\eqref{eq:bound-upward-final} into~\eqref{eq:bound-bar-I}, and recalling the definition of $T$ in~\eqref{eq:def-cR}, implies
		\begin{align*}
		\P\bigg( \sup_{t \in [0, n^{3/2}]} \bar{I}_t > n^\varepsilon \bigg) 
		& \leq T \max_{1 \leq j \leq T} \P( \bar I_{\varkappa_j} > n^\varepsilon) + \P(\cR^\complement) \\
		& \leq T \max_{1 \leq j \leq T} j \max_{r<j} \P( \cQ_j^r \mid \bar{I}_r\in A) + \P(\cR^\complement)
		\to 0,
		\end{align*}
concluding the proof.
\end{proof}
	
	
\subsection{Coupling}
\label{ss.coup}
	
We now introduce an explicit coupling between the exploration process discussed in Section~\ref{ss.expl} and the toy model analysed in Section~\ref{sec:meet}. With the help of this coupling we provide the proof of Theorem \ref{th:meeting} at the end of this section. 
	
Recall $\Prob$ denotes the probability law of the toy model and $\P$ stands for the probability law of the graphical construction of $(E_t,X_t,Y_t)_{t\ge 0}$. As shown in Proposition \ref{prop:nice-events}, at time $t=0$ w.h.p.\ the partial matching $E_0$ will be made by two disjoint trees having height $\hslash$ and rooted at $X_0$ and $Y_0$, respectively, so we work under this initial event. Recall the definition of the process in Section \ref{sec:meet}, and of the marked Poisson processes $({\mathfrak{T}}^W_{z,k})_{W\in\{X,Y\}, z\in\cT^W,1\le k\le d}$. 

To present the coupling between $\Prob$ and $\P$ it is convenient to first establish it in the first phase of the process with law $\Prob$, and consider the coupling of the second phase. 


\subsection{First phase coupling}
	
We start the process at some $s\ge 0$ and assume that at that time the exploration process $(E_s,X_s,Y_s)$ is such that
	\begin{itemize}
	\item[(C1)] $\cB_{\hslash,s}(X_s)$ and $\cB_{\hslash,s}(Y_s)$  are two disjoint trees, 
	\item[(C2)] ${\rm dist}_s(X_s,Y_s) = \infty$,
	\item[(C3)] $|E_s| \leq n^{12\delta}$,
	\end{itemize}
while the process $\Prob$ starts from the first phase. We proceed with the coupled construction of the first phase as follows:

	\begin{itemize}
		\item[(I-1$^{\rm st}$)] 
		Construct a distance preserving bijection $\varphi$ between the stubs in $\cB_{\hslash,s}(X_s) \cup \cB_{\hslash,s}(Y_s)$ and the stubs in $\cT^X \cup \cT^Y$. 
		\item[(II-1$^{\rm st}$)] 
		Look at the collections of (unmarked) processes 
		\begin{equation}
		(\mathfrak{T}^{\rm rw}_{z,k})_{z \in \{X_s,Y_s\}, 1 \leq k \leq d}, 
		\qquad (\hat{\mathfrak{T}}^{\rm dyn}_{z,k})_{\{z \in [n], 1 \leq k \leq d\colon \sigma_{z,k} \in E_s\}},
		\end{equation}
		and let $t>s$ be the time of the first arrival among the two collections.
		\begin{itemize}
			\item[(rw)] 
			If the arrival at $t$ comes from the former collection, then construct the process $(E_r,X_r,Y_r)_{r \in [s,t]}$ as explained in point (6-a) of the procedure in Section \ref{ss.expl}. Moreover, 
			\begin{itemize}
				\item [(i)]	
				if $(E_t,X_t,Y_t)$ satisfies (C1), (C2), and (C3), then restart the procedure with $t$ in place of $s$;
				\item[(ii)] otherwise, declare the coupling \emph{failed}.
			\end{itemize}
			\item[(dyn)] 
			\begin{itemize}
				\item[(i)]
			If the arrival at $t$ comes from the latter collection, say from $\hat{\mathfrak{T}}^{\rm dyn}_{z,i}$ for a unique $z \in [n]$, $1 \leq i \leq d$ such that $\sigma_{z,i} \in \bar E_s$, then construct the process $(E_r,X_r,Y_r)_{r \in [s,t]}$ as explained in point (6-c) of the procedure in Section \ref{ss.expl}. In particular
				\begin{itemize}
					\item [(A)]	
					if $(E_t,X_t,Y_t)$ satisfies (C1), (C2), and (C3), then restart the procedure with $t$ in place of $s$;
					\item[(B)] 
					otherwise, declare the coupling \emph{failed}.
				\end{itemize}
				\item[(ii)]
			If the arrival at $t$ comes from the latter collection, for two processes $\hat{\mathfrak{T}}^{\rm dyn}_{z,i}$ and $\hat{\mathfrak{T}}^{\rm dyn}_{v,j}$ for $z, v \in [n]$, $1 \leq i, j \leq d$ such that $\sigma_{z,k} \in E_s$, then construct the process $(E_r,X_r,Y_r)_{r \in [s,t]}$ as in point (6-b) of the procedure in Section \ref{ss.expl}. Furthermore:
				\begin{itemize}
					\item [(A)]
					if $(\varphi(\sigma_{z,i}),\varphi(\sigma_{v,j}))$ is a \emph{nice} pair of stubs for the toy model, and $\cB_{\hslash,t}(X_t)$ and $\cB_{\hslash,t}(Y_t)$ are two (overlapping) trees, then declare the coupling \emph{successful} and the toy process enters the second phase at time $t$ at distance $\ell={\rm dist}_t(X_t,Y_t)$;
					\item[(B)] 
					if $(\varphi(\sigma_{z,i}),\varphi(\sigma_{v,j}))$ is \emph{not a nice} pair of stubs for the toy model, and $(E_t,X_t,Y_t)$ satisfies (C1), (C2), and (C3), restart the procedure at time $t$;
					\item[(C)]
					otherwise, declare the coupling \emph{failed}.
				\end{itemize}
			\end{itemize}
		\end{itemize}
	\end{itemize}
If the coupling fails, then the realisations of the two processes are continued independently, and it is immediate to check that the two marginals indeed coincide with the two processes in Sections~\ref{sec:meet} and~\ref{ss.expl}.
	
In words, starting at time $s \geq 0$ with $(E_s,X_s,Y_s)$ satisfying (C1), (C2) and (C3), and using the procedure just described, we produce an attempt to couple the first phase of the toy model in Section  \ref{sec:meet} with the exploration process. If this attempt \emph{succeeds}, then the toy model enters the second phase at some $1 \leq \ell \leq 2\hslash-1$ that coincides with the distance between $X_t$ and $Y_t$ in the exploration process. 
	
Denote by
	\begin{equation}
	\label{def-F}
	\cF_t\coloneqq\{{\rm dist}_{t}(X_t,Y_t) > 2\hslash \text{ and } {\rm dist}_{t}(X_t,Y_t) < \infty\}
	\end{equation}
the event in which at time $t$ the two random walks are on the same connected component of $E_t$ but more than $2\hslash$ apart. Note that, in order for the coupling to fail at time $t$, one of the following three events must occur:
	\begin{itemize}
	\item[(fail-1$^{\rm st}$-a)] $[\cE_t^{\rm trees}]^\complement$ (defined as in \eqref{events});
	\item[(fail-1$^{\rm st}$-b)] $\cE_t^{\rm trees}\cap \cF_t$;
	\item[(fail-1$^{\rm st}$-c)] $\{|E_t|>n^{12\delta}\}$ (recall Proposition \ref{lemma:E-far}).
	\end{itemize}


\subsection{Second phase coupling}
	
Given that the first coupling succeeds and the toy model enters the second phase at some $1 \leq \ell \leq 2\hslash-1$, we next explain how to couple the second phase with the evolution of the exploration process. Consider the evolution of the exploration process started at time $t > s$ at some $(E_t,X_t,Y_t)$ such that there exists a unique path joining $X_t$ and $Y_t$ in $E_t$ and $|E_t| \le n^{12\delta}$.
	\begin{itemize}
	\item[(I-2$^{\rm nd}$)] 
	Construct a distance preserving injection $\varphi$ between the stubs of the vertices along the unique path joining $X_t$ to $Y_t$ and the stubs of the vertices along the unique path of $\cT^{\rm joint}$ (cf.\ Section \ref{sec:meet}) joining the two random walks.
	\item[(II-2$^{\rm nd}$)] 
	Proceed with the construction of the exploration process as explained in Section \ref{ss.expl}, and let $r>t$ be the first time when the exploration process evolves:
	\begin{itemize}
	\item[(rw)] If at time $r$ one of the two random walks moves following a stub $\sigma$, then construct the process $(E_u,X_u,Y_u)_{u \in [t,r]}$ as explained in point (6-a) of the procedure in Section \ref{ss.expl}, let the associated random walk in the toy model move following the stub $\varphi(\sigma)$, and do as follows:
	\begin{itemize}
	\item[(i)] 
	if $X_r = Y_r$, then declare the coupling \emph{successful}, set the length of the second phase to $r-t$ and continue to construct the two processes independently;
	\item[(ii)] 
	if $X_r \neq Y_r$, do as follows:
	\begin{itemize}
	\item if there is a unique path joining $X_r$ to $Y_r$ and $|E_t| \leq n^{12\delta}$, then restart from (I-2$^{\rm nd}$) with $r$ instead of $t$;
	\item otherwise, declare the coupling \emph{failed}.
	\end{itemize}
	\end{itemize}
	\item[(dyn)] If at time $r$ there is a rewiring, then construct the process $(E_u,X_u,Y_u)_{u \in [t,r]}$ as explained in points (6-b) and (6-c) of the procedure in Section~\ref{ss.expl}, and do as follows:
	\item[(i)] 
	if the rewiring takes place along the unique path joining $X_t$ and $Y_t$ in $E_t$, then stop the coupling, set the length of the second phase to $r-t$, and do as follows:
	\begin{itemize}
	\item[(A)]
	if as a consequence of the rewiring $(E_r,X_r,Y_r)$ satisfies properties (C1), (C2), and (C3), then say that the second phase ended at a \emph{good point} and restart the coupling of the first phase at $(E_r,X_r,Y_r)$;
	\item[(B)] 
	otherwise, say that the second phase ended at a \emph{bad point}, and declare the coupling \emph{failed}.
	\end{itemize}
	\item[(ii)] 
	if the rewiring does not take place along the unique path joining $X_t$ and $Y_t$ in $E_t$, then do as follows:
	\begin{itemize}
	\item[(A)] 
	if as consequence of the rewiring there exists a unique path joining $X_r$ and $Y_r$ in $E_r$ and $|E_r| \leq n^{12\delta}$, restart from (I-2$^{\rm nd}$) with $r$ instead of $t$;
	\item[(B)] otherwise, declare the coupling \emph{failed}.
	\end{itemize}
	\end{itemize}
	\end{itemize}

For the second phase, the coupling can fail at time $t$ only by realising either one of the following events:
	\begin{itemize}
	\item[(fail-2$^{\rm nd}$-a)] 
	the event in step (dyn-ii-B), which implies that either $[\cE_t^{\rm trees}]^\complement \cap [\cE_t^{\rm far}]^\complement$ or $|E_t| > n^{12\delta}$. 
	\item[(fail-2$^{\rm nd}$-b)] 
	the event in step (dyn-i-B), i.e., ending at a \emph{bad point}.
	\end{itemize}  
Let us now introduce $t_{\rm renew}^{(0)}(t) = t$,
	\begin{equation}
	\label{renew}
	\tau_{\rm renew}^{\ell+1}(t) \coloneqq \inf \Big\{s > \tau_{\rm renew}^{\ell}(t) \colon \begin{array}{c} {\text{every process in } ( \mathfrak{T}^{\rm dyn}_{z,k})_{z \in [n], 1 \leq k \leq d}} \\ {\text{registered at least one arrival}} \end{array} \Big\}
	\end{equation}
and
	\begin{equation}
	\label{renew+}
	K(t) = \inf\left\{ \ell \geq 1 \colon (E_{\tau_{\rm renew}^{\ell}(t)}, X_{\tau_{\rm renew}^{\ell}(t)}, 
	Y_{\tau_{\rm renew}^{\ell}(t)}) \text{ satisfies (C1), (C2), and (C3)}\right\}.
	\end{equation}
Finally, set $\tau_{\rm renew}^+(t) \coloneqq \tau_{\rm renew}^{(K(t))}(t)$. Observe that, by its very construction, $\big( \tau_{\rm renew}^{\ell+1}(t)-\tau_{\rm renew}^{\ell}(t) \big)_{\ell \geq 0}$ is an i.i.d.\ sequence and $K(t)$ is a Geometric random variable with parameter given by the probability that a random graph distributed according to $\mu_d$ satisfies (C1), (C2), and (C3). This probability can be made arbitrarily close to one, provided $n$ is large enough.
	
If the coupling fails at some time $t>0$, then regardless of the phase in which this happens, we declare it \emph{inactive} in the time interval $[t,\tau_{\rm renew}^+(t))$, and restart the coupling from the first phase at $\tau_{\rm renew}^+(t)$. This concludes the definition of the coupling. We start the exploration process at $(E_0,X_0,Y_0)$, assuming that $\cB_{\hslash,0}(X_0)$ and $\cB_{\hslash,0}(Y_0)$ are two disjoint trees (i.e., (C1)-(C3) are satisfied), and use the coupling just described up to time $\tau_{\rm meet}^{\pi \otimes \pi}$. Note that the meeting time between the two random walks might happen when the coupling is active of inactive.

We next state three lemmas and conclude the proof of Theorem~\ref{th:meeting}, postponing the verification of the lemmas until the end of the section. Our main technical result states that the meeting of the two random walks happens w.h.p.\ when the coupling is in its active phase. This is done with the aid of two main steps. We first prove that w.h.p.\ the coupling can fail only during the first phase (see Lemma~\ref{lemma:set-fail} below). Afterwards in Lemma~\ref{lemma:set-B}, we check that it actually stays active for most of the time (Lemma~\ref{lemma:set-B}).
	
\begin{lemma}{\bf [Failures occur only during the first phase]} 
\label{lemma:set-fail}
Consider the event
		\begin{equation}
		\label{eq:fail}
		\cW \coloneqq \{\text{\em The coupling fails during the second phase before time $n^{3/2}$}\}.
		\end{equation}
Then
		\begin{equation}
	 	\lim_{n \to \infty} \P(\cW) = 0.
		\end{equation}
\end{lemma}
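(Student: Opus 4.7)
Plan. The plan is to work on the intersection of the typical events from Propositions \ref{prop:typical} and \ref{lemma:E-far},
\[
\cJ \coloneqq \cH \cap \bigcap_{0 \leq t \leq n^{3/2}} \big(\cE^{\rm trees}_t \cup \cE^{\rm far}_t\big),
\]
which satisfies $\P(\cJ)=1-o(1)$. It then suffices to prove $\P(\cW\cap\cJ)=o(1)$.

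The key observation is that, on $\cJ$, both failure modes in the second phase---(fail-2$^{\rm nd}$-a) via step (dyn-ii-B) and (fail-2$^{\rm nd}$-b) via step (dyn-i-B)---can only occur when a rewiring event has \emph{both} of its endpoint stubs lying in $\bar E_r$. Indeed, the alternative condition $|E_r|>n^{12\delta}$ appearing in (dyn-ii-B) is ruled out by $\cH$, and the remaining mechanism is the destruction of the unique-path property between $X_r$ and $Y_r$ in $E_r$. Since in the second phase $\dist_r(X_r,Y_r)\le 2\hslash-1$, on $\cE^{\rm trees}_r$ the union $\cB_{\hslash,r}(X_r)\cup\cB_{\hslash,r}(Y_r)$ is a single tree containing the current path; a new alternative path can therefore only arise from a rewiring that reshuffles stubs already in $\bar E_r$, because an ``external'' rewiring (with a partner outside $\bar E_r$) merely triggers exploration of a fresh, disjoint subtree that cannot reconnect the walkers. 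The same argument handles (fail-2$^{\rm nd}$-b): a path-rewiring ends at a bad point precisely when (C2) fails, i.e., the walkers remain connected in $E_r$ after the path is broken, which again forces the new edge to lie in $\bar E_r$.

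Given this reduction, the conditional probability that a rewiring event is ``bad'' is at most $|\bar E_t|/(dn-1)=O(n^{-1+12\delta})$ on $\cJ$, using the bound $|\bar E_t|\le 2d|E_t|\le C n^{12\delta}$. To bound the total number of rewiring events during the second phase within $[0,n^{3/2}]$, first observe that by Lemma \ref{le:exponential-tau} and the construction of the coupling the first phase typically lasts time $\Theta(n)$, so the number of entries into the second phase within $[0,n^{3/2}]$ is $O(n^{1/2})$ w.h.p.; each such entry has expected duration $O(1/\nu)$, since the total path-rewiring rate is $\Omega(\nu)$. Therefore the total time spent in the second phase within $[0,n^{3/2}]$ is $O(n^{1/2})$ w.h.p. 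During this time, each of the $O(n^{12\delta})$ stubs in $\bar E_t$ triggers rewirings at rate $\nu/4$, yielding $O(n^{1/2+12\delta})$ events in total. A union bound then gives
\[
\P(\cW\cap\cJ)\le C\, n^{1/2+12\delta}\cdot n^{-1+12\delta}=O\big(n^{-1/2+24\delta}\big)=o(1),
\]
using the standing assumption $\delta<\tfrac{1}{48}$.

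Main obstacle. The delicate point is the structural claim that, on $\cE^{\rm trees}_r\cap\cH$, every second-phase failure reduces to a rewiring with both endpoints in $\bar E_r$. Justifying this rigorously requires a case analysis based on the location of each of the two stubs involved in a rewiring---whether it lies in $E_r$, in $\bar E_r\setminus E_r$, or outside $\bar E_r$ entirely---together with a careful tracking of how the exploration rules (6-b)--(6-d) of Section \ref{ss.expl} update $E_r$. The crucial input is that external matchings always reveal a fresh subtree of previously unknown vertices, which cannot already belong to the walkers' component, so such rewirings preserve the unique-path property and can never generate a bad endpoint.
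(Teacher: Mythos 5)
Your proposal follows the same overall route as the paper's proof: work on the typical events from Propositions \ref{prop:typical} and \ref{lemma:E-far}, reduce second-phase failures to a rare class of rewirings, and take a union bound over all rewirings in the second phase. The structural reduction -- that on typical events both (fail-2$^{\rm nd}$-a) and (fail-2$^{\rm nd}$-b) require a rewiring whose two stubs both already lie in $\bar E_r$, because external (6-c)-type rewirings and subsequent explorations only reveal fresh subtrees and, on $\cE^{\rm trees}_r$, cannot close a cycle -- is essentially sound and is the same observation the paper makes when handling (dyn-i-B).

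However there is a quantitative error that makes your bound too weak for the standing range $\delta<\tfrac1{48}$. You assert that ``by Lemma~\ref{le:exponential-tau} the first phase typically lasts time $\Theta(n)$, so the number of entries into the second phase within $[0,n^{3/2}]$ is $O(n^{1/2})$.'' This conflates $\tau^{\rm tot}_{\rm 1^{st}}$ (the \emph{total} time spent in the first phase across all $N$ iterations, which is indeed $\Theta(n)$ by Lemma~\ref{le:exponential-tau}) with the duration of a \emph{single} first-phase iteration. A single first phase ends at the first nice-pair arrival, which occurs at rate of order $\hslash (d-1)^{2\hslash}/n \asymp n^{-1+2\delta}$, so each iteration lasts only $\Theta(n^{1-2\delta})$ and the number of entries into the second phase over $[0,n^{3/2}]$ is of order $n^{1/2+2\delta}$, not $n^{1/2}$. (This is also consistent with the paper's use of Lemma~\ref{lemma:2nd-phase-new}, which gives a second-phase time of order $n^{1/2+4\delta}$.) Correcting this, the number of rewirings involving $\bar E_t$ becomes $O(n^{1/2+14\delta})$ or more, and combined with your coarser bad-rewiring probability $O(n^{-1+12\delta})$ the union bound gives $O(n^{-1/2+26\delta})$, which is $o(1)$ only for $\delta<\tfrac1{52}$ -- strictly narrower than the paper's $\delta<\tfrac1{48}$.

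The paper avoids this by using a finer conditional bad-rewiring probability for (fail-2$^{\rm nd}$-b): it exploits that the rewiring must involve one of the fewer than $4\hslash$ path stubs (not merely some stub of $\bar E_s$), which gives $O(n^{-1+\delta})$ rather than $O(n^{-1+12\delta})$. This is where the slack is recovered, yielding the final bound $O(n^{-1/2+17\delta})$, which is $o(1)$ for $\delta<\tfrac1{34}$ and hence for all $\delta<\tfrac1{48}$. If you replace your coarse estimate with the path-localised one, the argument goes through in the required range.
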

	
Consider the random subset of times at which the coupling is not active, that is, the random set
	\begin{equation}
	\label{def-B}
	B \coloneqq \{s \in [0,n^{3/2}] \colon \text{ the coupling is inactive at time } s\}.
	\end{equation}
	
\begin{lemma}{\bf [The coupling is active for most of the time]} 
\label{lemma:set-B}
	We have
	\begin{equation}
	\label{eq:B-size}
	\lim_{n \to \infty} \P(|B| >  n^{1/2 + 28\delta}) = 0.
\end{equation}
Moreover,
	\begin{equation}
	\label{eq:B-cc}
	\lim_{n \to \infty} \P \big( B\text{ \em has more than $n^{1/2+27\delta}$ connected components} \big) = 0.
	\end{equation}
\end{lemma}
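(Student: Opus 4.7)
The plan is to decompose $B$ into the disjoint inactive intervals opened by successive coupling failures and bound their number and total length separately. Enumerating the failure times in $[0,n^{3/2}]$ as $\tau^{(1)}<\tau^{(2)}<\cdots$ and setting $\tau^{(i),+}\coloneqq\tau_{\rm renew}^+(\tau^{(i)})$, we have $B=\bigcup_{i=1}^{N}[\tau^{(i)},\tau^{(i),+}]$, where $N$ denotes the total number of failures in $[0,n^{3/2}]$. Thus the number of connected components of $B$ is at most $N$, while
\begin{equation*}
|B|\;=\;\sum_{i=1}^{N}\sum_{\ell=1}^{K_i}\Delta_\ell^{(i)},
\end{equation*}
with $\Delta_\ell^{(i)}\coloneqq\tau_{\rm renew}^\ell(\tau^{(i)})-\tau_{\rm renew}^{\ell-1}(\tau^{(i)})$ the renewal increments and $K_i$ the geometric number introduced in~\eqref{renew+}. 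Hence both \eqref{eq:B-size} and \eqref{eq:B-cc} will follow from (i) $\E[N\,\ind_\cH]\le Cn^{1/2+25\delta}$, together with (ii) $\E[K_i]\le 2$ and $\E[\Delta_\ell^{(i)}]\le C\log n$, by two successive applications of Markov's inequality.

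For (i), Lemma~\ref{lemma:set-fail} confines the count (up to an event of vanishing probability) to phase-$1$ failures, and Lemma~\ref{lemma:E-far} rules out failures of type (fail-$1^{\rm st}$-c) throughout $[0,n^{3/2}]$. On the event $\cH$, the total transition rate of the exploration process is at most $2+\nu|\bar E_t|/2\le Cn^{12\delta}$, and every single transition samples at most $Cd^\hslash\le Cn^\delta$ fresh matchings, each uniform among the (at least $dn/2$) unmatched stubs. Only a matching landing inside $\bar E_t$ can create a cycle in $\cB_{\hslash,t}(X_t)\cup\cB_{\hslash,t}(Y_t)$ or merge the two explored components, and such a collision occurs with probability at most $|\bar E_t|/(dn/2)\le Cn^{12\delta-1}$. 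Hence the rate at which a failure-triggering transition occurs is bounded by $Cn^{12\delta}\cdot n^\delta\cdot n^{12\delta-1}=Cn^{25\delta-1}$ uniformly on $\cH$, and a Fubini argument yields $\E[N\,\ind_\cH]\le Cn^{3/2}\cdot n^{25\delta-1}=Cn^{1/2+25\delta}$.

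For (ii), the $\Delta_\ell^{(i)}$ are distributed as the maximum of $dn$ i.i.d.\ exponentials of rate $\nu/4$, whose mean is of order $\log n$. The geometric parameter $q_n$ of $K_i$ is bounded below via stationarity of the underlying dynamics: at $\tau_{\rm renew}^1$ the triple $(G,X,Y)$ is still distributed as $\mu_d\otimes\pi\otimes\pi$, so the proof of Proposition~\ref{prop:nice-events} gives $\P((\text{C1})\text{ fails})\le O(n^{2\delta-1})$; on $\cH$ the connected component of $X$ in $E_{\tau_{\rm renew}^1}$ contains at most $n^{12\delta}+1$ vertices, hence $\P((\text{C2})\text{ fails})\le Cn^{12\delta-1}$ since $Y\sim\pi$ marginally; and (C3) holds on $\cH$. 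Therefore $q_n\ge 1/2$ for $n$ large, so $\E[K_i]\le 2$. Markov applied to $N\,\ind_\cH$ yields $N\le n^{1/2+27\delta}$ with high probability, and conditionally on that bound $\E[|B|\mid N]\le 2N\cdot C\log n\le Cn^{1/2+27\delta}\log n$; a second application of Markov then gives $|B|\le n^{1/2+28\delta}$ w.h.p., while \eqref{eq:B-cc} is immediate from $N\le n^{1/2+27\delta}$.

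The main technical obstacle is the uniform rate estimate in step~(i): it crucially relies on the Markovian marginal property (P2) of the exploration process, which ensures that each freshly created matching in the random maps $V^X$, $V^{-}$ and $V^{+}$ is genuinely uniform over the set of currently unmatched stubs, so that its probability of colliding with $\bar E_t$ can indeed be controlled by $|\bar E_t|/(dn)$ transition by transition. This is the key ingredient that converts the stationary typicality estimate of Proposition~\ref{prop:nice-events} into a quantitative bound on the failure rate of the coupling.
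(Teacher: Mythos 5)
Your proposal is correct in spirit but uses a genuinely different route from the paper for the size estimate \eqref{eq:B-size}. For the number of components you both argue by a rate bound (you integrate a failure rate of order $n^{25\delta-1}$ over $[0,n^{3/2}]$ and apply Markov to $\E[N\ind_{\cH}]$; the paper bounds the number of jumps by $n^{3/2+13\delta}$ and the per-jump failure probability by $n^{-1+13\delta}$, then uses Binomial concentration). For the total size, however, the two arguments diverge: the paper establishes a stretched-exponential tail $\P(|\hat B_i|>\theta)\le \ee^{-c\theta/\log n}+2^{-\theta/(3\log n)}$ on the length of each inactive component (via a large-deviation bound on the sum of i.i.d.\ maxima of $dn$ exponentials), and then union-bounds: if $B$ has at most $n^{1/2+27\delta}$ components and $|B|>n^{1/2+28\delta}$, some component exceeds $n^{\delta}$. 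You instead write $|B|=\sum_{i\le N}L_i$ with $L_i=\sum_{\ell\le K_i}\Delta_\ell^{(i)}$ and bound $\E[|B|\ind_{N\le M}]$ by a Wald-type identity, then apply Markov twice. Both strategies close the argument; the paper's tail bound is per-component and uniform over history, which makes the union bound mechanical, while your approach is more elementary (no large-deviation principle) at the cost of a somewhat delicate conditioning step. Concretely, the phrase ``conditionally on that bound $\E[|B|\mid N]\le 2N\,C\log n$'' should be unwound: writing $\E[|B|\ind_{N\le M}]\le\sum_{i=1}^{M}\E[L_i\ind_{N\ge i}]$, noting that $\ind_{N\ge i}$ is $\cF_{\tau^{(i)}}$-measurable, and then invoking optional stopping for the martingale $\sum_{j\le\ell}\Delta_j^{(i)}-\ell\,\E[\Delta]$ at the stopping time $K_i$ to obtain $\E[L_i\mid\cF_{\tau^{(i)}}]\le\E[K_i]\,\E[\Delta]\le C\log n$ uniformly. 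This requires verifying that, conditionally on the enlarged filtration, the renewal increments retain the max-of-exponentials law (which they do, by memorylessness) and that $K_i$ is a geometric stopping time with parameter bounded away from $0$ — facts the paper also uses, so this is an honest use of the paper's infrastructure rather than a hidden gap.
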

	
Finally, the next lemma states that w.h.p.\ the meeting occurs when the coupling is active.
\begin{lemma}{\bf [No meetings when the coupling is inactive]}
\label{lemma:meet-A}
For $B$ as in \eqref{def-B},
	\begin{equation}
	\lim_{n \to \infty} \P(\tau_{\rm meet}^{\pi\otimes\pi} \in B)=0.
	\end{equation}
\end{lemma}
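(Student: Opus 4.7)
The plan is to bound $\P(\tau_{\rm meet}^{\pi\otimes\pi} \in B)$ by a union bound over the connected components of $B$, combining Lemma \ref{lemma:set-B} with a quantitative estimate of the meeting probability within a single inactive interval.

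First I would establish that each inactive component has short duration. By \eqref{renew}--\eqref{renew+}, an inactive interval starting at $t$ lasts $\tau_{\rm renew}^+(t)-t$, which is the sum over $K(t)$ independent waiting times, each equal to the maximum of $dn$ independent exponentials of rate $\nu/4$. Standard concentration bounds give a maximum of order $\log n$, and $K(t)$ is geometric with parameter converging to $1$, so with high probability every inactive interval has duration at most $L:=(\log n)^2$; call this event $\cR$. Combining with Propositions \ref{prop:typical} and \ref{lemma:E-far}, the event
\[
\cT := \{ \cE_t^{\rm trees}\cup\cE_t^{\rm far},\ |E_t|\le n^{12\delta}\ \forall\, 0\le t\le n^{3/2}\}
\]
also occurs with probability $1-o(1)$. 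On $\cR\cap \cT$ the failure modes (fail-1$^{\rm st}$-a), (fail-1$^{\rm st}$-c) and (fail-2$^{\rm nd}$-a) are excluded, so any failure must be of type (fail-1$^{\rm st}$-b) or (fail-2$^{\rm nd}$-b). In either case one checks that at the failure time $t_{\rm fail}$ the two random walks are either at $E$-distance at least $2\hslash$ or lie in different connected components of $E_{t_{\rm fail}}$.

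Second I would estimate, for each component $[t_{\rm fail},\tau_{\rm renew}^+(t_{\rm fail})]$ and conditional on the state at $t_{\rm fail}$, the probability of meeting within that interval. Starting from a ``far-apart'' configuration and over a time window of length at most $L$, the walks remain (with high probability) inside $\hslash$-neighbourhoods that are trees and well-separated in $E$; so one can couple their evolution to two random walks on an infinite $d$-regular tree whose distance-path edges die at rate $\nu$ (the idealised process $\hat Z$ of Section \ref{sec:locdiag}). Under this coupling the meeting event is dominated by $\{\hat Z \text{ hits } 0\text{ starting from distance } 2\hslash\}$, whose probability is precisely $q_{d,\nu}(2\hslash)$ from \eqref{def-q}. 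Using the formula \eqref{eq:q-final} together with $\Delta(i)\sim \rho_d/(\nu i)$ as $i\to\infty$, one gets $q_{d,\nu}(\ell)\le C^\ell/\ell!$, so that $q_{d,\nu}(2\hslash)$ decays super-polynomially, in particular $q_{d,\nu}(2\hslash)=o(n^{-1})$.

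Third I would assemble the pieces. By Lemma \ref{lemma:set-B} the set $B$ has at most $n^{1/2+27\delta}$ connected components with probability $1-o(1)$. A union bound over components, using the strong Markov property at each $t_{\rm fail}$ to invoke the single-component bound, then gives
\[
\P(\tau_{\rm meet}^{\pi\otimes\pi}\in B) \le \P(\cR^\complement)+\P(\cT^\complement)+o(1)+n^{1/2+27\delta}\cdot q_{d,\nu}(2\hslash) = o(1).
\]

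The main technical obstacle will be the coupling in step two: while within time $L=(\log n)^2$ each walk typically takes only $O(\log^2 n)$ jumps and so \emph{a priori} could travel beyond its initial tree neighbourhood, one must argue that either the walks stay within their respective $\hslash$-neighbourhoods, or else the cost of escaping is absorbed into the super-polynomial smallness of $q_{d,\nu}(2\hslash)$. The mode (fail-2$^{\rm nd}$-b) also needs extra care, since in this case the rewiring that ended the second phase could leave the two $\hslash$-neighbourhoods overlapping; however, this happens only when $[\cE^{\rm trees}\cup\cE^{\rm far}]^\complement$ occurs, which is already ruled out on $\cT$, so the failure still leaves the walks at large $E$-distance and the tree coupling goes through.
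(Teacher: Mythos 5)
Your overall decomposition (restrict to typical events, control the number and duration of inactive intervals, bound the per-interval meeting probability, then take a union bound) is a sensible route and broadly parallels the paper's strategy, but there is a genuine gap in the second step that the paper is at pains to address.

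The coupling to the process $\hat Z$ of Section~\ref{sec:locdiag} started from distance $2\hslash$ only captures one mechanism for meeting: the two walks wandering toward one another along the unique path while its edges die. But during an inactive interval the exploration process $(E_t,X_t,Y_t)$ continues to undergo rewirings, and a rewiring can \emph{instantaneously} place the two walks at $E$-distance $\ell$ for any $1 \le \ell \le 2\hslash-1$ (exactly the ``nice pair'' event of the first phase of the second toy model). The $\hat Z$-chain has no transition with $i \to j$ for $j < i-1$, so it strictly underestimates the real meeting probability, and the claimed domination by $q_{d,\nu}(2\hslash)$ fails. Given such a rewiring the walks then meet within time $O(1)$ with probability $q_{d,\nu}(\ell)$, which is bounded away from zero for small $\ell$ (cf.\ \eqref{est2bis}); so this contribution is \emph{not} super-polynomially small per component and cannot be absorbed into $q_{d,\nu}(2\hslash)$. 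This is precisely the content of the paper's Case~(2): it is controlled separately by observing that a ``nice'' rewiring occurs only at rate $\asymp 1/n$, so over an inactive window of polylogarithmic (or even $n^\delta$) length the probability of such an event is $o(1)$ uniformly, and this is shown by comparison with $\P(\tau_{1^{\rm st}}^{\rm tot}\leq n^\delta) \to 0$ via Lemma~\ref{le:exponential-tau}. You would need to add a term of this type to your union bound --- something like $n^{1/2+27\delta}\cdot L \cdot n^{-1+2\delta}$ --- which still vanishes for small $\delta$, but the argument as written does not contain it. As a smaller point, the claim that (fail-2$^{\rm nd}$-b) is ruled out on $\cT$ is not quite right, since a ``bad point'' can occur with $\cE^{\rm trees}\cup\cE^{\rm far}$ satisfied (e.g.\ the walks ending at finite $E$-distance larger than $2\hslash$, which violates (C2) but not $\cE^{\rm far}$); the paper instead rules out second-phase failures wholesale via $\cW^\complement$, invoking Lemma~\ref{lemma:set-fail}.
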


We are now in position to prove Theorem~\ref{th:meeting}.
\begin{proof}[Proof of Theorem~\ref{th:meeting}]
Using the coupling described in Section~\ref{ss.coup}, and Lemmas~\ref{lemma:set-B} and~\ref{lemma:meet-A}, we see that, for all $0 \leq  t \leq n^{3/2}$,
	\begin{equation}	
	\P(\tau^{\pi \otimes \pi}_{\rm meet} > t) = \P\big( \{\tau^{\pi \otimes \pi}_{\rm meet} > t\} \cap
	\{|B| \leq n^{\frac12+16\delta} \} \cap \{\tau^{\pi \otimes \pi}_{\rm meet} \not \in B\} \cap \cW^\complement \big) + o(1).
	\end{equation}
Recalling the definition of $\tau_{\rm final}$ in \eqref{eq:def-tau-final}, and using the above described coupling, we see that the latter probability can be bounded above by
		\begin{equation}
		\label{bound1}
		\P(\tau^{\pi\otimes\pi}_{\rm meet} > t) \leq \P({\tau_{\rm final}} > t-n^{\frac12+28\delta}) + o(1)
		\end{equation}
and from below by
		\begin{equation}
		\label{bound2}
		\P(\tau^{\pi\otimes\pi}_{\rm meet} > t) \geq \P({\tau_{\rm final}} > t) - o(1),
		\end{equation}
so \eqref{bound-prob} follows from Proposition \ref{lem:exponential-tau} by taking $0<\delta<\tfrac{1}{32}$.
		
To control the expectation of $\tau_{\rm meet}^{\pi \otimes \pi}$, we start by fixing $C_4>0$ and using~\eqref{bound1},~\eqref{bound2} and Proposition~\ref{lem:exponential-tau} to obtain
		\begin{align}
		\frac{\E[\tau_{\rm meet}^{\pi \otimes \pi}]}{n}
		& = \frac{1}{n} \int_{0}^\infty \P(\tau_{\rm meet}^{\pi \otimes \pi} > t)\, {\rm d}t \\
		& = (1+o(1)) \frac{1}{n} \int_{0}^{C_4n} \P(\tau_{\rm final} > t) \, {\rm d}t
		+ \frac{1}{n} \int_{C_4n}^\infty \P(\tau_{\rm meet}^{\pi \otimes \pi}>t) \, {\rm d}t.
		\end{align}
We are left to show that the latter integral is $o(n)$. It suffices to realise that, for any $t \geq 0$ and regardless of $(E_t,X_t,Y_t)$, the rate at which a rewiring occurs that puts the two random walks at distance $1$ is $d \nu$, and under the realisation of this event there is a non-negligible probability, say $p = p(d,\nu) > 0$, that the two random walks meet after a bounded amount of time. Therefore, for some $C_5 = C_5(d,\nu)>0$,
		\begin{equation}
		\P(\tau_{\rm meet}^{\pi \otimes \pi}>t) \leq \ee^{-C_5\,t/n},
		\end{equation}
which yields
		\begin{equation}
		\int_{C_4n}^\infty \P(\tau_{\rm meet}^{\pi \otimes \pi} > t) \, {\rm d}t \leq \frac{n}{C_5} e^{-C_5 C_4}.
		\end{equation}
At this point, given $\epsilon > 0$, choose $C_4 = C_4(d, \nu, \epsilon)>0$ sufficiently large as to have 
		\begin{equation}
		\frac{1}{2\vartheta_{d,\nu}} - \epsilon \leq \frac{1}{n} \int_{0}^{C_4n} \P(\tau_{\rm final} > t) \, {\rm d}t
		\leq \frac{1}{2\vartheta_{d,\nu}} + \epsilon,
		\end{equation}
and
		\begin{equation}
		\frac{1}{n} \int_{C_4n}^\infty \P(\tau_{\rm meet}^{\pi \otimes \pi}>t) \, {\rm d}t \leq \epsilon,
		\end{equation}
from which the desired claim follows immediately.
\end{proof}
	
\begin{proof}[Proof of Lemma~\ref{lemma:set-fail}]
Recall that, in order for the coupling to fail during the second phase, one of the events on the list immediately above \eqref{renew} must occur. 

If during the second phase of the coupling at some time $r \in [0,n^{3/2}]$ a rewiring occurs that does not involve the unique path between the two random walks and such that $\cB_{\hslash,r}(X_r) \cup \cB_{\hslash,r}(Y_r)$ is not a tree (cf.\ step (II-2$^{\rm nd}$-dyn-ii-B)), then we would have $[\cE^{\rm tree}_r]^\complement \cap [\cE^{\rm far}_r]^\complement$, and hence this kind of failure can be neglected thanks to Proposition \ref{prop:typical}. Similarly, the case when the coupling fails because $|E_{t}| > n^{12\delta}$ can be discarded with the aid of Proposition~\ref{lemma:E-far}. Lastly, if the coupling is active and in the second phase, in order for the event in (II-2$^{\rm nd}$-dyn-i-B) to occur at the next step, it is necessary to have a rewiring involving one of the (less than $4\hslash$) stubs in the unique path between the two random walks \emph{and} another stub in $\bar E_{s}$. Note that rewirings happen with rate bounded by $\frac{\nu}{4}|\bar{E}_{s}|$ and the probability that such a rewiring results in the event above can be bounded by
		\begin{equation}
		\frac{4\hslash \frac{\nu}{4} \frac{1}{dn-1} \times |\bar E_{s}|} {2|\bar E_{s}| \frac{\nu}{4} \frac{dn-2}{dn-1}} \leq c n^{-1+\delta}.
		\end{equation}
Finally, the time spent in the second phase of the coupling up to time $n^{3/2}$ is w.h.p\ at most $n^{3/2}n^{1-4\delta}$ due to Lemma~\ref{lemma:2nd-phase-new}. In particular, if $\cH$ denotes the event in~\eqref{def-H}, we can bound
\begin{equation}
\begin{split}
\P(\cW) & \leq \Prob_{d,\nu} ( \bar \tau_{\rm 2^{nd}}(n^{3/2}) > n^{3/2} n^{-1+4\delta}) + \P(\cH^\complement) \\
& \qquad + \P \Big( {\rm Poisson} \Big( \frac{\nu}{2} n^{3/2} n^{-1+4\delta} n^{12\delta} n^{-1+\delta} \Big) \geq 1 \Big) \\
& \leq \frac{\nu}{2}n^{-\frac{1}{2}+17\delta} + o(1),
\end{split}
\end{equation}
which converges to zero as soon as $\delta < \frac{1}{34}$, concluding the proof.
\end{proof}

\begin{proof}[Proof of Lemma \ref{lemma:set-B}]
Recall that, by Lemma \ref{lemma:set-fail}, w.h.p.\ no failures of the coupling will be registered during the second phase. Recall also that a failure occurring during the first phase must occur as a consequence of one of the events reported in the list immediately below \eqref{def-F}. 
		
We divide the proof into two steps. First we show that, under the event that the coupling is active and in the first phase, the probability that a failure occurs at the next jump is sufficiently small to take a union bound over the number of jumps. In this way we prove \eqref{eq:B-cc}. Afterwards we show that, uniformly in the realisation of $(E_t,X_t,Y_t)$ after the failure, w.h.p.\ the coupling will be reactivated within a short amount of time.
		
In what follows all inequalities hold for values of $n$ large enough. 
		
\medskip\noindent
{\bf 1.}
Let 
	\begin{equation}
	\label{eq:def-J}
	\cJ\coloneqq\big\{\text{$B$ has more than $n^{1/2+27\delta}$ connected components}\big\}\,.
	\end{equation}
We show that the probability of this event tends to zero. Indeed, under the event $\cH$ in \eqref{lemma:E-far}, the number of jumps of the process within time $n^{3/2}$ can be bounded above by the number of arrivals within the same time of a Poisson process with rate $2+dn^{12\delta}$. Hence, by Markov's inequality, w.h.p.\ within time $n^{3/2}$ there are at most $n^{3/2+13\delta}$ jumps. Assume that, at some time $0 \leq s \leq n^{3/2}$, the coupling is active and in the first phase, $\cB_{\hslash,s}(X_s)$ and $\cB_{\hslash,s}(Y_s)$ are two non-overlapping trees, ${\rm dist}_s(X_s,Y_s)=\infty$ and $|E_s| \leq n^{12\delta}$. Then the probability that after the next jump (say at time $r>s$) either the event $[\cE_r^{\rm trees}]^{\complement}$ or the event $\cF_r$ occurs can be bounded by
		\begin{equation}
		\label{eq:bound-bin}
		\begin{split}
		\P\left({\rm Bin} \Big( 2d^\hslash, \frac{|E_{s}|}{dn-|E_{s}|-1} \Big) > 0 \right)
		\leq n^{-1+13\delta}. 
		\end{split}
		\end{equation}
Therefore, if we call $J$ the number of connected components of $B$ that are generated by the occurrence of one of the events above, then we have
		\begin{equation}
		\P(\{J>n^{1/2+27\delta}\} \cap \cH) \leq \P\big( {\rm Bin}(n^{3/2+13\delta},n^{-1+13\delta})>n^{1/2+27\delta}\big) \to 0.
		\end{equation}
In conclusion, using Proposition \ref{lemma:E-far} and Lemma \ref{lemma:set-fail}, we have
		\begin{equation}
		\label{eq:bound-cc}
		\P(\cJ) \leq \P(\{J > n^{1/2+27\delta}\} \cap \cH \cap \cW^\complement ) + \P(\cH^\complement) + \P(\cW) \to 0,
		\end{equation}
which yields~\eqref{eq:B-cc}.

\medskip
We next control the size of the connected components of the set $B$. We first show that, for any $\varepsilon>0$, the probability that there exist a connected component of $B$ having length larger than $n^\varepsilon$ tends to zero sufficiently fast in $n$ for any $\delta>0$.
	
\medskip\noindent
	{\bf 2a.}
		Consider a given connected component $\hat B\subset B$ starting at some $0 \leq s< n^{3/2}$, and recall the definition of $\tau_{\rm renew}(s)$, $K(s)$, and $\tau_{\rm renew}^+(s)$ in \eqref{renew} and \eqref{renew+}. We work on the event $\cH$ in \eqref{def-H}. In order for the component to have size larger than a given $\theta>0$ (possibly depending on $n$), it must be the case that $\tau_{{\rm renew}}^+(s)>\theta$. In particular, conditioning on the value of $K(s) \in \N$, we get, for all $T \in \N$,
		 \begin{equation}
		 \begin{split}
		 	\P( \{|\hat B|>\theta\} \cap \cH) & \leq \P( \{K(s) \leq T\} \cap \{|\hat B|>\theta\} \cap \cH) + \P(K(s) > T) \\ 
		 	& \leq \P(\{\tau_{\rm renew}^{(T)}(s)>\theta\} \cap \cH) + \P(K(s) > T).
		 \end{split}
		 \end{equation}
As pointed out immediately after~\eqref{renew+}, $K(s)$ has Geometric distribution with rate arbitrarily close to one, provided $n$ is large enough. In particular, this implies
		 \begin{equation}\label{eq:exp-est}
		 	\P(K(s) \geq \ell) \leq 2^{-\ell} \quad \forall\, \ell \in \N\,,
		 \end{equation}
We now prove that there exists a universal constant $c>0$ such that, for $\theta \geq 3\log n$ and $T = \lceil \frac{\theta}{3\log n } \rceil$,
		 \begin{equation}\label{eq:max-of-exp-est}
		\P(\{\tau_{\rm renew}^{(T)}(s)>\theta\} \cap \cH) \leq \ee^{-c \frac{\theta}{\log n}},
		 \end{equation}
		 and therefore
		 \begin{equation}\label{eq:B-last-est}
		 	\P(\{|\hat B|>\theta\} \cap \cH) \leq \ee^{-c \frac{\theta}{\log n}}
			+ 2^{-\frac{\theta}{3\log n}} \qquad \forall\, T \in \N,\,\theta \geq 3 \log n.
		 \end{equation}
		 
		 \medskip\noindent
		 {\bf 2b.}
		 To prove \eqref{eq:max-of-exp-est}, note that, on the event $\cH$, the number of renewals before time $\theta$ is stochastically dominated by the sum of $T$ i.i.d.\ random variables with the same law as
		\begin{equation}
		Z = \max_{1\leq j \leq dn} E_j,
		\end{equation}
		where $(E_j)_{j \in \N}$ is a collection of i.i.d.\ exponential random variables with rate $\nu/4$. Therefore
		\begin{equation}\label{eq:max-of-exp-est-2}
		\P( \{ \tau_{\rm renew}^{(T)}(s)>\theta \} \cap \cH) \leq  \P\bigg( \sum_{i=1}^T Z_i > \theta\bigg) 
		\qquad \forall\,T \in \N, \theta>0.
		\end{equation}
		Note that
		\begin{equation}
		\E[Z] \leq 2\log n 
		\end{equation}
		and that $Z/\E[Z]$ satisfies a large deviation principle. Therefore, choosing $T= \frac{\theta}{3 \log n}$, we see that there exists a constant $c>0$ (independent of $n$) such that
		 \begin{equation}\label{eq:max-of-exp-est-3}
			 \P \bigg(\sum_{i=1}^T Z_i > \theta\bigg) \leq \P\bigg(\sum_{i=1}^T \frac{Z_i}{\E[Z]} 
			 > \frac{\theta}{2\log n}\bigg) \leq \ee^{-c T},\qquad \forall\,\theta>0.
		\end{equation}
Estimate~\eqref{eq:max-of-exp-est} follows by combining the above with~\eqref{eq:max-of-exp-est-2}.
	
		\medskip\noindent
		{\bf 2c.}
		We are left to show how the argument leading to \eqref{eq:B-last-est}, which is concerned with a single connected component, can be adapted to control the tail probability of multiple connected components, and how it eventually leads to the desired conclusion in \eqref{eq:B-size}. In particular, since all the estimates required to prove \eqref{eq:B-last-est} are uniform with respect to the history of the process, it follows that, for any $K \in \N$ (possibly depending on $n$) and any sequence $\theta_1, \dots, \theta_K$, where each element in the sequence is larger than $3\log n$,
		\begin{equation}\label{eq:bound_components}
			\P(\{|\hat B_i|>\theta_i\,\, \forall\, 1 \leq i \leq K\} \cap \cH) \leq \prod_{i=1}^{K} \ee^{-c \frac{\theta_i}{\log n}},
		\end{equation}		
for some absolute constant $c>0$, where $|\hat B_i|$ represents the $i$-th connected component in $B$ (with the convention that, if $B$ has $k$ connected component, then $|\hat B_i|=0$ for all $i>k$). Therefore, recalling the event $\cJ$ in \eqref{eq:def-J}, we see that \eqref{eq:B-size} follows at once via the estimate
		\begin{equation}
			\P(|B| >  n^{1/2+28\delta}) \leq \P(|B| >  n^{1/2+28\delta} \cap \cH \cap \cJ^\complement)
			+\P(\cH^\complement) + \P(\cJ).
		\end{equation}
Indeed, the last two terms on the right-hand side vanish thanks to Proposition \ref{lemma:E-far} and \eqref{eq:bound-cc}. As for the first term, note that, on the event $cJ^\complement$, $B$ has at most $n^{1/2+28\delta}$ connected components. In particular, if $|B| >  n^{1/2+28\delta}$, there exists at least one connected component with size at least $n^{\delta}$. Union bound combined with~\eqref{eq:bound_components} now yields
\begin{equation}
\P(|B| >  n^{1/2+28\delta} \cap \cH \cap \cJ^\complement) \leq n^{1/2+27\delta} \P(\{|\hat B_1| > n^{\delta}\} \cap \cH) \leq n^{1/2+27\delta} \ee^{-c\frac{n^{\delta}}{\log n}},
\end{equation}
which converges to zero as $n$ grows, concluding the proof.
\end{proof}

\begin{proof}[Proof of Lemma \ref{lemma:meet-A}]
By Corollary~\eqref{coro} and the definition of the event $\cW$ in~\eqref{eq:fail}, we have
		\begin{equation}
		\label{eq:first-bound}
		\P(\tau^{\pi\otimes\pi}_{\rm meet} \in B) = \P \Big( \{\tau^{\pi\otimes\pi}_{\rm meet} \in B \}
		\cap \{\cE^{\rm trees}_s \cup \cE_s^{\rm far}\,\, \forall\, 0 \leq s \leq n^{3/2} \}
		\cap \cW^\complement \Big) + o(1).
		\end{equation}
Under these events, there are only two ways in which the event $\{\tau_{\rm meet}^{\pi\otimes\pi} \in B\}$ can occur:
		\begin{enumerate}
		\item[\bf Case (1):] 
		for some time $s \in B$ the two random walks are a distance $2\hslash$ apart, each having a tree-like neighbourhood up to distance $\hslash$, and they meet before time $s + n^{\delta}$ by traversing the unique path of length $2\hslash$ that joins them before it gets destroyed.
		\item[\bf Case (2):] 
		for some time $s \in B$ there is a rewiring that puts the two random walks at distance $\ell < 2\hslash$, each having a tree-like $\hslash$-neighbourhood, and the two random walks meet before time $n^{\delta}$ by traversing the unique path of length $\ell < 2\hslash$ that joins them before this path gets destroyed.
		\end{enumerate}
Note that the probability of the event in {\bf Case (2)} can be bounded from above by the probability that $\tau_{1^{\rm st}}^{\rm tot} \leq n^\delta$ for the toy model in Section~\ref{sec:meet}. Hence, by Lemma~\ref{le:exponential-tau},
		\begin{equation}
		\label{eq:case2}
		\P(\textbf{Case (2)}) \to 0.
		\end{equation}
On the other hand, in order for the event in {\bf Case (1)} to occur, there must exist a time $0\le s\le n^{3/2}$ at which the two random walks are at distance exactly $2\hslash$ from each other. Under the event $\cH$ in Proposition~\ref{lemma:E-far}, the number of jumps of the process within time $n^{3/2}$ is w.h.p.\ at most $n^{3/2+12\delta}$. Taking a union bound over the jump times, we get
		\begin{equation}
		\P(\textbf{Case (1)}) \leq n^{\frac{3}{2}+12\delta} q(\hslash)
		\leq n^{\frac{3}{2}+12\delta} \times \prod_{i=0}^{\hslash-1}\frac{\Delta(i)}{\beta},
		\end{equation}
where $q$ is defined as in~\eqref{def-q}. Recall that $\Delta(i) \to 0$ as $i$ grows. Hence, provided $n$ is taken large enough, $\Delta(i) \leq d^{-\frac{4}{\delta}}$ for all $i \geq \hslash/2$. Moreover, for the same reason there exists a constant $C_6=C_6(d,\nu)>0$ such that $\prod_{i=0}^{\hslash/2} \Delta(i) \leq C_6$.Using that $\beta > 1$ and $\delta < \tfrac{1}{25}$ immediately implies
		\begin{equation}
		\label{eq:case1}
		\P(\textbf{Case (1)}) \leq C_6 n^{\frac{3}{2}+12\delta} d^{-\frac{4}{\delta} \frac{\hslash}{2}} \to 0.
		\end{equation}
Hence, the desired conclusion follows from~\eqref{eq:first-bound},~\eqref{eq:case2}, and~\eqref{eq:case1}.
\end{proof}
	
Before concluding this section, we prove the following adaptation of Theorem~\ref{th:meeting}, which deals with the case where the two random walks start at the extremes of a randomly sampled edge and will be of use later.	
\begin{proposition}{\bf [Tail of $\tau_{{\rm meet}}^{\rm edge}$ on an intermediate time scale]}
\label{lemma-new}
For every $\nu > 0$, $d \geq 3$ and any positive sequence $(s_n)_{n \in \N}$ such that $1 \ll s_n \ll n$,
		\begin{equation}
		\lim_{n \to \infty} \P_{\mu_d}(\tau_{{\rm meet}}^{\rm edge} > s_n) = \vartheta_{d,\nu}.
		\end{equation}
\end{proposition}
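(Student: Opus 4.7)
The starting idea is that, at distance $1$, the probability of meeting in the first toy model of Section~\ref{sec:locdiag} is $q_{d,\nu}(1) = \Delta(0)/\beta = \Delta_{d,\nu}/\beta_d$, by the explicit formula~\eqref{eq:q-final}. Hence, by~\eqref{def-theta}, the probability of being absorbed at $\dagger$ instead is exactly $1 - q_{d,\nu}(1) = \vartheta_{d,\nu}$. The proposition amounts to the statement that two random walks started at the endpoints of an edge either meet in time $O(1)$ (with probability $1 - \vartheta_{d,\nu} + o(1)$) or the edge between them is rewired and the walks become separated (with probability $\vartheta_{d,\nu} + o(1)$), in which case the next meeting happens on the much larger scale $n$ governed by Theorem~\ref{th:meeting}.

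The plan is to adapt the coupling constructed in Section~\ref{ss.coup}, but initialised with $G_0 \sim \mu_d$ and $(X_0, Y_0)$ chosen as the endpoints of a uniformly random edge of $G_0$. A minor variant of Proposition~\ref{prop:typical} shows that, with probability $1-o(1)$, $\cB_{\hslash,0}(X_0) \cup \cB_{\hslash,0}(Y_0)$ is a tree, $|E_0| \leq n^{12\delta}$, and ${\rm dist}_{E_0}(X_0,Y_0) = 1$. On this event, I would initiate the toy model of Section~\ref{sec:meet} in its second phase at distance $\ell = 1$, and couple it with the exploration process via (I-2$^{\rm nd}$)--(II-2$^{\rm nd}$). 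Let $T$ be the first time when either the walks meet (event $\cA$) or a rewiring destroys an edge along the unique path in $E_t$ from $X_t$ to $Y_t$ (event $\cB$). Because the second phase involves at most $2\hslash$ relevant edges rewired at rate $\nu$ each, plus two walks jumping at rate $2$, the random variable $T$ is stochastically dominated by a quantity tight in $n$, so $T = O_{\P}(1)$. By construction of the coupling, $\P(\cA) = q_{d,\nu}(1) + o(1) = 1 - \vartheta_{d,\nu} + o(1)$ and $\P(\cB) = \vartheta_{d,\nu} + o(1)$.

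On $\cA$, one has $\tau_{\rm meet}^{\rm edge} = T = o_{\P}(s_n)$ since $s_n \to \infty$, so $\{\tau_{\rm meet}^{\rm edge} > s_n\} \cap \cA$ has probability $o(1)$. On $\cB$, at time $T$ the configuration satisfies conditions (C1)--(C3), and one can restart the coupling of the first phase at $T$. By Lemma~\ref{le:exponential-tau}, the duration of this new first phase is asymptotically $\mathrm{Exp}(2\vartheta_{d,\nu}/n)$, which exceeds $s_n$ with probability $1 - O(s_n/n) = 1 - o(1)$ since $s_n \ll n$; moreover, by Lemmas~\ref{lemma:set-fail}--\ref{lemma:meet-A}, coupling failures and meetings during failure windows in $[T, T+s_n]$ contribute $o(1)$. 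Hence $\P(\cB \cap \{\tau_{\rm meet}^{\rm edge} \leq s_n\}) = o(1)$, and combining the two cases gives $\P_{\mu_d}(\tau_{\rm meet}^{\rm edge} > s_n) = \vartheta_{d,\nu} + o(1)$.

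The main obstacle is the restart of the coupling at the random time $T$ on the event $\cB$: the state $(G_T, X_T, Y_T)$ is not distributed as $\mu_d \otimes \pi \otimes \pi$, due to the correlation introduced by the initial edge condition. However, $G_t$ has marginal distribution $\mu_d$ for all $t \geq 0$ by stationarity of the rewiring dynamics, and the conditions (C1)--(C3) hold w.h.p.\ at $T$, which is all that the arguments of Section~\ref{ss.coup} actually require. The estimates of Lemmas~\ref{lemma:set-fail}--\ref{lemma:meet-A} then re-apply with cosmetic changes on the shifted time interval $[T, T+s_n] \subset [0, n^{3/2}]$.
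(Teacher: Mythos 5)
Your proposal matches the paper's proof: initiate the coupling in the second phase with the two walks at the endpoints of the sampled edge (distance $\ell=1$), identify $\vartheta_{d,\nu}=1-q_{d,\nu}(1)$ as the probability that the coupled $\hat Z$-process is absorbed at $\dagger$ before hitting $0$, note that this second phase terminates in $O_{\mathbb P}(1)$ time, and on the absorption event restart the first-phase coupling and invoke the $\Theta(n)$ time scale of Lemma~\ref{le:exponential-tau} (together with the failure-control lemmas) to see that no meeting occurs before $s_n\ll n$. Your treatment is more explicit — in particular in spelling out the restart argument and the stationarity of $G_t$ — but it is the same decomposition into the events $\cA$ and $\cB$ that underlies the paper's terser proof.
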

	
\begin{proof}
Note that the result can be obtained by means of the same coupling as above, by starting the coupling from the second phase with a realisation of $(E_0,X_0,Y_0)$ such that $X_0$ and $Y_0$ are neighbouring vertices, and their (almost overlapping) $\hslash$-neighbourhoods are trees. The two random walks are initially coupled to the process $(\hat Z_t)_{t\ge 0}$ as in Section \ref{sec:RW} with $\hat{Z}_0=1$. The probability that the process is absorbed before hitting $0$ is given by
		\begin{equation}
		\Pr(H_\dagger < H_0 \mid \hat{Z}_0 = 1 ) = \frac{1}{2R_{d,\nu}(0)} = \vartheta_{d,\nu},
		\end{equation}
where we simply use the transience of the process $(\hat Z_t)_{t\ge 0}$, the fact that the process jumps uniformly with rate at least $2$, and Proposition~\ref{lemma-R}. Clearly, the stopping time $H_{\{0, \dagger\}}$ is bounded independently of $n$, and the probability that the coupling fails in the second phase vanishes. If the two random walks do not meet during such a second phase of the coupling, then the process is reinitialised with high probability in the first phase, and then the probability of a meeting within time $o(n)$ is asymptotically negligible.
\end{proof}

\begin{remark}
\label{rmk:heuristic}
In this section we have made rigorous the heuristics mentioned in Section~\ref{sec:RW}, which can be summarised by saying that the dynamics of two random walks on the finite dynamic graph is well approximated by the dynamics of two random walks on the infinite dynamic tree. In particular, Proposition~\ref{lemma-new} can be translated into the claim that $1-\vartheta_{d, \nu}$ is the probability that two independent random walks ever meet when running on a regular tree with disappearing edges, starting from neighbouring vertices. With this idea in mind, we get a heuristic argument for the second limit in~\eqref{eq:per-theta-a} and the limit~\eqref{eq:per-theta-b}. Indeed, when either $d$ or $\nu$ is large, the only way in which a meeting can occur is when one of the two random walks moves before the edge joining them disappears, and moves exactly along that edge. In this way we get the factor $\frac1d\times \frac{2}{2+\nu}$, which is in line with Proposition~\ref{pr:der-theta-0}. A similar argument applies to the first limit in~\eqref{eq:per-theta-a}. Note that the probability that the two random walks ever meet in the dynamic setting is smaller than in the static setting, since an edge along the unique path joining the two random walks may be rewired.
\end{remark}

	
\section{Convergence to the Fisher-Wright diffusion}
\label{sec:CCC}

This section is devoted to the proof of Theorem~\ref{thm:disc} by showing that the condition in \eqref{mfc-new} holds when
\begin{equation}
\gamma_n = \alpha_n \text{ with } \alpha_n \coloneqq \frac{n}{2\vartheta_{d,\nu}}.
\end{equation} 
The proof is based on an adaptation of the arguments developed in \cite[Section 6]{CCC16} to the dynamic set-up, with some simplifications due to the availability of Theorem \ref{th:meeting}. Along the way, we prove another key result, namely Proposition \ref{separate2-new}, which is needed to control the expected number of discordant edges on short time scales, i.e., of order $o(n)$. The proof of this proposition can be found in Section \ref{suse:lemma-disc}. 


\subsection{Proof of convergence}
\label{suse:WF}
	
In this section we prove Theorem~\ref{thm:disc}. Recall that, by Theorem~\ref{th:meeting}, $\alpha_n\sim \E_{\mu_d}[\tau_{{\rm meet}}^{\pi\otimes\pi}]$ as $n\to\infty$. We verify that the convergence in \eqref{mfc-new} holds in mean for $\gamma_n= \alpha_n$. The convergence in \eqref{mfc-new} follows from Markov's inequality, and we deduce the validity of Theorem~\ref{thm:disc} as a consequence.

Let us look at the expectation of the left-hand side of~\eqref{mfc-new} for $T>0$ with $\gamma_n= \alpha_n$. Define $\delta_n = \frac{1}{n} \log^2 n$, and split the integral over $[0,T]$ in~\eqref{mfc-new} into the subintervals $[0,\delta_n]$ and $[\delta_n,T]$ as follows:
		\begin{equation}
		\label{split}
		\begin{split}
		I(0,T)
		& \coloneqq \E_{\mu_d,u} \left[ \left| \frac{\alpha_n}{n}\int_{0}^{T} \mathcal{D}_{\alpha_n s} \, \dd s 
		- \int_{0}^{T} \mathcal{O}_{\alpha_n s} \big( 1-\mathcal{O}_{\alpha_n s}\big) \, \dd s \right| \right] \\ 
		& \,\leq I(0,\delta_n)+ I(\delta_n,T).
		\end{split}
		\end{equation}
In the following, we prove that both terms in the right-hand side of the expression above vanish as $n$ grows.

For the first term, since $\cO_t,\cD_t \in [0,1]$ for all $t \geq 0$,
		\begin{equation}
		\label{shortTime}
		\begin{split}
		I(0,\delta_n) & = \E_{\mu_d,u} \left[ \left| \frac{\alpha_n}{n} \int_{0}^{\delta_n} 
		\mathcal{D}_{\alpha_n s} \, \dd s 
		- \int_{0}^{\delta_n} \mathcal{O}_{\alpha_n s} \big( 1-\mathcal{O}_{\alpha_n s}\big) \, 
		\dd  s \right| \right] \\
		& \leq \frac{\alpha_n}{n}\delta_{n}+\delta_{n}.
		\end{split}
		\end{equation}
Once one observes that $\alpha_n \asymp n$ and $\delta_n \to 0$, it follows that 
	\begin{equation}
	\lim_{n \to \infty} I(0,\delta_n) = 0.
	\end{equation}
Let us now turn our attention to $I(\delta_n,T)$. We abbreviate 
		\begin{equation}
		\label{conditioning}
 		Q_n(s) \coloneqq \frac{\alpha_n}{n}\left( \mathcal{D}_{\alpha_n s}
		- \E_{\mu_d,u}\left[\mathcal{D}_{\alpha_n s} | \mathcal{F}_{s-\delta_n}\right]\right),
		\end{equation}
with $(\mathcal{F}_{t})_{t\geq 0}$ the joint filtration in Proposition~\ref{prop:martingales}, and estimate
			\begin{align}
			&I(2\delta_n,T)\\
			\label{split2.1} 
			&\quad \leq \sqrt{\E_{\mu_d,u}\left[ \left(\int_{\delta_n}^T Q_n(s) \, \dd s \right)^2 \right]}\\ 
			\label{split2.2}
			&\quad + \E_{\mu_d,u}\left[\int_{\delta_n}^T \left| \frac{\alpha_n}{n} \E_{\mu_d,u}\left[ 
			\mathcal{D}_{\alpha_n s}| \mathcal{F}_{s-\delta_n}\right]
			- \mathcal{O}_{\alpha_n(s-\delta_n)} \big( 1-\mathcal{O}_{\alpha_n(s-\delta_n)}\big) \right| 
			\dd s \right]\\ 
			\label{split2.3} 
			&\quad + \E_{\mu_d,u}\left[\left|\int_{\delta_n}^T \left[\mathcal{O}_{\alpha_n(s-\delta_n)} 
			\big(1-\mathcal{O}_{\alpha_n(s-\delta_n)}\big) 
			- \mathcal{O}_{\alpha_n s} \big(1-\mathcal{O}_{\alpha_n s}\big)\right] \dd s \right|\right].
			\end{align}
We deal with the three expressions in the right-hand side separately. 

Consider~\eqref{split2.3}. Since the integrand is bounded, a simple time-shift yields that the integral is bounded by $2\delta_n$, which converges to zero as $n$ grows. Consider next~\eqref{split2.1}. Observe that, for any $r \in [\delta_n, s-\delta_n)$, by the tower property and the definition in \eqref{conditioning}, we have
		\begin{equation}
		\E_{\mu_d,u}\left[Q_n(s) \mid \mathcal{F}_{r}\right] = 0,
		\end{equation}
from which we get
		\begin{equation}  
		\E_{\mu_d,u}\left[Q_n(s)Q_n(r)\right]
		= \E_{\mu_d,u}\left[Q_n(r)\, \E_{\mu_d,u}\left[Q_n(s) \mid \mathcal{F}_{r}\right]\right] = 0.
		\end{equation}
Therefore we can write 
		\begin{align}
		\E_{\mu_d,u} \Bigg[ \bigg(\int_{\delta_n}^T Q_n(s) \, {\rm d} s \bigg)^2 \Bigg]
		= 2\, \E_{\mu_d,u} \Bigg[\int_{\delta_n}^T Q_n(s) \bigg(\int_{s}^{T \wedge (s+\delta_{n})} 	Q_n(r)\,\dd r \bigg) \dd s \Bigg].
		\end{align}
Expand $Q_{n}(s)Q_{n}(r)$ to obtain
		\begin{align}
		\label{eq:big_bound}
		&\E_{\mu_d,u}\Bigg[\int_{\delta_n}^T \bigg( \int_{s}^{T \wedge (s+\delta_{n})}Q_n(r)\, \dd r \bigg)  Q_n(s) \,\dd s \Bigg] \\
		\label{split3.1}
		&= \E_{\mu_d,u} \left[ \int_{\delta_n}^T \left(\int_{s}^{T \wedge (s+\delta_{n})} \Big( \frac{\alpha_n}{n} \Big)^{2}
		 \mathcal{D}_{\alpha_n{s}} \mathcal{D}_{\alpha_n{r}} \, \dd r\right) \, \dd s\right] \\
		\label{split3.2}
		&-\int_{\delta_n}^T \left(\int_{s}^{T \wedge (s+\delta_{n})} \Big( \frac{\alpha_n}{n} \Big)^{2} \E_{\mu_d,u}
		 \left[ \mathcal{D}_{\alpha_n{s}} \E_{\mu_d,u}\left[\mathcal{D}_{\alpha_n r} 
		 \middle| \mathcal{F}_{r-\delta_n} \right] \right] \, \dd r\right) \, \dd s \\
		\label{split3.3}
		&-\int_{\delta_n}^T \left(\int_{s}^{T \wedge (s+\delta_{n})} \Big( \frac{\alpha_n}{n} \Big)^{2} \E_{\mu_d,u} 
		\left[ \mathcal{D}_{\alpha_n{r}} \E_{\mu_d,u}\left[\mathcal{D}_{\alpha_n s} 
		\middle| \mathcal{F}_{s-\delta_n} \right] \right] \, \dd r\right) \, \dd s \\
		\label{split3.4}
		&- \int_{\delta_n}^T \left(\int_{s}^{T \wedge (s+\delta_{n})} \Big( \frac{\alpha_n}{n} \Big)^{2} \E_{\mu_d,u}
		 \left[\E_{\mu_d,u}\left[\mathcal{D}_{\alpha_n r} \middle| \mathcal{F}_{r-\delta_n} \right] \E_{\mu_d,u}
		 \left[\mathcal{D}_{\alpha_n s} \middle| \mathcal{F}_{s-\delta_n} \right] \right] \, \dd r\right) \,\dd s.
		\end{align}
We next show that the right-hand side tends to zero by considering each term in the right-hand side of \eqref{eq:big_bound} separately. To this aim, a key observation is that for all $r>s\ge 0$, conditionally on $\cF_{s\alpha_n}$,  the expected fraction of discordant edges at time $r\alpha_n$ can be bounded, via \eqref{exp-D-sup} and Markov's inequality, by
			\begin{equation}
			\label{eq:goodbnd}
			\E_{\mu_d,u} \left[ \mathcal{D}_{\alpha_n r} \middle| \mathcal{F}_{s\alpha_n} \right]
			\leq 2\P_{\mu_d} \big(\tau_{{\rm meet}}^{\rm edge} > \alpha_{n} (r-s) \big).
			\end{equation}
It will be convenient to write
		\begin{equation}
		\label{eq:at}
		A(t)\coloneqq\limsup_{n\to\infty}\frac{\alpha_{n}}{n} \int_0^t \P_{\mu_d} 
		\big(\tau_{{\rm meet}}^{\rm edge} > \alpha_{n} s \big) \, \dd s \le \frac{t}{2\vartheta_{d,\nu}}.
		\end{equation}

Consider \eqref{split3.1}. By conditioning at time $0<s<r$ and using \eqref{eq:goodbnd} and \eqref{exp-D-u}, we obtain
		\begin{equation}
		\begin{split}
		\Big( \frac{\alpha_n}{n} \Big)^{2}& \int_{\delta_n}^T \bigg( \int_{s}^{T \wedge (s+\delta_{n})}   
		\E_{\mu_d,u}  \left[ \mathcal{D}_{\alpha_n s} \mathcal{D}_{\alpha_n r} \right]\,\dd r \bigg) \, \dd s\\ 
		& = \Big( \frac{\alpha_n}{n} \Big)^{2}\int_{\delta_n}^T \bigg(  \int_{s}^{T \wedge (s+\delta_{n})} 
		\E_{\mu_d,u} \Big[\mathcal{D}_{\alpha_n s} \E_{\mu_d,u} \big[ \mathcal{D}_{\alpha_n r} \big| 
		\mathcal{F}_{\alpha_n s} \big] \Big]\, \dd r \bigg) \, \dd s\\
		& \leq 2 \Big( \frac{\alpha_n}{n} \Big)^{2} \int_{\delta_n}^T \bigg( \int_{s}^{T \wedge (s+\delta_{n})} 
		\E_{\mu_d,u} \big[\mathcal{D}_{\alpha_n s} \big]  \P_{\mu_d}\big( \tau_{\rm meet}^{\rm edge} 
		> \alpha_n (r-s) \big) \, \dd r \bigg)\,\dd s \\
		& \leq 2\frac{\alpha_n}{n} \int_{\delta_{n}}^{T} \P_{\mu_d} \big(\tau_{\rm meet}^{\rm edge} 
		> \alpha_{n} s\big) \, \dd s \times \frac{\alpha_n}{n} \int_{0}^{\delta_{n}} \P_{\mu_d} 
		\big(\tau_{\rm meet}^{\rm edge}> \alpha_n r \big)\, \dd r,
		\end{split}
		\end{equation}
and the latter tends to zero as $n$ grows, thanks to~\eqref{eq:at} combined with the facts that $\alpha_n \asymp n$ and $\delta_n \to 0$.
		
The arguments for \eqref{split3.2} and \eqref{split3.3} are similar and for this reason we consider only the first. By \eqref{eq:goodbnd} and \eqref{exp-D-u}, we obtain
		\begin{equation}
		\begin{split}
		\Big( \frac{\alpha_n}{n} \Big)^{2} & \int_{\delta_n}^T \bigg( \int_{s}^{T \wedge (s+\delta_{n})} 
		\E_{\mu_d,u} \left[  \mathcal{D}_{\alpha_n s} \E_{\mu_d,u}\left[\mathcal{D}_{\alpha_n r} 
		\middle| \mathcal{F}_{r-\delta_n} \right] \right] \, \dd r \bigg) \, \dd s \\
		& \leq 4\Big( \frac{\alpha_n}{n} \Big)^{2} \int_{\delta_n}^T \P_{\mu_d} 
		\big(\tau_{{\rm meet}}^{\rm edge}> \alpha_n s \big) \bigg( \int_{s}^{T \wedge (s+\delta_{n})} 
		\P_{\mu_d} \big(\tau_{{\rm meet}}^{\rm edge} > 2\alpha_n  \delta_{n} \big) \, \dd r \bigg) \, \dd s \\
		& \leq 4 \delta_n \Big( \frac{\alpha_n}{n} \Big)^{2} \int_{\delta_n}^T \P_{\mu_d} 
		\big(\tau_{{\rm meet}}^{\rm edge}> \alpha_n s \big) \, \dd s \\
		& \leq 4 \delta_n \Big( \frac{\alpha_n}{n} \Big)^{2} T,
		\end{split}
		\end{equation}
and the latter vanishes as $n$ grows. 

Finally, to control \eqref{split3.4}, we once again apply~\eqref{eq:goodbnd}, to obtain
		\begin{equation}
		\begin{split}
		\Big( \frac{\alpha_n}{n} \Big)^{2}  &\int_{\delta_n}^T \int_{s}^{T \wedge (s+\delta_{n})} 
		\E_{\mu_d,u} \left[\E_{\mu_d,u}\left[\mathcal{D}_{\alpha_n r} \middle| \mathcal{F}_{r-\delta_n} \right] 
		\E_{\mu_d,u}\left[\mathcal{D}_{\alpha_n s} \middle| \mathcal{F}_{s-\delta_n} \right] \right] \, \dd r\, \dd s \\ 
		& \leq 4 \Big( \frac{\alpha_n}{n} \Big)^{2} \int_{\delta_n}^T \bigg( \int_{s}^{T \wedge (s+\delta_{n})} 
		\P_{\mu_d} \big(\tau_{{\rm meet}}^{\rm edge} > 2\alpha_{n} \delta_{n} \big)^{2} \, \dd r \bigg) \, \dd s \\
		& \leq 4 \delta_n T \Big( \frac{\alpha_n}{n}\,\P_{\mu_d} \big(\tau_{{\rm meet}}^{\rm edge}> \alpha_{n} 
		\delta_{n} \big) \Big)^{2},
		\end{split}
		\end{equation}
and the latter tends to zero in the limits as $n$ grows. 

It remains to control \eqref{split2.2}, for which the following lemma is needed.

\begin{proposition}{\bf [Expected discordances on short time scales]}
\label{separate2-new} 
Fix a sequence $\delta_n$ such that $\frac{\log n}{n} \ll \delta_n \ll 1$, and define
		\begin{equation}
		\label{epsilon-new}
		\epsilon_n \coloneqq \sup_{\xi\in\{0,1\}^{n}} 
		\Big| \E_{\mu_d, \xi} \big[ \mathcal{D}_{\delta_n\alpha_n} \big]
		- 2\vartheta_{d,\nu} \mathcal{O}^n(\xi) \big(1-\mathcal{O}^n(\xi) \big) \Big|,
		\end{equation}
where $\mathcal{O}^n(\xi)=\frac{1}{n} \sum_{x \in [n]} \xi(x)$. Then
		\begin{equation}
		\label{Hammer2}
		\lim_{n \to \infty} \epsilon_n = 0.
		\end{equation}
\end{proposition}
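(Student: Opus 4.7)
The plan is to exploit duality together with Proposition~\ref{lemma-new}, reducing the claim to a uniform decorrelation statement for two walks conditional on non-meeting. Re-running the steps \eqref{d1}--\eqref{d4} of the proof of Lemma~\ref{lemma:conseq-duality} with a deterministic initial opinion $\xi$ (and using the $x\leftrightarrow y$ symmetry to combine $\xi(X_t^x)(1-\xi(X_t^y))$ with $\xi(X_t^y)(1-\xi(X_t^x))$), one obtains the identity
\begin{equation}
\label{eq:det-duality}
\E_{\mu_d,\xi}[\mathcal{D}_t] = \P_{\mu_d}\bigl(\xi(X_t^x) \neq \xi(X_t^y),\, \tau_{\rm meet}^{\rm edge} > t\bigr),
\end{equation}
where $(x,y)$ denotes the pair of endpoints of the uniformly chosen oriented edge of $G_0 \sim \mu_d$ underlying $\tau_{\rm meet}^{\rm edge}$, and $(X_t^x, X_t^y)$ are the two associated forward walks of Section~\ref{ss.notprop}.

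In view of Proposition~\ref{lemma-new}, which gives $\P_{\mu_d}(\tau_{\rm meet}^{\rm edge} > \delta_n \alpha_n) \to \vartheta_{d,\nu}$, the assertion \eqref{Hammer2} is equivalent to the uniform decorrelation statement
\begin{equation}
\label{eq:decorr-prop}
\sup_{\xi \in \{0,1\}^n} \Bigl| \P_{\mu_d}\bigl(\xi(X_t^x) \neq \xi(X_t^y) \,\big|\, \tau_{\rm meet}^{\rm edge} > t\bigr) - 2\,\mathcal{O}^n(\xi)(1-\mathcal{O}^n(\xi)) \Bigr| \to 0
\end{equation}
as $n \to \infty$ at $t = \delta_n \alpha_n$; in other words, conditional on not having met over a time window that is much longer than the mixing time, the pair $(X_t^x, X_t^y)$ should be approximately distributed as $\pi \otimes \pi$. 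I would establish \eqref{eq:decorr-prop} by inserting a mixing buffer $s_n$ with $\log n \ll s_n \ll \delta_n \alpha_n$ and decomposing the trajectory at time $s_n$. The coupling of Sections~\ref{ss.expl}--\ref{ss.coup}, combined with a two-walk extension of the rapid-mixing estimate invoked at the start of Section~\ref{sec:meeting}, should yield that on $\{\tau_{\rm meet}^{\rm edge} > s_n\}$ the conditional law of $(X_{s_n}^x, X_{s_n}^y)$ is within $o(1)$ total variation of $\pi \otimes \pi$. Propagating from $s_n$ to $t$ preserves $\pi \otimes \pi$ by stationarity, and \eqref{eq:decorr-prop} then follows from the elementary bound $|\E[\ind_{\xi(X) \neq \xi(Y)}] - 2\mathcal{O}^n(\xi)(1-\mathcal{O}^n(\xi))| \leq 2 \|\mathrm{Law}(X,Y) - \pi \otimes \pi\|_{\rm TV}$, which is uniform in $\xi$.

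I expect the main obstacle to lie in quantifying the decorrelation under the conditioning $\{\tau_{\rm meet}^{\rm edge} > s_n\}$: rare ``near-meeting-but-escape'' trajectories could in principle bias the positions of the two walks. The quantitative content of Theorem~\ref{th:meeting} and of the coupling in Section~\ref{sec:meeting}---which show that the bulk of the non-meeting event is carried by trajectories on which the walks separate early and subsequently evolve essentially as two independent stationary walks---should be enough to rule out this effect. Once this decorrelation has been secured, the remainder of the argument becomes a direct translation of the corresponding static-graph step in \cite[Section 6]{CCC16}, the role of the dynamic environment being absorbed entirely into the two-walk analysis developed in the preceding sections.
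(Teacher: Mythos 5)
Your reduction is set up correctly: the dual identity $\E_{\mu_d,\xi}[\mathcal{D}_t]=\P_{\mu_d}(\xi(X_t^x)\neq\xi(X_t^y),\,\tau_{\rm meet}^{\rm edge}>t)$ does follow from the chain \eqref{d1}--\eqref{d4} after symmetrising in $x\leftrightarrow y$, and combining it with Proposition~\ref{lemma-new} does reduce the claim to a decorrelation statement. However, the step you would actually need to prove --- that conditionally on the non-meeting event $\{\tau_{\rm meet}^{\rm edge}>s_n\}$ the law of $(X_{s_n}^x,X_{s_n}^y)$ is $o(1)$ close to $\pi\otimes\pi$ in total variation --- is not established anywhere in the paper and is genuinely harder than what the paper uses. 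Proposition~\ref{pr:mixing} is an unconditioned, single-walk strong stationary time bound; it says nothing about the joint law of two walks after conditioning on a trajectory event, and such conditioning can distort the joint law substantially (at the very least it removes and redistributes all the mass on coalesced trajectories). Your "rare near-meeting-but-escape" worry is exactly the issue, and waving at the coupling of Sections~\ref{ss.expl}--\ref{ss.coup} plus a hypothetical "two-walk extension" does not close it; in fact, the correct quantitative version of this conditional two-walk decorrelation would essentially be a new result requiring its own argument.

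The paper avoids the conditioning entirely. Its Lemma~\ref{lemma:bound_TV} keeps the indicator $\ind_{\tau_{\rm meet}^{\rm edge}>s}$ inside the \emph{outer} expectation (never dividing by $\P(\tau_{\rm meet}^{\rm edge}>s)$), conditions on the filtration $\mathcal{F}_s$ via the tower property, and then factorises the inner conditional expectation so that each factor can be compared to $\mathcal{O}^n(\xi)$ using the \emph{single}-walk bound $d_{\rm TV}(t-s)$ of Proposition~\ref{DynamicMixingMarginal}. The loss incurred in passing from $\ind_{\tau>t}$ to $\ind_{\tau>s}$ is then controlled separately by $\P(\tau_{\rm meet}^{\rm edge}\in(s,t])$, which Proposition~\ref{lemma-new} makes small. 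This decomposition buys a proof that never touches conditional laws given non-meeting, so it only ever needs the mixing estimate you already have. I'd recommend you replace your conditional decorrelation step with this algebraic route: bound $\E_{\mu_d,\xi}[\mathcal{D}_t]$ above by the same quantity with $\ind_{\tau>s}$ (paying a $\P(\tau\in(s,t])$ cost), apply the Markov property at time $s$, factorise, and plug in $d_{\rm TV}(t-s)$, choosing $s=t/2=\delta_n\alpha_n/2$ so that both error terms vanish.
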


\noindent
Proposition~\ref{separate2-new} says that, regardless of the initial opinion configuration, on short time scales the expected density of discordant edges is proportional to product of the initial densities of the two opinions up to multiplication by $2\vartheta_{d,\nu}$. This will be of use later on in the proof of Theorem~\ref{thm:disc}.
	
We postpone the proof of Proposition~\ref{separate2-new} to the end of this section. Note that the integrand in~\eqref{split2.2} is uniformly bounded by $\frac{1}{2\vartheta_{d,\nu}}\epsilon_{n}$ defined in \eqref{epsilon-new}, and hence the integral tends to zero by Proposition~\ref{separate2-new}. This concludes the proof of Theorem~\ref{thm:disc}.
\qed


\subsection{Discordances on short time scales}
\label{suse:lemma-disc}
	
In this section we prove Proposition \ref{separate2-new}. Before presenting the proof, we state two intermediate results that will be of use later. The first is a control on the mixing time of the random walk on the dynamic random graph.

\begin{proposition}{\bf [Random walk mixing time]}
\label{pr:mixing}
\label{DynamicMixingMarginal} 
For $t\ge 0$ consider the worst-case total variation distance on the product space
		\begin{equation}
		\label{eq:mixing_distance}
		d_{\rm TV}(t) \coloneqq \max_{(x,G)\in E} \max_{(A,B) \subset[n] \times \cG_n(d)} 
		\big| \P_{G}(X^x_t \in A, G_t \in B) - \pi(A) \mu_{d}(B) \big|,
		\end{equation}
and let 
		\begin{equation}
		\label{eq:mixing}
		t_{\rm mix} \coloneqq \inf\{t \geq 0 \colon d(t) \leq (2\ee)^{-1}\}.
		\end{equation}	
There exists a positive constant $C_0=C_0(d,\nu)>0$ such that
		\begin{equation}
		\label{MixBound}
		t_{\rm mix} \leq C_0 \log n.
		\end{equation}
\end{proposition}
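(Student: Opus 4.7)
The plan is to obtain the bound $t_{\rm mix} \leq C_0\log n$ by combining a rapid mixing estimate for the graph dynamics alone with a ``local refresh'' argument for the walker, exploiting the rewiring to decorrelate $X_t$ from its starting state in logarithmic time. The reversibility of the joint process with respect to $\mu_d\otimes\pi$ is used throughout.

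The first step would be to show that the marginal graph dynamics $(G_t)_{t\geq 0}$ (governed by $L^{\rm dyn}_{d,\nu}$) has total variation mixing time $O(\log n)$. Since each stub carries an independent rewiring clock of rate $\nu/4$, by time $T_1 = C_1(\nu)\log n$ every stub has been rewired at least once with probability $\geq 1 - n^{-10}$. A standard synchronization coupling between two trajectories starting from arbitrary $G, G' \in \cG_n(d)$ allows the two copies to agree on any stub after its first rewiring, yielding $\sup_G\|P^{\rm dyn}_{T_1}(G,\cdot)-\mu_d\|_{\rm TV} = O(n^{-2})$.

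The second step would be to show that the walker also reaches the uniform distribution in $O(\log n)$ time. The key observation is that, in the graphical construction of Section~\ref{ss.notprop}, for any stub $\sigma_{X_s,i}$ at the walker's current location, there is positive rate $\nu/2$ that this stub is rewired (by its own clock or by being selected as the mark of another rewiring) within a short interval. If such a rewiring happens and the walker subsequently uses that stub, then by the uniformity of the new partner chosen among the $dn-1$ stubs, the walker's next location is uniformly distributed over $[n]$ up to an $O(1/n)$ correction. Iterating this over an interval of length $T_2 = C_2 \log n$, the probability that no such ``refresh followed by traversal'' occurs is at most $e^{-c T_2} \leq n^{-3}$ for $C_2$ large enough. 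The analysis of the exploration process in Section~\ref{ss.expl}, together with Proposition~\ref{lemma:E-far}, makes this argument quantitative: on the typical event $\cH$ the explored set $E_t$ remains small, so the unrevealed part of the graph remains essentially a uniform matching, and the walker's next step conditional on using a freshly rewired stub is genuinely close to $\pi$.

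Combining the two steps via a Doeblin-type minorization
\begin{equation}
\P_{(x,G)}\!\big((X_{t_0},G_{t_0})\in \cdot \big) \;\geq\; (1-o(1))\,\alpha\, (\mu_d\otimes\pi)(\cdot),
\end{equation}
valid uniformly in $(x,G)$ for $t_0 = T_1 + T_2 = O(\log n)$ and some $\alpha >0$ independent of $n$, a standard argument iterating the minorization gives $d_{\rm TV}(k t_0) \leq (1-\alpha)^k$, so choosing $k = k(\alpha)$ large yields $d_{\rm TV}(k t_0) \leq (2e)^{-1}$ and the claimed logarithmic bound. The main obstacle is establishing the Doeblin minorization rigorously: the walker's position at time $t_0$ is nontrivially correlated with the graph trajectory, so the claim that a rewiring of a stub at the walker's location followed by a walker step produces an approximately uniform destination must be extracted from a careful construction. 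The exploration process of Section~\ref{ss.expl} and the typicality estimates of Section~\ref{ss.typev} provide exactly the bookkeeping needed, in a spirit similar to the arguments used for the contact process in \cite{SOV21pr,SV23pr}.
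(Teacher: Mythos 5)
Your high-level idea (a rewiring of a stub at the walker's current vertex, followed by a traversal of that stub, places the walker at an essentially uniform destination) is the same key mechanism that drives the paper's proof. However, the paper implements it through a much cleaner device: a \emph{strong stationary time}. Specifically, the paper lets $\tau_{\rm env}$ be the first time every edge of $G_0$ has been rewired --- this is a strong stationary time for the graph marginal --- and then lets $\tau_{\rm rw}$ be the first time after $\tau_{\rm env}$ at which the walker enters a vertex $v$, all $d$ stubs of $v$ are rewired before the walker moves again, and the walker then takes a step. One checks directly that $\tau_{\rm rw}$ is a strong stationary time for the joint chain $(G_t, X_t)$, and bounds its tail by stochastic domination of $\tau_{\rm env}$ by the maximum of $dn$ independent rate-$\nu/4$ exponentials, which is $O(\log n)$. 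This immediately yields the TV bound without any iteration or minorization step.

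Your Doeblin-minorization route would also work, but you leave the main step unfinished --- you explicitly flag the minorization as ``the main obstacle'' without closing it --- and then propose to close it using the exploration process of Section~\ref{ss.expl} and Proposition~\ref{lemma:E-far}. This is unnecessary machinery: the paper's proof is completely self-contained and never references the exploration process or the typicality estimates; the strong stationary time is built directly from the graphical construction of Section~\ref{ss.notprop}. Reaching for those heavier tools here both overcomplicates the argument and, more importantly, creates a logical dependency that does not exist (Proposition~\ref{pr:mixing} is needed as an ingredient in the proof of Theorem~\ref{thm:disc}, and one should not silently import later sections' results).

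Two further cautions. First, your ``synchronization coupling'' for the graph marginal is hand-wavy: feeding both trajectories the same stub-clock/mark pairs does not synchronize them after a single rewiring of each stub, because a rewiring also creates a new edge between the two \emph{orphaned} partners, and those orphaned partners can differ between the two trajectories. The strong-stationary-time argument sidesteps this entirely by working with a single trajectory. Second, your claimed bound ``the walker's next location is uniformly distributed over $[n]$ up to an $O(1/n)$ correction'' after a single stub rewiring followed by a traversal undersells what is needed: a single rewired stub gives a uniform \emph{stub}, not a uniform \emph{vertex} independent of the graph marginal; the paper's requirement that \emph{all} stubs of the current vertex be freshly rewired before the walker departs is precisely what makes the joint distribution exactly $\mu_d \otimes \pi$ at $\tau_{\rm rw}$.
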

	
\begin{proof}
Consider the stopping time $\tau_{\rm env}$ at which every edge in $G_0=G$ has been rewired. It is immediate that $\tau_{\rm env}$ is a strong stationary time for the Markov process obtained by projecting $(G_t,X^x_t)_{t\ge 0}$ onto the first coordinate, i.e.,
		\begin{equation}
		\P_G(G_t=G'\mid\tau_{\rm env}\le t)=\mu_{d}(G'), \qquad \forall\, G,G' \in \cG_d(n).
		\end{equation}
Consider next the stopping time $\tau_{\rm rw}$ as the first time after $\tau_{\rm env}$ at which the following events are all satisfied:  
\begin{itemize}
	\item the random walk arrives in a vertex $v$;
	\item all the stubs of $v$ have been rewired before the random walks does a single step;
	\item the random walk does a step.
\end{itemize}
It is immediate to check that
		\begin{equation}
		\P_G(G_t=G', X_t^x=y \mid \tau_{\rm rw} \leq t)=\mu_{d}(G') \pi(y)
		\qquad \text{for all } G, G' \in \cG_d(n) \text{ and all } x, y \in [n].
		\end{equation}
Therefore $\tau_{\rm rw}$ is a strong stationary time for the joint Markov chain, and hence
		\begin{equation}
		\label{eq:sst}
		t_{\rm mix} \leq \inf \{t \geq 0 \colon \P_G(\tau_{\rm rw}>t) \leq (2\ee)^{-1}\}.
		\end{equation}
To control the tail of $\tau_{\rm rw}$ it is enough to note that $\tau_{\rm rw}=\tau_{\rm env}+Z$, where $Z$ is distributed as the sum of two independent random variables having law ${\rm Exp}(1)$. On the other hand, $\tau_{\rm env}$ is stochastically dominated by the maximum of $dn$ exponential random variables of rate $\nu/4$. Hence, $d$ and $\nu$ being bounded, for every $\varepsilon>0$ there exists some $C_0=C_0(d,\nu)>0$ such that, for $n$ large enough,
		\begin{equation}
		\label{eq:bound-sst}
		\P_G(\tau_{\rm rw} > C_0 \log n) \leq (2\ee)^{-1}.
		\end{equation}
The claim follows via \eqref{eq:sst} and \eqref{eq:bound-sst}.
\end{proof}

The next result clarifies how we can make use the bound on the mixing time provided by Lemma~\ref{pr:mixing} to prove Proposition~\ref{separate2-new}. The proof follows the lines of \cite[Proposition 6.1]{CCC16}, but we repeat the full argument to show the reader how to adapt it to our dynamic set-up.

\begin{lemma}{\bf [Expected discordances on arbitrary timescale]}
\label{lemma:bound_TV}
For every $0 < s < t < \infty$,
		\begin{equation}
		\label{eq:lemmaboundTV}
		\begin{split}
		& \sup_{\xi \in \{0,1\}^{n}} \Big| \E_{\mu_d,\xi}[\mathcal{D}_{t}] 
		- 2\P_{\mu_d}( \tau_{{\rm meet}}^{\rm edge}>s) \cO^n(\xi) \big(1-\cO^n(\xi) \big) \Big| \\
		& \qquad \qquad\qquad\leq 2\,\P_{\mu_d} \big( \tau_{{\rm meet}}^{\rm edge}\in (s,t] \big) 
		+ 4\,\P_{\mu_d} \big( \tau_{{\rm meet}}^{\rm edge} >s \big)\, d_{\rm TV}(t-s).
		\end{split}
		\end{equation}
\end{lemma}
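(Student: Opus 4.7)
The plan is to combine the duality representation of $\E_{\mu_d,\xi}[\cD_t]$ with the Markov property at time $s$ and a two-walk version of the mixing bound of Proposition~\ref{pr:mixing}. Rerunning the derivation of Lemma~\ref{lemma:conseq-duality} (see \eqref{d0-}--\eqref{d4}) with a deterministic initial opinion $\xi$ in place of the product Bernoulli yields
\begin{equation}
\label{eq:dualxi}
\E_{\mu_d,\xi}[\cD_t]=\frac{2}{dn}\sum_{\substack{x,y\in[n]\\y\ne x}}\sum_{1\le i,j\le d}\E_{\mu_d}\!\left[\ind_{\sigma_{x,i}\leftrightarrow_{0}\sigma_{y,j}}\,\xi(X_t^x)\bigl(1-\xi(X_t^y)\bigr)\right],
\end{equation}
where $(X^x,X^y)$ are the forward coalescing walks on $(G_r)_{r\ge 0}$ started from $G_0\stackrel{(\rm d)}{=}\mu_d$; the indicator $\ind_{\tau_{\rm meet}^{x,y}>t}$ is implicit because coalesced walks carry the same opinion.

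Conditioning on $\mathcal{F}_s$ and applying the Markov property, the conditional expectation of the integrand vanishes on $\{\tau_{\rm meet}^{x,y}\le s\}$ and equals $g(G_s,X_s^x,X_s^y)$ on $\{\tau_{\rm meet}^{x,y}>s\}$, where
\begin{equation}
g(G,x',y')\coloneqq\E_{(G,x',y')}\!\left[\xi(X_{t-s}^{x'})\bigl(1-\xi(X_{t-s}^{y'})\bigr)\right]
\end{equation}
refers to two coalescing walks on the dynamic graph started at $G$. Since coalescing walks coincide with two independent walks $\tilde X^{x'},\tilde X^{y'}$ up to their meeting time $\tilde\tau'$, and the integrand vanishes at coincidence,
\begin{equation}
g(G,x',y')=h(G,x',y')-\E_{(G,x',y')}\!\left[\xi(\tilde X_{t-s}^{x'})\bigl(1-\xi(\tilde X_{t-s}^{y'})\bigr)\ind_{\tilde\tau'\le t-s}\right],
\end{equation}
with $h$ the analogous functional for \emph{independent} walks; the correction is bounded in absolute value by $\P_{(G,x',y')}(\tilde\tau'\le t-s)$. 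Inserted into \eqref{eq:dualxi}, the Markov property rewrites this correction as $\P(s<\tau_{\rm meet}^{x,y}\le t\mid\mathcal{F}_s)$, and summation produces exactly $2\,\P_{\mu_d}(\tau_{\rm meet}^{\rm edge}\in(s,t])$.

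For the $h$-term I would establish the two-walk mixing estimate
\begin{equation}
\label{eq:2walkmix}
\bigl\|\P^{(2)}_{(G,x',y')}\bigl((G_{t-s},\tilde X_{t-s}^{x'},\tilde X_{t-s}^{y'})\in\cdot\bigr)-\mu_d\otimes\pi\otimes\pi\bigr\|_{\rm TV}\le 2\,d_{\rm TV}(t-s)
\end{equation}
by repeating the strong-stationary-time argument of Proposition~\ref{pr:mixing}: setting $\tau^{(2)}_{\rm rw}=\max(\tau^{(1,x')}_{\rm rw},\tau^{(1,y')}_{\rm rw})$, the two single-walk strong stationary times make both walks uniform and (conditionally on the stationary graph) independent, and a union bound doubles the single-walk tail estimate. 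Applied to the test function $(a,b)\mapsto\xi(a)(1-\xi(b))$, whose stationary expectation equals $\cO^n(\xi)(1-\cO^n(\xi))$, the bound~\eqref{eq:2walkmix} gives $|h(G,x',y')-\cO^n(\xi)(1-\cO^n(\xi))|\le 2\,d_{\rm TV}(t-s)$. Summing over $(x,y,i,j)$ and using $\tfrac{2}{dn}\sum_{x,y,i,j}\P_{\mu_d}(\sigma_{x,i}\leftrightarrow_{0}\sigma_{y,j},\tau_{\rm meet}^{x,y}>s)=2\,\P_{\mu_d}(\tau_{\rm meet}^{\rm edge}>s)$ yields the $4\,\P_{\mu_d}(\tau_{\rm meet}^{\rm edge}>s)\,d_{\rm TV}(t-s)$ term in~\eqref{eq:lemmaboundTV}, and taking the supremum over $\xi$ completes the argument.

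The main technical input is the two-walk mixing bound~\eqref{eq:2walkmix}. Although advertised as a straightforward extension of Proposition~\ref{pr:mixing}, its only nontrivial feature is that the independence of the Poisson clocks driving the two walks ensures that, after both have made a step on a fully rewired graph, the two walk coordinates are uniform and conditionally independent given the graph; the factor $2$ comes from the union-bound step. Everything else reduces to bookkeeping via duality and the Markov property.
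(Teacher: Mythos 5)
Your overall strategy mirrors the paper's: express $\E_{\mu_d,\xi}[\cD_t]$ via the duality/reversibility manipulation of Lemma~\ref{lemma:conseq-duality}, apply the Markov property at the intermediate time $s$, peel off the coalescence error $2\P_{\mu_d}(\tau_{\rm meet}^{\rm edge}\in(s,t])$ by comparing coalescing to independent walks, and bound what remains by a random-walk mixing estimate. The bookkeeping with the edge indicators and the $\tfrac{2}{dn}$ prefactor is correct, and the replacement of coalescing by independent walks is a clean way to produce the first error term. Where you genuinely diverge from the paper is in how the remaining term is controlled: the paper factorizes $\E[\xi(X^x_t)(1-\xi(X^y_t))\mid\cF_s]$ into a product of two \emph{single-walk} conditional expectations and bounds each factor by $d_{\rm TV}(t-s)$ using the telescoping inequality $|ab-c(1-c)|\le b\,|a-c|+c\,|(1-c)-b|$ (see \eqref{eq:giant}), whereas you reduce to a \emph{two-walk} total variation mixing estimate \eqref{eq:2walkmix}. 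Your route avoids the factorization step — which is actually delicate, since after time $s$ the two walks share the graph dynamics, so the conditional expectation of the product does not factor exactly when conditioning only on $\cF_s$ — but shifts the technical weight onto the two-walk mixing bound, where your argument has two gaps.

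The first gap is quantitative: the union bound on strong stationary times gives $\|\cdot\|_{\rm TV}\le\P(\tau^{(2)}_{\rm rw}>t-s)\le 2\max_{G,x}\P_G(\tau_{\rm rw}>t-s)$, which controls the left-hand side of \eqref{eq:2walkmix} by the tail of the single-walk \emph{strong stationary time}, not by $d_{\rm TV}(t-s)$ as defined in \eqref{eq:mixing_distance}. One has $d_{\rm TV}(t)\le\max_{G,x}\P_G(\tau_{\rm rw}>t)$ but not the reverse, so \eqref{eq:2walkmix} as stated does not follow; you would obtain the lemma with the SST tail in place of $d_{\rm TV}(t-s)$. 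This is harmless for the downstream application (Proposition~\ref{separate2-new} only uses $d_{\rm TV}\to 0$, which is itself proved through that tail), but it does not reproduce the inequality as stated.

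The second gap is more substantive: it is not automatic that $\tau^{(2)}_{\rm rw}=\max(\tau^{(1,x')}_{\rm rw},\tau^{(1,y')}_{\rm rw})$ is a strong stationary time for the \emph{product} chain $(G_t,\tilde X_t,\tilde Y_t)$ with target $\mu_d\otimes\pi\otimes\pi$. From the single-walk SSTs you only get that each marginal pair $(G_{\tau^{(2)}},\tilde X_{\tau^{(2)}})$ and $(G_{\tau^{(2)}},\tilde Y_{\tau^{(2)}})$ is $\mu_d\otimes\pi$-distributed, which does not by itself give the full product law. The subtlety is that the uniformization in the construction of Proposition~\ref{pr:mixing} relies on the graph-rewiring randomness to make the walk position uniform; conditionally on the graph trajectory — which is precisely what makes $\tilde X$ and $\tilde Y$ independent — the single-walk SST does \emph{not} produce a uniform position. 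You therefore need an explicit argument that after both walks have passed through a freshly re-randomized vertex, the triple is jointly product-stationary (for example by re-examining which Poisson arrivals and marks feed into which coordinate and showing the relevant uniform choices are independent). Asserting this as a straightforward extension and taking a union bound is not enough; this is the step that must be filled in for your proof to close.
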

	
\begin{proof}
By the definition of $\E_{\mu_d,\xi}[\mathcal{D}_{t}]$, duality, and the reversibility of the dynamic random environment with respect to $\mu_d$, we obtain
		\begin{equation}
		\label{eq:giant3}
			\begin{aligned}
			& \E_{\mu_d,\xi}[\mathcal{D}_{t}] \\
			& = \frac{2}{dn} \sum_{G, G' \in \cG(d)} \mu_d(G) \sum_{\substack{x, y \in [n] \\ x \neq y}}
			\sum_{1 \leq i,j \leq d} \ind_{\sigma_{x,i}\leftrightarrow_{G'}\sigma_{y,j}}
			\E_{G,\xi} [\xi(\hat{X}^x_{t,t})(1-\xi(\hat{X}^y_{t,t})) \ind_{G_t=G'} \ind_{\hat{\tau}_{{\rm meet},t}^{x,y}>t} ] \\
			& = \frac{2}{dn}\sum_{G, G' \in \cG(d)} \mu_d(G') \sum_{\substack{x, y \in [n] \\ x \neq y}} \sum_{1 \leq i,j \leq d}
			\ind_{\sigma_{x,i} \leftrightarrow_{G'} \sigma_{y,j}} \E_{G',\xi}[\xi({X}^x_{t})(1-\xi({X}^y_{t})) \ind_{G_t=G} \ind_{{\tau}_{{\rm meet}}^{x,y}>t} ] \\
			& = \frac{2}{dn} \sum_{G \in \cG(d)} \sum_{\substack{ x, y \in [n] \\ x \neq y}} \sum_{1 \leq i,j \leq d}
			\E_{\mu_d,\xi}[ \xi({X}^x_{t})(1-\xi({X}^y_{t})) \ind_{G_t=G} \ind_{\sigma_{x,i}\leftrightarrow_{0}
			\sigma_{y,j}} \ind_{{\tau}_{{\rm meet}}^{x,y}>t}] \\
		    & = \frac{2}{dn} \sum_{\substack{ x, y \in [n] \\ x \neq y}} \sum_{1 \leq i,j \leq d}
		    \E_{\mu_d,\xi} [\xi({X}^x_{t})(1-\xi({X}^y_{t})) \ind_{\sigma_{x,i}\leftrightarrow_{0}\sigma_{y,j}} \ind_{{\tau}_{{\rm meet}}^{x,y}>t} ]\\
			& \leq \frac{2}{dn} \sum_{\substack{x, y \in [n] \\ x \neq y}} \sum_{1 \leq i,j \leq d}	
			\E_{\mu_d,\xi} [\xi({X}^x_{t})(1-\xi({X}^y_{t})) \ind_{\sigma_{x,i}\leftrightarrow_{0}\sigma_{y,j}} \ind_{{\tau}_{{\rm meet}}^{x,y}>s}] + 2\P_{\mu_d} (\tau_{\rm meet}^{\rm edge} \in (s,t]),
		\end{aligned}
		\end{equation}
where in the last inequality we split according to $s$, bound $\xi({X}^x_{t})(1-\xi({X}^y_{t}))$ by $1$, perform the sums, and use the definition of $\tau_{\rm meet}^{\rm edge}$.

We next apply Markov's property for the joint evolution of $(X_r^x,X_r^y,G_r)_{r\ge 0}$ and the random graph dynamics to obtain
			\begin{equation}
			\label{eq:giant2}
			\begin{aligned}
				&\E_{\mu_d,\xi}[\xi({X}^x_{t})(1-\xi({X}^y_{t}))\ind_{\sigma_{x,i}\leftrightarrow_0\sigma_{y,j}}
				\ind_{{\tau}_{{\rm meet}}^{x,y}>s}] \\
				&\quad=\E_{\mu_d,\xi}\bigg[\E_{\mu_d,\xi}[\xi({X}^x_{t})(1-\xi({X}^y_{t}))\ind_{\sigma_{x,i}
				\leftrightarrow_0\sigma_{y,j}}\ind_{{\tau}_{{\rm meet}}^{x,y}>s}\mid (X_r^x,X_r^y,G_r)_{r\in [0,s]}]\bigg] \\
				&\quad=\E_{\mu_d,\xi}\bigg[\E_{\mu_d,\xi}[\xi({X}^x_{t})(1-\xi({X}^y_{t}))\mid 
				(X_r^x,X_r^y,G_r)_{r\in [0,s]}]\ind_{\sigma_{x,i}\leftrightarrow_0\sigma_{y,j}}
				\ind_{{\tau}_{{\rm meet}}
				^{x,y}>s}\bigg] \\
				&\quad=\E_{\mu_d,\xi}\bigg[\E_{\mu_d,\xi}[\xi({X}^{X_s^x}_{t-s})(1-\xi({X}^{X^y_s}_{t-s}))\mid 
			        (X_r^x,X_r^y,G_r)_{r\in [0,s]}]\ind_{\sigma_{x,i}\leftrightarrow_0\sigma_{y,j}}
			        \ind_{{\tau}_{{\rm meet}^{x,y}>s}}\bigg] \\
				&\quad=\E_{\mu_d,\xi}\bigg[\E_{\mu_d,\xi}[\xi({X}^{X_s^x}_{t-s})(1-\xi({X}^{X^y_s}_{t-s}))
				-\cO^n (\xi)(1-\cO^n(\xi))\mid (X_r^x,X_r^y,G_r)_{r\in [0,s]}] \\
				&\qquad\times\ind_{\sigma_{x,i}\leftrightarrow_0\sigma_{y,j}}\ind_{{\tau}_{{\rm meet}}^{x,y}>s}\bigg]
				+\cO^n(\xi)(1-\cO^n(\xi))\P_{\mu_d}\Big(\{\tau_{\rm meet}^{x,y}>s\}\cap\{\sigma_{x,i}
				\leftrightarrow_0\sigma_{y, j} \}\Big).
			\end{aligned}
			\end{equation}
At this point, the proof is complete as soon as we bound the difference in the first term on the right-hand side of~\eqref{eq:giant2} by 
\begin{equation}
2\P_{\mu_d}\Big( \{\tau_{\rm meet}^{x,y}>s\} \cap \{\sigma_{x,i}\leftrightarrow_0\sigma_{y, j} \} \Big) d_{\rm TV}(t-s).
\end{equation}
To do so, observe that
		\begin{equation}
		\label{eq:giant}
		\begin{aligned}
				\Big| \E_{\mu_d,\xi}& \Big[ \E_{\mu_d,\xi} \big[  \xi(X^{X^{x}_{s}}_{t-s}) (1-\xi(X^{X^y_{s}}_{t-s})) 
				- \mathcal{O}^n(\xi)(1-\mathcal{O}^n(\xi)) \big| (X_r^x,X_r^y,G_r)_{r\in [0,s]} \big]\\
				&\qquad\qquad\qquad\qquad\qquad\qquad\qquad\qquad\qquad\quad
				\times\ind_{\sigma_{x,i}\leftrightarrow_0\sigma_{y,j}}\ind_{{\tau}_{{\rm meet}}^{x,y}>s} \Big] \Big| \\
				&= \Big| 
					\E_{\mu_d,\xi} \Big[\Big( 
					\E_{\mu_d,\xi} \big[  \xi(X^{X^{x}_{s}}_{t-s})  \mid (X_r^x,G_r)_{r\in [0,s]}\big]
					\E_{\mu_d,\xi} \big[1-\xi(X^{X^y_{s}}_{t-s})
				 \mid (X_r^y,G_r)_{r\in [0,s]}\big]\\
				 &\qquad\qquad\qquad\qquad\qquad\qquad\qquad - \mathcal{O}^n(\xi)(1-\mathcal{O}^n(\xi))
				 \Big)
				 \ind_{\sigma_{x,i}\leftrightarrow_0\sigma_{y,j}}
				 \ind_{{\tau}_{{\rm meet}}^{x,y}>s}
				  \Big] \Big| \\
				& \leq \E_{\mu_d,\xi} \Big[ \mathcal{O}^n(\xi) \Big| \mathcal{O}^n(\xi) 
				- \E_{\mu_d,\xi} \big[\xi(X^{X^{y}_{s}}_{t-s}) \big| (G_r,X_r^y)_{r\in [0,s]} \big] \Big|\ind_{\sigma_{x,i}
				\leftrightarrow_0\sigma_{y,j}}
				\ind_{{\tau}_{{\rm meet}}^{x,y}>s} \Big] \\
				& \quad + \E_{\mu_d,\xi} \Big[ \E_{\mu_d,\xi} \big[ 1-\xi(X^{X^{y}_{s}}_{t-s})
				 \big| (G_r,X_r^y)_{r\in [0,s]}  \big]\\
				&\qquad\qquad\quad\times \Big| \mathcal{O}^n(\xi) 
				- \E_{\mu_d,\xi} \big[\xi(X^{X^{x}_{s}}_{t-s}) \big| (G_r,X_r^x)_{r\in [0,s]} \big] \Big| 
				\ind_{\sigma_{x,i}\leftrightarrow_0\sigma_{y,j}}
				\ind_{{\tau}_{{\rm meet}}^{x,y}>s} \Big] \\
				& \leq 2\, \P_{\mu_d}\Big(\{\tau_{\rm meet}^{x,y}>s\}\cap\{\sigma_{x,i}
				\leftrightarrow_0\sigma_{y, j} \}\Big)\, d_{\rm TV}(t-s),
		\end{aligned}
		\end{equation}
which concludes the proof.
\end{proof}
	
\begin{proof}[Proof of Proposition~\ref{separate2-new}]
From \eqref{eq:lemmaboundTV} we obtain
		\begin{equation}
		\label{giant4}
		\begin{aligned}
				\sup_{\xi \in \{0,1\}^n} & \left| \E_{\mu_d, \xi} \left[\mathcal{D}_{t_n}\right]
				- 2 \P_{\mu_d}\big( \tau_{{\rm meet}}^{\rm edge}>\tfrac12 t_n \big)
				\mathcal{O}^n(\xi) \big(1-\mathcal{O}^n(\xi)\big) \right| \\
				& \qquad \leq 2 \P_{\mu_d} \big( \tau_{\rm meet}^{\rm edge} \in \big( \tfrac12 t_n,t_{n} \big] \big) 
				+ 4\P_{\mu_d} \big( \tau_{\rm meet}^{\rm edge} > \tfrac12 t_n \big) d_{\rm TV}\big( \tfrac12 t_n \big).
		\end{aligned}
		\end{equation}
The desired result follows by choosing $t_n = \delta_n\alpha_n$. Note that Proposition~\ref{lemma-new} yields 
\begin{equation}
\P_{\mu_d}\big( \tau_{{\rm meet}}^{\rm edge}>\tfrac12 t_n \big) \to \vartheta_{d, \nu}\,
\end{equation}
because $\frac{\log n}{n} \ll \delta_{n} \ll 1$. The same Proposition~\ref{lemma-new} can be used to conclude that
\begin{equation}
\P_{\mu_d} \big( \tau_{\rm meet}^{\rm edge} \in \big( \tfrac12 t_n,t_{n} \big] \big) \to 0\,,
\end{equation}
while Proposition~\ref{DynamicMixingMarginal} yields $d_{\rm TV}(\delta_n \alpha_n) \to 0$, and concludes the proof.
\end{proof}

	
\section{Properties of the diffusion constant}
\label{sec:theta}

In this section we prove Propositions~\ref{lem:limit-theta}--\ref{pr:der-theta-0}. Recall that $\beta_d = \sqrt{d-1}$ and $\rho_d = \frac{2}{d}\sqrt{d-1} = \frac{2}{d}\beta_d$. 

\medskip\noindent
{\bf 1.}
It follows from \eqref{def-Delta} that
\begin{equation}
\Delta_{d,\nu} \sim \frac{\rho_d}{\nu}\,,\qquad \nu\to\infty\,,
\end{equation}
which together with \eqref{def-theta} proves the second limit in \eqref{eq:limit-theta-a} and \eqref{eq:per-theta-a}. It also
follows from \eqref{def-Delta} that
\begin{equation}
\Delta_{d,\nu} \sim \frac{\rho_d}{\nu+2}\,,\qquad d\to\infty\,,
\end{equation}
which together with \eqref{def-theta} proves the limit in \eqref{eq:limit-theta-b} and \eqref{eq:per-theta-b}. 

\medskip\noindent		
{\bf 2.}
It follows from \eqref{def-Delta} that 
\begin{equation}
\lim_{\nu\downarrow 0} \Delta_{d,\nu} = \Delta_{d,0} = \mathfrak{d}_d\,, 
\end{equation}
with $\mathfrak{d}_d$ the solution of the equation
\begin{equation}
\mathfrak{d}_d = \(\frac2{\rho_d}-\mathfrak{d}_d\)^{-1}\,.
\end{equation}
The latter gives $\rho_d \mathfrak{d}_d^2-2\mathfrak{d}_d+\rho_d=0$, which yields, via \eqref{def-beta} and \eqref{def-rho},
\begin{equation}
\label{eq:sol-C}
\mathfrak{d}_d \in\left\{ \frac{1}{\beta_d},\beta_d\right\}.
\end{equation}
Consequently,
\begin{equation}
\vartheta_{d,0} \in\left\{\frac{d-2}{d-1},0\right\}\,.
\end{equation}	
Note that \eqref{def-Ri} and \eqref{def-Delta-i} show that $\Delta_{d,\nu}<\beta_d$, so that the second solution in \eqref{eq:sol-C} can be discarded. We conclude that $\vartheta_{d,0} = \frac{d-2}{d-1}$, which proves the first limit in \eqref{eq:limit-theta-a}. 

\medskip\noindent
{\bf 3.}
To prove the first limit in \eqref{eq:per-theta-a} we return to the recursion in \eqref{eq:recursion-Delta}. Differentiating this recursion with respect to $\nu$, we get
\begin{equation}
\frac{\partial}{\partial\nu} \Delta_{d,\nu}(i) 
=  - \Delta_{d,\nu}(i)^2 \left[\frac{i+1}{\rho_d} - \frac{\partial}{\partial\nu} \Delta_{d,\nu}(i+1)\right]\,,
\qquad i \in \N_0\,,
\end{equation}
which can be iterated to obtain
\begin{equation}
\label{eq:derDel}
\frac{\partial}{\partial\nu} \Delta_{d,\nu}(0) = - \frac{1}{\rho_d} \sum_{i=0}^\infty (i+1) 
\prod_{j=0}^i \Delta_{d,\nu}(j)^2\,.
\end{equation}
By \eqref{def-theta},
\begin{equation}
\frac{1}{\nu} (\vartheta_{d,\nu}-\vartheta_{d,0}) = - \frac{1}{\beta_d}\, 
\frac{1}{\nu} (\Delta_{d,\nu}-\Delta_{d,0})\,.
\end{equation}
Passing to the limit $\nu \downarrow 0$, we get
\begin{equation}
\lim_{\nu \downarrow 0} \frac{1}{\nu} (\vartheta_{d,\nu}-\vartheta_{d,0})
= \frac{1}{\beta_d} \frac{1}{\rho_d} \sum_{i=0}^\infty (i+1) 
\prod_{j=0}^i \Delta_{d,0}(j)^2\,.
\end{equation}
Since $\Delta_{d,0}(j) = \Delta_{d,0}(0) = \Delta_{d,0} = \mathfrak{d}_d = 1/\beta_d$ for all $j \in \N$, we find
\begin{equation}
\begin{aligned}
\lim_{\nu \downarrow 0} \frac{1}{\nu} (\vartheta_{d,\nu}-\vartheta_{d,0})
&= \frac{d}{2\beta_d^2} \sum_{i=0}^\infty (i+1) \mathfrak{d}_d^{2(i+1)}
= \frac{d}{2\beta_d^2} \frac{\mathfrak{d}_d^2}{(1-\mathfrak{d}_d^2)^2}\\ 
&=  \frac{d}{2\beta_d^2} \frac{\beta_d^{-2}}{(1-\beta_d^{-2})^2} 
= \frac{d}{2} \frac{1}{(\beta_d^2-1)^2} = \frac{d}{2(d-2)^2}\,,
\end{aligned}
\end{equation}
which completes the proof of the first limit in \eqref{eq:per-theta-a}.

\medskip\noindent
{\bf 4.}
It is immediate from \eqref{eq:derDel} that $\nu \mapsto \Delta_{d,\nu}$ is strictly decreasing. Hence, by \eqref{def-theta}, $\nu \mapsto \vartheta_{d,\nu}$ is strictly increasing. Put $\chi_{d,\nu} = \Delta_{d,\nu}/\beta_d$. Then the recursion in \eqref{eq:recursion-Delta} reads
\begin{equation}
\chi_{d,\nu}(i) = \Big(d(1+\tfrac12(i+1)\nu) - (d-1)\chi_{d,\nu}(i+1)\Big)^{-1}\,.
\end{equation} 
Differentiating this recursion with respect to $d$, we get
\begin{equation}
\begin{aligned}
\frac{\partial}{\partial d}\,\chi_{d,\nu}(i) 
&= - \chi_{d,\nu}(i)^2\left[(1+\tfrac12(i+1)\nu) - \chi_{d,\nu}(i+1) 
- (d-1) \frac{\partial}{\partial d}\,\chi_{d,\nu}(i+1)\right]\\
&= - \chi_{d,\nu}(i)^2\left[\frac{1}{d}\left(\frac{1}{\chi_{d,\nu}(i)} - \chi_{d,\nu}(i)\right) 
- (d-1) \frac{\partial}{\partial d}\,\chi_{d,\nu}(i+1) \right]\,,
\end{aligned}
\end{equation}
which can be iterated to obtain
\begin{equation}
\frac{\partial}{\partial d}\,\chi_{d,\nu}(0) 
= - \sum_{i=0}^\infty \frac{1}{d}\left(\frac{1}{\chi_{d,\nu}(i)} - \chi_{d,\nu}(i)\right) (d-1)^i 
\prod_{j=0}^i \chi_{d,\nu}(j)^2\,.
\end{equation}
Since the right-hand side is negative (recall that $\chi_{d,\nu}<1$), we see that $d \mapsto \chi_{d,\nu}(0)$ is strictly decreasing. Hence, by \eqref{def-theta}, $d \mapsto \vartheta_{d,\nu}$ is strictly increasing.


	
\end{document}